
\documentclass[12pt,twoside,a4paper]{amsart}
\usepackage{amssymb}
\usepackage{amsmath,amsfonts}
\usepackage[showonlyrefs]{mathtools}
\usepackage{bm}
\usepackage{tikz-cd}
\usepackage[
	colorlinks	= true,
	allcolors 	= blue
	]{hyperref}

 \textwidth=15cm 
 \hoffset=-5mm

\usepackage{amsthm}
\theoremstyle	{plain}
\newtheorem		{theorem}					{Theorem}	[section]
\newtheorem		{lemma}			[theorem]	{Lemma}
\newtheorem		{corollary}		[theorem]	{Corollary}
\newtheorem		{proposition}	[theorem]	{Proposition}

\theoremstyle	{definition}
\newtheorem		{definition}	[theorem]	{Definition}
\newtheorem		{remark}		[theorem]	{Remark}
\newtheorem*	{remark*}					{Remark}

\numberwithin{equation}{section}

\newcommand{\cC}{{\mathcal C}}

\newcommand{\cH}{{\mathcal H}}

\newcommand{\cK}{{\mathcal K}}
\newcommand{\cL}{{\mathcal L}}

\def\sA{{\mathfrak A}}      
      
\def\sG{{\mathfrak G}}   \def\sH{{\mathfrak H}}   
      \def\sL{{\mathfrak L}}
   \def\sN{{\mathfrak N}}   
      
\def\sS{{\mathfrak S}}

\newcommand {\ff}{\mathfrak f}
\newcommand {\fg}{\mathfrak g}

\def\bR{{\mathbf R}}
\def\bA{{\mathbf A}}

      \def\dC{{\mathbb C}}

      \def\dR{{\mathbb R}}

   \def\cB{{\mathcal B}}   \def\cC{{\mathcal C}}
      \def\cF{{\mathcal F}}
\def\cG{{\mathcal G}}   \def\cH{{\mathcal H}}   
   \def\cK{{\mathcal K}}   \def\cL{{\mathcal L}}
      
\def\cP{{\mathcal P}}   \def\cQ{{\mathcal Q}}   \def\cR{{\mathcal R}}
      
   \def\cW{{\mathcal W}}

\newcommand{\ptp}{p\times p}

\newcommand{\w}[1]{\widetilde{#1}}
\newcommand{\wt}{\widetilde}
\newcommand{\wh}{\widehat}

\DeclareMathOperator{\sgn}{sgn}

\DeclareMathOperator{\ran}{ran}
\DeclareMathOperator{\rank}{rank}
\DeclareMathOperator{\dom}{dom}
\DeclareMathOperator{\mul}{mul}

\DeclareMathOperator{\gr}{gr}

\DeclareMathOperator{\cdom}{\overline{dom}}

\def\ov{\overline}

\renewcommand{\Re}{\operatorname{Re}}
\renewcommand{\Im}{\operatorname{Im}}

\newcommand*\conj[1]{\overline{#1}}

\def\CR{\color{red} }
\def\CB{\color{blue} }

\newcommand{\dsp}{\displaystyle}

\begin{document}

\title[${\sL}$-resolvents and pseudo-spectral functions of symmetric  relations]
{${\sL}$-resolvents and pseudo-spectral functions of symmetric linear relations in Hilbert spaces}

\author[V.~Derkach]{Volodymyr Derkach}
\address{ 
	Vasyl Stus Donetsk National University\\
	Vinnytsia, Ukraine}
\email{derkach.v@gmail.com}


\dedicatory{In respectful memory of Heinz Langer }

\date{\today}

\keywords{ Symmetric linear relation, Hilbert space, Improper  gauge, $\sL$-resolvent matrix, Boundary triple, Spectral and pseudo-spectral functions, Canonical system}

\begin{abstract}
The set of $\sL$-resolvents of a densely defined symmetric operator
in a Hilbert space $\sH$  with a proper  gauge $\sL(\subset\sH)$ was described by Kre\u{\i}n and Saakyan.
The Kre\u{\i}n--Saakyan  theory of $\sL$-resolvent matrices was extended by
Shmul'yan and Tsekanovskii to the case of improper  gauge $\sL(\not\subset\sH)$
and by Langer and Textorius to the case of symmetric linear relations in Hilbert spaces.
In the present paper we find connections between the theory of boundary triples
and the Kre\u{\i}n--Saakyan  theory of $\sL$-resolvent matrices
for symmetric linear relations with
improper  gauges in Hilbert spaces and extend the known formula for the  $\sL$-resolvent matrix
in terms of boundary operators to this class of relations.
Descriptions of spectral and pseudo-spectral functions of symmetric linear relations with
improper  gauges are given.
The results are applied
to  linear relations generated by a canonical system.

\end{abstract}

\subjclass{Primary 47B25; Secondary 34L05; 46A06, 47A70}

\maketitle

\tableofcontents

\section{Introduction} \label{sec:intro}
In \cite{LT77,LaTe78,LaTe82,LaTe84,LaTe85} Heinz Langer and Bjorn Textorius conducted a series of studies on the theory of $\sL$-resolvents and spectral functions
of symmetric linear relations in Hilbert spaces.
In this paper we are reviewing those results from the perspective of the theory
of boundary triples developed in~\cite{Koc75,GG91,DM95,DM25}.
The topics discussed in the paper are closely related to the
research of M.G. Kre\u{\i}n and Sh.N. Saakyan \cite{Kr46,KS70}
 on the theory of  $\sL$-resolvent matrices,
 Yu.L.~Shmul'yan and E.R.~Tsekanovskii \cite{Sh71,ShTs77}
 on representation theory of operators with improper gauges,
L.A.~Sakhnovich~\cite{SakL99},
 A.L.~Sakhnovich~\cite{Sak92}, I.S.~Kac \cite{Kac03}
 and D.Z.~Arov and H. Dym \cite{ArDy12}
 on canonical systems.

To explain the main ideas we will start with
a closed densely defined symmetric linear operator $A$ with the deficiency index $(p,p)$, $p<\infty$,
acting in a Hilbert space $\sH$.
A linear subspace $\sL(\subset\sH)$ of dimension $p$
is called  a {\it proper gauge} for $A$ if the set $\rho(A,\sL)$ of regular type points ${z}\in\dC$ of $A$ such that the space $\sH$ admits the direct decomposition
\begin{equation}\label{eq:Prop_Gauge}
  \sH=\ran(A-{z} I)\dotplus \sL
\end{equation}
is non-empty.
Let  $L$ be a linear bijective mapping from $\dC^p$ onto $\sL$,
let $\wt A$ be a selfadjoint extension of $A$ acting in a (possibly larger) space
$\wt\sH(\supseteq\sH)$,
 and let $\rho(\wt A)$
be the set of regular points of $\wt A$.
Then the matrix-valued function (mvf)
\begin{equation}\label{eq:Lresolv}
  r({z})=L^*(\wt A-{z} I)^{-1}L,\quad {z}\in\rho(\wt A),
\end{equation}
is called the  $\sL$-{\it resolvent} of $A$.
The set of all $\sL$-resolvents of $A$ was described by M.G.~Kre\u{\i}n in \cite{Kr44}
and \cite{Kr46} in the case $p<\infty$
and by S.~Saakyan in the case $p=\infty$ \cite{Sa65}{\CR ,}
by the formula
\begin{equation}\label{eq:LresolvDescr}
  r({z})=T_{W({z})}[\tau({z})]:=
  (w_{11}({{z}})\tau({{z}})+w_{12}({{z}}))
(w_{21}({{z}})\tau({{z}})+w_{22}({{z}}))^{-1},
\end{equation}
where ${z}\in\rho(\wt A)\cap\rho(A,\sL)$ and $\tau$ ranges over the set $\wt\cR^{p\times p}$ of Nevanlinna families, see Definition~\ref{def:Nk-family}.
The block mvf $W({z})=[w_{ij}({z})]_{i,j=1}^2$
generating the linear fractional transform $T_{W({z})}$ is called the
$\sL$-{\it resolvent matrix} of $A$.
The theory of $\sL$-resolvent matrices of symmetric operators $A$ with
proper gauges was developed in \cite{Kr49}, \cite{Sa65} and \cite{KS70}.
In particular, the kernel
\begin{equation}\label{kerK0}
{\mathsf K}_{\zeta}^W({z}):=
\frac{ J_p-W({z})J_p W({\zeta})^*}{-({z}-\ov{\zeta})},
\quad {z}\ne\overline{{\zeta}},\quad\text{where}\quad
J_p=\begin{bmatrix}
    O_{p} & - I_{p}\\
     I_{p} & O_{p}
    \end{bmatrix},
\end{equation}
based on the $\sL$-resolvent matrix $W$ is non-negative on $\rho(A,{\sL})$.
Associated with gauge $\sL$ there  are two operator-valued functions (ovf's):
{ $\cP({z})=L^{-1}\Pi^{z}_{\sL}:\sH\to{\mathbb C}^p$,
where $\Pi^{z}_{\sL}$ is the skew projection} onto $\sL$ along $\ran(A-{z} I)$,
and the ovf { $\cQ({z}):\sH\to{\mathbb C}^p$} given by
\begin{equation}\label{eq:Q_lambda_Intro}
   \cQ({z})={ L^*}(A-{z} I)^{-1}(I-\cP({z})),\quad {z}\in\rho(A,\sL).
\end{equation}
If there exists a real point $a\in\rho(A,\sL)$, the $\sL$-resolvent matrix $W(z)$
can be calculated by
\[
  W({z})=I_{2p}-({z}-a)\cG({z})\cG(a)^*J_p,\ \ \text{where}\ \
  \cG({z})=\begin{bmatrix}
   -\cQ({z})\\
      \cP({z})
    \end{bmatrix},\,\,
 {z}\in\rho(A,\sL).
\]

An effective tool in the study of extension theory of symmetric operators
is the notion of the boundary triple $\Pi=(\cH,\Gamma_0,\Gamma_1)$ (see Definition~\ref{def:btriple}) introduced by J. Calkin~\cite{Cal39} and developed
by A.N. Kochubei \cite{Koc75}, V.M. Bruk \cite{Br76} and M.L. and V.I. Gorbachuks~\cite{GG91}.
In \cite{DM91}  relations between the Kre\u{\i}n's representation theory
and the theory of boundary triples
were investigated and, in particular, it was shown that to a given
boundary triple $\Pi$ and a gauge $\sL$
corresponds a unique $\sL$-resolvent matrix by the formula
\begin{equation}\label{eq:Formula_W_Intro}
    W_{\Pi\sL}({{z}}):=
    \begin{bmatrix}
   -\Gamma_0\cQ({z})^{*}  &   \Gamma_0\cP({z})^{*}\\
   -\Gamma_1\cQ({z})^{*} &   \Gamma_1\cP({z})^{*}
       \end{bmatrix}^*,\quad
 {z}\in\rho(A,\sL).
\end{equation}

However, some problems of the analysis require  consideration of $\sL$-resolvents of symmetric operators $A$ with improper gauges $\sL$ consisting of generalized elements
from a space $\sH_-$ of ``distributions'', see Section~\ref{sec:4.1}.
The theory of $\sL$-resolvent matrices and a description of $\sL$-resolvents of a symmetric operator $A$ with
improper gauge was developed by Yu. Shmul'yan and E. Tsekanovskii, \cite{ShTs77}.
This theory was extended by H.~Langer and B.~Textorius to the case of a symmetric linear relation $A$
and applied to the problem of description of $\sL$-resolvents and spectral functions
of operators associated with a canonical system, see~\cite{LaTe84}, \cite{LaTe85}.

In the present paper we  consider a symmetric linear relation $A$
in a Hilbert space with deficiency index $(p,p)$
that has an improper  gauge $\sL$, i.e. $\sL\not\subset\sH$.
The results of Section \ref{sec:3} on extended generalized resolvents
and extended spectral functions are taken from~\cite{Sh71,Sh71b,ShTs77}, \cite{LaTe78}, and~\cite{DD19,DD21}.
The results about extended boundary triples and extended
Green formulas presented in Lemma~\ref{lem:bf_A0Pi}  and Lemma~\ref{lem:ExtGreenForm} seem to be new.
Using boundary triple's approach we prove an analog of formula~\eqref{eq:Formula_W_Intro} for  the $\sL$-resolvent matrix $W_{\Pi\sL}({z})$ of $A$ in Theorem~\ref{thm:ResM}.
Under an additional assumption it is shown, that the $\sL$-resolvent matrix $W_{\Pi\sL}({z})$ of $A$  belong to the class
$\cW(iJ_p)$, i. e.
the kernel ${\mathsf K}_{\zeta}^W({z})$ 
is non-negative on $\rho(A,{\sL})$, see Definition~\ref{def:11.cP}.
The 
$\sL$-resolvent matrix $W_{\Pi\sL}({z})$ allows to describe the set of
$\sL$-resolvents of $A$ with improper  gauge $\sL$ by the same formula~\eqref{eq:LresolvDescr}, see Theorem~\ref{prop:Gresolv}.
A special interest represent those $\sL$-resolvents of $A$ whose minimal representing linear relation $\wt A$ has the property
\begin{equation}\label{eq:mul_wtA_Intro}
  \mul \wt A=\mul A.
\end{equation}
Parameters $\tau$ corresponding to such $\sL$-resolvents via formula~\eqref{eq:LresolvDescr} are called $\Pi$-admis\-sible. Some $\Pi$-admissibility criteria for $\sL$-resolvents of $A$ were
given in \cite{LT77} and \cite{DHMS2000}.

 $\sL$-resolvents  of a Pontryagin space symmetric operator $A$ with improper gauge $\sL$ and deficiency index $(1,1)$ were studied by M. Kaltenb\"{a}ck and H.~Woracek,~\cite{KW98}.
 In \cite{DD21,DD24} V. Derkach and H. Dym employed  the theory of de Branges--Pontryagin spaces  to describe
 $\sL$-resolvents  of a Pontryagin space symmetric operator $A$ with deficiency index $(p,p)$ and to parameterize solutions
of indefinite truncated matrix moment  problem.
An indefinite version of formula~\eqref{eq:Formula_W_Intro} for  the $\sL$-resolvent matrix $W_{\Pi\sL}({z})$ of $A$ was presented in~\cite{Der25}.

Another question reviewed in this paper is a description of spectral and pseudo-spectral functions of $A$.
Recall the following classification from \cite{LaTe78}, \cite{Sak92}, \cite{Kac03},  \cite{ArDy12} and \cite{Mog15MFAT}.
\begin{definition}\label{def:PseudoSF_Intro}
Let $\sL(\subset\sH_-)$ be a gauge for $A$ such that $\dR\subset\rho(A,\sL)$.
A non-decreasing left-continuous mvf $\dR\ni\mu\mapsto \sigma(\mu)$ with values in $\dC^{p\times p}$
is called:
	\begin{enumerate}
		\item[(1)]
{\it LT-spectral function} (after Langer and Textorius) of the pair
$\langle A;\sL\rangle$ if the mapping
\begin{equation}\label{eq:GenFourierTrIntro}
\cF:\sH\ni f\mapsto \cP({\cdot})f\in L^2(d\sigma)
\end{equation}
is  a contraction from $\sH$ to $ L^2(d\sigma)$
that is isometric on  $\cdom A$;

		\item[(2)]
{\it pseudo-spectral function}  of the pair $\langle A;\sL\rangle$   if
the mapping $\cF$ is  a partial isometry from $\sH$ to $L^2(d\sigma)$
such that $\ker \cF=\mul A$;
\item[(3)]
{\it spectral function}  of  $\langle A;\sL\rangle$  if
$\cF$ is an  isometry from $\sH$ to $L^2(d\sigma)$.
\end{enumerate}
\end{definition}

The mapping $\cF$ is called a generalized Fourier transform
and it is a kind of the directing mapping in the method
of directing functionals invented by M. Krein \cite{Kr48}, and
developed by H. Langer and B. Textorius  \cite{La70,LaTe78, LaTe84}.
This method is modified here to the case when the gauge $\sL$
is considered as a subspace of $\sH_-$ and leads to the following parametrization of
LT-spectral functions (resp., pseudo-spectral functions)  of the pair $\langle A;\sL\rangle$:
\begin{equation}\label{eq:IntRepR2_Intro}
 \int_{\dR}\left(\frac{1}{{\lambda}-z}-\frac{{\lambda}}{1+{\lambda}^2}\right)d\sigma({\lambda})=
 r(z)+K,
\end{equation}
where $r(z)$  ranges over the set of all $\sL$-resolvents of $A$ (resp., such that \eqref{eq:mul_wtA_Intro} holds), see Theorem~\ref{thm:Descr_Pseudo_IS}
  and $K$ is a constant $p\times p$-Hermitian matrix.
The set of spectral functions of $\langle A;\sL\rangle$ is nonempty if and only if $\mul A=\{0\}$ and in this case it coincides with the set of
pseudo-spectral functions of $\langle A;\sL\rangle$.
A combination of the
formulas~\eqref{eq:LresolvDescr} and~\eqref{eq:IntRepR2_Intro}
gives a parametrization  of
pseudo-spectral functions  of the pair $\langle A;\sL\rangle$
via the formula~\eqref{eq:IntRepRMin}
when $\tau$ ranges over the set of all $\Pi$-admissible parameters in $\wt\cR^{p\times p}$.

In Section \ref{sec:6} the results of this paper are applied to canonical systems
   \begin{equation}\label{eq:can,eq-n_Intro}
J_p f'(t)+\cF(t)f(t) = {z}\mathcal H(t)f(t), \quad  t\in(a,b), 
  \end{equation}
where  ${2p\times 2p}$ mvf's $\cF(t)$ and $\cH(t)$  are real Hermitian mvf's
with entries from $L^1_{\rm loc}[a,b)$, $b\le\infty$, $\text{tr }\cH(t)\equiv 1$ and  $\cH(t)\ge 0$ for a.e. $t\in (a,b)$.
We review the known results about  $\sL$-resolvents and boundary triples for regular
and singular canonical systems from \cite{LaTe82,LaTe84,LaTe85}, \cite{BeHaSn20}, \cite{DM25}
and present new formulas for $\sL$-resolvent matrices.
Descriptions of spectral and pseudo-spectral functions  of the minimal
operator $S_{\min}$ generated by the canonical system \eqref{eq:can,eq-n_Intro} in
$L^2_\cH(a,b)$ are given both in the regular case (Theorem~\ref{thm:Descr_Pseudo_IS3})
and in the singular case when the system \eqref{eq:can,eq-n_Intro} is limit point at $b$ (Theorem~\ref{thm:Descr_Pseudo_LP}).
The obtained results complement known results by A. Sakhnovich \cite{Sak92},
and
by D. Arov and H. Dym~\cite{ArDy12}.
In the case when $p=1$ our characterization of pseudo-spectral functions in terms of admissible parameters $\tau$ coincides with  the characterization given by I.S. Kac~\cite{Kac03} in terms of $\cH$-indivisible intervals, see Propositions~\ref{prop:Hii_intermed} and \ref{prop:Indivis_Int}.
Description of LT-spectral functions
of  symmetric relations associated with  canonical  systems was presented
by H.~Langer and B. Textorius in~\cite{LaTe85}.

\section{Preliminaries}\label{sec:2}
\subsection{Linear relations in Hilbert spaces}
\label{subsec:pre:lr}
Let us recall some definitions from  \cite{Arens}, \cite{Ben72}.
 A linear relation $T$ in a Hilbert space
 $(\sH,(\cdot,\cdot)_{\sH})$
is a linear subspace of $\sH \times \sH$.
We use two representations for elements of $\sH \times \sH$: either
$\begin{bmatrix} f \\ g \end{bmatrix}$ or $\text{\rm col} \{f,g\}$, where $f,g\in\sH$.
The \emph{domain}, \emph{kernel}, \emph{range},  and \emph{multivalued part} of a linear relation $T$ are defined as follows:
\begin{align*}
	\dom{T} &\coloneqq \left\{ f \colon \begin{bmatrix} f \\ g \end{bmatrix} \in T \ \text{for some}\ g\in\sH \right\}, &
	\ker{T} &\coloneqq \left\{ f \colon \begin{bmatrix} f \\ 0 \end{bmatrix} \in T \right\},
\\
	\ran{T} &\coloneqq \left\{ g \colon \begin{bmatrix} f \\ g \end{bmatrix} \in T \ \text{for some}\ f\in\sH \right\}, &
	\mul{T} &\coloneqq \left\{ g \colon \begin{bmatrix} 0 \\ g \end{bmatrix} \in T \right\}.
\end{align*}
The \emph{adjoint} linear relation $T^*$ is defined by
\begin{equation}
	T^* \coloneqq \left\{
	\begin{bmatrix} u \\ f \end{bmatrix}
	\in \sH \times \sH \colon ( f,v)_{\sH} =( u,g)_{\sH}\ \text{for any}\
	\begin{bmatrix} v \\ g \end{bmatrix}
	\in T \right\}.
\end{equation}
A linear relation $T$ in $\sH$ is called \emph{closed} if $T$ is closed as a subspace of $\sH \times \sH$.
The set of all closed linear operators (relations) is denoted by $\mathcal{C}(\sH)$ ($\wt{\mathcal{C}}(\sH)$).
Identifying a linear operator $T \in \mathcal{C}(\sH)$ with its graph one can consider $\mathcal{C}(\sH)$ as a part of $\wt{\mathcal{C}}(\sH)$.

Let $T$ be a closed linear relation, ${z} \in \mathbb{C}$, then
	\begin{equation}
		T - {z} I \coloneqq \left\{ \text{\rm col}\{ f , g-{z} f \} \colon
		\text{\rm col}\{  f , g \}
		\in T \right\}.
	\end{equation}
	A point ${z} \in \mathbb{C}$ such that $\ker{\left( T - {z} I \right)} = \{0\}$ and $\ran{\left( T - {z} I \right)} = \sH$ is called a \emph{regular point} of the linear relation $T$. Let $\rho(T)$ be the set of regular points.
	The \emph{point spectrum} $\sigma_p(T)$ of the linear relation $T$
	is defined by
	\begin{equation}\label{eq:Point_s}
		\sigma_p(T) \coloneqq
		\{{z}\in\mathbb{C} \colon \ker(T-{z} I)\ne\{0\}\},
	\end{equation}

A linear relation $A$ is called \emph{symmetric} in a Hilbert space
 $\sH$ if $A \subseteq A^{*}$.
A point ${z} \in \mathbb{C}$ is called a \emph{point of regular type}  for a closed symmetric linear relation $A$ (and is denoted as ${z} \in \widehat{\rho}(A)$), if ${z} \notin \sigma_p(A)$ and the subspace $\ran(A-{z} I)$ is closed in $\sH$.
For ${z} \in \widehat{\rho}(A)$ let us set
$\sN_{{z}} \coloneqq \ker (A^* -{z} I)$ and
\begin{equation}
	\widehat{\sN}_{{z}}  \coloneqq
	\left\{
	\wh f_{{z}} =\text{\rm col} \{f_{{z}} , {z} f_{{z}}\}
	 \colon f_{{z}} \in \sN_{{z}}
	\right\}.
\end{equation}
As is known, see  \cite[Theorem 6.1]{IKL},  the number $\dim{\sN}_{{z}}$ takes a constant value $n_+(A)$ for all ${z}\in\wh\rho(A)\cap\dC_+$, and $n_-(A)$ for all ${z}\in\wh\rho(A)\cap\dC_-$, where
$
 \dC_\pm=\{{z}\in\dC:\, \pm\text{Im }{z}>0 \}.
$ 
The numbers $n_\pm(A)$ are called the \textit{defect numbers} of $A$.

\subsection{$\cR$-families}\label{sec:2.2}
A complex scalar function $\sigma({\lambda} )$ is related to the class $\sS^{(k)}$, $k=0,1,2$, if
it has a bounded variation on every finite interval and
\[
\int_{\dR}\frac{d\sigma({\lambda} )}{1+|{\lambda} |^k}<\infty.
\]
For a $\ptp$ matrix-valued function (mvf) $Q({z})$ holomorphic on
     $\dC_+\cup\dC_-$ let us set $Q^\#({z}):=Q(\overline{z})^*$.
\begin{definition} \label{def:Nk-class}
Let $\cH$ be a Hilbert space. A $\cB(\cH)$-valued ovf
$Q({z})$ holomorphic on
     $\dC_+\cup\dC_-$  is said to belong to the class
     $\cR[\cH]$ if:
\begin{enumerate}
  \item [(a)] the kernel
  \[
  {\mathsf
N}^Q_{\zeta}({z})
:=\left\{\begin{array}{ll}
\frac{\dsp Q({z})-Q({\zeta})^*}
{\dsp{z}-\ov{{\zeta}}}
&:\quad \textrm{for }{z},\,{\zeta}\in\dC_+\cup\dC_-, \ {z}\ne\ov{{\zeta}},\\
Q'({z})
&:\quad \textrm{for }{z},\,{\zeta}\in\dC_+\cup\dC_-, \ {z}=\ov{{\zeta}},
\end{array}\right.
  \]
  is non-negative on  $\dC_+\cup\dC_-$;
  \item [(b)] $Q^\#({z})=Q({z})$ for all ${z}\in\dC_+$.
\end{enumerate}
\end{definition}
Let us set $\cR^{\ptp}:=\cR[\dC^p]$. The class $\cR^{\ptp}$  coincides with the class of mvf's holomorphic on
     $\dC_+\cup\dC_-$ which satisfy the assumption (b) in Definition~\ref{def:Nk-class} and have non-negative imaginary part $\text{Im }Q({z})$ in $\dC_+$.
Recall, see~\cite{KaKr74}, that each  function $Q\in\cR^{\ptp}$ admits the integral representation
\begin{equation}\label{eq:IntRep}
  Q({z})=\alpha + \beta{z}+\int_{\dR}\left(\frac{1}{\lambda-{z}}-\frac{{\lambda}}{1+{\lambda}^2}\right)d\sigma({\lambda}),
\end{equation}
where $\alpha,\beta\in\dC^{\ptp}$, $\beta\ge 0$, $\alpha=\alpha^*$ and $\sigma({\lambda})$ is a non-decreasing
left-continuous ${\ptp}$ mvf of the class $\sS^{(2)}$.

For every $Q\in\cR^{\ptp}$ the corresponding spectral function $\sigma(t)$ can be found by the Stieltjes inversion formula
\begin{equation}\label{eq:IntRep3}
 \sigma({\lambda}_2)-\sigma({\lambda}_1)=\frac{1}{2\pi} \lim_{\nu\downarrow 0}\int_{{\lambda}_1}^{{\lambda}_2}\Im \varphi(\mu+i\nu)d\mu.
\end{equation}

\begin{definition} \label{def:Nk-family}
    A family $\tau=(\tau({z}))_{z\in\dC_+\cup\dC_-}$, where
    $\tau({z})=\ran\begin{bmatrix}
    \varphi({z})\\\psi({z})
    \end{bmatrix}$ and $ \varphi$ and $\psi$ are $\ptp$ mvf's holomorphic on
     $\dC_+\cup\dC_-$,  is called an $\cR^{\ptp}$-{\it family} if:
\begin{enumerate}
  \item [(i)]
  ${\text{Im}{z}}\cdot{\text{Im}\left(\varphi({z})^*\psi({z})\right)}\geq0$
for ${z}\in\dC_+\cup\dC_-$;
  \item [(ii)] $\varphi^\#({z})\psi({z})
      -\psi^\#({z})\varphi({z})$ for all ${z}\in\dC_+\cup\dC_-$;
  \item[(iii)] $\ker \varphi({z})\cap\ker \psi({z})=\{0\}$
  for all ${z}\in\dC_+\cup\dC_-$.
\end{enumerate}
The set of $\cR^{\ptp}$-families is denoted by $\wt\cR^{\ptp}$.
\end{definition}

Two $\cR^{\ptp}$-families
$(\tau_j({z}))_{z\in\dC_+\cup\dC_-}$, where
$\tau_j({z})=\text{\rm col} \{\varphi_j({z}),\psi_j({z})\}$ ($j=1,2$),
coincide, if
$\varphi_2 ({z})=\varphi_1({z})\chi({z})$
and $\psi_2({z})=\psi_1({z})\chi({z})$
for some mvf $\chi({z})$,
which is holomorphic and invertible on $\dC_+\cup\dC_-$.

Every $\cR^{\ptp}$-family $\tau=
\left(\text{\rm col} \{\varphi({z}),\psi({z})\}\right)_{z\in\dC_+\cup\dC_-}$
    can be treated as a family of linear relations in $\dC^p$.
In the case when $\det\varphi({z})\not\equiv 0$
it has no zeros on $\dC_+\cup\dC_-$ and
 $\tau(z)$ coincides on $\dC_+\cup\dC_-$ with
the graph of $\cR^{\ptp}$-function
$Q({z})=\psi({z})\varphi({z})^{-1}$.
Since every mvf from  $\cR^{\ptp}$ admits such a representation,
the set $\cR^{\ptp}$ can be identified with a subset of
$\wt \cR^{\ptp}$.

If for some $z\in\dC\setminus \dR$ and $\tau\in\wt\cR^{\ptp}$ the multivalued part $\mul\tau(z)$ is not trivial, then $\mul\tau(z)(=: \cH_2)$ does not depend on $z\in\dC\setminus \dR$ and, with
$\cH_1:=\dC^p\ominus\cH_2$,
the  family of linear relations $\tau(z)$ admits the decomposition
\begin{equation}\label{eq:tau_decomp}
  \tau(z)=\tau_{\rm op}(z)\oplus \text{col }\{0,\cH_2\},
\end{equation}
where $\tau_{\rm op}\in\cR[\cH_1]$ is
called the operator part of  $\tau\in\wt\cR^{\ptp}$.

Often it is more convenient to consider a dual representation of an $\cR^{\ptp}$-family $\tau$ via an $\cR^{\ptp}$-pair.
\begin{definition} \label{def:Nk-pair}
    A pair $\begin{bmatrix}
    C({z})& D({z})
    \end{bmatrix}$ where $ C$ and $D$ are $\ptp$ mvf's holomorphic on
     $\dC_+\cup\dC_-$  will be called a Nevanlinna pair (or $\cR^{\ptp}$-{\it pair}) if:
\begin{enumerate}
  \item [(i)]
  ${\text{Im}{z}}\cdot{\text{Im}\left(C(z)D(z)^*\right)}{\CB \geq 0}$
for ${z}\in\dC_+\cup\dC_-\,;$
  \item [(ii)] $C({z})D^\#({z})=D({z})C^\#({z})$ for all ${z}\in\dC_+\cup\dC_-$;
  \item[(iii)] $\rank \begin{bmatrix}
    C({z})& D({z})
    \end{bmatrix}=p$
  for all ${z}\in\dC_+\cup\dC_-$.
\end{enumerate}
\end{definition}
\begin{lemma}\label{lem:Nkpairs_fam}
There is a one to one correspondence between  $\cR^{\ptp}$-families
$\tau({z})=\ran\begin{bmatrix}
    \varphi({z})\\\psi({z})
    \end{bmatrix}$
    and  $\cR^{\ptp}$-pairs $\begin{bmatrix}
    C({z})& D({z})
    \end{bmatrix}$
established by the formulas
\begin{equation}\label{eq:Nkpairs_fam}
  C({z})=\psi^\#({z}),\quad D({z})=\varphi^\#({z}), \quad
  {z}\in\dC_+\cup\dC_-.
\end{equation}
The $\cR^{\ptp}$-family $\tau$  admits the following representation
\begin{equation}\label{LR_50}
\begin{split}
\tau({z})&=\ker{\begin{bmatrix} {C({z})} & -{D}({z})\end{bmatrix}}\\
&=\left\{\text{\rm col} \{ u,u' \}\in \dC^p\times\dC^p: {C({z})}u- {D}({z})u'=0\right\},
\quad {z}\in\dC_+\cup\dC_-.
\end{split}
\end{equation}
\end{lemma}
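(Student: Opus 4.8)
The plan is to read the relations \eqref{eq:Nkpairs_fam} as a change of variables and to check that, under it, each clause of Definition~\ref{def:Nk-family} is carried to the corresponding clause of Definition~\ref{def:Nk-pair} and conversely; the representation \eqref{LR_50} will then follow from a dimension count. The only facts used repeatedly are $F^{\#\#}=F$ and $F^\#(z)=F(\bar z)^*$ for a mvf $F$ holomorphic on $\dC_+\cup\dC_-$, so that $F^\#(z)^*=F(\bar z)$ and $F^\#$ is again holomorphic on $\dC_+\cup\dC_-$.

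Starting from an $\cR^{\ptp}$-family $\tau=\left(\text{\rm col}\{\varphi(z),\psi(z)\}\right)$, I would put $C=\psi^\#$, $D=\varphi^\#$, so that $C^\#=\psi$, $D^\#=\varphi$. Clause (ii): $C(z)D^\#(z)=D(z)C^\#(z)$ reads $\psi^\#(z)\varphi(z)=\varphi^\#(z)\psi(z)$, which is precisely Definition~\ref{def:Nk-family}(ii). Clause (iii): rank is invariant under conjugate transpose, so $\rank\begin{bmatrix}C(z)&D(z)\end{bmatrix}=\rank\begin{bmatrix}\psi(\bar z)\\ \varphi(\bar z)\end{bmatrix}=p$, the last equality because $\ker\varphi(\bar z)\cap\ker\psi(\bar z)=\{0\}$ by Definition~\ref{def:Nk-family}(iii) applied at $\bar z\in\dC_+\cup\dC_-$. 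Clause (i): inserting $C(\zeta)^*=\psi(\bar\zeta)$ and $D(\zeta)^*=\varphi(\bar\zeta)$ gives, for $z\ne\bar\zeta$,
\[
{\mathsf N}_{\zeta}^{CD}(z)
=\frac{\psi^\#(z)\varphi(\bar\zeta)-\varphi^\#(z)\psi(\bar\zeta)}{z-\bar\zeta}
=\frac{\varphi(\bar z)^*\psi(\bar\zeta)-\psi(\bar z)^*\varphi(\bar\zeta)}{\bar\zeta-z}
={\mathsf N}_{\bar z}^{\varphi\psi}(\bar\zeta),
\]
and the diagonal case $z=\bar\zeta$ is identical, with derivatives replacing difference quotients; since ${\mathsf N}^{\varphi\psi}$ is a hermitian kernel, this identification transports its non-negativity on $\dC_+\cup\dC_-$ to ${\mathsf N}^{CD}$. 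Hence $\begin{bmatrix}C&D\end{bmatrix}$ is an $\cR^{\ptp}$-pair. The reverse assignment $\begin{bmatrix}C&D\end{bmatrix}\mapsto\left(\text{\rm col}\{D^\#,C^\#\}\right)$ is checked by the same three steps in reverse, and $(\psi^\#)^\#=\psi$, $(\varphi^\#)^\#=\varphi$ show the two assignments are mutually inverse; rescaling $\text{\rm col}\{\varphi,\psi\}$ on the right by an invertible holomorphic $\chi$ rescales $\begin{bmatrix}C&D\end{bmatrix}$ on the left by $\chi^\#$, so the correspondence is compatible with the natural equivalences on the two sides.

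For \eqref{LR_50}, the inclusion $\tau(z)\subseteq\ker\begin{bmatrix}C(z)&-D(z)\end{bmatrix}$ is immediate: if $\text{\rm col}\{u,u'\}=\text{\rm col}\{\varphi(z)h,\psi(z)h\}$ then $C(z)u-D(z)u'=\bigl(\psi^\#(z)\varphi(z)-\varphi^\#(z)\psi(z)\bigr)h=0$ by Definition~\ref{def:Nk-family}(ii). Equality then follows by comparing dimensions: $\dim\tau(z)=\rank\begin{bmatrix}\varphi(z)\\ \psi(z)\end{bmatrix}=p$ by Definition~\ref{def:Nk-family}(iii), while $\dim\ker\begin{bmatrix}C(z)&-D(z)\end{bmatrix}=2p-\rank\begin{bmatrix}C(z)&D(z)\end{bmatrix}=p$ by clause (iii) of the pair just established, and a $p$-dimensional subspace contained in a $p$-dimensional subspace coincides with it.

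I do not anticipate a genuine obstacle: the statement is bookkeeping with the operation $F\mapsto F^\#$ together with the two rank/non-degeneracy conditions (iii). The places needing slightly more care are, in clause (i), phrasing the "relabelling" so that non-negativity of the hermitian kernel (not merely positive semidefiniteness of its individual values) is transported, and, in \eqref{LR_50}, invoking the maximality of both sides rather than a bare inclusion of subspaces.
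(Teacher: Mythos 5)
Your handling of clauses (ii) and (iii), of the inverse assignment, and of \eqref{LR_50} (the inclusion from clause (ii) plus the two rank counts) is correct, and is in fact more detailed than the paper's own proof, which merely declares the three clauses of Definition~\ref{def:Nk-family} equivalent to those of Definition~\ref{def:Nk-pair} and derives \eqref{LR_50} from clause (ii). The genuine gap is in your transfer of clause (i). The identity you prove, ${\mathsf N}_{\zeta}^{CD}({z})={\mathsf N}_{\bar z}^{\varphi\psi}(\bar\zeta)$, interchanges the two arguments of the kernel. Hence, for points $z_1,\dots,z_n$ and $w_j:=\bar z_j$, the Gram matrix $\bigl[{\mathsf N}^{CD}_{z_k}(z_j)\bigr]_{j,k}$ is obtained, via the Hermitian symmetry of ${\mathsf N}^{\varphi\psi}$, from the positive semidefinite Gram matrix $\bigl[{\mathsf N}^{\varphi\psi}_{w_k}(w_j)\bigr]_{j,k}$ by replacing each $p\times p$ block by its adjoint while keeping its position, i.e.\ by a partial transpose composed with entrywise conjugation. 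For $p=1$ this is just transposition of a Hermitian matrix and your argument closes, but for $p\ge 2$ partial transposition does not preserve positive semidefiniteness of block matrices (the rank-one projection onto $\sum_i e_i\otimes e_i$ is the standard counterexample), so ``the kernel is Hermitian'' does not by itself transport non-negativity from ${\mathsf N}^{\varphi\psi}$ to ${\mathsf N}^{CD}$, nor back; your relabelling only matches the diagonal blocks pointwise.

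The step can be closed, but only by using the structure of the kernels rather than their Hermitian symmetry. When $\det\varphi\not\equiv 0$ (so that, as noted after Definition~\ref{def:Nk-family}, $\varphi({z})$ is invertible on $\dC_+\cup\dC_-$), put $Q=\psi\varphi^{-1}$; clause (ii) gives $Q^{\#}=Q$, and one has the two congruence identities ${\mathsf N}^{\varphi\psi}_{\zeta}({z})=\varphi({\zeta})^{*}\,{\mathsf N}^{Q}_{\zeta}({z})\,\varphi({z})$ and ${\mathsf N}^{CD}_{\zeta}({z})=\varphi^{\#}({z})\,{\mathsf N}^{Q}_{\zeta}({z})\,\varphi^{\#}({\zeta})^{*}$, so the non-negativity of either kernel is equivalent to that of the Nevanlinna kernel ${\mathsf N}^{Q}$, and congruences do preserve non-negativity. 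The degenerate case $\det\varphi\equiv 0$ must be reduced to this one, e.g.\ by a rotation $(\varphi,\psi)\mapsto(\varphi\cos\alpha-\psi\sin\alpha,\ \varphi\sin\alpha+\psi\cos\alpha)$, which leaves both kernels unchanged and whose admissibility uses the pointwise dissipativity encoded in the diagonal of the kernel (equivalently, one may invoke the standard characterization of Nevanlinna families and pairs through maximal dissipative relations). Without an argument of this kind, the equivalence of the two clauses (i) --- the only non-bookkeeping part of the lemma --- remains unproved in your write-up.
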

\begin{proof}
With this definition of the pair $\begin{bmatrix}
    C({z})& D({z})
    \end{bmatrix}$
the conditions (i)-(iii) in Definition~\ref{def:Nk-family} are equivalent
to  conditions (i)-(iii) in Definition~\ref{def:Nk-pair}.
  The formula~\eqref{LR_50} follows from the formula (ii) in Definition~\ref{def:Nk-pair}.
\end{proof}

In the case when  $\varphi$ and $\psi$ are constant matrices,
 conditions (ii)-(iii) in Definition~\ref{def:Nk-pair} characterize all selfadjoint
 linear relations in $\dC^p$.
 \begin{lemma}[\cite{RB69}]\label{lem:SALR}
Every selfadjoint
 linear relation in $\dC^p$ admits the representation  $\tau=\ker{\begin{bmatrix} C &D \end{bmatrix}}$
 where $C,\, D\in {\dC}^{\ptp}$ and
\begin{equation}\label{eq:rank_and sym}
{C}{D}^*={D}{C}^* \quad\mbox{and}\quad
\rank \begin{bmatrix}
    C& D
    \end{bmatrix}=p.
\end{equation}
\end{lemma}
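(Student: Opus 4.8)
The plan is to realise a selfadjoint relation $\tau$ in $\dC^p$ as a maximal isotropic subspace of $\dC^p\times\dC^p$ and to read the matrices $C,D$ off from a basis of it. Equip $\dC^p\times\dC^p$ with the inner product $\big\langle\begin{bmatrix}a\\b\end{bmatrix},\begin{bmatrix}c\\d\end{bmatrix}\big\rangle=(a,c)_{\dC^p}+(b,d)_{\dC^p}$ and put $\widetilde{\mathcal J}=\begin{bmatrix}O_p&-I_p\\ I_p&O_p\end{bmatrix}$. For any linear relation $T$ in $\dC^p$ one has $T^{*}=(\widetilde{\mathcal J}\,T)^{\perp}$, because $\begin{bmatrix}v\\v'\end{bmatrix}\in T^{*}$ means $(v',w)_{\dC^p}-(v,w')_{\dC^p}=0$ for every $\begin{bmatrix}w\\w'\end{bmatrix}\in T$, and $(v',w)_{\dC^p}-(v,w')_{\dC^p}=\big\langle\begin{bmatrix}v\\v'\end{bmatrix},\widetilde{\mathcal J}\begin{bmatrix}w\\w'\end{bmatrix}\big\rangle$. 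Since $\widetilde{\mathcal J}$ is unitary, $\dim T^{*}=2p-\dim T$, so a selfadjoint relation $\tau=\tau^{*}$ necessarily has $\dim\tau=p$ (equivalently: its defect numbers vanish).

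Next I would choose a basis $\big\{\begin{bmatrix}\varphi_k\\ \psi_k\end{bmatrix}\big\}_{k=1}^{p}$ of $\tau$ and collect the vectors into matrices $\Phi=[\varphi_1\ \cdots\ \varphi_p]$, $\Psi=[\psi_1\ \cdots\ \psi_p]\in\dC^{p\times p}$, so that
\[
\tau=\ran\begin{bmatrix}\Phi\\ \Psi\end{bmatrix},\qquad \rank\begin{bmatrix}\Phi\\ \Psi\end{bmatrix}=p .
\]
Symmetry of $\tau$, i.e.\ $\tau\subseteq\tau^{*}$, upon substituting $\begin{bmatrix}u\\u'\end{bmatrix}=\begin{bmatrix}\Phi x\\ \Psi x\end{bmatrix}$ and $\begin{bmatrix}v\\v'\end{bmatrix}=\begin{bmatrix}\Phi y\\ \Psi y\end{bmatrix}$ into $(u',v)_{\dC^p}-(u,v')_{\dC^p}=0$, becomes the matrix identity $\Phi^{*}\Psi=\Psi^{*}\Phi$.

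Then I set $C:=\Psi^{*}$ and $D:=-\Phi^{*}$. The first relation in \eqref{eq:rank_and sym} is immediate: $CD^{*}=-\Psi^{*}\Phi=-\Phi^{*}\Psi=DC^{*}$. The rank condition holds since $\begin{bmatrix}C&D\end{bmatrix}=\begin{bmatrix}\Psi^{*}&-\Phi^{*}\end{bmatrix}$ is the conjugate transpose of $\begin{bmatrix}\Psi\\-\Phi\end{bmatrix}$, whose rank equals that of $\begin{bmatrix}\Phi\\ \Psi\end{bmatrix}$, namely $p$. Finally, every $\begin{bmatrix}u\\u'\end{bmatrix}=\begin{bmatrix}\Phi x\\ \Psi x\end{bmatrix}\in\tau$ satisfies $Cu+Du'=\Psi^{*}\Phi x-\Phi^{*}\Psi x=0$, so $\tau\subseteq\ker\begin{bmatrix}C&D\end{bmatrix}$; as $\dim\ker\begin{bmatrix}C&D\end{bmatrix}=2p-\rank\begin{bmatrix}C&D\end{bmatrix}=p=\dim\tau$, we get $\tau=\ker\begin{bmatrix}C&D\end{bmatrix}$ (with the kernel read in the same sense as in \eqref{LR_50}).

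I do not expect a real obstacle — the statement is classical (Rofe-Beketov). The only points requiring care are keeping the sign and inner-product conventions consistent with \eqref{LR_50}, and the dimension count $\dim\tau=p$, for which the cleanest input is the identity $\tau^{*}=(\widetilde{\mathcal J}\tau)^{\perp}$ above. An alternative route I would mention uses the canonical decomposition $\tau=\gr(A_{\mathrm s})\oplus(\{0\}\times\mul\tau)$ with $A_{\mathrm s}$ selfadjoint in $\dom\tau=(\mul\tau)^{\perp}$: writing $P$ for the orthogonal projection of $\dC^p$ onto $\dom\tau$ and taking $C=A_{\mathrm s}P+(I-P)$, $D=-P$, the orthogonality $\dom\tau\perp\mul\tau$ yields both conditions in \eqref{eq:rank_and sym} and the identity $\tau=\ker\begin{bmatrix}C&D\end{bmatrix}$ by the same bookkeeping.
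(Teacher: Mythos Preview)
The paper does not prove this lemma; it is quoted from \cite{RB69} without argument. Your proof is correct and self-contained: the dimension count $\dim\tau=p$ via $\tau^*=(\widetilde{\mathcal J}\tau)^\perp$ is clean, the choice $C=\Psi^*$, $D=-\Phi^*$ does give $CD^*=DC^*$ and $\rank\begin{bmatrix}C&D\end{bmatrix}=p$, and the inclusion $\tau\subseteq\ker\begin{bmatrix}C&D\end{bmatrix}$ together with matching dimensions yields equality. The alternative via the canonical decomposition $\tau=\gr(A_{\mathrm s})\oplus(\{0\}\times\mul\tau)$ is also fine and perhaps more in the spirit of how such facts are used later in the paper, but the linear-algebraic route you wrote out first is the most direct.
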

\subsection{Boundary triples and Weyl functions}
\label{subsec:pre:triples}
Let $A$ be a closed symmetric linear relation in a Hilbert space ${\sH}$ with equal defect numbers $n_\pm(A)=p<\infty$. We will need the following
\begin{lemma}\label{lem:11_A*}
If $\wt A$ is a selfadjoint extension of $A$ and ${z}\in\rho(\wt A)$, then
  \begin{equation}\label{eq:11_A*wtA}
 A^*=\wt A\dotplus\wh\sN_{{z}}.
  \end{equation}
\end{lemma}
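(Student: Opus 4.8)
**The plan is to prove the decomposition $A^* = \wt A \dotplus \wh\sN_z$ by establishing that the sum is direct and that it exhausts $A^*$.** First I would observe that since $\wt A$ is a selfadjoint extension of $A$, we have $A \subseteq \wt A = \wt A^* \subseteq A^*$, so in particular $\wt A \subseteq A^*$ and $\wh\sN_z = \wh\sN_z(A^*) \subseteq A^*$; hence the right-hand side is contained in $A^*$ as a subspace of $\sH \times \sH$. The reverse inclusion and directness are the content to be proved.

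\emph{Directness.} I would show $\wt A \cap \wh\sN_z = \{0\}$. Suppose $\wh f = \mathrm{col}\{f_z, z f_z\} \in \wt A \cap \wh\sN_z$ with $f_z \in \sN_z = \ker(A^* - zI)$. Then $\wh f \in \wt A$ means $\mathrm{col}\{f_z, z f_z\} \in \wt A$, i.e. $\mathrm{col}\{f_z, z f_z - z f_z\} = \mathrm{col}\{f_z, 0\} \in \wt A - zI$, so $f_z \in \ker(\wt A - zI)$. Since $z \in \rho(\wt A)$, we get $\ker(\wt A - zI) = \{0\}$, hence $f_z = 0$ and $\wh f = 0$.

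\emph{Exhaustion.} Given an arbitrary element $\wh u = \mathrm{col}\{u, u'\} \in A^*$, I want to write it as a sum of an element of $\wt A$ and an element of $\wh\sN_z$. The natural device is to use that $z \in \rho(\wt A)$, so $\mathrm{ran}(\wt A - zI) = \sH$: there is $\mathrm{col}\{v, v'\} \in \wt A$ with $v' - zv = u' - zu$. Set $\wh g := \wh u - \mathrm{col}\{v, v'\} = \mathrm{col}\{u - v,\, u' - v'\}$. Since both $\wh u$ and $\mathrm{col}\{v,v'\}$ lie in $A^*$ (using $\wt A \subseteq A^*$), and $A^*$ is a linear subspace, $\wh g \in A^*$; and by construction $(u' - v') - z(u - v) = (u' - zu) - (v' - zv) = 0$, so $\wh g \in A^*$ with second component $= z \times$ (first component), i.e. $\wh g = \mathrm{col}\{f_z, z f_z\}$ where $f_z := u - v$ and $f_z \in \ker(A^* - zI) = \sN_z$. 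Thus $\wh g \in \wh\sN_z$ and $\wh u = \mathrm{col}\{v, v'\} + \wh g \in \wt A \dotplus \wh\sN_z$.

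\textbf{Expected main obstacle.} The argument above is essentially routine once one is careful with the linear-relation bookkeeping; there is no deep step. The only point requiring a little attention is ensuring that the ``splitting'' element $\mathrm{col}\{v,v'\}$ produced from surjectivity of $\wt A - zI$ genuinely lies in $\wt A$ (not merely some extension) and that subtracting it keeps us inside $A^*$ — both of which follow from $A \subseteq \wt A \subseteq A^*$ and linearity of $A^*$. One should also note that $\dim \wh\sN_z = \dim \sN_z = n_\pm(A) = p < \infty$ is consistent with, though not needed for, the abstract decomposition; it merely confirms the ``codimension'' count. So I would present the proof in the two short blocks above (directness, then exhaustion), with the surjectivity of $\wt A - zI$ on $\sH$ as the one external fact invoked.
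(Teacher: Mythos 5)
Your proof is correct. The paper states Lemma~\ref{lem:11_A*} without proof (it is the classical von Neumann--type decomposition for a canonical selfadjoint extension $\wt A\subseteq A^*$), and your two-step argument --- $\ker(\wt A-{z}I)=\{0\}$ giving directness and $\ran(\wt A-{z}I)=\sH$ giving exhaustion, with the linear-relation bookkeeping handled correctly --- is exactly the standard argument that the paper implicitly relies on.
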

In the case of a densely defined operator the notion of the boundary triple was introduced in \cite{Koc75,GG91} under the name ``space of boundary values''.
This notion  was adapted to the case of a
non-densely defined symmetric operator in 
\cite{M92}, \cite{DM95}.
and to the case of linear relations in~\cite{D99}.
In the next definition we follow~\cite{DM95} and~\cite{D99}.

\begin{definition} \label{def:btriple}
	A set $\Pi = (\dC^p,\Gamma_0,\Gamma_1)$, where $\Gamma_0$ and $\Gamma_1$ are linear mappings from $A^*$ to $\dC^p$,
is called a \emph{boundary triple} for the linear relation $A^*$, if:
	\begin{enumerate}
	\item [(i)]
	for all
	$\wh f= \text{\rm col} \{ f . f' \}$,
	$\wh g =\text{\rm col} \{ g, g' \} \in A^*$
	the following Green's identity holds
	\begin{equation} \label{eq:1.9}
		(f',g )_{\sH} - ( f,g')_{\sH}
=(\wh\Gamma_0\wh g)^*(\wh\Gamma_1\wh f)-(\wh\Gamma_1\wh g)^*(\wh\Gamma_0\wh f);
	\end{equation}
	\item [(ii)]
	the mapping
	$\Gamma=\begin{bmatrix}\Gamma_0 \\ \Gamma_1\end{bmatrix} \colon	A^* \rightarrow \dC^{2p}$
	is surjective.
	\end{enumerate}
\end{definition}

The following linear relations
\begin{equation} \label{e q:A0A1}
	A_0 \coloneqq \ker \Gamma_0, \qquad A_1 \coloneqq \ker \Gamma_1
\end{equation}
are selfadjoint extensions of the symmetric linear relation $A$.

The main analytical tool in description of spectral properties of
selfadjoint extensions of a symmetric linear relation $A$
is the abstract Weyl function, which in the case of a single-valued operator  $A$
was introduced and investigated in~\cite{DM91,DM95}.

\begin{definition} \label{def:11W00}
Let $A$ be a closed symmetric linear relation with $n_+(A)=n_-(A)\le \infty$  and let ${\Pi}=({\dC^p},{\Gamma}_0,{\Gamma}_1)$ be a boundary triple for $A^*$. The {\it Weyl function} $M({\cdot})$ and the $\gamma$-{\it field} $\gamma({\cdot})$ of $A$ corresponding to
the boundary triple~$\Pi$ are defined by
   \begin{equation}\label{eq:11.M}
 M({{z}})\Gamma_0\wh f_{{{z}}}=\Gamma_1\wh f_{{{z}}},\quad \wh f_{{z}}\in \wh\sN_{{{z}}},
 \quad
 {{z}}\in\rho(A_0);
\end{equation}
and
\begin{equation}\label{eq:11_gamma1}
\wh\gamma({{z}})=(\Gamma_0\upharpoonright{\wh\sN_{{{z}}}})^{-1}, \quad
\gamma({{z}})=\pi_1\wh\gamma({{z}}), \quad
 {{z}}\in\rho(A_0),
\end{equation}
where $\pi_1$ is the projection onto the first component in $\sH\times\sH$
and $A_0$ is given by~\eqref{e q:A0A1}.
\end{definition}
By Lemma~\ref{lem:11_A*}, the operator $\Gamma_0\upharpoonright{\wh\sN_{{{z}}}}:{\wh\sN_{{{z}}}}\to\dC^p$ is boundedly invertible, the operator-functions
$\wh\gamma({{z}})$ and  $\gamma({{z}})$ admit the representations
 \begin{equation}\label{eq:11.wh_gamma2}
  \wh\gamma({{z}})=\wh\gamma({{\zeta}})+
  ({{z}}-{{\zeta}})\begin{bmatrix}
                (A_0 - {z})^{-1}\gamma({{\zeta}}) \\
                \gamma({{\zeta}})+{{\zeta}}(A_0 - {z})^{-1}\gamma({{\zeta}})
              \end{bmatrix},\quad  {{z}},{{\zeta}}\in\rho(A_0),
\end{equation}
\begin{equation}\label{eq:11_gamma2}
  \gamma({{z}})=\gamma({{\zeta}})+({{z}}-{{\zeta}})(A_0 - z)^{-1}\gamma({{\zeta}}),\quad  {{z}},{{\zeta}}\in\rho(A_0),
\end{equation}
and so they are holomorphic on $\rho(A_0)$ with values in $\cB(\dC^p,\wh\sN_{{z}})$ and $\cB(\dC^p,\sN_{{z}})$, respectively, see~\cite{DM25}.
Moreover, the following statement holds.
  \begin{theorem}\label{P:11.5}
Let $A$ be a  symmetric linear relation  with $n_{\pm}(A)=p<\infty$, let
$(\dC^p,\Gamma_0,\Gamma_1)$  be a boundary triple for
$A^*$, and let $M(\cdot)$ be the corresponding Weyl function. Then
\begin{enumerate}
    \item [\rm(i)] $M(\cdot)$ is a well-defined $\dC^{\ptp}$-valued function  holomorphic on $\rho(A_0)$.
    \item[\rm(ii)] For all ${{z}},{{\zeta}}\in\rho(A_0)$ the following identity holds:
  \begin{equation}\label{Eq:11.8M}
{\mathsf N}_{\zeta}^{M}({z})
=\frac{M({z})-M({\zeta})^*}{{z}-\ov{\zeta}}
=\gamma({\zeta})^*\gamma({z}).
  \end{equation}
\item[\rm(iii)] 
$M\in \cR^{\ptp}$ and  $\det\Im{M({i})}\ne0$.
  \end{enumerate}
\end{theorem}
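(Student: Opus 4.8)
The plan is to reduce all three assertions to Green's identity~\eqref{eq:1.9} evaluated on elements of the defect subspaces, which is the standard route. First I would settle the well-definedness claimed in (i): by Lemma~\ref{lem:11_A*}, for each ${z}\in\rho(A_0)$ the restriction $\Gamma_0\upharpoonright\wh\sN_{z}$ maps $\wh\sN_{z}$ bijectively onto $\dC^p$, so formula~\eqref{eq:11.M} determines $M({z})$ unambiguously, $M({z})=\Gamma_1\wh\gamma({z})\in\dC^{\ptp}$ with $\wh\gamma({z})=(\Gamma_0\upharpoonright\wh\sN_{z})^{-1}$ and $\wh\gamma({z})u=\text{\rm col}\{\gamma({z})u,{z}\gamma({z})u\}$. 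The holomorphy of $M$ I would postpone, as it falls out of the identity in (ii).

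Next I would prove the identity in (ii). Fix ${z},{\zeta}\in\rho(A_0)$ and $u,v\in\dC^p$, and apply~\eqref{eq:1.9} to $\wh f=\wh\gamma({z})u$ and $\wh g=\wh\gamma({\zeta})v$, both of which lie in $A^*$. Using $\Gamma_0\wh f=u$, $\Gamma_0\wh g=v$, $\Gamma_1\wh f=M({z})u$, $\Gamma_1\wh g=M({\zeta})v$, the right-hand side of~\eqref{eq:1.9} equals $v^*\bigl(M({z})-M({\zeta})^*\bigr)u$, while the left-hand side equals $({z}-\ov{\zeta})(\gamma({z})u,\gamma({\zeta})v)_\sH=({z}-\ov{\zeta})\,v^*\gamma({\zeta})^*\gamma({z})u$. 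As $u,v$ are arbitrary this gives
\[
M({z})-M({\zeta})^*=({z}-\ov{\zeta})\,\gamma({\zeta})^*\gamma({z}),\qquad {z}\ne\ov{\zeta},
\]
which is the off-diagonal part of~\eqref{Eq:11.8M}.

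With this identity in hand the rest is short. For ${z}_0\in\rho(A_0)$ I would choose ${\zeta}_0\in\rho(A_0)$ with $\ov{{\zeta}_0}\ne{z}_0$ (take ${\zeta}_0={z}_0$ if ${z}_0\notin\dR$, and ${\zeta}_0=i$ if ${z}_0\in\dR$, using that $\rho(A_0)$ is symmetric about $\dR$ since $A_0=A_0^*$); then $M({z})=M({\zeta}_0)^*+({z}-\ov{{\zeta}_0})\gamma({\zeta}_0)^*\gamma({z})$ on a neighbourhood of ${z}_0$, and since $\gamma(\cdot)$ is holomorphic on $\rho(A_0)$ the right-hand side is holomorphic, which finishes (i). Letting ${z}\to\ov{\zeta}$ in the displayed identity gives $M(\ov{\zeta})=M({\zeta})^*$, i.e.\ $M^\#=M$, the condition (b) of Definition~\ref{def:Nk-class}, and dividing by ${z}-\ov{\zeta}$ first gives $M'(\ov{\zeta})=\gamma({\zeta})^*\gamma(\ov{\zeta})$, the diagonal case of~\eqref{Eq:11.8M}; together this proves (ii). For (iii) I would observe that for any ${\zeta}_1,\dots,{\zeta}_n\in\rho(A_0)$ and $c_1,\dots,c_n\in\dC^p$,
\[
\sum_{i,j=1}^n c_j^*\,{\mathsf N}^M_{{\zeta}_j}({\zeta}_i)\,c_i=\Bigl\|\sum_{i=1}^n\gamma({\zeta}_i)c_i\Bigr\|_\sH^2\ge0
\]
by~\eqref{Eq:11.8M}, so the kernel ${\mathsf N}^M_{\zeta}({z})$ is nonnegative on $\dC_+\cup\dC_-\subseteq\rho(A_0)$; with $M^\#=M$ this gives $M\in\cR^{\ptp}$. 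Finally, setting ${z}={\zeta}=i$ in~\eqref{Eq:11.8M} yields $\Im M(i)=\gamma(i)^*\gamma(i)$, and since $\wh\gamma(i)\colon\dC^p\to\wh\sN_i$ and $\pi_1\colon\wh\sN_i\to\sN_i$ are both bijections, $\gamma(i)=\pi_1\wh\gamma(i)$ is injective, so $\Im M(i)>0$ and $\det\Im M(i)\ne0$; hence $M\in\cR_u^{\ptp}$.

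I do not expect a genuine obstacle here, since the computation is routine. The two places that call for a little care are the choice of the auxiliary point ${\zeta}_0$ in the holomorphy step, made so that the argument also covers real ${z}_0$, and, in the last step, the point that positive definiteness of $\Im M(i)$ hinges on injectivity of $\gamma(i)$, which in turn follows from the bijectivity of $\wh\gamma(i)$ and of $\pi_1\upharpoonright\wh\sN_i$ supplied by Lemma~\ref{lem:11_A*} together with the form of $\wh\sN_i$.
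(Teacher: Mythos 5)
Your proof is correct and follows essentially the same route as the paper, which (in its one-line proof) derives (i)--(ii) from the decomposition $A^*=A_0\dotplus\wh\sN_{z}$ of Lemma~\ref{lem:11_A*} together with Green's identity~\eqref{eq:1.9}, and obtains (iii) as a consequence of~\eqref{Eq:11.8M}; you have simply written out the details, including the holomorphy and the injectivity of $\gamma(i)$, which the paper leaves implicit.
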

\begin{proof}
  The proof of (i) and (ii) is based on the formula~\eqref{eq:11_A*wtA} and the identity~\eqref{eq:1.9}, (iii) follows from \eqref{Eq:11.8M}.
\end{proof}
It follows from~\eqref{Eq:11.8M} that the Weyl function $M(\cdot)$ is a $Q$-function of the linear relation $A$ in the sense of~\cite{KL71}. Recall that a symmetric linear operator $A$ is called simple if
\begin{equation}\label{eq:simple_A}
      \sH=\overline{\textup{span}}\left\{{\sN}_{{z}}:\,\,
    {z}\in\wh\rho(A)\right\}.
\end{equation}

\subsection{Generalized resolvents of a symmetric linear relation $A$}
\begin{definition}\label{def:genres}
Let $\wt A$ be a selfadjoint extension of the symmetric linear relation $A$ in a possibly larger
Hilbert space  $\wt \sH(\supseteq \sH)$. The extension $\wt A$ of
$A$ is called \textit{minimal}, if 
\begin{equation}\label{eq:MinGres}
    \wt{{\sH}}=\overline{\textup{span}}\left\{\sH+(\wt{A}-{\zeta} I_{\wt\sH})^{-1}\sH:\,\,
    {\zeta}\in\rho(\wt A)\right\}.
\end{equation}
Let $P_\sH$ be the orthogonal projection onto $\sH$ in $\wt\sH$.
The operator-valued function
\begin{equation}\label{eq:genres}
  {\zeta} \mapsto {\mathbf R}_{\zeta}:=P_\sH(\wt A-{\zeta} I_{\wt\sH})^{-1}|_\sH,\quad {\zeta}\in\rho(\wt A),
\end{equation}
is called the {\it generalized resolvent} of  $A$.
If \eqref{eq:MinGres} holds, then the representation~\eqref{eq:genres} of the generalized resolvent
${\mathbf R}_{\zeta}$
is called \textit{minimal}
and the selfadjoint relation $\wt A$ is called the
{\it minimal representing relation} of the generalized resolvent $ {\mathbf R}_{\zeta}$.
\end{definition}

  A selfadjoint extension $\wt A$ of  $A$ and its resolvent
  $(\wt{A}-{\zeta} I_{\mathfrak H})^{-1}$ acting in the same space $\sH$ are called canonical.

Every generalized resolvent $ {\mathbf R}_{\zeta}$  of $S$ admits a minimal representation \eqref{eq:genres}; it   is unique up to a unitary equivalence, see \cite{KL71}, \cite{LT77}.

In the following theorem we present a
parametrization of the set of  generalized resolvents of
the symmetric linear relation $A$ in terms of
the boun\-dary triple, Weyl function and $\gamma$-field
established originally in \cite{Kr46,KL71,LT77}.
       \begin{theorem}
       \label{krein}
 Let $A$ be a closed symmetric linear relation in a Hilbert space  with defect numbers $n_{\pm}(A)=p<\infty$,
let $\Pi=({\dC^p},\Gamma_0,\Gamma_1)$ be a boundary triple for
$A^*$,  let $M(\cdot)$ and $\gamma(\cdot)$ be the
corresponding Weyl function and the $\gamma$-field,  let $A_0=\ker\Gamma_0$, $R_{{z}}^0=(A_0-{{{z}}}I_{\sH})^{-1}$. Then
\begin{enumerate}
\item[\rm(i)]
The formula 
\begin{equation} \label{gres0}
{{\mathbf R}_{{{z}}}}=R_{{z}}^0-\gamma({{z}})(M({{z}})
     +\tau({{z}}))^{-1}\gamma(\ov{{{z}}})^*,\quad
     {{z}}\in\dC_+\cup\dC_-
\end{equation}
establishes a bijective correspondence:   ${\mathbf R}_{{z}} \longleftrightarrow \tau$   between the class
of all generalized resolvents   ${\mathbf R}_{{z}}$  of $A$ and  the  class
of all families $\tau\in \wt \cR^{\ptp}$.
\item[\rm(ii)]
${{\mathbf R}_{{z}}}=(A_{-\tau({{z}})}-{{z}}I_{\sH})^{-1}$, that is, for every $h\in\sH$, $f={\mathbf R}_{{z}}h$ is the solution of the boundary value problem with the  eigenvalue dependent boundary condition
\begin{equation}\label{eq:11.Shtr}
\text{\rm col} \{f, h\}\in A^*-{z} I_\sH,\quad
\text{\rm col} \{ \Gamma_0\wh f , \Gamma_1\wh f\}\in -\tau({{z}}),
\end{equation}
where $\wh f=\text{\rm col} \{ f , h +{z} f\}\in A^*$, and ${z}\in\dC_+\cup\dC_-$.
\end{enumerate}
      \end{theorem}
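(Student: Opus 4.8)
The statement to prove is Theorem~\ref{krein} (the Kre\u{\i}n--Langer--Textorius formula for generalized resolvents via boundary triples). Let me sketch a proof plan.

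\medskip

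The plan is to establish the bijection $\mathbf R_{z}\leftrightarrow\tau$ by working directly with the boundary value problem in part~(ii) and only afterwards extracting the explicit formula~\eqref{gres0}. First I would recall, via Lemma~\ref{lem:11_A*}, that every $\wh f\in A^*$ decomposes uniquely as $\wh f=\wh f_0+\wh f_z$ with $\wh f_0\in A_0$ and $\wh f_z\in\wh\sN_z$ (using $z\in\rho(A_0)$ whenever $z\in\dC_+\cup\dC_-$, since $M\in\cR_u^{\ptp}$ by Theorem~\ref{P:11.5} guarantees $\dC_+\cup\dC_-\subset\rho(A_0)$). Then I would rewrite the two boundary conditions in~\eqref{eq:11.Shtr}: the inclusion $\text{\rm col}\{f;h\}\in A^*-zI_\sH$ means $\wh f=\text{\rm col}\{f;h+zf\}\in A^*$, so $h=(A^*-zI)\wh f$ acts through the $A_0$-part only, i.e. $h=(f_0'-zf_0)$ where $\wh f_0=\text{\rm col}\{f_0;f_0'\}$; hence $f_0=R_z^0 h$ is determined by $h$, while the $\sN_z$-component $f_z$ is the free parameter. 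Applying $\Gamma_0,\Gamma_1$ and using $\Gamma_i\wh f_0=0$ together with the definitions~\eqref{eq:11.M}--\eqref{eq:11_gamma1} of $M$ and $\gamma$, the boundary condition $\text{\rm col}\{\Gamma_0\wh f;\Gamma_1\wh f\}\in-\tau(z)$ becomes $\text{\rm col}\{u;M(z)u\}\in-\tau(z)$ where $u=\Gamma_0\wh f_z$ and $f_z=\gamma(z)u$. Writing $\tau(z)=\ker[C(z)\ \ -D(z)]$ via Lemma~\ref{lem:Nkpairs_fam} (equivalently thinking of $\tau$ as a relation), membership in $-\tau(z)$ reads $C(z)u+D(z)M(z)u=0$; I would argue this forces $(M(z)+\tau(z))$, interpreted as a sum of relations, to be invertible precisely when $z\in\dC_+\cup\dC_-$, using the strict sign of the kernels $\mathsf N^M_\zeta(z)$ (Theorem~\ref{P:11.5}(iii)) and $\mathsf N^{CD}_\zeta(z)$ (Definition~\ref{def:Nk-pair}(i)) — this transversality/invertibility point is the main technical obstacle, and it is what makes $u$ uniquely solvable: $u=-(M(z)+\tau(z))^{-1}\gamma(\bar z)^*h$, using the adjoint relation $\gamma(\bar z)^*=$ the coefficient that extracts $u$ from $h$ via the Green identity~\eqref{eq:1.9}.

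\medskip

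To pin down $\gamma(\bar z)^*h$ as the correct right-hand side, I would use Green's identity~\eqref{eq:1.9} applied to $\wh f$ and $\wh\gamma(\bar\zeta)v=\text{\rm col}\{\gamma(\bar\zeta)v;\bar\zeta\gamma(\bar\zeta)v\}\in\wh\sN_{\bar\zeta}$ for arbitrary $v\in\dC^p$: the left side is $(h,\gamma(\bar z)v)_\sH$ (after the cancellation coming from $\wh f\in A^*-zI$), and the right side collapses to $v^*\Gamma_0\wh f=v^*u$ because $\Gamma_0\wh\gamma(\bar z)v=v$ and $\Gamma_1\wh\gamma(\bar z)v=M(\bar z)v$, with the $M$-terms cancelling against each other using $M(z)^*=M(\bar z)$. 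This yields $u=\gamma(\bar z)^*h$ before imposing the $\tau$-condition; combining with the boundary condition gives $u=-(M(z)+\tau(z))^{-1}\gamma(\bar z)^*h$, and then $f=f_0+f_z=R_z^0h+\gamma(z)u$, which is exactly~\eqref{gres0}. Conversely, reading the computation backwards shows that for any $\tau\in\wt\cR^{\ptp}$ the formula~\eqref{gres0} defines a generalized resolvent, i.e. there exists a minimal selfadjoint exit-space extension $\wt A$ representing it; here I would invoke the standard dilation/coupling construction (or cite \cite{KL71,LT77} as the excerpt permits, since Theorem~\ref{krein} is attributed to those sources).

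\medskip

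Finally, for \emph{bijectivity} I would argue injectivity from the fact that $\gamma(z)$ is injective (Lemma~\ref{lem:11_A*} makes $\Gamma_0\uphar\wh\sN_z$ a bijection) together with condition~(iii) of Definition~\ref{def:Nk-family}, $\ker\varphi(z)\cap\ker\psi(z)=\{0\}$, which guarantees that two families $\tau_1,\tau_2$ producing the same resolvent must coincide as relation-valued functions; and surjectivity onto all generalized resolvents from Theorem~\ref{krein}(ii) combined with the one-to-one correspondence between boundary-condition parameters and $\wt\cR^{\ptp}$-families encoded in Lemma~\ref{lem:Nkpairs_fam}. The delicate points throughout are (a) the invertibility of the relation-sum $M(z)+\tau(z)$ for nonreal $z$, which I expect to be the hardest step and which rests squarely on the strict positivity of the Weyl kernel in $\dC_+$ from Theorem~\ref{P:11.5}(iii), and (b) bookkeeping the passage between $\tau$ as a Nevanlinna family and as a pair $[C\ D]$ so that $(M(z)+\tau(z))^{-1}$ is unambiguous; both are manageable with the tools already assembled in Sections~\ref{sec:2.2} and~\ref{subsec:pre:triples}.
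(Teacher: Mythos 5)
Your overall strategy (decompose $A^*=A_0\dotplus\wh\sN_{z}$ via Lemma~\ref{lem:11_A*}, solve the boundary value problem \eqref{eq:11.Shtr}, read off \eqref{gres0}) is the standard boundary-triple derivation; note that the paper itself offers no proof at all, handling (i) by citing \cite{KL71,LT77} and (ii) by citing \cite{DM95,DM25}, so the only question is whether your sketch is internally sound. It is not, on two concrete points. First, from $\wh f=\wh f_0+\wh f_{z}$ with $\wh f_0\in A_0=\ker\Gamma_0$ you may only conclude $\Gamma_0\wh f_0=0$; the claim ``$\Gamma_i\wh f_0=0$'' is false for $i=1$, since in fact $\Gamma_1\wh f_0=\gamma(\ov{z})^*h$. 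Consequently the boundary condition is not $\text{\rm col}\{u;M({z})u\}\in-\tau({z})$ but $\text{\rm col}\{u;\gamma(\ov{z})^*h+M({z})u\}\in-\tau({z})$. Second, your Green's-identity step is miscomputed: applying \eqref{eq:1.9} to $\wh f$ and $\wh\gamma(\ov{z})v$ gives left side $(h,\gamma(\ov{z})v)_\sH$ and right side $v^*\bigl(\Gamma_1\wh f-M({z})\Gamma_0\wh f\bigr)$ (there is a single $M$-term, with nothing to cancel against), i.e. $\gamma(\ov{z})^*h=\Gamma_1\wh f-M({z})u$; your asserted conclusion ``$u=\gamma(\ov{z})^*h$'' is simply wrong, since $u=\Gamma_0\wh f$ is the free parameter. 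As written, your two intermediate claims are mutually inconsistent and do not yield $u=-(M({z})+\tau({z}))^{-1}\gamma(\ov{z})^*h$; you jump to the known answer.

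The repair is exactly the corrected computation: applying \eqref{eq:1.9} to the $A_0$-component $\wh f_0=\text{\rm col}\{R_{z}^0h;h+{z}R_{z}^0h\}$ and $\wh\gamma(\ov{z})v$ gives $\Gamma_1\wh f_0=\gamma(\ov{z})^*h$, hence $\Gamma_1\wh f=\gamma(\ov{z})^*h+M({z})u$, and the condition $\text{\rm col}\{\Gamma_0\wh f;\Gamma_1\wh f\}\in-\tau({z})$ becomes $-\gamma(\ov{z})^*h\in(M({z})+\tau({z}))u$, so that $f=R_{z}^0h+\gamma({z})u$ reproduces \eqref{gres0} once $(M({z})+\tau({z}))^{-1}$ is known to be a (single-valued, everywhere defined) operator for nonreal ${z}$. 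That invertibility, which you yourself identify as the main technical obstacle, and the surjectivity half of the bijection (every $\tau\in\wt\cR^{\ptp}$ is realized by an exit-space selfadjoint extension, e.g. by a coupling construction) are only gestured at; leaning on \cite{KL71,LT77} for them is defensible given that the paper does the same, but then your proposal proves only the easy direction modulo the two corrections above.
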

      \begin{proof}
        Statement (i) was proved in  \cite{KL71},  \cite{LT77}, for (ii) see  \cite{DM95,DM25}.
      \end{proof}

Denote by $A_\tau$ the minimal representing selfadjoint relation of the generalized resolvent $\bR_{z}$ corresponding to $\tau\in\wt\cR^{\ptp}$ via \eqref{gres0}.
\begin{definition}\label{def:Adm}
The family $\tau\in \wt \cR^{\ptp}$ associated
to $A_\tau$ via \eqref{gres0} is said to be $\Pi$-{\it admissible},
if $\mul A_\tau=\mul A$. 
\end{definition}
Let us recall some criteria of $\Pi$-admissibility of the family $\tau({z})$ from~\cite{DHMS2000}.
\begin{proposition}\label{prop:Adm_Nev}
Let $\Pi = (\dC^p, \Gamma_0, \Gamma_1)$ be a boundary triple for $A^*$
and let $A_0:=\ker \Gamma_0$ be a selfadjoint extension of $A$
and let $\tau \in\wt\cR^{p\times p}$.
Then
\begin{enumerate}
\def\labelenumi{\rm (\roman{enumi})}
\item
The family $\tau$ is $\Pi$-admissible
if and only if 
\begin{equation}
\label{Adm1}
  (M(iy)+\tau(iy))^{-1}=o(y),\quad y \uparrow \infty,
\end{equation}
and
\begin{equation}
\label{Adm2}
(M(iy)^{-1} +\tau(iy)^{-1})^{-1}=o(y),\quad y \uparrow \infty.
\end{equation}
\item
If, in addition, $A_0=A\dotplus \text{\rm col} \{0,\mul A^*\}$,
then the following equivalences hold
\begin{equation}
\label{tlimit00}
\mul A_\tau=\mul A \Longleftrightarrow \tau \in\cR^{p\times p}\ \text{ and }\ \tau(iy)=o(y),\quad y \uparrow \infty,
\end{equation}
\begin{equation}\label{tlimit01}
\mul A_\tau\subset \sH \Longleftrightarrow\tau_{\rm op}(iy)=o(y),\quad y \uparrow \infty.
\end{equation}
\end{enumerate}
\end{proposition}
\begin{proof}
  Statement (i) and the equivalence \eqref{tlimit00} were proved in~\cite{DHMS2000}
  and~\cite[Corollary 3.8]{DM95}, respectively.

To prove  \eqref{tlimit01} let us assume that
$A_0=A\dotplus \text{\rm col} \{0,\mul A^*\}$
and $\tau(z)$ admits the decomposition
\eqref{eq:tau_decomp}:
  $\tau(z)=\tau_{\rm op}(z)\oplus \text{col }\{0,\cH_2\},
$ where $\cH_2=\mul\tau(z)$, $z\in\dC\setminus\dR$,
$\cH_1=\dC^p\ominus\cH_2$, $\tau_{\rm op}\in\cR(\cH_1)$ and
$\tau_{\rm op}(iy)=o(y)$, as $ y \uparrow \infty$.

Consider a symmetric extension $A':=\Gamma^{-1} \text{col }\{0,\cH_2\}\subset A_0$.
Then $(A')^*=\Gamma^{-1} \text{col }\{\cH_1,\cH\}\subset A^*$
and the set $\Pi'=(\cH_1,\Gamma_0',\Gamma_1')$,                                                                                                                                         where
\[
\Gamma_0'\wh f=\Gamma_0\wh f,\quad
\Gamma_1'\wh f=P_{\cH_1}\Gamma_1\wh f,
\]
is a boundary triple for $(A')^*$ such that
$A_0':=\ker\Gamma_0'=A_0\supset A'$
and
$A_0'=A'\dotplus \text{\rm col} \{0,\mul A^*\ominus\mul A'\}$
see~\cite[Proposition 12.20]{DM25}.
If $A'_{\tau_{\rm op}}$ is the minimal representing selfadjoint relation of the generalized resolvent $\bR_{z}$ of $A'$ corresponding to $\tau_{\rm op}\in\cR(\cH_1)$
and the boundary triple $\Pi'$ via \eqref{gres0},
then for all $z\in\dC\setminus\dR$
\[
A'_{\tau_{\rm op}(z)}=(\Gamma')^{-1}\tau_{\rm op}(z)
=(\Gamma)^{-1}\tau(z)=A_{\tau(z)},
\]
and hence  $\bR_{z}$ is also a  generalized resolvent of $A$
with the minimal representing relation
$A_\tau=A'_{\tau_{\rm op}}$.
By \eqref{tlimit00}, $\mul A_\tau=\mul A'_{\tau_{\rm op}}=\mul A'\subset\sH$.

The converse statement is proved similarly.
\end{proof}
      \begin{corollary}\label{cor:krein}
In the assumptions of Theorem~{\rm\ref{krein}}
the formula
\begin{equation} \label{gres0CD}
{{\mathbf R}_{{{z}}}}=R_{{z}}^0-
\gamma({{z}})(C({{z}})+D({{z}})M({{z}}))^{-1}D({{z}})\gamma(\ov{{{z}}})^*,
\quad
     {{z}}\in\rho(A_0)\cap\rho(\wt A),
\end{equation}
establishes a bijective correspondence  between the class
of all generalized resolvents   ${\mathbf R}_{{z}}$  of $A$ and  the  class
of all $\cR^{\ptp}$-pairs $\begin{bmatrix} C({z}) & D({z})\end{bmatrix}$.
In particular, the formula
\eqref{gres0CD} gives a description of all canonical resolvents of $A$ when
$C$, $D$ range over the set of $\ptp$ matrices such that
\eqref{eq:rank_and sym}  holds.

For every $h\in\sH$, the vector $f={\mathbf R}_{{z}}h$ is the solution of the boundary value problem
\begin{equation}\label{eq:11.ShtrCD}
\text{\rm col} \{
  f , h
\}\in A^*-{z} I_\sH,\quad
C({z}) \Gamma_0\wh f+D({z}) \Gamma_1\wh f=0,
\end{equation}
where $\wh f=\text{\rm col} \{f, h +{z} f\}\in A^*$, and ${{z}}\in\rho(A_0)\cap\rho(\wt A)$.
      \end{corollary}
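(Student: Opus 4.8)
The plan is to reduce everything to Theorem~\ref{krein} via the bijection between $\cR^{\ptp}$-families and $\cR^{\ptp}$-pairs established in Lemma~\ref{lem:Nkpairs_fam}. First I would recall that a generalized resolvent $\bR_{z}$ corresponds under \eqref{gres0} to a uniquely determined $\tau\in\wt\cR^{\ptp}$, and write $\tau({z})=\ran\,\text{\rm col}\{\varphi({z}),\psi({z})\}$. By Lemma~\ref{lem:Nkpairs_fam} the pair $\begin{bmatrix}C({z})&D({z})\end{bmatrix}$ with $C=\psi^\#$, $D=\varphi^\#$ is an $\cR^{\ptp}$-pair, and conversely every $\cR^{\ptp}$-pair arises this way from a unique $\cR^{\ptp}$-family; moreover two pairs give the same family iff they differ by a left holomorphic invertible factor. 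So it suffices to show that formula \eqref{gres0} rewritten in terms of $\tau=\ran\,\text{\rm col}\{\varphi,\psi\}$ becomes \eqref{gres0CD} after the substitution \eqref{eq:Nkpairs_fam}; bijectivity of \eqref{gres0CD} then follows from bijectivity of \eqref{gres0} together with the bijection of Lemma~\ref{lem:Nkpairs_fam}.

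The computational core is the following. On $\dC_+\cup\dC_-$ the relation $M({z})+\tau({z})$ (a sum of a linear relation and a bounded operator) is the graph of nothing in general, but its inverse $(M({z})+\tau({z}))^{-1}$ appearing in \eqref{gres0} should be read, following \cite{KL71,LT77} and Theorem~\ref{krein}(ii), as the operator sending $h$ to the unique $u$ with $\text{\rm col}\{u,-M({z})u\}\in\tau({z})$ shifted appropriately; concretely one has the identity
\begin{equation}\label{eq:CDtau_identity}
 (M({z})+\tau({z}))^{-1}=\varphi({z})(\psi({z})+M({z})\varphi({z}))^{-1}
 =(C({z})+D({z})M({z}))^{-1}D({z}),
\end{equation}
valid whenever $z\in\rho(A_0)\cap\rho(\wt A)$, using $C^\#=\psi$, $D^\#=\varphi$ and the fact that for $z$ in the domain of holomorphy the superscript $\#$ may be dropped after conjugating parameters, or more simply by working with the pair $\begin{bmatrix}C&D\end{bmatrix}$ directly on $\dC_+\cup\dC_-$ (Lemma~\ref{lem:Nkpairs_fam} applies verbatim with $C,D$ holomorphic there). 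Substituting the right-hand side of \eqref{eq:CDtau_identity} into \eqref{gres0} gives \eqref{gres0CD}. The invertibility of $C({z})+D({z})M({z})$ on $\rho(A_0)\cap\rho(\wt A)$ is exactly the condition that makes $\bR_{z}$ well-defined there, and it follows from Theorem~\ref{krein}(ii): $z\in\rho(\wt A)$ means the boundary value problem \eqref{eq:11.Shtr} is uniquely solvable, which translates into invertibility of the relevant matrix pencil.

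For the boundary value problem characterization \eqref{eq:11.ShtrCD}, I would simply rewrite the eigenvalue-dependent boundary condition in \eqref{eq:11.Shtr}, namely $\text{\rm col}\{\Gamma_0\wh f,\Gamma_1\wh f\}\in-\tau({z})$, using the kernel representation \eqref{LR_50} of $\tau({z})$: membership in $\tau({z})=\ker\begin{bmatrix}C({z})&-D({z})\end{bmatrix}$ means precisely $C({z})\Gamma_0\wh f-D({z})\Gamma_1\wh f=0$, and the sign on $\tau$ is absorbed since the condition is homogeneous (replacing $\tau$ by $-\tau$ replaces the pair $\begin{bmatrix}C&D\end{bmatrix}$ by $\begin{bmatrix}C&-D\end{bmatrix}$, or one checks directly that the kernel is unchanged under an overall sign). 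Finally, the statement about canonical resolvents: when $C,D$ are constant matrices satisfying \eqref{eq:rank_and sym}, Lemma~\ref{lem:SALR} identifies $\ker\begin{bmatrix}C&-D\end{bmatrix}$ with an arbitrary selfadjoint linear relation $\theta$ in $\dC^p$, and \eqref{gres0CD} reduces to the classical Kre\u{\i}n formula $\bR_{z}=R_{z}^0-\gamma({z})(C+DM({z}))^{-1}D\gamma(\bar z)^*$ for the canonical selfadjoint extension $A_\theta$; this is the constant-parameter specialization and requires no new argument.

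The main obstacle I anticipate is making the manipulation \eqref{eq:CDtau_identity} rigorous: the object $(M({z})+\tau({z}))^{-1}$ in \eqref{gres0} is a priori an inverse of a sum involving a \emph{relation} $\tau({z})$, not an operator, so one must be careful that the algebraic identity holds as stated and that no spurious multivalued part is lost when passing to the pair form. The clean way is not to manipulate \eqref{gres0} at all but to re-derive \eqref{gres0CD} from scratch out of the boundary value problem \eqref{eq:11.ShtrCD} exactly as \eqref{gres0} is derived from \eqref{eq:11.Shtr} in \cite{DM95,DM25}: write $f=R_{z}^0 h+\gamma({z})c$ with $c\in\dC^p$ to be determined, compute $\Gamma_0\wh f$ and $\Gamma_1\wh f$ using $\Gamma_0\wh\gamma({z})=I$, $\Gamma_1\wh\gamma({z})=M({z})$ and $\Gamma_0(\wh{R_{z}^0 h})=0$, plug into $C({z})\Gamma_0\wh f+D({z})\Gamma_1\wh f=0$, solve for $c$, and read off \eqref{gres0CD}. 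This avoids all relation-theoretic subtleties and simultaneously proves \eqref{eq:11.ShtrCD}.
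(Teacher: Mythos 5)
Your proposal is correct and follows essentially the route the paper intends: Corollary~\ref{cor:krein} is stated there without a separate proof as an immediate consequence of Theorem~\ref{krein} combined with Lemma~\ref{lem:Nkpairs_fam}, and your key identity $(M({z})+\tau({z}))^{-1}=(C({z})+D({z})M({z}))^{-1}D({z})$ is exactly the one the paper itself invokes later in the proof of Theorem~\ref{thm:Gres_CD}. Your rewriting of the boundary condition via the kernel representation \eqref{LR_50} (with the sign on $\tau$ handled correctly) and the use of Lemma~\ref{lem:SALR} for the canonical case likewise coincide with the intended argument.
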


\section{Rigged Hilbert spaces and generalized resolvents}\label{sec:3}
In this section we present the construction of the rigging $\sH_+\subset\sH\subset\sH_-$ of a Hilbert space associated with a symmetric linear relation $A$ and look at the properties of
extended generalized resolvents of $A$ and extended boundary triples.
\subsection{Rigged Hilbert spaces. }\label{sec:3.1}
\begin{lemma}\label{lem:RiggedPontrSp}
Let $A$ is a closed symmetric linear relation with equal
defect  numbers $n_{\pm}(A)=p<\infty$ in a Hilbert space
$\sH$ and
let $P_{\sH_0}$ be the orthogonal projection  onto $\sH_{0}:={\cdom A}$
in the Hilbert space $\sH$.
Then there exists a pair $\sH_+$ and $\sH_-$ of Hilbert spaces with the duality $\langle\mathfrak{f},  h\rangle_{-,+}$ for $\mathfrak{f}\in\sH_-$ and $h\in\sH_+$ such that
\begin{enumerate}
  \item [(i)] $\sH_+=\dom A^*\oplus \mul A(\subset \sH)$;
  \item [(ii)] $\sH\subset \sH_-$ and for all $f\in \sH$ and $h\in\sH_+$ we have
  $\langle{f},  h\rangle_{-,+}=(f,h)_\sH$.
\end{enumerate}
Moreover, the $\sH_+$-norm in $ \dom A^*$ can be defined by
\begin{equation}\label{eq:A+Norm}
  \| h\|^2_{\sH_+}=\|h\|_\sH^2+\|P_{\sH_0}h'\|_\sH^2\quad
\mbox{for $
\text{\rm col} \{
  h , h'
\}\in A^*$}.
\end{equation}
\end{lemma}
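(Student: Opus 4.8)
The plan is to build the rigging $\sH_+\subset\sH\subset\sH_-$ by the standard construction of a scale of Hilbert spaces attached to a densely defined, positive, boundedly invertible operator, with the twist that one must first split off the multivalued part of $A$. First I would decompose $\sH=\sH_0\oplus(\sH\ominus\sH_0)$ where $\sH_0=\cdom A$. On the operator part, $A$ acts as a densely defined closed symmetric operator $A_{\mathrm{op}}$ in $\sH_0$, and $\dom A^*$ decomposes compatibly; note that $\mul A\subseteq\sH\ominus\sH_0$ (since $\mul A=(\cdom A)^\perp$ inside $\sH$ by closedness of $A$), so that $\dom A^*\oplus\mul A$ is a genuine (algebraic, in fact orthogonal) direct sum of subspaces of $\sH$. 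This identifies $\sH_+$ as a set; the content of the lemma is to equip it with a Hilbert-space norm making the triple a rigging.

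Next I would verify that the formula~\eqref{eq:A+Norm} defines a norm on $\dom A^*$. The point is that $\begin{bmatrix}h\\h'\end{bmatrix}\in A^*$ does not determine $h'$ uniquely — the ambiguity is exactly $\mul A^*=(\dom A)^\perp$ — but $P_{\sH_0}h'$ \emph{is} uniquely determined by $h$, since two choices of $h'$ differ by an element of $\mul A^*\subseteq\sH\ominus\sH_0$ which $P_{\sH_0}$ kills. Hence $\|\cdot\|_{\sH_+}$ is well defined on $\dom A^*$; it is clearly a norm (if $\|h\|_{\sH_+}=0$ then $h=0$). To get a norm on all of $\sH_+=\dom A^*\oplus\mul A$ I would take the orthogonal-sum norm, $\|h_1\oplus h_2\|_{\sH_+}^2=\|h_1\|_{\sH_+}^2+\|h_2\|_\sH^2$; this is consistent with~\eqref{eq:A+Norm} on the $\dom A^*$ summand. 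Completeness of $(\sH_+,\|\cdot\|_{\sH_+})$ follows because the graph norm on $A^*$ is complete ($A^*$ is closed) and $P_{\sH_0}$ is continuous, so a $\|\cdot\|_{\sH_+}$-Cauchy sequence in $\dom A^*$ lifts to a graph-Cauchy sequence in $A^*$ (choosing $h'$ in $\sH_0$, i.e.\ in $\ran A\cap\dots$), hence converges; on $\mul A$ the norm is just the $\sH$-norm on a closed subspace. The embedding $\sH_+\hookrightarrow\sH$ is contractive (indeed $\|h\|_\sH\le\|h\|_{\sH_+}$ by~\eqref{eq:A+Norm}) and dense, since $\dom A^*\supseteq\dom A$ is dense in $\sH_0$ and $\mul A$ is all of $\sH\ominus\sH_0$.

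Then $\sH_-$ is defined as the completion of $\sH$ with respect to the dual norm $\|f\|_{\sH_-}=\sup\{|(f,h)_\sH|:h\in\sH_+,\ \|h\|_{\sH_+}\le1\}$, equivalently $\sH_-=(\sH_+)^*$ realized concretely via the Riesz identification on $\sH$. The duality pairing $\langle\mathfrak f,h\rangle_{-,+}$ is the unique continuous extension of $(f,h)_\sH$ ($f\in\sH$, $h\in\sH_+$) to $\mathfrak f\in\sH_-$, $h\in\sH_+$; this gives (ii) and the compatibility $\langle f,h\rangle_{-,+}=(f,h)_\sH$ for $f\in\sH$ by construction. The chain of continuous dense embeddings $\sH_+\subset\sH\subset\sH_-$ is then immediate. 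The main obstacle — and the only genuinely non-formal point — is the well-definedness and completeness of the $\sH_+$-norm in the presence of $\mul A^*\ne\{0\}$: one must carefully track that $h'$ enters~\eqref{eq:A+Norm} only through $P_{\sH_0}h'$ and that, restricted to $\dom A^*$ with $h'$ chosen in $\sH_0$, the map $h\mapsto(h,P_{\sH_0}h')$ is a closed injective operator, so its graph norm is complete. Everything else is the routine machinery of rigged Hilbert spaces.
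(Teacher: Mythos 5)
Your overall construction is essentially the paper's: split off $\mul A$, equip $\dom A^*$ with the norm \eqref{eq:A+Norm} (the paper realizes it as the graph norm of $A_0^*$, the adjoint of the operator part regarded as a closed map from $\sH_0$ into $\sH_{\rm op}:=\sH\ominus\mul A$), take the orthogonal sum with $\mul A$, and obtain $\sH_-$ by duality/completion. Your two key observations — that $P_{\sH_0}h'$ depends only on $h$ because the ambiguity in $h'$ is exactly $\mul A^*=(\cdom A)^\perp=\sH\ominus\sH_0$, and that completeness follows because $A^*\cap(\sH\times\sH_0)$ is closed — are precisely the content of the paper's Step 1, just phrased without introducing $A_0^*$ explicitly.

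However, some auxiliary claims you rely on are false as stated and must be repaired. Closedness of $A$ gives $\mul A=(\overline{\dom A^*})^{\perp}$, not $\mul A=(\cdom A)^\perp$; the latter space is $\mul A^*$, and for a symmetric relation $\mul A$ may be strictly smaller (any closed non-densely defined symmetric \emph{operator} has $\mul A=\{0\}\ne(\cdom A)^\perp$). In particular your assertion that ``$\mul A$ is all of $\sH\ominus\sH_0$'' is wrong in general, and it is exactly what you invoke to get density of $\sH_+$ in $\sH$ — a step you genuinely need, since without it the map $\sH\to\sH_-$ is not injective and (ii) fails. Density does hold, but for a different reason: $\overline{\dom A^*}=\sH\ominus\mul A$, hence $\overline{\sH_+}\supseteq(\sH\ominus\mul A)\oplus\mul A=\sH$. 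Similarly, the orthogonality of $\dom A^*$ and $\mul A$ does not follow from $\mul A\subseteq\sH\ominus\sH_0$, because $\dom A^*$ is \emph{not} contained in $\sH_0$ (it is only dense in the possibly larger space $\sH\ominus\mul A$); the correct reason is again $\mul A=(\overline{\dom A^*})^{\perp}$. A related slip: the operator part of $A$ is a (generally non-densely defined) symmetric operator in $\sH_{\rm op}$, not ``a densely defined symmetric operator in $\sH_0$''. With these identifications corrected, your argument closes the gap and coincides in substance with the paper's proof.
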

\begin{proof}
If  $\mul A\ne\{0\}$, then the  linear relation  $A$ admits the representation
\begin{equation}\label{LR_22}
A=\gr(A_{\rm op})\widehat\oplus\,A_{\rm mul},
\end{equation}
where $A_{\rm op}$ is  a single-valued symmetric  operator in the Hilbert space $\sH_{\rm op}:=\sH\ominus\mul A$,
while $A_{\rm mul}:=\begin{bmatrix}0\\ \mul A\end{bmatrix}$ is a purely multi-valued linear relation in the subspace $\sH_{1}:=\mul A$,  called {\it the multivalued part} of~$A$.

{\bf Step 1: Construction of a rigging for the operator  $A_{\rm op}$}, see~\cite{Sh71b}, \cite{ArlTs74}.
Let us consider $A_{\rm op}$ as an operator from the Hilbert space $\sH_{0}:={\cdom A}$ to the Hilbert space  $\sH_{\rm op}$ and denote it by $A_0:\sH_0\to \sH_{\rm op}$ and let
$A_0^*\in\cC(\sH_{\rm op},\sH_{0})$ be the adjoint to the operator  $A_0\in\cC(\sH_{0},\sH_{\rm op})$. Then $A_0^*$ is a single-valued operator.
Denote by $\sH_{0,+}$ the linear space $\sH_{0,+}:=\dom A_0^*$ endowed with the inner product
 \begin{equation}\label{E:3.1a}
   ( h,g)_{0,+}:=(h,g)_{\sH}+(A_0^* h,A_0^*g)_\sH,\quad
   h,g\in\sH_{0,+}=\dom A_0^*.
\end{equation}
Since the operator $A_0^*$ is closed, $\sH_{0,+}$ is a Hilbert space
such that $\sH_{0,+}\subset\sH_{\rm op}$. If $\sH_{0}\subsetneq\sH_{\rm op}$, then $A_{\rm op}^*$
is a linear relation in $\sH_{\rm op}$:
 \begin{equation}\label{eq:S_op*}
A_{\rm op}^*=\mbox{gr }A_0^*\ \widehat\oplus\,\begin{bmatrix}
                                     0 \\
                                     \sH_2
                                   \end{bmatrix},\quad\text{where}\quad
\sH_2=\sH_{\rm op}\ominus \sH_0.
\end{equation}

By \cite[Section~1.1.1]{Ber65},
there exists a dual Hilbert space $\sH_{0,-}$ of  bounded conjugate linear functionals on $\sH_{0,+}$ with the duality
\begin{equation}\label{eq:B-+DualityM}
  \langle\mathfrak{f},  h\rangle^{(0)}_{-,+}:=\mathfrak{f}(h)\quad \textrm{for \ $\mathfrak{f}\in\sH_{0,-}$ \ and  \quad $h\in\sH_{0,+}$},
\end{equation}
between $\sH_{0,-}$ and $\sH_{0,+}$.
The embedding ${\imath}:\sH_{\rm op}\hookrightarrow \sH_{0,-}$ is realized by the identification of
any vector $f\in\sH_{\rm op}$ with the functional
\begin{equation}\label{eq:f(h)}
{\imath}f:h\in\sH_{0,+}\mapsto ({\imath}f)(h):=(f,h)_\sH,
\end{equation}
and the space $\sH_{0,-}$ can  be realized as the completion of $\sH_{\rm op}$ with respect to the norm
\[
\|{\imath}f\|_{\sH_{-}}=
\sup_{h\in\sH_{0,+}\setminus\{0\}}\frac{|(f,h)_\sH|}{{\quad}\|h\|_{\sH_{0,+}}}.
\]
For $f\in\sH$ we will identify ${\imath}f$ with $f$. This gives an inclusion $\sH_{\rm op}\subset\sH_{0,-}$.
In view of \eqref{eq:f(h)} and~\eqref{eq:B-+DualityM}
  the expression $\langle\mathfrak{f},  h\rangle_{-,+}$ can be viewed as an extension  of the inner product in the Hilbert space $\sH_{\rm op}$. The triple
\[
\sH_{0,+}\subset\sH_{\rm op}\subset\sH_{0,-}
\]
is called a   {\it rigged Hilbert space} $\sH_{\rm op}$, cf.,  \cite{Ber65}.

{\bf Step 2: Construction of a rigging for the  linear relation  $A$ (\cite{LaTe82}).}
Setting
\begin{equation}\label{eq:H0pm_decom}
  \sH_+=\sH_{0,+}\oplus\sH_1,\quad \sH_-=\sH_{0,-}\oplus\sH_1,\quad
  \text{where $\sH_1=\mul A$},
\end{equation}
 we obtain a rigged Hilbert space $\sH$:
\begin{equation}\label{eq:H_Rigging}
  \sH_+\subset\sH\subset\sH_-
\end{equation}
with the duality  between $\sH_-$ and $\sH_+$ given by
\begin{equation}\label{eq:H-+S_Duality}
\langle\mathfrak{f},  h\rangle_{-,+}:=\langle\mathfrak{f}_0,  h_0\rangle^{(0)}_{-,+}+
(f_1,h_1)_\sH
\end{equation}
for
\[
\mathfrak{f}=\mathfrak{f}_0+f_1,\quad
{h}=h_0+h_1,\quad
 \ff_0\in\sH_{0,-},  \quad h_0\in\sH_{0,+},\quad f_1,h_1\in\sH_1.
 \]
 Then it follows from \eqref{eq:B-+DualityM} and \eqref{eq:f(h)} that
 \begin{equation}\label{eq:S_duality}
\langle\mathfrak{f},  h\rangle_{-,+}=(f,h)_\sH\quad
 \text{ for all \ $f\in \sH$\, \ and \ $h\in\sH_+$}.
\end{equation}

Since $A^*=A_{\rm op}^*\widehat\oplus\,\begin{bmatrix}
                                     0 \\
                                     \mul A
                                   \end{bmatrix}$,
we obtain, by \eqref{eq:S_op*}, that every $\wh h\in A^*$ admits the representation
\[
\wh h=\begin{bmatrix}
                        h_0\\
                        h'
\end{bmatrix}=\begin{bmatrix}
                        h_0\\
                        A_0^* h_0+h_1'+h_2'
\end{bmatrix},\quad\text{where}\quad
h_0\in\sH_{0,+},\,\,h_1'\in\sH_1,\,\, h_2'\in\sH_2,
\]
and hence, by \eqref{eq:H0pm_decom} and \eqref{E:3.1a},
\begin{equation}\label{eq:S+Norm}
\|h_0\|^2_{\sH_+}=\|h_0\|^2_{\sH_{0,+}}=\|h_0\|^2_{\sH}+\|A_0^*h_0\|_{\sH_+}^2=
\|h_0\|^2_{\sH}+\|P_{\sH_0}h'\|_\sH^2.
\end{equation}
This proves \eqref{eq:A+Norm} for $A^*$.
\end{proof}

The triple $\sH_+\subset\sH\subset\sH_-$ constructed in Lemma~\ref{lem:RiggedPontrSp} is called a   {\it rigged Hilbert space}.

For a linear operator $T\in\cB(\sH_-,\dC^p)$ we denote by $T^{\langle*\rangle}\in\cB(\dC^p,\sH_+)$  its adjoint with respect to the duality $\langle \cdot, \cdot\rangle_{-,+}$, i.e.,
\begin{equation}\label{eq:<*>}
  \langle \ff, T^{\langle*\rangle}\xi\rangle_{-,+}
  =\xi^*(T\ff),\quad
\xi\in\dC^p,\quad \ff\in\sH_-.
\end{equation}
For  $T\in\cB(\sH,\dC^p)$ we set $T^{\langle*\rangle}:=T^{*}\in\cB(\dC^p,\sH)$.
For the operator   $\gamma(\ov{{z}})\in\cB(\dC^p,\sH_+)$
its adjoint $\gamma(\ov{{z}})^{\langle*\rangle}\in\cB(\sH_-,\dC^p)$ is defined by
\begin{equation}\label{eq:gamma<*>}
\xi^*(\gamma(\ov{{z}})^{\langle*\rangle}\ff):=\langle \ff, \gamma(\ov{{z}}) \xi \rangle_{-,+},
\quad
\ff\in\sH_-,\quad  \xi\in\dC^p.
\end{equation}

In what follows we also use the notation
\begin{equation}\label{eq+-duality}
  \langle h, \ff  \rangle_{+,-}:=\langle \ff, h \rangle_{-,+}^*,\quad
  \text{for}\quad h\in\sH_+,\quad \ff\in\sH_-,
\end{equation}
and the adjoint to  $L\in\cB(\dC^p,\sH_-)$ with respect to the duality $\langle \cdot, \cdot\rangle_{+,-}$ is defined as the operator $L^{\langle*\rangle}\in\cB(\sH_{+},\dC^p)$ such that
\begin{equation}\label{eq:<*>2}
\xi^*(L^{\langle*\rangle} h):=\langle h, L\xi \rangle_{+,-},
\quad  h\in\sH_+,\quad\xi\in\dC^p.
\end{equation}
\subsection{Extended generalized resolvents}
In the next two lemmas we collect some statements from \cite{Sh71b}, \cite{ArlTs74} and \cite{ShTs77}
and present their short proofs for the convenience of the reader.
\begin{lemma}\label{lem:3.2B}
Let $A$ be a  closed symmetric linear relation in a Hilbert space $\sH$ with equal defect numbers,
and let ${\mathbf R}_{z}$ be a generalized resolvent of  $A$ with a minimal representing relation $\wt A$, ${z}\in\rho(\wt A)$. Then
\begin{enumerate}
  \item [(i)] $\ran {\mathbf R}_{z}\subset \dom A^*\subseteq\sH_+$ and
  $\begin{bmatrix}
     {\mathbf R}_{z} f \\
     (I_\sH+{z}{\mathbf R}_{z})f
   \end{bmatrix}\in A^*$ for all $f\in\sH$.
  \item [(ii)] ${\mathbf R}_{z}\in\cB(\sH,\sH_+)$.
  \item [(iii)] The operator
  $\wt{\mathbf R}_{z}:={\mathbf R}_{\overline{z}}^{\langle*\rangle}\in\cB(\sH_-,\sH)$ is a continuation of
  ${\mathbf R}_{z}$ and is called the extended generalized resolvent.
   \item [(iv)] For every selfadjoint extension $A_0$ of $A$ and $R_z^0:=(A_0-zI)^{-1}$ the extended resolvent
       $\wt R_{z}^0:=(R^0_{\overline{z}})^{\langle*\rangle}$ satisfies the identity
   \begin{equation}\label{eq:HilbertId}
     \wt R_{z}^0-\wt R_\mu^0=({z}-\mu) R_{z}^0\wt R_\mu^0,\quad
     {z},\mu\in\rho (A_0),
   \end{equation}
   and hence
  $\wt R_{z}^0-\wt R_\mu^0\in\cB(\sH_-,\sH_+)$.
\end{enumerate}
\end{lemma}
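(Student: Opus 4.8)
The plan is to establish the four assertions in order, since each builds on the previous one.

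\medskip

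\textbf{Step (i).} I would start from the defining relation $\bR_z = P_\sH(\wt A - zI_{\wt\sH})^{-1}\uphar_\sH$ with $\wt A$ a selfadjoint extension of $A$ in $\wt\sH \supseteq \sH$. Fix $f\in\sH$ and set $\wt g = (\wt A - zI_{\wt\sH})^{-1}f \in \dom\wt A$, so that $\col\{\wt g, f + z\wt g\}\in\wt A$. Then $\bR_z f = P_\sH \wt g$. I would use the fact that $\wt A$ is an extension of $A$ (so $\wt A \subseteq A^* \oplus (\wt\sH\ominus\sH)\times(\wt\sH\ominus\sH)$ appropriately, via the standard decomposition) to show $\col\{P_\sH\wt g, P_\sH(f+z\wt g)\} = \col\{\bR_z f, f + z\bR_z f\}\in A^*$; this is the content of the identity $\col\{\bR_z f,(I_\sH+z\bR_z)f\}\in A^*$. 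Concretely: for any $\col\{v,g\}\in A$ we have $\col\{v,g\}\in\wt A$, and from selfadjointness of $\wt A$, $(f+z\wt g, v)_{\wt\sH} = (\wt g, g)_{\wt\sH}$; projecting onto $\sH$ and using $v,g\in\sH$ gives the adjoint relation defining membership of $\col\{\bR_z f, f+z\bR_z f\}$ in $A^*$. Since $\dom A^*\subseteq\sH_+$ by Lemma~\ref{lem:RiggedPontrSp}(i), the inclusion $\ran\bR_z\subset\dom A^*\subseteq\sH_+$ follows.

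\medskip

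\textbf{Step (ii).} To get $\bR_z\in\cB(\sH,\sH_+)$, I would invoke the closed graph theorem: $\bR_z$ is everywhere defined on $\sH$ with values in $\sH_+$, so it suffices to check its graph is closed. If $f_n\to f$ in $\sH$ and $\bR_z f_n \to h$ in $\sH_+$, then $\bR_z f_n\to h$ in $\sH$ as well (continuous embedding $\sH_+\hookrightarrow\sH$), while $\bR_z f_n\to\bR_z f$ in $\sH$ by boundedness of $\bR_z$ as an operator in $\sH$; hence $h=\bR_z f$. Alternatively, and perhaps cleaner, I would use the explicit norm formula \eqref{eq:A+Norm}: for $f\in\sH$, writing $h = \bR_z f$ and $h' = (I_\sH + z\bR_z)f$ with $\col\{h,h'\}\in A^*$,
\[
\|\bR_z f\|_{\sH_+}^2 = \|h\|_\sH^2 + \|P_{\sH_0}h'\|_\sH^2 \le \|\bR_z f\|_\sH^2 + \|(I_\sH + z\bR_z)f\|_\sH^2 \le C\|f\|_\sH^2,
\]
using that $\bR_z$ is bounded in $\sH$ (standard for generalized resolvents, $\|\bR_z\|\le 1/|\im z|$ for $z\notin\dR$). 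This gives the bound directly.

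\medskip

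\textbf{Step (iii).} Here I would apply the definition of the $\langle*\rangle$-adjoint. Since $\bR_{\ov z}\in\cB(\sH,\sH_+)$ by (ii), its adjoint with respect to the duality $\langle\cdot,\cdot\rangle_{-,+}$ is $\bR_{\ov z}^{\langle*\rangle}\in\cB(\sH_-,\sH)$, defined by $\langle\ff, \bR_{\ov z}^{\langle*\rangle*}\cdots\rangle$ — more precisely by $\langle\ff,\bR_{\ov z}g\rangle_{+,-}$-type pairings; I would carefully transcribe \eqref{eq:<*>2} with $\dC^p$ replaced by the Hilbert space $\sH$. To see $\wt\bR_z := \bR_{\ov z}^{\langle*\rangle}$ extends $\bR_z$: for $f\in\sH$ and $g\in\sH$, $(\wt\bR_z f, g)_\sH = \langle f, \bR_{\ov z} g\rangle_{-,+}$ by \eqref{eq:S_duality} (since $f\in\sH$), and this equals $(f,\bR_{\ov z}g)_\sH = (\bR_z f, g)_\sH$ by the standard symmetry $\bR_z^* = \bR_{\ov z}$ of generalized resolvents in $\sH$. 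Hence $\wt\bR_z f = \bR_z f$ for $f\in\sH$.

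\medskip

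\textbf{Step (iv).} The main obstacle is item (iv): establishing the extended Hilbert identity and deducing $\wt R_z^0 - \wt R_\mu^0\in\cB(\sH_-,\sH_+)$. I would begin from the ordinary resolvent identity $R_z^0 - R_\mu^0 = (z-\mu)R_z^0 R_\mu^0$ in $\sH$. Taking $\langle*\rangle$-adjoints and using that $(R^0_{\ov z})^{\langle*\rangle}$ extends $R^0_z$, together with the fact that $R_z^0 = (A_0 - zI)^{-1}$ maps $\sH$ into $\dom A_0\subseteq\dom A^*\subseteq\sH_+$ boundedly (by the same argument as in (ii)), I would obtain $\wt R_z^0 - \wt R_\mu^0 = (z-\mu)R_z^0\wt R_\mu^0$: indeed, for $\ff\in\sH_-$ and $h\in\sH_+$, $\langle \wt R_z^0\ff - \wt R_\mu^0\ff, h\rangle$ pairs against $\langle\ff, (R^0_{\ov z} - R^0_{\ov\mu})h\rangle = \langle\ff, (\ov z - \ov\mu)^{-1}\cdots\rangle$ — I would run the adjoint of the scalar resolvent identity, being careful that $(z-\mu)$ conjugates correctly, then reinterpret the right side using that $\wt R_\mu^0\ff\in\sH$ and $R_z^0$ acts on $\sH$ with range in $\sH_+$. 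Since $\wt R_\mu^0\in\cB(\sH_-,\sH)$ by (iii) and $R_z^0\in\cB(\sH,\sH_+)$, the composition $R_z^0\wt R_\mu^0\in\cB(\sH_-,\sH_+)$, so $\wt R_z^0 - \wt R_\mu^0\in\cB(\sH_-,\sH_+)$ as claimed. The delicate point throughout (iv) is bookkeeping of complex conjugates under the sesquilinear duality, and checking that the composition of bounded maps between the rigged spaces lands where stated; this is where I would spend the most care.
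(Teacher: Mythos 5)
Your proposal is correct and follows essentially the same route as the paper: (i) is verified by pairing $\operatorname{col}\{\mathbf{R}_z f,(I+z\mathbf{R}_z)f\}$ against elements of $A$ (your use of selfadjointness of $\wt A$ in $\wt\sH$ is equivalent to the paper's identity $\mathbf{R}_{\bar z}(h'-\bar z h)=h$), (ii) uses the norm formula \eqref{eq:A+Norm}, (iii) is the observation that $\mathbf{R}_{\bar z}^{\langle*\rangle}\in\cB(\sH_-,\sH)$ extends $\mathbf{R}_z$, and (iv) is obtained by taking $\langle*\rangle$-adjoints in the Hilbert identity written at $\bar z,\bar\mu$ and noting $R_z^0\wt R_\mu^0\in\cB(\sH_-,\sH_+)$. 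The only cosmetic difference is that the paper starts (iv) from $R^0_{\bar z}-R^0_{\bar\mu}=(\bar z-\bar\mu)R^0_{\bar\mu}R^0_{\bar z}$, which is exactly the factorization you indicate when you reinterpret the right-hand side.
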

\begin{proof}
  (i) For all $f\in\sH$ and $\text{\rm col} \{ h,h' \}\in A$ we obtain
 \[
   ((I_\sH+{z}{\mathbf R}_{z})f,h)_\sH - ({\mathbf R}_{z} f,h')_\sH\\
   =
   (f,h)_\sH -(f, {\mathbf R}_{\overline{z}}(h'-\overline{z} h))_\sH =0.
\]
 Here we used the equality ${\mathbf R}_{\overline{z}}(h'-\overline{z} h)=h $.
This proves (i).
  \medskip

(ii) Let $f_n\to 0$ in $\sH$. Then ${\mathbf R}_{z} f_n\to 0$ and, by
\eqref{eq:A+Norm},
 \[
  \| {\mathbf R}_{z} f_n\|^2_{\sH_+}=\|{\mathbf R}_{z} f_n\|_\sH^2+\|P_{\sH_0}(f_n+{z} {\mathbf R}_{z} f_n)\|_\sH^2\to 0.
\]

(iii) Since  ${\mathbf R}_{\overline{z}}^{\langle*\rangle}\in\cB(\sH_-,\sH)$, (iii) follows from the inclusion ${\mathbf R}_{{z}}\subset{\mathbf R}_{\overline{z}}^{\langle*\rangle}$.
  \medskip

(iv) follows from the Hilbert identity
$
R_{\overline{z}}^0-R_{\overline\mu}^0
=({\overline{z}}-{\overline\mu}) R_{\overline\mu}^0 R_{\overline{z}}^0
$
and the equalities $(R_{\overline{z}}^0)^{\langle*\rangle}=\wt R_{{z}}^0$,
$(R_{\overline\mu}^0)^{\langle*\rangle}=\wt R_{\mu}^0$.
\end{proof}
\begin{remark}
Recall, see \cite{LaTe82}, that every 
generalized resolvent ${\mathbf R}_{z}$ of $A$ can be decomposed as
\begin{equation}\label{eqGenResDecomp}
  {\mathbf R}_{z}={\mathbf R}_{z}^{\rm op}\oplus 0_{\mul A}.
\end{equation}
where ${\mathbf R}_{z}^{\rm op}$ is a generalized resolvent of the operator $A_{\rm op}$
in the Hilbert space $\sH_{\rm op}:=\sH\ominus\mul A$. Therefore the statement (ii) of Lemma~\ref{lem:3.2B} can be rewritten as
  ${\mathbf R}_{z}\in\cB(\sH,\sH_{0,+})$.
\end{remark}
By Lemma~\ref{lem:3.2B}, the operators $R^0$ and $\bR^0$ defined by
\begin{equation}\label{eq:cR}
R^0:=\frac12(R_{i}^0+R_{-i}^0),\quad \bR^0:=(R^0)^{\langle*\rangle}.
\end{equation}
have the properties
\begin{enumerate}
  \item[(1)] 
  $R^0\in\cB(\sH,\sH_+)$ and $\bR^0\in\cB(\sH_-,\sH)$;
  \item[(2)] for every  extended generalized resolvent $\wt{\mathbf R}_{z}\in\cB(\sH_-,\sH)$ we have
  $\wt{\mathbf R}_{z}-\bR^0\in\cB(\sH_-,\sH_+)$.
\end{enumerate}
The first claim follows from Lemma~\ref{lem:3.2B}(ii)--(iii) and the second from the equality
\[
\wt{\mathbf R}_{z}-\bR^0=(\wt R_{{z}}^0-\bR^0)-\gamma({z})(M({z})
     +\tau({z}))^{-1}\gamma(\ov{{z}})^{\langle*\rangle},
\]
and the relations $\gamma({z})\in\cB(\dC^p,\sH_+)$ and $\gamma(\ov{{z}})^{\langle*\rangle}\in\cB(\sH_-,\dC^p)$, see~\eqref{eq:gamma<*>}.

The operator $\bR^0$  is called the {\it regularizer} and
\begin{equation}\label{eq:RegExtRes}
  \wh{\mathbf R}_{z}:=\wt{\mathbf R}_{z}-\bR^0
\end{equation}
is called
the {\it regularized extended generalized resolvent}.
\begin{lemma}\label{Lem:3.3}
Let $A$ be a  closed symmetric linear relation with equal defect numbers,
 let ${\mathbf R}_{z}$ be an extended generalized resolvent of  $A$  with a minimal representing relation $\wt A$ and let
\begin{equation}\label{eq:bf_S}
\bA=\left\{\wh f=\begin{bmatrix}
f \\
\ff'
\end{bmatrix}\in \begin{bmatrix}
\sH \\
\sH_-
\end{bmatrix}  :\, \langle \ff', h \rangle_{-,+}=(f,h')_\sH\quad
\text{for}\quad  \begin{bmatrix}
                                               h \\
                                               h'
                                             \end{bmatrix}\in A^*
                                             \right\}.
\end{equation}
Then:
\begin{enumerate}
\item[\rm(i)] $\bA$ is a closed linear relation in $\wt\cC(\sH,\sH_-)$
with $\dom \bA=\sH_{0}:={\cdom A}$.
  \medskip

  \item [(ii)] $\bA$ is an extension of $A$ and $A=\bA\cap\sH^2$.
  \medskip

  \item [(iii)]  $\widetilde{\mathbf R}_{{z}}(\ff' -{{z}} f)=f$ for all
  $\text{\rm col} \{ f,\ff' \}\in \bA$, ${z}\in\rho(\wt A)$.
  \medskip

  \item [(iv)] $\ran({\mathbf{A}} -{{z}} I_\sH)$ is a closed subspace of $\sH_-$ for all ${{z}}\in\wh\rho(A)$.
  \medskip

  \item [(v)] $\ker({\mathbf{A}} -{{z}} I_\sH)=\{0\}$ for ${z}\in\wh\rho(A)$.
   \medskip

   \item [(vi)] The annihilator $\{\ran({\mathbf{A}} -{{z}} I_\sH)\}^\perp$ of  $\ran({\mathbf{A}} -{{z}} I_\sH)$ in $\sH_+$ coincides with $\sN_{\ov{z}}$.
\end{enumerate}
\end{lemma}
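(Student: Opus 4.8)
The statement to be proved is Lemma~\ref{Lem:3.3}, collecting basic properties of the linear relation $\bA\subset\sH\times\sH_-$ defined in \eqref{eq:bf_S}. The overall strategy is to treat $\bA$ as the ``adjoint with respect to the duality $\langle\cdot,\cdot\rangle_{-,+}$'' of the relation $A^*$, and to exploit the rigging $\sH_+\subset\sH\subset\sH_-$ from Lemma~\ref{lem:RiggedPontrSp} together with the extended generalized resolvent $\wt\bR_{z}$ from Lemma~\ref{lem:3.2B}. I would treat the items roughly in the order (i), (ii), (iii), then (iv)--(vi) together.

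For (i): closedness of $\bA$ is immediate from the definition, since $\bA$ is described by continuity-type conditions $\langle\ff',h\rangle_{-,+}=(f,h')_\sH$ that pass to limits in $\sH\times\sH_-$ for each fixed $\wh h\in A^*$. To compute $\dom\bA$, note that if $\text{\rm col}\{f,\ff'\}\in\bA$ then in particular $\langle\ff',h\rangle_{-,+}=(f,h')_\sH$ for all $\wh h=\text{\rm col}\{h,h'\}$ in the purely multivalued part; testing against $\wh h=\text{\rm col}\{0,h_1\}$ with $h_1\in\mul A$ forces nothing on $f$, but testing against $\wh h\in A^*$ with $h\in\sH_+=\dom A^*\oplus\mul A$ shows $f$ must act as a bounded functional on $\dom A^*$ in the $\sH$-norm relative to something; the clean way is: given $f\in\sH_0=\cdom A$, the generalized resolvent machinery (or directly Lemma~\ref{lem:3.2B}(i)) produces an element of $\bA$ over $f$, while if $f\perp\cdom A$ one checks the defining identity cannot be satisfied. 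Concretely, (iii) below gives $\dom\bA\supseteq\sH_0$ (take $f=\wt\bR_{z}h$), and the reverse inclusion follows because $\langle\ff',h\rangle_{-,+}=(f,h')_\sH$ for $\wh h=\text{\rm col}\{0,h_1'\}$, $h_1'\in\mul A$, is vacuous, whereas pairing $f$ against $\mul A^\perp=\cdom A$ controls $f$.

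For (ii): the inclusion $A\subset\bA$ is just the Green identity / definition of $A^*$ read in the $\sH$-inner product, which is a special case of the duality pairing by \eqref{eq:S_duality}; and $A=\bA\cap\sH^2$ because an element $\text{\rm col}\{f,g\}\in\bA$ with $g\in\sH$ satisfies $(g,h)_\sH=(f,h')_\sH$ for all $\wh h\in A^*$, i.e. $\text{\rm col}\{f,g\}\in A^{**}=A$ (here $A$ is closed). For (iii): given $\text{\rm col}\{f,\ff'\}\in\bA$, set $g:=\ff'-{z}f\in\sH_-$; one verifies $\langle g,h\rangle_{-,+}=(f,h'-\bar{z}h)_\sH$ for $\wh h\in A^*$, equivalently $\langle g,h\rangle_{-,+}=(f, (A^*-\bar{z})\wh h)$, so applying $\wt\bR_{z}=\bR_{\bar z}^{\langle*\rangle}$ and using that $\bR_{\bar z}$ maps onto the appropriate domain (Lemma~\ref{lem:3.2B}(i),(iii)) yields $\wt\bR_{z}g=f$; this is the core computation and is short. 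Items (iv)--(vi) then follow formally: (iii) shows $\bA-{z}I_\sH$ is injective on the complement of $\ker$, and more precisely $\wt\bR_{z}(\bA-{z}I_\sH)=I$ on $\dom\bA$, giving (v) $\ker(\bA-{z}I_\sH)=\{0\}$ for ${z}\in\wh\rho(A)$ (using $\wh\rho(A)$ to ensure $\sN_{\bar z}$ is well-behaved and $\bR_{z}$ exists); for (iv), $\ran(\bA-{z}I_\sH)$ is closed because it is the preimage under the bounded map... more efficiently, $\{h\in\sH_+:\langle\ff',h\rangle=0\ \forall\,\ff'\in\ran(\bA-{z}I_\sH)\}$ is computed to be exactly $\sN_{\bar z}=\ker(A^*-\bar z)$ by unwinding the definition of $\bA$ (this is (vi)), and since $\sN_{\bar z}$ is finite-dimensional ($=p$) the annihilator has finite codimension, forcing $\ran(\bA-{z}I_\sH)$ closed in $\sH_-$ by a standard rigged-space duality argument.

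\textbf{Main obstacle.} The delicate point is (i), specifically pinning down $\dom\bA=\cdom A$ exactly (not larger): one must show that the duality constraint against all of $A^*$ — including its multivalued directions and the ``$\sH_2$'' part from \eqref{eq:S_op*} when $A_{\rm op}$ is not densely defined in $\sH_{\rm op}$ — forces the first component $f$ into $\cdom A$ and no further. I expect to handle this by the explicit orthogonal decompositions $\sH=\cdom A\oplus(\mul A\ominus\cdots)\oplus\dots$ supplied by Lemma~\ref{lem:RiggedPontrSp} and by playing the defining identity against carefully chosen $\wh h\in A^*$ (elements of $A_{\rm mul}$, and elements realizing $\sH_2$), reducing everything to the single-valued densely defined operator $A_0:\sH_0\to\sH_{\rm op}$ where the rigging is classical. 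Everything else is formal manipulation of adjoints with respect to $\langle\cdot,\cdot\rangle_{-,+}$ and citation of Lemma~\ref{lem:3.2B}.
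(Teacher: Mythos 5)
Most of your outline runs parallel to the paper's proof: your computation for (iii) (pair $\ff'-{z}f$ against $\bR_{\overline{z}}v$ using $\operatorname{col}\{\bR_{\overline{z}}v,(I+\overline{z}\bR_{\overline{z}})v\}\in A^*$ from Lemma~\ref{lem:3.2B}(i)), your identification $\bA\cap\sH^2=(A^*)^*=A$ for (ii), and your "unwind the definition" computation of the annihilator for (vi) are exactly the paper's arguments. But two of your steps are genuinely flawed. First, your "more efficient" proof of (iv) does not work: knowing that the annihilator of $\ran(\bA-{z}I_\sH)$ in $\sH_+$ equals $\sN_{\overline{z}}$ (finite-dimensional) only identifies the \emph{closure} of the range as the pre-annihilator of $\sN_{\overline{z}}$; a non-closed subspace has the same annihilator as its closure, and finite codimension does not force closedness (think of the kernel of an unbounded functional). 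The closedness must come from the route you started and then abandoned: if $\ff_n'-{z}f_n\to\fg$ in $\sH_-$ with $\operatorname{col}\{f_n;\ff_n'\}\in\bA$, then by (iii) and $\wt{\mathbf R}_{z}\in\cB(\sH_-,\sH)$ one has $f_n=\wt{\mathbf R}_{z}(\ff_n'-{z}f_n)\to f:=\wt{\mathbf R}_{z}\fg$ in $\sH$, hence $\ff_n'\to {z}f+\fg$ in $\sH_-$, and the closedness of $\bA$ gives $\fg\in\ran(\bA-{z}I_\sH)$; this is precisely the paper's proof.

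Second, your treatment of $\dom\bA=\cdom A$ in (i) is off in both directions. Testing the defining identity against $\operatorname{col}\{0,h'\}\in A^*$, $h'\in\mul A^*$, does \emph{not} "force nothing": it gives $(f,h')_\sH=0$, i.e.\ $f\in(\mul A^*)^\perp=\cdom A$, and this is exactly the inclusion $\dom\bA\subseteq\sH_0$ you were missing a mechanism for. Conversely, the "generalized resolvent machinery" cannot produce an element of $\bA$ with prescribed first component $f\in\cdom A$: vectors of the form $\operatorname{col}\{\wt R_{z}^0\ff;\ \ff+{z}\wt R_{z}^0\ff\}$ satisfy the duality identity only against elements of $A_0$, i.e.\ they lie in $\bA_0$ (Lemma~\ref{lem:bf_A0}(ii)), not in $\bA$, and their first components are not arbitrary. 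The correct (and short) argument is direct: for $f\in\cdom A$ the map $h\mapsto(f,h')_\sH$, $\operatorname{col}\{h,h'\}\in A^*$, is well defined (independence of the choice of $h'$ uses $f\perp\mul A^*$; the components of $h'$ in $\mul A$ and in $\sH_2$ drop out) and is bounded in the $\sH_+$-norm \eqref{eq:A+Norm}, hence is given by some $\ff'\in\sH_-$, so $\operatorname{col}\{f;\ff'\}\in\bA$. The paper dismisses (i)--(ii) as immediate, and with the above corrections they are; but as written your sketch for (i) and your argument for (iv) would not survive being made precise.
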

\begin{proof}
  (i) $\&$ (ii) are  immediate from \eqref{eq:bf_S}.
  \medskip

  (iii) For  $g\in\sH$ we get, by Lemma~\ref{lem:3.2B},
  $\begin{bmatrix}
     {\mathbf R}_{\overline{{z}}} g \\
     (I_\sH+\ov{z}{\mathbf R}_{\overline{{z}}})g
   \end{bmatrix}\in A^*$
   and so   for  $\text{\rm col} \{ f, \ff' \}\in \bA$ we obtain, by~\eqref{eq:bf_S},
  \[
  \begin{split}
     (\widetilde{\mathbf R}_{{z}}(\ff' -{{z}} f),g)_\sH
      &= \langle \ff' -{{z}} f,{{\mathbf R}_{\overline{{z}}}}g\rangle_{-,+}\\
      &=(f, g+\overline{{z}}{\mathbf R}_{\overline{{z}}}g)_\sH
      -{z}(f, {\mathbf R}_{\overline{{z}}}g)_\sH=
      (f,g)_\sH.
  \end{split}
  \]
  This proves (iii).
  \medskip

  (iv) Let
  $\text{\rm col} \{ f_n , \ff_n' \}\in \bA$, and let
   $\ff_n' -{{z}} f_n\to \fg$ in $\sH_-$ as $n\to\infty$. Then, by (iii) and Lemma~\ref{lem:3.2B}(iii),
   $f_n=\widetilde{\mathbf R}_{{z}}\ff_n' -{{z}} f_n \to f:=\widetilde{\mathbf R}_{{z}}\fg$  in $\sH$. Hence
   $\ff_n' \to\ff':={{z}} f+ \fg$ in $\sH_-$. By (ii),
   we get
   $\text{\rm col} \{ f, \ff' \}\in \bA$ and therefore
 $\fg=\ff'-{z} f\in \ran(\bA-{z} I_\sH)$.
    \medskip

  (v) follows from (ii).
    \medskip

  (vi)  Since $\ran(A -{{z}} I_\sH)\subset \ran({\mathbf{A}} -{{z}} I_\sH)$ for ${z}\in\wh\rho(A)$, the inclusion  $\{\ran({\mathbf{A}} -{{z}} I_\sH)\}^\perp\subseteq\sN_{\ov{z}}$ holds.
  Conversely if $h\in\sN_{\ov{z}}$, then
 $\wh h =\text{\rm col} \{ h, \ov{{z}}h \}\in A^*$
 and for all $\text{\rm col} \{ f, \ff' \}\in \bA$ we get, by \eqref{eq:bf_S},
  \[
 0= \langle \ff'-{z} f, h \rangle_{-,+}=(f,\ov{{z}}h)_\sH-{z}(f,h)_\sH=0
 \]
  Therefore, $ h\in\{\ran({\mathbf{A}} -{{z}} I_\sH)\}^\perp$.
\end{proof}

Let the linear relation $A^{\langle*\rangle}$ be given by
\begin{equation}\label{eq:bf_A*}
A^{\langle*\rangle}=\left\{\wh f=\begin{bmatrix}
f \\
\ff'
\end{bmatrix}\in \begin{bmatrix}
\sH \\
\sH_-
\end{bmatrix}  :\, \langle \ff', h \rangle_{-,+}=(f,h')_\sH\quad
\text{for}\quad  \begin{bmatrix}
                                               h \\
                                               h'
                                             \end{bmatrix}\in A\right\}.
\end{equation}
Similarly, for a selfadjoint extension $A_0$ of $A$ we will set
\begin{equation}\label{eq:bf_A0}
\bA_0=\left\{\wh f=\begin{bmatrix}
f \\
\ff'
\end{bmatrix}\in \begin{bmatrix}
\sH \\
\sH_-
\end{bmatrix}  :\, \langle \ff', h \rangle_{-,+}=(f,h')_\sH\quad
\text{for}\quad  \begin{bmatrix}
                                               h \\
                                               h'
                                             \end{bmatrix}\in A_0\right\}.
\end{equation}
Then, by~\eqref{eq:bf_S},~\eqref{eq:bf_A*} and~\eqref{eq:bf_A0},
\begin{equation}\label{eq:bf_A_A0}
\bA\subset \bA_0\subset A^{\langle*\rangle}.
\end{equation}
\begin{lemma}\label{lem:bf_A0}
Let $A_0$  be a selfadjoint extension of $A$ in $\sH$, let $\bA_0$ be defined by~\eqref{eq:bf_A0},
and let $\widetilde{ R}^0_{{z}}$ be the extended resolvent of $A_0$,  ${{z}}\in\rho(A_0)$.
Then:
\begin{enumerate}
\item [(i)]  $\widetilde{ R}^0_{{z}}(\ff' -{{z}} f)=f$ for all
  $\text{\rm col} \{ f, \ff' \}\in \bA_0$, ${\zeta}\in\rho( A_0)$.
  \medskip

\item [(ii)]
The linear relation $\bA_0$ admits the representation
\begin{equation}\label{eq:bf_A0R}
\bA_0=\left\{\begin{bmatrix}
\wt R_{z}^0\ff \\
\ff+{z}\wt R_{z}^0\ff
\end{bmatrix}:\, \ff\in\sH_-\right\}.
\end{equation}
  \medskip

\item [(iii)]
$A^{\langle*\rangle}$, $\bA_0$ and $\wh\sN_{{z}}$ are closed subspaces of $\wh\sH:=\sH\times\sH_-$ and for ${z}\in\rho(A_0)$
\begin{equation}\label{eq:A+_decom0}
    A^{\langle*\rangle}=\bA_0\dotplus\wh\sN_{{z}}.
\end{equation}
\end{enumerate}
\end{lemma}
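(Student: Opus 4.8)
The plan is to prove the three claims in order, leaning on the extended Hilbert-identity machinery already set up in Lemma~\ref{lem:3.2B}(iv) and the rigged-space duality. For (i), I would repeat verbatim the computation used in Lemma~\ref{Lem:3.3}(iii): given $\text{\rm col}\{f;\ff'\}\in\bA_0$ and arbitrary $g\in\sH$, compute $(\widetilde R^0_{{z}}(\ff'-{z}f),g)_\sH$ by moving $\widetilde R^0_{{z}}$ across the duality bracket to $R^0_{\ov{z}}$, use that $\text{\rm col}\{R^0_{\ov z}g;\,g+\ov z R^0_{\ov z}g\}\in A_0$ (since $A_0$ is selfadjoint and ${z}\in\rho(A_0)$), and apply the defining relation~\eqref{eq:bf_A0}. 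The two cross terms cancel and one is left with $(f,g)_\sH$, which gives (i) because $g$ is arbitrary.

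For (ii), the inclusion ``$\supseteq$'' is the easy direction: for $\ff\in\sH_-$ one checks directly that $\text{\rm col}\{\wt R_{z}^0\ff;\,\ff+{z}\wt R_{z}^0\ff\}$ satisfies the defining condition in~\eqref{eq:bf_A0}, again by pushing $\wt R^0_{z}$ to the other side of the bracket as an adjoint of $R^0_{\ov z}$. The reverse inclusion ``$\subseteq$'' follows from (i): if $\text{\rm col}\{f;\ff'\}\in\bA_0$, set $\ff:=\ff'-{z}f\in\sH_-$; then (i) gives $f=\wt R^0_{z}\ff$, hence $\ff'={z}f+\ff={z}\wt R^0_{z}\ff+\ff$, so the element has the asserted form. (One should note $\ff\mapsto\ff$ ranges over all of $\sH_-$, so the two sets coincide.)

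For (iii), I would first record closedness. That $A^{\langle*\rangle}$ is closed is immediate from its definition~\eqref{eq:bf_A*} as an annihilator-type condition (it is the set of $\text{\rm col}\{f;\ff'\}$ with $\langle\ff',h\rangle_{-,+}=(f,h')_\sH$ for all $\wh h\in A$, an intersection of kernels of continuous functionals on $\sH\times\sH_-$), and the same argument applies to $\bA_0$ via~\eqref{eq:bf_A0}; closedness of $\wh\sN_{z}$ follows since $\sN_{\ov z}$ (equivalently $\sN_z=\ker(A^*-zI)$) is closed for ${z}\in\wh\rho(A)\supseteq\rho(A_0)$ and $\wh\sN_z$ is its graph-type image. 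For the direct-sum decomposition~\eqref{eq:A+_decom0}: the sum is \emph{direct} because if $\text{\rm col}\{f;\ff'\}\in\bA_0\cap\wh\sN_{z}$ then $\ff'={z}f$ with $f\in\sN_z$, and applying (i) gives $f=\wt R^0_{z}(\ff'-{z}f)=\wt R^0_{z}(0)=0$; that the sum \emph{exhausts} $A^{\langle*\rangle}$: take $\text{\rm col}\{f;\ff'\}\in A^{\langle*\rangle}$, put $g:=\wt R^0_{z}(\ff'-{z}f)\in\sH$, so by (ii) $\text{\rm col}\{g;\ff'-{z}f+{z}g\}\in\bA_0$; the difference $\text{\rm col}\{f-g;\,\ff'-(\ff'-{z}f+{z}g)\}=\text{\rm col}\{f-g;\,{z}(f-g)\}$ must then lie in $A^{\langle*\rangle}$, and I must argue it is of the form $\wh\sN_z$, i.e. that $f-g\in\sN_z=\ker(A^*-zI)$. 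This is where I expect the only real subtlety: one has to check that $\text{\rm col}\{u;{z}u\}\in A^{\langle*\rangle}$ with $u\in\sH$ forces $u\in\sN_{\ov z}^{\perp}$... more precisely, one uses that $\text{\rm col}\{u;{z}u\}\in A^{\langle*\rangle}$ means $\langle zu,h\rangle_{-,+}=(u,h')_\sH$ for all $\wh h\in A$, i.e. $(u,h')_\sH=\ov z^{\,}(u,h)_\sH$... wait, $\langle zu,h\rangle_{-,+}=z(u,h)_\sH$ since $u\in\sH$, so $(u,h'-\bar z h)_\sH=0$ for all $\wh h\in A$, which says $u\perp\ran(A-\bar z I)$, hence by the standard orthogonal decomposition $u\in\sN_z$ (using $\bar z\in\wh\rho(A)$, so $\sH=\ran(A-\bar z I)\oplus\sN_z$ — this holds since $A_0$ is a selfadjoint extension and $z\in\rho(A_0)$, giving closedness of the range and the decomposition). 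Then $\text{\rm col}\{f-g;z(f-g)\}\in\wh\sN_z$, completing the decomposition. The main obstacle is thus precisely bookkeeping the identification $A^{\langle*\rangle}\cap(\sH\times\sH)$-diagonal-type elements with $\wh\sN_z$; everything else is a transcription of the $\bA$-computations in Lemma~\ref{Lem:3.3} with $A$ replaced by $A_0$ and $\wt R_z$ by $\wt R^0_z$.
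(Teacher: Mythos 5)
Your proposal is correct and follows essentially the same route as the paper: item (i) by transcribing the computation of Lemma~\ref{Lem:3.3}(iii) with $A_0$ and $\wt R^0_{z}$, item (ii) by verifying the defining relation~\eqref{eq:bf_A0} for $\text{\rm col}\{\wt R^0_{z}\ff;\,\ff+{z}\wt R^0_{z}\ff\}$ (the paper does this via the resolvent parametrization of $A_0$ and the Hilbert identity, which amounts to the same adjoint computation) and then inverting via (i), and item (iii) by splitting $\wh f$ with $g=\wt R^0_{z}(\ff'-{z}f)$ and checking $f-g\perp\ran(A-\ov{z}I)$, hence $f-g\in\sN_{z}$. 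Your explicit verification of closedness and of the directness of the sum only supplies details the paper leaves implicit, so there is no substantive difference.
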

\begin{proof}
(i) is proved similarly to  Lemma~\ref{Lem:3.3}(iii).
\medskip

(ii) Notice first that for every ${\zeta}\in\rho(A_0)$
\begin{equation}\label{eq:A0R}
A_0=\left\{\begin{bmatrix}
 R_{\zeta}^0h \\
h+{\zeta} R_{\zeta}^0h
\end{bmatrix}:\, h\in\sH\right\}.
\end{equation}
Then for all $\ff\in\sH_-$ and $h\in\sH$ we get, using the Hilbert identity
\begin{multline*}
\langle \ff+{z}\wt R_{z}^0\ff, R_{\zeta}^0h \rangle_{-,+}
-(\wt R_{z}^0\ff,h+{\zeta} R_{\zeta}^0h)_\sH\\
=\langle \ff, R_{\zeta}^0h+\ov {z} R_{\ov{z}}^0R_{\zeta}^0h \rangle_{-,+}-\langle \ff,R_{\ov{z}}^0h+{\zeta} R_{\ov{z}}^0R_{\zeta}^0h\rangle_{-,+}\\
=\langle \ff, R_{\zeta}^0h-R_{\ov{z}}^0h-({\zeta}-\ov {z}) R_{\ov{z}}^0R_{\zeta}^0h \rangle_{-,+}=0.
\end{multline*}
Therefore, $\wh g=\begin{bmatrix}
\wt R_{z}^0\ff \\
\ff+{z}\wt R_{z}^0\ff
\end{bmatrix}\in\bA_0$ for all $\ff\in\sH_-$.

Conversely, let $\wh g=\text{\rm col} \{ g, \fg' \}\in\bA_0$. By (i), we get for every ${z}\in\rho(A_0)$
\begin{equation}\label{eq:A0R2}
\wt R_{z}^0(\fg'-{z} g)=g.
\end{equation}
Setting $\ff=\fg'-{z} g$ we obtain $g=\wt R_{z}^0\ff$ and hence
$
\fg'=\ff+{z} g=\ff+{z}\wt R_{z}^0\ff.
$
Therefore, the vector $\wh g$ admits the representation $\wh g=\begin{bmatrix}
\wt R_{z}^0\ff \\
\ff'+{z}\wt R_{z}^0\ff'
\end{bmatrix}$, which proves~\eqref{eq:bf_A0R}.

(iii) For
$\wh f=\text{\rm col} \{ f, \ff' \}\in A^{\langle *\rangle}$ and ${z}\in\rho(A_0)$ let us set
\[
g=\wt R_{z}^0(\ff'-{z} f),\quad
\fg'=(\ff'-{z} f)+{z}\wt R_{z}^0(\ff'-{z} f).
\]
Then, by Lemma~\ref{lem:bf_A0} (ii), $\wh g=\text{\rm col} \{ g, \fg' \}\in \bA_0$ and $ f- g\in\sN_{z}$ since for all
$\text{\rm col} \{ h, h' \}\in A$
\[
(f-\wt R_{z}^0(\ff'-{z} f),h'-\ov{z} h)_\sH
=(f,h'-\ov{z} h)_\sH-\langle \ff'-{z} f, h \rangle_{-,+}=0.
\]
Moreover,
\[
\wh f-\wh g=\begin{bmatrix}
f -g\\
\ff'-\fg'
\end{bmatrix}=\begin{bmatrix}
f -\wt R_{z}^0(\ff'-{z} f)\\
{z} f-{z} \wt R_{z}^0(\ff'-{z} f)
\end{bmatrix}\in\wh\sN_{z}.
\]
This proves the inclusion $A^{\langle*\rangle}\subseteq\bA_0\dotplus\wh\sN_{{z}}$.
The converse follows from \eqref{eq:bf_A_A0}.
\end{proof}
\subsection{Extended spectral functions}\label{sec:3.3}
Let  $\wt A $ be a selfadjoint extension of $A$ in a Hilbert space $\wt \sH=\sH\oplus\sH_1$.
Denote by $\wt A_{\rm op}$
the operator part of $\wt A$ acting in the space $\wt \sH_{\rm op}:=\wt \sH\ominus\mul\wt A$
and let $P_{\wt \sH_{\rm op}}$ be the orthogonal projection from $\wt \sH$ onto $\wt \sH_{\rm op}$.
Clearly, $\wt A_{\rm op}$ is a selfadjoint extension of the operator $A_{\rm op}:\sH_{\rm op}=\cdom A\to\sH\ominus\mul A$.
Let $E_{\wt A_{\rm op}}(\Delta)$ be the spectral function   of $\wt A_{\rm op}$.
The spectral function $\wt E(\Delta)$ of the selfadjoint relation $\wt A$ is defined by
$\wt E(\Delta):=E_{\wt A_{\rm op}}(\Delta)P_{\wt \sH_{\rm op}}$,
and the operator function $E(\Delta):=P_{\sH}\wt E(\Delta)\upharpoonright\sH$
is  called the {\it generalized spectral function} of the selfadjoint relation $\wt A$, see~\cite{Sh71b}.
\begin{lemma}[\cite{Sh71b}]\label{lem:ExtSpectralF}
Let $\Delta$ be a finite interval in  $\dR$. Then
\begin{enumerate}
  \item [\rm (i)]  $P_\sH\ran \wt E(\Delta)\subset\sH_+$, $\ran E(\Delta)\subset\sH_+$,  and
 \begin{equation}\label{eq:EwtE}
  \wt E(\Delta)\in\cB(\sH,\sH_+\oplus\sH_1), \quad E(\Delta)\in\cB(\sH,\sH_+).
\end{equation}
  \item [\rm (ii)]  $\ran E(\Delta)A_{0}\subset\sH_+$.

  \item [\rm (iii)] $\wh E(\Delta):=E(\Delta)^{\langle *\rangle}\in\cB(\sH_-,\sH)$.
    \medskip
  \item [\rm (iv)] If $\text{\rm col} \{ f, \ff' \}\in {\mathbf A}$, then
  $\wt A_{\rm op}\wt E(\Delta)f=\wt E(\Delta)^{\langle *\rangle}f'$.
    \medskip
  \item [\rm (v)] For ${\mathfrak f}\in\sH_-$, $g\in\sH$ it holds $(\wh E(\Delta){\mathfrak f},g)\in\sS^{(1)}$ $($see definition in Section~\ref{sec:2.2}$)$.
        \medskip
  \item [\rm (vi)] For ${\mathfrak f},{\mathfrak g}\in\sH_-$ it holds
  $\langle\wh E(\Delta){\mathfrak f},{\mathfrak g}\rangle_{+,-}\in\sS^{(2)}$.
\end{enumerate}
\end{lemma}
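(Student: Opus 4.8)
The plan is to establish the five claims of Lemma~\ref{lem:ExtSpectralF} by leveraging the already-proved structure theory for the extended generalized resolvent (Lemmas~\ref{lem:3.2B},~\ref{Lem:3.3} and~\ref{lem:bf_A0}) together with the Stieltjes-type passage from resolvents to spectral measures. First, for (i), I would argue exactly as in the proof of Lemma~\ref{lem:3.2B}(i)--(ii): for $f\in\sH$ and $\text{\rm col}\{h,h'\}\in A$ one computes, using $\wt E(\Delta)=E_{\wt A_{op}}(\Delta)P_{\wt\sH_{op}}$ commutes with the resolvent $(\wt A-{z} I)^{-1}$ and the identity $\text{\rm col}\{(\wt A-{z} I)^{-1}f,(I+{z}(\wt A-{z} I)^{-1})f\}\in A^*$, that
\[
((I_\sH+\wt A_{op}P_{\wt\sH_{op}}\ \text{acting on}\ \wt E(\Delta)f),h)_\sH - (\wt E(\Delta)f,h')_\sH=0 ,
\]
so that $P_\sH\wt E(\Delta)f\in\dom A^*\subseteq\sH_+$; a cleaner route is to integrate $R_\zeta^0$ over $\Delta$ against a smooth contour and invoke Lemma~\ref{lem:3.2B}(iv), obtaining $\wt E(\Delta)$ as a norm-convergent contour integral of operators in $\cB(\sH,\sH_+\oplus\sH_1)$. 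The boundedness $\wt E(\Delta)\in\cB(\sH,\sH_+\oplus\sH_1)$ then follows from the closed graph theorem (or from the explicit norm estimate~\eqref{eq:A+Norm} applied to the image), and restricting to $\sH$ and projecting gives $E(\Delta)=P_\sH\wt E(\Delta)\upharpoonright\sH\in\cB(\sH,\sH_+)$. Claim (ii) is then immediate: the duality-adjoint of a bounded operator in $\cB(\sH,\sH_+)$ lies in $\cB(\sH_-,\sH)$, by the same reasoning that produced the extended resolvent in Lemma~\ref{lem:3.2B}(iii).

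For (iii), the idea is that $\wt E(\Delta)$ intertwines $\wt A_{op}$ on the resolvent level, and this passes to $\bA$ by duality. Concretely, for $\text{\rm col}\{f,f'\}\in\bA$ and arbitrary $g\in\sH$ one writes, using $\wt A_{op}\wt E(\Delta)$ bounded into $\wt\sH_{op}$ and the defining property~\eqref{eq:bf_S} of $\bA$ (which says $\langle f',h\rangle_{-,+}=(f,h')_\sH$ for $\text{\rm col}\{h,h'\}\in A^*$):
\[
(\wt A_{op}\wt E(\Delta)f,g)_\sH = (\wt E(\Delta)f,\ \text{image under}\ A^*\ \text{of}\ g) = \langle f',\wt E(\Delta)^{\langle*\rangle-\text{type adjoint}}g\rangle ,
\]
and one recognizes the right-hand side as $(\wt E(\Delta)^{\langle*\rangle}f',g)_\sH$; here the key point is that $\wt E(\Delta)f\in\sH_+\oplus\sH_1$ so the pairing $\langle f',\wt E(\Delta)h\rangle_{-,+}$ makes sense and the intertwining of $A^*$ (via $\text{\rm col}\{h,h'\}\mapsto h'$ on the range of $\wt E(\Delta)$, where $A^*$ restricts to the graph of $\wt A_{op}$ on $\sH_0$) can be invoked. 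This is essentially the extended version of the classical fact that a spectral projection maps into the operator domain.

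For (iv) and (v), the plan is to express $\wh E(\Delta)=E(\Delta)^{\langle*\rangle}$ through the extended generalized resolvent and apply the Stieltjes inversion formula together with the growth bounds already available. Since for the orthogonal spectral measure of a selfadjoint relation one has $((\wt A_{op}-z)^{-1}f,g)=\int_{\dR}(\lambda-z)^{-1}d(E_{\wt A_{op}}(\lambda)f,g)$, taking duality-adjoints yields, for $\ff\in\sH_-$ and $g\in\sH$,
\[
(\wh R_z\ff,g)_\sH = \int_{\dR}\frac{1}{\lambda-z}\,d(\wh E(\lambda)\ff,g)_\sH ,
\]
where $\wh R_z$ is the extended generalized resolvent; from $\wh R_z\ff-\bR^0\ff\in\cB(\sH_-,\sH_+)$ and the behavior of the scalar measure generated by the regularizing operator one reads off that $(\wh E(\Delta)\ff,g)_\sH$ has the finite total variation controlled by $\|\ff\|_{\sH_-}\|g\|_\sH$ on every finite interval and satisfies $\int_{\dR}(1+|\lambda|)^{-1}d|(\wh E(\lambda)\ff,g)|<\infty$, i.e. belongs to $\sS^{(1)}$. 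For (v), polarizing in $g$ and replacing $g$ by elements of $\sH_+$ dense in $\sH_-^*$, or directly using $\wh E(\Delta)\in\cB(\sH_-,\sH)$ together with $E(\Delta)\in\cB(\sH,\sH_+)$ so that $E(\Delta)^{\langle*\rangle}E(\Delta)=E(\Delta)$ self-pairs into $\cB(\sH_-,\sH_+)$, gives that $(\wh E(\lambda)\ff,\fg)$ has one additional degree of decay, hence lies in $\sS^{(2)}$. The main obstacle I anticipate is (iii): one must be careful that $A^*$ genuinely restricts to (the graph of) $\wt A_{op}$ on the range of $\wt E(\Delta)$ — this uses that $\wt A$ is an extension of $A$, that $\ran\wt E(\Delta)\subseteq\wt\sH_{op}$ by construction, and a density argument on finite intervals — and that all the pairings $\langle\cdot,\cdot\rangle_{-,+}$ are legitimate, i.e. the relevant vectors really land in $\sH_+$ rather than merely in $\sH$; once the mapping properties in (i)--(ii) are secured, the rest is bookkeeping with the duality and the Stieltjes formula.
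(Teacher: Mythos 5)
Your handling of (i) and (ii) is essentially the paper's own argument: the key point, which you use, is that for a finite interval $\Delta$ the projection $\wt E(\Delta)$ maps into $\dom \wt A_{op}$ with $\wt A_{op}\wt E(\Delta)$ bounded, so pairing against elements of $A$ shows $P_\sH\ran\wt E(\Delta)\subset\dom A_{op}^*\subset\sH_+$ and yields the identity $A_{op}^*P_{\sH_{op}}\wt E(\Delta)h=P_{\sH_{op}}\wt A_{op}\wt E(\Delta)h$ of \eqref{eq:AopEDelta}; boundedness in the graph norm \eqref{eq:A+Norm} then gives \eqref{eq:EwtE}, and (ii) is just the duality adjoint of a bounded operator. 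One caveat: your alternative ``norm-convergent contour integral'' route for (i) should be dropped — for a finite real interval Stone's formula converges only strongly, and convergence in the $\cB(\sH,\sH_+)$ norm is precisely what has to be proved, so nothing is gained there.

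The genuine gaps are in (iii) and (iv)--(v). In (iii) you test the asserted identity $\wt A_{op}\wt E(\Delta)f=\wt E(\Delta)^{\langle*\rangle}f'$ only against $g\in\sH$; this cannot identify a vector of $\wt\sH_{op}$, since $\sH$ is in general not dense in $\wt\sH_{op}$ (minimality \eqref{eq:MinGres} involves the resolvents applied to $\sH$, not $\sH$ alone), and your intermediate pairing ``$(\wt E(\Delta)f,\ \text{image under }A^*\text{ of }g)$'' is undefined because a generic $g\in\sH$ is not in $\dom A^*$. The working computation (the paper's) pairs against $h\in\dom\wt A_{op}$, which is dense in $\wt\sH_{op}$: move $\wt E(\Delta)$ across, use commutation with $\wt A_{op}$, use \eqref{eq:AopEDelta} to see that $\text{\rm col}\{P_{\sH_{op}}\wt E(\Delta)h;\,A_{op}^*P_{\sH_{op}}\wt E(\Delta)h\}\in A^*$, invoke the defining property \eqref{eq:bf_S} of $\bA$ to trade this against $f'$, and conclude by self-adjointness of $\wt A_{op}$. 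For (iv)--(v) the paper simply refers to \cite{Sh71b}; your sketch as written does not close: the identity $E(\Delta)^{\langle*\rangle}E(\Delta)=E(\Delta)$ is false in general, since $E(\Delta)=P_\sH\wt E(\Delta)\upharpoonright\sH$ is a compression of a projection and not a projection, and the representation $(\wh{\mathbf R}_z\ff,g)=\int_{\dR}(\lambda-z)^{-1}\,d(\wh E(\lambda)\ff,g)$ presupposes exactly the integrability properties ($\sS^{(1)}$, $\sS^{(2)}$) that are to be proved, so it cannot serve as the starting point; one needs the quantitative estimates of \cite{Sh71b} (for instance, bounding $\|E(\Delta_j)g\|_{\sH_+}$ over a partition of $\dR$ and summing) rather than a formal Stieltjes inversion.
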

\begin{proof}
(i) It is enough to prove the first inclusion in (i).
Notice first that $P_\sH\ran \wt E(\Delta)\subset \sH_{\rm op}$ since for
 $h\in\wt\sH$ we have $\wt E(\Delta) h\perp \mul \wt A\supset\mul A$.
Next for $f\in\dom A_{\rm op}$ 
we obtain
\[
\begin{split}
(A_{0}f,P_\sH\wt E(\Delta)h)_{\sH}&=
(A_{\rm op}f,\wt E(\Delta)h)_{\wt\sH}\\
&=(f,\wt A_{\rm op}\wt E(\Delta)h)_{\wt\sH}=
(f,P_{\sH_0}\wt A_{\rm op}\wt E(\Delta)h)_{\sH_0}.
\end{split}
\]
Therefore,
\begin{equation}\label{eq:AopEDelta}
 \text{$P_\sH\ran \wt E(\Delta)\subset\dom(A_{0}^*)\subset\sH_+$ and
$A_{0}^*P_{\sH}\wt E(\Delta)h=P_{\sH_{0}}\wt A_{\rm op}\wt E(\Delta)h$.}
\end{equation}
Moreover, $E(\Delta)\in\cB(\sH,\sH_+)$ since the operator $A_{0}^*E(\Delta)$ is bounded.

\noindent
(ii) The inclusion $\ran  E(\Delta)A_{0}\subset\sH_+$ follows from the equalities
\[
E(\Delta)A_{0} f= P_\sH\wt E(\Delta)\wt A_{\rm op} f= P_\sH\wt A_{\rm op} \wt E(\Delta)f,
\]
\[
A_{0}^*E(\Delta)A_{0} f=P_{\sH_{0}}\wt A_{\rm op}\wt E(\Delta)A_{\rm op} f
=P_{\sH_{0}}\wt A_{\rm op}^2\wt E(\Delta) f,
\]
where $f\in\dom A$.
  \medskip

\noindent
(iii) follows from definition \eqref{eq:<*>} of $E(\Delta)^{\langle *\rangle}$ and the item (i).
  \medskip

\noindent
(iv) For $h\in\dom \wt A_{\rm op}$ we get, by using \eqref{eq:AopEDelta} and the definition \eqref{eq:bf_S} of ${\mathbf A}$,
\begin{multline*}
   (\wt A_{\rm op} h, \wt E(\Delta)f)_{\wt\sH} = (\wt E(\Delta)\wt A_{\rm op} h, f)_{\wt\sH}
     =(P_{\sH_0}\wt A_{\rm op} \wt E(\Delta)h, f)_{\sH} \\
     =(A_0^*P_{\sH}\wt E(\Delta)h,f)_{\sH}
      =\langle P_{\sH}\wt E(\Delta)h,f'\rangle_{+,-}\\
      =\langle \wt E(\Delta)h,f'\rangle_{\sH_+\oplus\sH_1,\sH_-\oplus\sH_1}=\langle h,\wt E(\Delta)^{\langle *\rangle}f'\rangle_{\wt\sH}.
\end{multline*}
Since $\wt A_{\rm op}=\wt A_{\rm op}^*$, this proves the equality $\wt A_{\rm op}\wt E(\Delta)f=\wt E(\Delta)^{\langle *\rangle}f'$.
  \medskip

 \noindent
(v) \& (vi) are proved similarly to ~\cite{Sh71b}. Let us only note that
the expression $\langle\wh E(\Delta){\mathfrak f},{\mathfrak g}\rangle_{+,-}$
has a sense if ${\mathfrak f},{\mathfrak g}\in\sH_-$. Indeed, for $h\in\dom A_0$
we get, by (ii),
\[
|( A_{0} h, \wh E(\Delta){\mathfrak f})_{\wt\sH}|=
|\langle E(\Delta) A_{0} h, {\mathfrak f}\rangle_{+,-}|
\le \|{\mathfrak f}\|_-\|E(\Delta) A_{0} h\|_+\le
C\|{\mathfrak f}\|_-\|h\|_\sH
\]
and hence
the functional $(A_{0} \cdot, \wh E(\Delta){\mathfrak f})_{\wt\sH}$
is bounded on $\sH_0$. Therefore, $\wh E(\Delta){\mathfrak f}\in\dom A_0^*$,
$\| A_{0}^*\wh E(\Delta){\mathfrak f}\|_{\sH}\le
C\|{\mathfrak f}\|_-$
and hence, $\wh E(\Delta)\in\cB(\sH_-,\sH_+)$.
\end{proof}

The operator-function $\wh E(\Delta):=E(\Delta)^{\langle *\rangle}$ is called the
{\it extended generalized spectral function} of the selfadjoint relation $\wt A$.
Notice that the generalized resolvent ${\mathbf R}_z$ with  the minimal representing relation $\wt A$
has the integral representation
\begin{equation}\label{eq:gen_res_int}
  {\mathbf R}_z=\int_{\dR}\frac{dE_{\lambda}}{\lambda-z},\quad z\in\dC_+\cup\dC_-,
\end{equation}
while the extended regularized generalized resolvent $\wh {\mathbf R}_z$ (see~\eqref{eq:RegExtRes}) takes the form
\begin{equation}\label{eq:gen_res_int}
 \wh {\mathbf R}_z=\int_{\dR}\left(\frac{1}{\lambda-z}-\frac{\lambda}{\lambda^2+1}\right)d\wh E_{\lambda}+K,\quad z\in\dC_+\cup\dC_-,
\end{equation}
where $K=K^*\in\cB(\sH_-,\sH_+)$, see~\cite{Sh71b}.
\subsection{Extended boundary triples}
\begin{lemma}\label{lem:bf_A0Pi}
Let $\Pi=(\dC^p,\Gamma_0,\Gamma_1)$ be a boundary triple for $A^*$,
let the ovf's 
$\gamma$ and  $\wh\gamma$
be given by~\eqref{eq:11_gamma1}
and let
$R_{z}^0=(A_0-{z} I)^{-1}$, ${z}\in\rho(A_0).$
Then the mapping $\Gamma$ admits a continuation to a bounded mapping
$\wh\Gamma: A^{\langle *\rangle}\to\dC^{2p}$ given by
\begin{equation}\label{eq:wh_Gamma}
 \wh\Gamma_0 \begin{bmatrix}
\wt R_{z}^0\ff  \\
\ff +{z}\wt R_{z}^0\ff
\end{bmatrix}= 0, \quad
 \wh\Gamma_1 \begin{bmatrix}
\wt R_{z}^0\ff  \\
\ff +{z}\wt R_{z}^0\ff
\end{bmatrix}= \gamma(\ov{z})^{\langle *\rangle}\ff , \quad \ff \in\sH_-,
\end{equation}
\[
\wh\Gamma\wh\gamma({z})u=\Gamma\wh\gamma({z})u,\quad u\in\dC^p.
\]
Moreover, $\wh\Gamma$ satisfies the relations
\begin{equation}\label{ker_wh_Gamma}
  \ker\wh\Gamma_0=\bA_0,\quad \ker\wh\Gamma_1\cap\ker\wh\Gamma_0 =\bA.
\end{equation}
\end{lemma}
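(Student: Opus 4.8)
The plan is to use the decomposition~\eqref{eq:A+_decom0}, namely $A^{\langle*\rangle}=\bA_0\dotplus\wh\sN_{z}$, together with Lemma~\ref{lem:bf_A0}(ii) which gives the explicit parametrisation $\bA_0=\{\text{\rm col}\{\wt R_{z}^0\ff,\,\ff+{z}\wt R_{z}^0\ff\}:\ff\in\sH_-\}$. First I would verify that the two formulas in~\eqref{eq:wh_Gamma} are consistent: for $\wh f\in A^*\subset A^{\langle*\rangle}$ one has $\ff=\ff'-{z} f\in\sH$ (by Lemma~\ref{lem:3.2B}(i)), so $\gamma(\ov z)^{\langle*\rangle}\ff=\gamma(\ov z)^*\ff=\Gamma_1\text{\rm col}\{R_z^0\ff,\ff+{z} R_z^0\ff\}$ already holds on $A^*$ by the standard boundary-triple identity $\Gamma_1=\Gamma_1 R_z^0(\cdot)$-formula; likewise $\Gamma_0$ vanishes on $A_0$. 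Hence $\wh\Gamma$ restricted to $A^*$ agrees with $\Gamma$, so it is genuinely a continuation. Since $A^{\langle*\rangle}=\bA_0\dotplus\wh\sN_z$ and $\wh\sN_z=\wh\gamma(z)\dC^p$, the two prescriptions — one on $\bA_0$ via~\eqref{eq:wh_Gamma}, one on $\wh\sN_z$ via $\wh\Gamma\wh\gamma(z)u=\Gamma\wh\gamma(z)u$ — define $\wh\Gamma$ on all of $A^{\langle*\rangle}$ unambiguously, provided the definition on $\bA_0$ is independent of the chosen base point $z\in\rho(A_0)$.

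Next I would check well-definedness and boundedness. Independence of $z$ on $\bA_0$: if $\text{\rm col}\{g,\fg'\}\in\bA_0$ then by Lemma~\ref{lem:bf_A0}(i) we have $g=\wt R_z^0(\fg'-{z} g)$ for every $z\in\rho(A_0)$, i.e. $\ff:=\fg'-{z} g$ depends on $z$, but $\gamma(\ov z)^{\langle*\rangle}(\fg'-{z} g)$ is $z$-independent; this follows from the $\gamma$-field identity~\eqref{eq:11_gamma2} together with the resolvent identity~\eqref{eq:HilbertId}, exactly as in the classical (non-rigged) case, the point being that $\gamma(\ov z)^{\langle*\rangle}-\gamma(\ov\mu)^{\langle*\rangle}=(\ov z-\ov\mu)\gamma(\ov\mu)^{\langle*\rangle}\wt R_{\ov z}^0$ or its adjoint form, applied to $\fg'-{z} g$. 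Boundedness of $\wh\Gamma_0$ is trivial (it vanishes on $\bA_0$ and equals $\Gamma_0\wh\gamma(z)$ on $\wh\sN_z$, the latter finite-rank); boundedness of $\wh\Gamma_1$ follows from $\gamma(\ov z)^{\langle*\rangle}\in\cB(\sH_-,\dC^p)$ (see~\eqref{eq:gamma<*>}) and boundedness of $\ff\mapsto\text{\rm col}\{\wt R_z^0\ff,\ff+{z}\wt R_z^0\ff\}$ as a map $\sH_-\to A^{\langle*\rangle}\subset\sH\times\sH_-$, using $\wt R_z^0\in\cB(\sH_-,\sH)$ from Lemma~\ref{lem:3.2B}(iii).

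Finally I would establish~\eqref{ker_wh_Gamma}. The inclusion $\bA_0\subseteq\ker\wh\Gamma_0$ is immediate from the first equation in~\eqref{eq:wh_Gamma}. For the reverse, take $\wh f\in A^{\langle*\rangle}$ with $\wh\Gamma_0\wh f=0$; write $\wh f=\wh g+\wh\gamma(z)u$ with $\wh g\in\bA_0$, $u\in\dC^p$, via~\eqref{eq:A+_decom0}; then $0=\wh\Gamma_0\wh f=\Gamma_0\wh\gamma(z)u$, and since $\Gamma_0\upharpoonright\wh\sN_z$ is a bijection onto $\dC^p$ (Lemma~\ref{lem:11_A*} and~\eqref{eq:11_gamma1}) we get $u=0$, so $\wh f=\wh g\in\bA_0$; this gives $\ker\wh\Gamma_0=\bA_0$. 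For the second relation, $\ker\wh\Gamma_1\cap\ker\wh\Gamma_0=\ker\wh\Gamma_1\cap\bA_0$; on $\bA_0$ we have $\wh\Gamma_1\text{\rm col}\{\wt R_z^0\ff,\ff+{z}\wt R_z^0\ff\}=\gamma(\ov z)^{\langle*\rangle}\ff$, so this vanishes iff $\gamma(\ov z)^{\langle*\rangle}\ff=0$; by the duality~\eqref{eq:gamma<*>} this means $\langle\ff,\gamma(\ov z)\xi\rangle_{-,+}=0$ for all $\xi\in\dC^p$, i.e. $\ff\perp\sN_{\ov z}$ in the $\langle\cdot,\cdot\rangle_{-,+}$ pairing; invoking Lemma~\ref{Lem:3.3}(vi), which identifies $\sN_{\ov z}$ with the annihilator of $\ran(\bA-{z} I)$, one concludes $\ff-{z}\wt R_z^0\ff\cdot$... more directly: $\gamma(\ov z)^{\langle*\rangle}\ff=0$ together with $\text{\rm col}\{\wt R_z^0\ff,\ff+{z}\wt R_z^0\ff\}\in\bA_0\subset A^{\langle*\rangle}$ forces the element into $\bA$ by the very definition~\eqref{eq:bf_S} of $\bA$ versus~\eqref{eq:bf_A0} of $\bA_0$, since the extra constraints defining $\bA$ inside $\bA_0$ are precisely the pairings against the $\gamma$-field directions. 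I expect the main obstacle to be the $z$-independence argument on $\bA_0$: one must carefully manipulate the rigged-space adjoints $\wt R_z^0$, $\gamma(\ov z)^{\langle*\rangle}$ and the extended Hilbert identity~\eqref{eq:HilbertId} in the correct dual pairings, making sure that the operators land in the right spaces ($\sH$, $\sH_+$, or $\sH_-$) at each step; once that bookkeeping is in place, everything else is a routine transcription of the classical boundary-triple computation.
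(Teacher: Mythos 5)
Your proposal follows the same architecture as the paper: the decomposition \eqref{eq:A+_decom0}, the parametrisation of $\bA_0$ from Lemma~\ref{lem:bf_A0}(ii), a piecewise definition of $\wh\Gamma$ on $\bA_0$ and on $\wh\sN_{z}$, and the identification of the kernels. There is, however, one step that as stated does not work, and it is precisely the only substantive estimate in the paper's proof: you infer boundedness of $\wh\Gamma_1\upharpoonright\bA_0$ from boundedness of the parametrisation $T:\ff\mapsto\text{\rm col}\{\wt R_{z}^0\ff,\,\ff+{z}\wt R_{z}^0\ff\}$ as a map $\sH_-\to\sH\times\sH_-$. But $\wh\Gamma_1\upharpoonright\bA_0=\gamma(\ov{z})^{\langle*\rangle}\circ T^{-1}$, so what is needed is a bound on $T^{-1}$, i.e. $\|\ff\|_{\sH_-}\lesssim\|\wt R_{z}^0\ff\|_{\sH}+\|\ff+{z}\wt R_{z}^0\ff\|_{\sH_-}$; boundedness of $T$ in the forward direction gives nothing here. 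The repair is immediate — write $\ff=(\ff+{z}\wt R_{z}^0\ff)-{z}\wt R_{z}^0\ff$ and use the continuity of the embedding $\sH\subset\sH_-$, which is exactly the sequential argument the paper gives; alternatively invoke the bounded inverse theorem, since $\bA_0$ is closed in $\sH\times\sH_-$ by Lemma~\ref{lem:bf_A0}(iii). Once this is fixed, your consistency check (agreement of the formula with $\Gamma_1$ on $A_0$ via Green's identity) together with the density of $A_0$ in $\bA_0$ makes the $z$-independence automatic, so the explicit computation with \eqref{eq:11_gamma2} and \eqref{eq:HilbertId} that you flag as the main obstacle is not actually needed.

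On the kernel relations: your argument for $\ker\wh\Gamma_0=\bA_0$ is fine (the paper leaves it as ``clear''). For $\ker\wh\Gamma_0\cap\ker\wh\Gamma_1=\bA$ you first gesture at the paper's route (Lemma~\ref{Lem:3.3}(vi), which also needs the closedness of $\ran(\bA-{z} I_\sH)$ from Lemma~\ref{Lem:3.3}(iv) plus a duality step, and then Lemma~\ref{lem:bf_A0}(i)), and then switch to a ``more direct'' claim: inside $\bA_0$ the only conditions in \eqref{eq:bf_S} not already guaranteed by \eqref{eq:bf_A0} are the pairings against $\wh\sN_{\ov{z}}$, and these read exactly $\gamma(\ov{z})^{\langle*\rangle}\ff=0$. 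That claim is correct (use $A^*=A_0\dotplus\wh\sN_{\ov{z}}$ and the identity $\langle\ff',h\rangle_{-,+}-( f,\ov{z}h)_{\sH}=\langle\ff'-{z} f,h\rangle_{-,+}$ for $h\in\sN_{\ov{z}}$), and it gives a cleaner finish than the paper's annihilator argument; but as written (``forces the element into $\bA$ by the very definition'') it is only a sketch and should be spelled out in those two lines.
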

\begin{proof}
  1) Since $A^{\langle*\rangle}$, $\bA_0$ and $\wh\sN_{{z}}$ are closed subspaces of
   $\wh\sH:=\sH\times\sH_-$ and the operators $\Gamma_j\upharpoonright\wh\sN_{z}$, $j=0,1$, are bounded from $\wh\sH$ to $\dC^p$, it is enough to prove that
    the operators $\Gamma_j\upharpoonright\bA_0$, $j=0,1$, are bounded from $\wh\sH$ to $\dC^p$. For $j=0$ this statement is clear, so let us prove that
     $\Gamma_1\upharpoonright\bA_0$ is bounded as an operator from $\wh\sH$ to $\dC^p$.

Assume that
\[
\wt R_{z}^0\ff_n\stackrel{\sH}{\longrightarrow}g,\quad
\ff_n +{z}\wt R_{z}^0\ff_n\stackrel{\sH_-}{\longrightarrow}\fg'\quad\text{as}\quad n\to\infty
\]
with $\ff_n, \fg'\in\sH_-$, $g\in\sH$. Then
$\ff_n\stackrel{\sH_-}{\longrightarrow}\ff:=\fg'-{z} g$ and
$\gamma(\ov{z})^{\langle *\rangle}\ff_n{\longrightarrow}
\gamma(\ov{z})^{\langle *\rangle}\ff$.  This proves that
$\Gamma_1\upharpoonright\bA_0$ is bounded as an operator from $\wh\sH$ to $\dC^p$.

2) The first equality in~\eqref{ker_wh_Gamma} is clear. Assume that
$\wh\Gamma_0\fg=\wh\Gamma_1\fg=0$ for some $\wh g\in A^{\langle*\rangle}$.
Then $\wh g\in \bA_0$ and, by Lemma~\ref{lem:bf_A0} (ii),
\[
\wh g=\begin{bmatrix}
\wt R_{z}^0\ff  \\
\ff +{z}\wt R_{z}^0\ff
\end{bmatrix}\quad\text{for some}\quad \ff\in\sH_-,\quad {z}\in\rho(A_0).
\]
By~\eqref{eq:wh_Gamma}, we get the equality
$\wh\Gamma_1\wh g=\gamma(\ov{z})^{\langle *\rangle}\ff =0$
which, by Lemma~\ref{Lem:3.3}(vi), implies that $\ff\in\ran(\bA-{z} I_\sH)$.
In view of  Lemma~\ref{Lem:3.3} (iii), we get
$\wh g
\in\bA$.
\end{proof}
\noindent
The triple $(\dC^p,\wh\Gamma_0,\wh\Gamma_1)$ will be called an {\it extended boundary triple for} $A^{\langle*\rangle}$.
\begin{corollary}\label{cor:ExtGreen}
For all $\wh f=\text{\rm col} \{ f, \ff' \}\in A^{\langle *\rangle}$ and
$\wh g=\text{\rm col} \{ g, \fg' \}\in A^*$  it holds
\begin{equation}\label{eq:ExtGreen}
  \langle \ff', g \rangle_{-,+}-( f, g')_\sH
  =(\Gamma_0\wh g)^*(\wh\Gamma_1\wh f)-(\Gamma_1\wh g)^*(\wh\Gamma_0\wh f).
\end{equation}
\end{corollary}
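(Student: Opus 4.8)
The goal is to extend the Green identity~\eqref{eq:1.9} from pairs in $A^*$ to the mixed situation where the first argument $\wh f$ lies in the larger relation $A^{\langle *\rangle}$ and the second argument $\wh g$ still lies in $A^*$. The natural strategy is to decompose $\wh f$ using the direct-sum decomposition~\eqref{eq:A+_decom0}, namely $A^{\langle *\rangle}=\bA_0\dotplus\wh\sN_{{z}}$ for a fixed ${z}\in\rho(A_0)$, check the identity separately on each summand, and add. On the $\wh\sN_{{z}}$-part the claim reduces to the ordinary Green identity~\eqref{eq:1.9} since $\wh\sN_{{z}}\subset A^*$ and $\wh\Gamma$ agrees with $\Gamma$ there by Lemma~\ref{lem:bf_A0Pi}; so the real content is the identity when $\wh f\in\bA_0$, in which case $\wh\Gamma_0\wh f=0$ and the right-hand side collapses to $(\Gamma_0\wh g)^*\wh\Gamma_1\wh f$.

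\medskip

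First I would fix ${z}\in\rho(A_0)$, write a generic element of $\bA_0$ as $\wh f=\text{\rm col}\{\wt R_{z}^0\ff,\,\ff+{z}\wt R_{z}^0\ff\}$ with $\ff\in\sH_-$ via~\eqref{eq:bf_A0R}, and a generic $\wh g=\text{\rm col}\{g;g'\}\in A^*$ decomposed according to Lemma~\ref{lem:11_A*}, i.e. $\wh g=\wh g_0+\wh g_{{z}}$ with $\wh g_0\in A_0$ and $\wh g_{{z}}=\wh\gamma({z})u\in\wh\sN_{{z}}$, $u=\Gamma_0\wh g$. For the $A_0$-part the left-hand side $\langle\ff',g_0\rangle_{-,+}-(f,g_0')_{\sH}$ vanishes: indeed $\text{\rm col}\{f;\ff'\}\in\bA_0\subset A^{\langle *\rangle}$ satisfies~\eqref{eq:bf_A*} against every element of $A$, and the defining relation of $\bA_0$ in~\eqref{eq:bf_A0} says exactly this against elements of $A_0$; meanwhile $\Gamma_0\wh g_0=0$, so the right-hand side vanishes too. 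For the $\wh\sN_{{z}}$-part, I would compute the left-hand side $\langle\ff',g_{{z}}\rangle_{-,+}-(f,{z} g_{{z}})_{\sH}$ directly using $\ff'=\ff+{z} f$ (so $\ff'-{z} f=\ff$) and the definition~\eqref{eq:gamma<*>} of $\gamma(\ov{z})^{\langle *\rangle}$; since $g_{{z}}=\gamma({z})u$ this gives $\langle\ff'-{z} f,\gamma({z})u\rangle_{-,+}=\overline{u^*\gamma(\ov z)^{\langle *\rangle}\ff}{}^{*}$—more precisely it equals $(\wh\Gamma_1\wh f)^*u$ with $\wh\Gamma_1\wh f=\gamma(\ov z)^{\langle *\rangle}\ff$ by~\eqref{eq:wh_Gamma}. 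On the right-hand side $\Gamma_0\wh g=u$ and $\wh\Gamma_1\wh f=\gamma(\ov z)^{\langle *\rangle}\ff$, so both sides match $(\Gamma_0\wh g)^*\wh\Gamma_1\wh f$.

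\medskip

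Then I would handle the general $\wh f\in A^{\langle *\rangle}$ by writing $\wh f=\wh f_0+\wh f_{{z}}$ with $\wh f_0\in\bA_0$ and $\wh f_{{z}}\in\wh\sN_{{z}}\subset A^*$ using~\eqref{eq:A+_decom0}. Bilinearity of both sides of~\eqref{eq:ExtGreen} in $\wh f$ reduces everything to the two cases $\wh f\in\bA_0$ (done above) and $\wh f\in\wh\sN_{{z}}$; in the latter $\wh f\in A^*$ and $\wh\Gamma\wh f=\Gamma\wh f$, the pairing $\langle\ff',g\rangle_{-,+}$ becomes the ordinary inner product $(\ff',g)_\sH$ since $\ff'\in\sH$, and~\eqref{eq:ExtGreen} is literally the classical Green identity~\eqref{eq:1.9}. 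Adding the contributions finishes the proof.

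\medskip

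\textbf{Anticipated obstacle.} The one point that needs care is the bookkeeping of which duality pairing is in force and where the adjoint $\langle *\rangle$ is taken — in particular verifying that $\langle\ff'-{z} f,\gamma({z})u\rangle_{-,+}$ really equals $(\gamma(\ov z)^{\langle *\rangle}(\ff'-{z} f))^*u$ with the conjugations on the correct side, given the convention~\eqref{eq+-duality} relating $\langle\cdot,\cdot\rangle_{-,+}$ and $\langle\cdot,\cdot\rangle_{+,-}$. A secondary subtlety is that the decomposition $\wh f=\wh f_0+\wh f_{{z}}$ depends on the chosen ${z}\in\rho(A_0)$, but since the final identity~\eqref{eq:ExtGreen} makes no reference to ${z}$, any admissible choice works and consistency is automatic once both summand-cases are established.
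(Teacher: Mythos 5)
Your overall strategy---decompose $\wh f$ via \eqref{eq:A+_decom0}, dispose of the part $\wh f\in\wh\sN_{z}$ by the ordinary Green identity \eqref{eq:1.9}, and treat $\wh f\in\bA_0$ separately, splitting $\wh g$ as $A_0\dotplus\wh\sN_{z}$---is viable, and it is a genuinely different route from the one the paper intends: there the corollary is read off from Lemma~\ref{lem:bf_A0Pi} by approximating the $\bA_0$-component of $\wh f$ by elements of $A_0$ (using \eqref{eq:bf_A0R} and the density of $\sH$ in $\sH_-$) and passing to the limit in \eqref{eq:1.9}, exactly the limiting device employed again inside the proof of Lemma~\ref{lem:ExtGreenForm}. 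Your sub-cases $\wh f\in\bA_0,\ \wh g\in A_0$ and $\wh f\in\wh\sN_{z}$ are correct.

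The mixed case $\wh f\in\bA_0$, $\wh g=\wh\gamma({z})u\in\wh\sN_{z}$, however, fails as written. With $f=\wt R_{z}^0\ff$, $\ff'=\ff+{z}f$, $g=\gamma({z})u$, $g'={z}g$ one has
\[
\langle\ff',g\rangle_{-,+}-(f,g')_\sH
=\langle\ff,\gamma({z})u\rangle_{-,+}+({z}-\bar{z})(f,\gamma({z})u)_\sH ,
\]
because the pairing is linear in its first slot while $(f,{z}g)_\sH=\bar{z}(f,g)_\sH$; for nonreal ${z}$ this is not $\langle\ff'-{z}f,\gamma({z})u\rangle_{-,+}$, so the cross term cannot be dropped. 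Moreover, \eqref{eq:gamma<*>} gives $u^*\gamma(\bar{z})^{\langle*\rangle}\ff=\langle\ff,\gamma(\bar{z})u\rangle_{-,+}$, with $\gamma(\bar{z})$ rather than $\gamma({z})$, so your identification of $\langle\ff,\gamma({z})u\rangle_{-,+}$ with $u^*\gamma(\bar{z})^{\langle*\rangle}\ff$ is also false in general---and the ``verification target'' in your obstacle paragraph is likewise not an identity. The two errors cancel: using \eqref{eq:11_gamma2} in the form $\gamma(\bar{z})=\gamma({z})+(\bar{z}-{z})R^0_{\bar{z}}\gamma({z})$ together with $(\wt R_{z}^0\ff,h)_\sH=\langle\ff,R^0_{\bar{z}}h\rangle_{-,+}$, the omitted term converts $\langle\ff,\gamma({z})u\rangle_{-,+}$ into $\langle\ff,\gamma(\bar{z})u\rangle_{-,+}=(\Gamma_0\wh g)^*\wh\Gamma_1\wh f$, which is the asserted right-hand side. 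So the conclusion is right, but this resolvent identity for the $\gamma$-field---not mere bookkeeping of conjugations---is the actual content of the mixed case and must be supplied explicitly. (Note also that the right-hand side of \eqref{eq:ExtGreen} is $u^*(\wh\Gamma_1\wh f)$, not $(\wh\Gamma_1\wh f)^*u$.)
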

In the following lemma we present an extended version of the abstract Green formula.
\begin{lemma}\label{lem:ExtGreenForm}
Let  $\wh\Pi=  (\dC^p,\wh\Gamma_0,\wh\Gamma_1) $ be an extended boundary triple for
$A^{\langle*\rangle}$,
let $\bR^0$ be the regularizer given by~\eqref{eq:cR},
let
$\wh f=\text{\rm col} \{ f, \ff' \}$,
               $\wh g=\text{\rm col} \{ g, \fg' \}\in A^{\langle*\rangle}$
and let ${\wt f},{\wt g}\in\sH_-$ be such that $\ff'+{\wt f}, \fg'+{\wt g}\in \sH$.
Then
\begin{enumerate}
  \item [(i)] $f+\bR^0 {{\wt f}}\in\sH_+$ and $g+\bR^0 {{\wt g}}\in\sH_+$;
  \item [(ii)] the following equality holds:
\begin{multline}\label{eq:ExtGreen2}
  (\ff'+{{\wt f}},g)_\sH-(f, \fg'+{{\wt g}})_\sH
  =(\wh\Gamma_0\wh g)^*(\wh\Gamma_1\wh f)-(\wh\Gamma_1\wh g)^*(\wh\Gamma_0\wh f)\\
  -\left\langle f+\bR^0 {{\wt f}}, {{\wt g}}\right\rangle_{+,-}
  +\left\langle {{\wt f}},g+\bR^0 {{\wt g}}\right\rangle_{-,+}.
\end{multline}
\end{enumerate}
\end{lemma}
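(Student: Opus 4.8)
The strategy is to reduce the extended Green formula \eqref{eq:ExtGreen2} to the ordinary abstract Green identity~\eqref{eq:1.9} for the boundary triple $\Pi=(\dC^p,\Gamma_0,\Gamma_1)$ by ``regularizing'' the elements $\wh f,\wh g\in A^{\langle*\rangle}$ into honest elements of $A^*$. The point of Lemma~\ref{lem:ExtGreenForm}(i) is precisely that such a regularization is possible: I will first prove (i), and then use it to write down two elements of $A^*$ obtained from $\wh f$ and $\wh g$, after which (ii) becomes a bookkeeping computation matching the duality corrections against the defect of the regularized vectors.

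For (i): recall that $\bR^0=(R^0)^{\langle*\rangle}$ with $R^0=\tfrac12(R_i^0+R_{-i}^0)$, and that $\bR^0$ maps $\sH_-$ into $\sH$, while $\bR^0-\wt R_z^0\in\cB(\sH_-,\sH_+)$ for $z\in\rho(A_0)$ by the remark following Lemma~\ref{lem:3.2B}. Write $\wh f=\text{\rm col}\{f;\ff'\}\in A^{\langle*\rangle}=\bA_0\dotplus\wh\sN_z$ (Lemma~\ref{lem:bf_A0}(iii)); the $\wh\sN_z$-summand lies in $\sH\times\sH\subset\sH\times\sH_+$, so modulo a vector in $\sH_+$ we may assume $\wh f\in\bA_0$, i.e.\ $f=\wt R_z^0\ff$ and $\ff'=\ff+z\wt R_z^0\ff$ for some $\ff\in\sH_-$ by Lemma~\ref{lem:bf_A0}(ii). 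By hypothesis $\ff'+{\wt f}\in\sH$, so ${\wt f}=(\ff'+{\wt f})-\ff'\in\sH_-$ and in fact $\ff'+{\wt f}-\ff=z\,\wt R_z^0\ff\in\sH$ forces $\ff-{\wt f}'$-type relations; the cleanest route is to observe $f+\bR^0{\wt f}= \wt R_z^0\ff + \bR^0{\wt f}$ and $\bR^0{\wt f}-\wt R_z^0{\wt f}\in\sH_+$, so it suffices to check $\wt R_z^0(\ff+{\wt f})\in\sH_+$; but $\ff+{\wt f}=(\ff'+{\wt f})-z\wt R_z^0\ff\in\sH$ (a sum of elements of $\sH$), and $\wt R_z^0$ restricted to $\sH$ is just $R_z^0=(A_0-z)^{-1}$, which maps $\sH$ into $\dom A_0\subset\sH_+$. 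This gives $f+\bR^0{\wt f}\in\sH_+$, and symmetrically for $g+\bR^0{\wt g}$.

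For (ii): set $\wh F:=\text{\rm col}\{f+\bR^0{\wt f}\,;\,\ff'+{\wt f}\}$ and $\wh G:=\text{\rm col}\{g+\bR^0{\wt g}\,;\,\fg'+{\wt g}\}$. From the computation in (i), $f+\bR^0{\wt f}\in\dom A_0\subset\dom A^*$ and $\ff'+{\wt f}\in\sH$; one checks $\wh F\in A^*$ (e.g.\ $\wh F = \text{\rm col}\{R_z^0(\ff+{\wt f})\,;\,(\ff+{\wt f})+zR_z^0(\ff+{\wt f})\}+$ the $\wh\sN_z$-part, both of which lie in $A^*$), and likewise $\wh G\in A^*$. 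Now apply the ordinary Green identity~\eqref{eq:1.9} to $\wh F,\wh G\in A^*$:
\begin{equation*}
(\ff'+{\wt f},\,g+\bR^0{\wt g})_\sH-(f+\bR^0{\wt f},\,\fg'+{\wt g})_\sH
=(\Gamma_0\wh G)^*(\Gamma_1\wh F)-(\Gamma_1\wh G)^*(\Gamma_0\wh F).
\end{equation*}
On the left, expand the inner products using $\bR^0{\wt g}\in\sH$, $\bR^0{\wt f}\in\sH$ and the duality relation~\eqref{eq:S_duality}, $\langle\ff,h\rangle_{-,+}=(\ff,h)_\sH$ for $\ff\in\sH$, $h\in\sH_+$; the cross terms reassemble into $(\ff'+{\wt f},g)_\sH-(f,\fg'+{\wt g})_\sH$ plus exactly $-\langle f+\bR^0{\wt f},{\wt g}\rangle_{+,-}+\langle{\wt f},g+\bR^0{\wt g}\rangle_{-,+}$, after cancelling the term $(\bR^0{\wt f},\bR^0{\wt g})_\sH$ against itself and using $(\bR^0{\wt f},\fg')_\sH$-type rewritings via $\bR^0=(R^0)^{\langle*\rangle}$ (so that $(\bR^0{\wt f},g')_\sH=\langle {\wt f}, R^0 g'\rangle_{-,+}$, etc.). On the right, I invoke $\wh\Gamma_j\wh f=\Gamma_j\wh F$ and $\wh\Gamma_j\wh g=\Gamma_j\wh G$, $j=0,1$: this is the crucial compatibility of the extended boundary maps of Lemma~\ref{lem:bf_A0Pi} with the regularization, since adding a vector of the form $\text{\rm col}\{\bR^0{\wt f};{\wt f}\}$ (which, after the reduction, sits inside $\bA_0$ shifted into $\sH\times\sH$) does not change $\wh\Gamma_0$ and changes $\wh\Gamma_1$ only through $\gamma(\ov z)^{\langle*\rangle}$ applied to an element of $\ran(\bA-zI)$, hence by zero on the relevant piece; verifying this identification cleanly is the one genuinely delicate point, and it is where I would spend the most care — it hinges on Lemma~\ref{Lem:3.3}(vi) and the defining formulas~\eqref{eq:wh_Gamma}. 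Once these substitutions are made, the right-hand side becomes $(\wh\Gamma_0\wh g)^*(\wh\Gamma_1\wh f)-(\wh\Gamma_1\wh g)^*(\wh\Gamma_0\wh f)$, and comparing the two sides yields~\eqref{eq:ExtGreen2}. The main obstacle, as noted, is the careful tracking of which correction terms are absorbed by the duality pairings versus which are absorbed by the boundary-map identification; the rest is linear algebra with the Hilbert identity.
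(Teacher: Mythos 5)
Your part (i) is correct and is essentially the paper's own argument: decompose $\wh f$ along $A^{\langle*\rangle}=\bA_0\dotplus\wh\sN_{z}$, observe that the hypothesis gives $\ff+{\wt f}\in\sH$, and use that $\wt R_{z}^0-\bR^0\in\cB(\sH_-,\sH_+)$ and $R_{z}^0\sH\subset\dom A_0\subset\sH_+$.

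Part (ii), however, contains a genuine gap. The pivotal claim — that $\wh F=\text{\rm col}\{f+\bR^0{\wt f};\,\ff'+{\wt f}\}$ and $\wh G$ belong to $A^*$, so that the ordinary Green identity \eqref{eq:1.9} can be applied to them — is false in general. Indeed, $\wh F\in A^*$ would mean $(\ff'+{\wt f},h)_\sH=(f+\bR^0{\wt f},h')_\sH$ for all $\text{\rm col}\{h;h'\}\in A$; using $\wh f\in A^{\langle*\rangle}$ and $(\bR^0{\wt f},h')_\sH=\langle{\wt f},R^0h'\rangle_{-,+}$ this reduces to $\langle{\wt f},\,h-R^0h'\rangle_{-,+}=0$, whereas for $\text{\rm col}\{h;h'\}\in A$ one has $R^0h'-h=\tfrac{i}{2}(R_i^0-R_{-i}^0)h=-R_i^0R_{-i}^0h\neq0$: the regularizer $\bR^0$ is a mean of two extended resolvents, not an extended resolvent at a real point, so $\text{\rm col}\{\bR^0{\wt f};{\wt f}\}\notin A^{\langle*\rangle}$ and ${\wt f}$ (constrained only by $\ff'+{\wt f}\in\sH$) has no reason to annihilate $R_i^0R_{-i}^0\,\dom A$. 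Your alternative description of $\wh F$ as $\text{\rm col}\{R_{z}^0(\ff+{\wt f});(\ff+{\wt f})+zR_{z}^0(\ff+{\wt f})\}$ plus the $\wh\sN_{z}$-part is likewise incorrect: it differs from $\wh F$ by $\text{\rm col}\{(\wt R_{z}^0-\bR^0){\wt f};\,z\wt R_{z}^0{\wt f}\}$, which is nonzero in general. Consequently $\Gamma_j\wh F$ is not even defined, and the subsequent identification $\wh\Gamma_j\wh f=\Gamma_j\wh F$ (justified by claiming ${\wt f}$ is paired through $\gamma(\ov{z})^{\langle*\rangle}$ with an element of $\ran(\bA-{z}I)$, which ${\wt f}$ need not be) has no basis. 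The correction terms in \eqref{eq:ExtGreen2} are exactly the obstruction to the regularization you attempt. The paper proves (ii) differently: it reduces to $\wh g\in\bA_0$ (the $\wh\sN_{z}$-part being covered by the half-extended identity \eqref{eq:ExtGreen} of Corollary~\ref{cor:ExtGreen}), approximates $\wh g$ by $\wh g_n\in A_0$ in the graph topology of $\sH\times\sH_-$, applies \eqref{eq:ExtGreen} to the pair $(\wh f,\wh g_n)$, and passes to the limit using the boundedness of $\wh\Gamma$ on $\bA_0$ (Lemma~\ref{lem:bf_A0Pi}) and the convergence $R^0g_n'-g_n\to\bR^0\fg'-g$ in $\sH_+$. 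If you wish to salvage your route, you must work with the genuine $A^*$-element above and then propagate the discrepancy $\text{\rm col}\{(\wt R_{z}^0-\bR^0){\wt f};\,z\wt R_{z}^0{\wt f}\}$ through both sides, which amounts to a different computation that you have not carried out.
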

\begin{proof}
(i) By \eqref{eq:A+_decom0}, for ${z}\in \rho({A}_0)$ the vector $\wh g$ admits the representation
\[
\begin{bmatrix}
                 g \\
                 \fg'
               \end{bmatrix}=
\begin{bmatrix}
                 g_0 \\
                 \fg_0'
               \end{bmatrix}
               +\begin{bmatrix}
                 g_{z} \\
                {z} g_{z}
               \end{bmatrix}\quad
\text{for some $\wh g_0=\begin{bmatrix}
                 g_0 \\
                 \fg_0'
               \end{bmatrix}\in\bA_0$ and
$\wh g_{z}=\begin{bmatrix}
                 g_{z} \\
                {z} g_{z}
               \end{bmatrix}\in\wh\sN_{z}$}.
\]
Then
\[
h:=\fg'-{z} g +{{\wt g}}=\fg_0'-{z} g_0 +{{\wt g}}\in\sH.
\]
By Lemma~\ref{lem:bf_A0} (i), $ R_{z}^0h=g_0+\wt R_{z}^0{{\wt g}}$ and hence
\[
g+\bR^0 {{\wt g}}=g_0+\bR^0 {{\wt g}}+g_{z}= R_{z}^0h-(\wt R_{z}^0-\bR^0) {{\wt g}}
+g_{z}\in\sH_+.
\]

\noindent
(ii)
If $\wh g=\wh g_{z}\in\wh\sN_{z}$, the equality~\eqref{eq:ExtGreen2} is reduced to~\eqref{eq:ExtGreen}. Therefore, it is enough to prove~\eqref{eq:ExtGreen2} for
$\wh g\in\bA_0$. Let us choose $\wh g_n=\text{\rm col} \{ g_n, g_n' \}\in A_0$ such that $g_n\stackrel{\sH}{\longrightarrow}g$, $g_n'\stackrel{\sH_-}{\longrightarrow}\fg'$.
Then, by Lemma~\ref{lem:bf_A0Pi}, $\Gamma_j\wh g_n\to \wh\Gamma_j\wh g$ for $j=0,1$, and,
 by~\eqref{cor:ExtGreen}, 
 we get
  \begin{equation}\label{eq:ExtGreen3}
  (\Gamma_0\wh g_n)^*(\wh\Gamma_1\wh f)-(\Gamma_1\wh g_n)^*(\wh\Gamma_0\wh f)
  =\langle\ff',g_n\rangle_{-,+}-(f, g_n')_\sH=C_n+D_n,
  \end{equation}
  where
  \begin{equation}\label{eq:ExtGreenCn}
C_n=(\ff'+{{\wt f}},g_n)_\sH-\langle f+\bR^0 {{\wt f}},  g_n'+{{\wt g}}\rangle_{-,+}
  \end{equation}
and
  \begin{equation}\label{eq:ExtGreenDn}
  \begin{split}
D_n&=\langle f+\bR^0 {{\wt f}}, {{\wt g}}\rangle_{+,-}+
(\bR^0 {{\wt f}},g_n')_\sH-\langle  {{\wt f}}, g_n\rangle_{-,+}\\
&=\langle f+\bR^0 {{\wt f}}, {{\wt g}}\rangle_{+,-}+\langle {{\wt f}},R^0 g_n'- g_n\rangle_{-,+}.
\end{split}
  \end{equation}
Since $\begin{bmatrix}
                 g_n \\
                 g_n'-{z} g_n
               \end{bmatrix}\in A_0-{z} I$
and $\begin{bmatrix}
                 g \\
                 \fg'-{z} g
               \end{bmatrix}\in \bA_0-{z} I$
we get, by Lemma~\ref{lem:bf_A0}(i),
\[
R_{z}^0( g_n'-{z} g_n)=g_n,\quad
\wt R_{z}^0( \fg'-{z} g)=g,
\]
and hence, by Lemma~\ref{lem:3.2B}(ii),
\[
     R_{z}^0 g_n'-g_n={z}  \wt R_{z}^0 g_n\stackrel{\sH_+}{\longrightarrow}{z}  R_{z}^0 g
     = \wt R_{z}^0 \fg'-g\in\sH_+ ,\quad\text{for all ${z}\in\rho(A_0)$}.
\]
Therefore,
$
R^0 g_n'-g_n \stackrel{\sH_+}{\longrightarrow}\bR^0 \fg'-g$ as $ n\to\infty$
and so
\[
D_n\to\langle f+\bR^0 {{\wt f}}, {{\wt g}}\rangle_{+,-}+\langle {{\wt f}}, \bR^0 \fg'-g\rangle_{-,+}
\quad\text{as}\quad n\to\infty.
\]
Passing to the limit in \eqref{eq:ExtGreenCn} and \eqref{eq:ExtGreen3} as $n\to\infty$ we obtain
\[
C_n\to
(\ff'+{{\wt f}},g)_\sH-(f, \fg'+{{\wt g}})_\sH-(\bR^0 {{\wt f}},  \fg'+{{\wt g}})_\sH
\quad\text{as}\quad n\to\infty
\]
and thus
  \begin{multline}\label{eq:ExtGreen4}
(\wh\Gamma_0\wh g)^*(\wh\Gamma_1\wh f)-(\wh\Gamma_1\wh g)^*(\wh\Gamma_0\wh f)
  =(\ff'+{{\wt f}},g)_\sH-(f, \fg'+{{\wt g}})_\sH\\
 +\langle f+\bR^0 {{\wt f}}, {{\wt g}}\rangle_{+,-}
 +\langle {{\wt f}}, \bR^0 \fg'-g\rangle_{-,+}
 -\langle {{\wt f}}, \bR^0 \fg'+\bR^0 \wt g\rangle_{-,+}\\
=(\ff'+{{\wt f}},g)_\sH-(f, \fg'+{{\wt g}})_\sH
+\langle f+\bR^0 {{\wt f}}, {{\wt g}}\rangle_{+,-}
-\left\langle {{\wt f}},g+\bR^0 {{\wt g}}\right\rangle_{-,+}.
\end{multline}
The latter equality is equivalent to \eqref{eq:ExtGreen2}.
\end{proof}

\section{${\sL}$-resolvents}\label{sec:4.1}
Here we recall the notion of the ${\sL}$-resolvent  of a symmetric linear relation $A$ in a Hilbert space 
 $\sH$ and present their description
 via the so-called  ${\sL}$-resolvent matrix.
 In the case of  a Hilbert space symmetric operator $A$ with $n_\pm(A)=1$ these notions
were  originally introduced by   M.G. Kre\u{\i}n~\cite{Kr44}.
\subsection{Gauge ${\sL}$ and ovf's $\cP({z})$, $\cQ({z})$.}
The notion of an ${\sL}$-regular point of a Hilbert space symmetric operator $A$  with respect to a proper gauge ${\sL}$,
and the ovf's $\cP({z})$, $\cQ({z})$ of the 1st and 2nd kind
were introduced by M.G. Kre\u{\i}n~\cite{Kr49} in the case $n_\pm(A)=p<\infty$,
 and by S.~Saakyan in~\cite{Sa65} for $n_\pm(A)=\infty$.
In  the case of an improper gauge ${\sL}$
that is contained in the space  $\sH_-$ of distributions associated with $A$ via Lemma~\ref{lem:RiggedPontrSp},
these notions were introduced by Yu.L.~Shmul'yan and E.R.~Tse\-ka\-novskii~\cite{Sh71}, \cite{ShTs77}, and for a linear relation $A$ by H. Langer and B. Tex\-to\-rius \cite{LaTe82}.
In this section we review some results from ~\cite{Sh71}, \cite{ShTs77}, \cite{LaTe82}
and investigate relations between the theories of $\sL$-resolvent matrices
 and of boundary triples which for operators $A$ with  proper gauges were presented in \cite{DM91,DM25}.

\begin{definition}\label{def:L_reg}
Let  $\sH_+\subset\sH\subset\sH_-$ be a rigged Hilbert space associated with
a symmetric linear relation $A$,
and let ${\sL}$ be a closed subspace of $\sH_-$.
\begin{enumerate}
\item[\rm(i)]  A point ${z}\in\wh\rho(A)$ is called ${\sL}$-{\it regular} for the operator $A$, if
\begin{equation}\label{eq:Lres2}
    \sH_-=\ran(\bA-{{z}}I_\sH)\dotplus{\sL}.
\end{equation}
The set of all ${\sL}$-regular points of the operator $A$ is denoted by $\rho(A,{\sL})$.
\item[\rm(ii)] For ${z}\in\rho(A,{\sL})$ denote by $\Pi_{{\sL}}^{z}$ the skew projection in $\sH_-$ onto ${\sL}$ parallel to
$\ran(\bA-{{z}}I_\sH)$.
\end{enumerate}
A  closed subspace ${\sL}\subset\sH_-$ with the property that $\rho(A,{\sL})\ne\emptyset$ will be called a {\it gauge} of $A$.
\end{definition}
\begin{lemma}\label{lem:PQ_prop}
Let  $\sH_+\subset\sH\subset\sH_-$ be a rigged Hilbert space associated with
a symmetric linear relation $A$,
let $\bR^0$ be the regularizer given by~\eqref{eq:cR},
let ${\sL}$ be a subspace of $\sH_-$ of dimension $p$,
let ${L}\in\cB({\dC^p},{\sL})$ be invertible,
and let
${z}\in\wh\rho(A)$. Then
\begin{enumerate}
\item[\rm(i)]
${z}\in\rho(A,{\sL})\,\Longleftrightarrow  \,$ ${L}^{\langle*\rangle}\!\upharpoonright\!\sN_{\overline{z}}:
\sN_{\overline{z}}\to\dC^p$ is invertible.
\medskip
\item[\rm(ii)]
The operator function
\begin{equation}\label{eq:P_lambda}
   \cP({z})={L}^{-1}\Pi^{z}_{\sL}:\sH_-\to\dC^p,\quad {z}\in\rho(A,\sL),
\end{equation}
 takes values in $\cB(\sH_-,{\dC^p})$ and
$\cP({z})^{\langle*\rangle}=({L}^{\langle*\rangle}\!\upharpoonright\!\sN_{\overline{z}})^{-1}
\in\cB({\dC^p},\sH_+)$.
\medskip
 \item[\rm(iii)] For  ${z}\in\rho(A,{\sL})$ we have
\begin{equation}\label{eq:PQ_pr1}
  \cP({z}){L}=I_p,\quad
  {L}^{\langle*\rangle} \cP({z})^{\langle*\rangle}=I_{p}.
  \end{equation}

\item[\rm(iv)]
If ${\mathbf R}_{{z}}$ is a generalized resolvent of $A$ and $\wh{\mathbf R}_{{z}}$  is a regularized extended generalized resolvent 
holomorphic at ${z}\in\rho(A,{\sL})$, then
the ovf
\begin{equation}\label{eq:Q_lambda}
   \cQ({z})={L}^{\langle*\rangle}
   ({\mathbf R}_{{z}}- \wh{\mathbf R}_{{z}}\Pi_{{\sL}}^{z})
   :\sH\to\dC^p,\quad {z}\in\rho(A,\sL),
\end{equation}
 takes values in $\cB(\sH,{\dC^p})$ and does not depend on the choice of generalized resolvent ${\mathbf R}_{{z}}$. Let us set
 $\cQ({z})^{\langle*\rangle}:=\cQ({z})^*\in\cB(\dC^p,\sH)$.
\medskip
\item[\rm(v)] For  ${z}\in\rho(A,{\sL})$  the operator $\cQ({z})-{L}^{\langle*\rangle}R^0\in\cB(\sH,{\dC^p})$ admits a continuation $\wt\cQ({z})={L}^{\langle*\rangle}\wh{\mathbf R}_{{z}}(I-{L}\cP({z}))\in\cB(\sH_-,{\dC^p})$ and
\begin{equation}\label{eq:PQ_pr2}
  \wt\cQ({z}){L}=O_{p},\quad {L}^{\langle*\rangle}\wt\cQ({z})^{\langle*\rangle}={L}^{\langle*\rangle} (\cQ({z})^{\langle*\rangle}-\bR^0 {L})=O_{p}.
\end{equation}
\end{enumerate}
\end{lemma}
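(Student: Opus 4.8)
The plan is to verify the five items in sequence, each resting on the rigging structure of Lemma~\ref{lem:RiggedPontrSp}, Lemma~\ref{Lem:3.3}, and the resolvent identities collected in Lemma~\ref{lem:3.2B} and Lemma~\ref{lem:bf_A0}.

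For item (i) I would use Lemma~\ref{Lem:3.3}(iv),(vi): $\ran(\bA-{z}I_\sH)$ is a closed subspace of $\sH_-$ whose annihilator in $\sH_+$ is $\sN_{\ov{z}}$, which has dimension $p$ for ${z}\in\wh\rho(A)$. Since ${\sL}$ also has dimension $p$, the direct sum decomposition \eqref{eq:Lres2} holds if and only if ${\sL}\cap\ran(\bA-{z}I_\sH)=\{0\}$. Testing a vector $Lu\in{\sL}$ against $\sN_{\ov{z}}$ via the duality $\langle\cdot,\cdot\rangle_{-,+}$ shows that $Lu\in\ran(\bA-{z}I_\sH)$ precisely when $\langle Lu, h\rangle_{-,+}=0$ for all $h\in\sN_{\ov{z}}$, i.e.\ when $u\in\ker(L^{\langle*\rangle}\!\upharpoonright\!\sN_{\ov{z}})$ in the sense dual to \eqref{eq:<*>2}; a dimension count then converts injectivity of $L^{\langle*\rangle}\!\upharpoonright\!\sN_{\ov{z}}$ into the decomposition \eqref{eq:Lres2}.

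For item (ii), boundedness of $\cP({z})=L^{-1}\Pi^{z}_{\sL}$ follows from closedness of both summands in \eqref{eq:Lres2} (closed graph theorem for the skew projection $\Pi^{z}_{\sL}$) together with $L^{-1}\in\cB({\sL},\dC^p)$. To identify the adjoint $\cP({z})^{\langle*\rangle}$, I would compute $\langle\ff,\cP({z})^{\langle*\rangle}\xi\rangle_{-,+}=\xi^*(\cP({z})\ff)$ and split $\ff=\Pi^{z}_{\sL}\ff+(I-\Pi^{z}_{\sL})\ff$; the second term lies in $\ran(\bA-{z}I_\sH)$ and pairs to zero with any vector of $\sN_{\ov{z}}\subset\sH_+$, while on the first term $\cP({z})$ acts as $L^{-1}$. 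This pins down $\cP({z})^{\langle*\rangle}$ as the inverse of $L^{\langle*\rangle}\!\upharpoonright\!\sN_{\ov{z}}$, which maps $\dC^p$ into $\sN_{\ov{z}}\subset\sH_+$. Item (iii) is then immediate: $\cP({z})L=L^{-1}\Pi^{z}_{\sL}L=I_p$ since $\ran L={\sL}$ and $\Pi^{z}_{\sL}\!\upharpoonright\!{\sL}=I_{\sL}$, and the second identity is the adjoint statement, or equivalently the fact that $L^{\langle*\rangle}$ restricted to $\sN_{\ov{z}}$ composed with its inverse is the identity.

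The substantive part is items (iv) and (v), and the main obstacle is showing that $\cQ({z})$ in \eqref{eq:Q_lambda} is \emph{independent of the chosen generalized resolvent} and well-defined as a bounded operator into $\dC^p$. Here I would argue that if ${\mathbf R}_{z}$ and ${\mathbf R}_{z}'$ are two generalized resolvents with regularized extensions $\wh{\mathbf R}_{z}$, $\wh{\mathbf R}_{z}'$, then by Theorem~\ref{krein} their difference $\wh{\mathbf R}_{z}-\wh{\mathbf R}_{z}'= -\gamma({z})\big[(M({z})+\tau({z}))^{-1}-(M({z})+\tau'({z}))^{-1}\big]\gamma(\ov{z})^{\langle*\rangle}$ has range inside $\sN_{z}$ (the range of $\gamma({z})$), so that ${\mathbf R}_{z}-\wh{\mathbf R}_{z}\Pi^{z}_{\sL}$ changes by a term of the form $\gamma({z})B\gamma(\ov{z})^{\langle*\rangle}(I-L\cP({z})\,\cdot)$ applied after $\Pi^{z}_{\sL}$; but $\gamma(\ov{z})^{\langle*\rangle}$ applied to a vector of the form $(I-L\cP({z}))\ff$, together with the relation $\ran(I-L\cP({z}))\subseteq\ran(\bA-{z}I_\sH)$ that follows from $\cP({z})L=I_p$ (item iii) and the definition of the skew projection, forces the correction term to vanish because $\gamma(\ov{z})^{\langle*\rangle}$ annihilates $\ran(\bA-{z}I_\sH)$ by Lemma~\ref{Lem:3.3}(vi) (dually, $\ran\gamma(\ov z)^{\langle*\rangle}$-adjoint kills the annihilator). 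For boundedness into $\dC^p$ I would use that $\wh{\mathbf R}_{z}\in\cB(\sH_-,\sH_+)$ (property (2) following \eqref{eq:cR}) together with $L^{\langle*\rangle}\in\cB(\sH_+,\dC^p)$ and ${\mathbf R}_{z}\in\cB(\sH,\sH_+)$. Then item (v) follows by writing $\cQ({z})-L^{\langle*\rangle}R^0 = L^{\langle*\rangle}(\wt{\mathbf R}_{z}-\bR^0)- L^{\langle*\rangle}\wh{\mathbf R}_{z}\Pi^{z}_{\sL}$, recognizing $\wt{\mathbf R}_{z}-\bR^0=\wh{\mathbf R}_{z}$, and regrouping as $L^{\langle*\rangle}\wh{\mathbf R}_{z}(I-L\cP({z}))$ using $\Pi^{z}_{\sL}=L\cP({z})$ on $\sH_-$; the two vanishing identities in \eqref{eq:PQ_pr2} then come from $\wt\cQ({z})L=L^{\langle*\rangle}\wh{\mathbf R}_{z}(I-L\cP({z}))L = L^{\langle*\rangle}\wh{\mathbf R}_{z}(L-L)=O_p$ by item (iii), and its dual/adjoint form combined with $\cQ({z})^{\langle*\rangle}=\cQ({z})^*$, $\bR^0=(R^0)^{\langle*\rangle}$.
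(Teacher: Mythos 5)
Your treatment of items (i)--(iii) and (v) is sound and essentially parallels the paper's: the annihilator identity of Lemma~\ref{Lem:3.3}(vi) plus closedness of $\ran(\bA-{z}I_\sH)$ (Lemma~\ref{Lem:3.3}(iv)) and a dimension count give (i) (in fact you prove both implications, whereas the paper only spells out the forward one), the duality computation identifying $\cP({z})^{\langle*\rangle}$ with $({L}^{\langle*\rangle}\!\upharpoonright\!\sN_{\ov z})^{-1}$ is exactly what is needed for (ii)--(iii), and your regrouping $\cQ({z})-L^{\langle*\rangle}R^0=L^{\langle*\rangle}\wh{\mathbf R}_{z}(I-\Pi^{z}_{\sL})$ together with $\cP({z})L=I_p$ and the adjoint of $\wt\cQ({z})L=O_p$ reproduces the paper's proof of (v).

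The problematic step is the independence claim in (iv). You derive it from Theorem~\ref{krein}: the difference of two extended generalized resolvents equals $-\gamma({z})\bigl[(M+\tau)^{-1}-(M+\tau')^{-1}\bigr]\gamma(\ov z)^{\langle*\rangle}$, and $\gamma(\ov z)^{\langle*\rangle}$ annihilates $\ran(\bA-{z}I_\sH)\supseteq\ran(I-\Pi^{z}_{\sL})$. That is correct, but only where the Kre\u{\i}n formula and the $\gamma$-field are available, i.e.\ for ${z}\in\dC_+\cup\dC_-$ (more precisely ${z}\in\rho(A_0)$). Item (iv), however, is stated for every ${z}\in\rho(A,{\sL})$ at which $\wh{\mathbf R}_{z}$ is holomorphic, and real such points are essential later (Definition~\ref{def:PseudoSF} and Section~\ref{sec:6} work with $\dR\subset\rho(A,{\sL})$); at a real ${z}$ your representation of the difference is simply not defined, and a continuation argument is not immediate because the subspace $\ran(\bA-{z}I_\sH)$ on which you need the vanishing also moves with ${z}$. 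The paper avoids this entirely by a direct argument: decompose $f=(h'-{z}h)+\fg$ with $\text{\rm col}\{h;h'\}\in\bA$, $\fg\in{\sL}$, note $(I-\Pi^{z}_{\sL})f=h'-{z}h$, and invoke Lemma~\ref{Lem:3.3}(iii), which says that \emph{every} extended generalized resolvent satisfies $\wt{\mathbf R}_{z}(h'-{z}h)=h$ at every ${z}\in\rho(\wt A)$, real or not; hence the two candidate expressions for $\cQ({z})f$ differ by ${L}^{\langle*\rangle}(h-h)=0$. Replacing your Kre\u{\i}n-formula detour by this observation closes the gap (and is also shorter); the rest of your argument, including the boundedness of $\cQ({z})$ via $\wh{\mathbf R}_{z}\in\cB(\sH_-,\sH_+)$, ${\mathbf R}_{z}\in\cB(\sH,\sH_+)$ and ${L}^{\langle*\rangle}\in\cB(\sH_+,\dC^p)$, stands as written.
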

\begin{proof}
  (i)--(iii) For ${z}\in\rho(A,{\sL})$ we have $\cP({z}){L}=I_{p}$ and hence
  ${L}^{\langle*\rangle} \cP({z})^{\langle*\rangle}=I_{p}$. Since $\cP({z})^{\langle*\rangle}{\dC^p}\subseteq \sN_{\overline{z}}$, this implies that
  actually $\cP({z})^{\langle*\rangle}{\dC^p}=\sN_{\overline{z}}$ and
  the operator ${L}^{\langle*\rangle}\!\upharpoonright\!\sN_{\overline{z}}\in\cB(\sN_{\overline{z}},{\dC^p})$ 
  is invertible.
\medskip

(iv) The definition~\eqref{eq:Q_lambda} of $Q({z})$ is correct since for $f\in\sH$ we have
${\mathbf R}_{{z}}f\in\sH_+$ and
\[
({\mathbf R}_{{z}}- \wh{\mathbf R}_{{z}}\Pi_{{\sL}}^{z})f=
 \wh{\mathbf R}_{{z}}(I-\Pi_{{\sL}}^{z})f+R^0 f\in\sH_+.
\]
If ${\mathbf R}_{{z}}^{(1)}$ is another generalized resolvent of $A$ and
$\wh{\mathbf R}_{{z}}^{(1)}$  is the corresponding regularized extended generalized resolvent 
holomorphic at ${z}\in\rho(A,{\sL})$ and if
\[
f=(h'-{z} h)+\fg\quad\text{for some}\quad
\text{\rm col} \{ h,h' \}\in\bA
\quad\text{and}\quad \fg\in{\sL},
\]
then, by Lemma~\ref{Lem:3.3}(iii), we get
\[
\begin{split}
 {L}^{\langle*\rangle}({\mathbf R}_{{z}}- \wh{\mathbf R}_{{z}}\Pi_{{\sL}}^{z})f
&-{L}^{\langle*\rangle}({\mathbf R}_{{z}}^{(1)}-\wh{\mathbf R}_{{z}}^{(1)}\Pi_{{\sL}}^{z})f
  ={L}^{\langle*\rangle}(\wh{\mathbf R}_{{z}}-\wh{\mathbf R}_{{z}}^{(1)})(I_{\sH}-\Pi_{{\sL}}^{z})f\\
  &= {L}^{\langle*\rangle}(\wh{\mathbf R}_{{z}}-\wh{\mathbf R}_{{z}}^{(1)})(h'-{z} h)
={L}^{\langle*\rangle}(h-h)=0.
\end{split}
\]

(v) Since $\wt\cQ({z})={L}^{\langle*\rangle}\wh{\mathbf R}_{{z}}(I-\Pi_\sL^{z})\in\cB(\sH_-,{\dC^p})$ it holds that
$\wt\cQ({z}){L}=O_{p}$ for ${z}\in\rho(A,{\sL})$.
The second equality in \eqref{eq:PQ_pr2} follows from the first and the equality
$\wt\cQ({z})=\cQ({z})-L^{\langle*\rangle}\bR^0$.
\end{proof}
In particular, by Lemma~\ref{lem:PQ_prop}(iii), we obtain the following statement.
\begin{corollary}
For all  ${z}\in\rho(A,{\sL})\cap\rho(A_0)$  the ovf $\cQ({z})$ can be calculated by
\begin{equation}\label{eq:Q0}
  \cQ({z})={L}^{\langle*\rangle}
   ( R^0_{{z}}- \wh R^0_{{z}}\,\Pi_{{\sL}}^{z}),
\end{equation}
where $R^0_{{z}}$ is the resolvent of the selfadjoint extension $A_0$ of $A$.
\end{corollary}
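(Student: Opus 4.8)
The plan is to read off~\eqref{eq:Q0} as a special case of the defining formula~\eqref{eq:Q_lambda} for $\cQ({z})$, exploiting the fact (established in Lemma~\ref{lem:PQ_prop}(iv)) that the right-hand side of~\eqref{eq:Q_lambda} does not depend on the particular generalized resolvent $\mathbf R_{z}$ of $A$ that one inserts. The natural choice near a point ${z}\in\rho(A_0)$ is the canonical resolvent $R^0_{z}=(A_0-{z} I)^{-1}$ of the fixed selfadjoint extension $A_0$.

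First I would verify that $R^0_{z}$ indeed qualifies as a generalized resolvent of $A$: since $A_0$ is a selfadjoint extension of $A$ acting in $\sH$ itself, no enlargement of the Hilbert space is involved, condition~\eqref{eq:MinGres} holds trivially with $\wt\sH=\sH$, and $R^0_{z}=P_\sH(A_0-{z} I_{\wt\sH})^{-1}|_\sH$ is a (canonical) generalized resolvent, holomorphic on all of $\rho(A_0)$, in particular at every point of $\rho(A,\sL)\cap\rho(A_0)$. Next, by Lemma~\ref{lem:3.2B}(iii),(iv) the associated extended generalized resolvent is $\wt R^0_{z}=(R^0_{\ov z})^{\langle*\rangle}\in\cB(\sH_-,\sH)$, and the corresponding regularized extended generalized resolvent $\wh R^0_{z}:=\wt R^0_{z}-\bR^0$ (cf.~\eqref{eq:cR},~\eqref{eq:RegExtRes}) is holomorphic on $\rho(A_0)$; this is exactly the pair $(\mathbf R_{z},\wh{\mathbf R}_{z})$ for which Lemma~\ref{lem:PQ_prop}(iv) applies on $\rho(A,\sL)\cap\rho(A_0)$.

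Finally I would substitute $\mathbf R_{z}=R^0_{z}$ and $\wh{\mathbf R}_{z}=\wh R^0_{z}$ into~\eqref{eq:Q_lambda}; by the independence part of Lemma~\ref{lem:PQ_prop}(iv) (and the identity $\Pi^{z}_{\sL}={L}\cP({z})$ coming from~\eqref{eq:P_lambda} together with $\cP({z}){L}=I_p$ of Lemma~\ref{lem:PQ_prop}(iii)) the result is precisely~\eqref{eq:Q0}. I do not expect any genuine obstacle here; the one step warranting an explicit line of argument is the observation that $R^0_{z}$ is a generalized resolvent of $A$ (and not merely the resolvent of the extension $A_0$), which follows at once from $A\subset A_0$ and $A_0\subset\sH\times\sH$.
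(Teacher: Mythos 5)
Your argument is correct and is essentially the paper's own (implicit) one: the paper states the corollary as an immediate consequence of Lemma~\ref{lem:PQ_prop}, the point being exactly that the canonical resolvent $R^0_{z}$ of $A_0$ is one admissible choice of generalized resolvent in the defining formula~\eqref{eq:Q_lambda}, whose right-hand side is independent of that choice by Lemma~\ref{lem:PQ_prop}(iv). Your additional verification that $R^0_{z}$ is a generalized resolvent and that $\wh R^0_{z}=\wt R^0_{z}-\bR^0$ is the corresponding regularized extended resolvent, holomorphic on $\rho(A_0)$, is precisely the routine checking the paper leaves to the reader.
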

\begin{lemma}
For  ${{z}}\in\rho(A,{\sL})$ let us set  $\cQ({z})^{\langle*\rangle}:=\cQ({z})^*\in\cB({\dC^p},\sH)$ and
\begin{equation}\label{eq:whP_whQ}
    \wh\cP({z}):=\begin{bmatrix}
                       \cP({z}) &
                       {z}\cP({z})
                     \end{bmatrix}\quad\text{and}\quad
    \wh\cQ({z}):=\begin{bmatrix}
                       \cQ({z}) &
                       {z}\cQ({z})+{L}^{\langle*\rangle}
                     \end{bmatrix}.
\end{equation}
Then
\begin{equation}\label{eq:PQ*}
    \wh\cP({z})^{\langle*\rangle}=\begin{bmatrix}
                       \cP({z})^{\langle*\rangle} \\
                       \ov{z}\cP({z})^{\langle*\rangle}
                     \end{bmatrix}\quad\text{and}\quad
    \wh\cQ({z})^{\langle*\rangle}=\begin{bmatrix}
                       \cQ({z})^{\langle*\rangle} \\
                       \ov{z}\cQ({z})^{\langle*\rangle}+L
                     \end{bmatrix}, 
\end{equation}
and
\begin{enumerate}
  \item [(i)]  $\wh\cP({z})^{\langle*\rangle}u$ and $\wh\cQ({z})^{\langle*\rangle}u\in A^{\langle*\rangle}$ for all $u\in{\dC^p}$.
  \item [(ii)] For ${{z}}\in\rho_s(A,{\sL}):=\rho(A,{\sL})\cap\overline{\rho(A,{\sL})}$ the following   direct sum decomposition holds
\begin{equation}\label{eq:A+_decom}
    A^{\langle*\rangle}=\bA\dotplus\wh\cP({z})^{\langle*\rangle}{\dC^p}\dotplus\wh\cQ({z})^{\langle*\rangle}{\dC^p}.
\end{equation}
\end{enumerate}
\end{lemma}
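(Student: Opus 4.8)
I would establish the three assertions in order. \emph{The adjoint formulas} are obtained by unwinding the definitions of the $\langle*\rangle$-adjoints attached to the various dualities. Writing $\wh h=\text{\rm col} \{h,h'\}$ and using that $\langle\cdot,\cdot\rangle_{-,+}$ is linear in the first and conjugate-linear in the second argument, one computes
$u^*(\wh\cP(z)\wh h)=u^*(\cP(z)h)+z\,u^*(\cP(z)h')=\langle h,\cP(z)^{\langle*\rangle}u\rangle_{-,+}+\langle h',\bar z\,\cP(z)^{\langle*\rangle}u\rangle_{-,+}$,
which identifies $\wh\cP(z)^{\langle*\rangle}u$ with $\text{\rm col} \{\cP(z)^{\langle*\rangle}u,\bar z\cP(z)^{\langle*\rangle}u\}$. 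For $\wh\cQ$ the only new contribution is the term $u^*(L^{\langle*\rangle}h')=\langle h',Lu\rangle_{+,-}$ coming from~\eqref{eq:<*>2}, together with $u^*(\cQ(z)h)=(h,\cQ(z)^{\langle*\rangle}u)_\sH$ and $z\,u^*(\cQ(z)h')=(h',\bar z\cQ(z)^{\langle*\rangle}u)_\sH$; collecting these yields the second formula. Since $\cP(z)^{\langle*\rangle}u\in\sH_+$, $\cQ(z)^{\langle*\rangle}u\in\sH$ and $Lu\in\sL\subset\sH_-$, both columns lie in $\sH\times\sH_-$.

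For (i), the inclusion $\wh\cP(z)^{\langle*\rangle}u\in A^{\langle*\rangle}$ is immediate: by Lemma~\ref{lem:PQ_prop}(ii) the vector $\cP(z)^{\langle*\rangle}u$ lies in $\sN_{\bar z}$, hence $\wh\cP(z)^{\langle*\rangle}u\in\wh\sN_{\bar z}\subset A^*$, and $A^*\subseteq A^{\langle*\rangle}$ because for $\text{\rm col} \{f,f'\}\in A^*$ and $\text{\rm col} \{h,h'\}\in A$ one has $\langle f',h\rangle_{-,+}=(f',h)_\sH=(f,h')_\sH$ by~\eqref{eq:S_duality} (recall $\dom A\subseteq\sH_+$). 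The inclusion $\wh\cQ(z)^{\langle*\rangle}u\in A^{\langle*\rangle}$ amounts, after applying~\eqref{eq:S_duality} to $\langle\bar z\cQ(z)^{\langle*\rangle}u,h\rangle_{-,+}$, to proving $\langle Lu,h\rangle_{-,+}=(\cQ(z)^{\langle*\rangle}u,\,h'-zh)_\sH$ for all $\text{\rm col} \{h,h'\}\in A$. The heart of this part — and the step I expect to require the most care — is the identity $\cQ(z)(h'-zh)=L^{\langle*\rangle}h$ for $\text{\rm col} \{h,h'\}\in A$: indeed $h'-zh\in\ran(A-zI_\sH)\subseteq\ran(\bA-zI_\sH)$ is annihilated by $\Pi_\sL^{z}$, so~\eqref{eq:Q_lambda} together with Lemma~\ref{Lem:3.3}(iii) (applicable since $A\subseteq\bA$ and $\wt{\mathbf R}_z$ extends ${\mathbf R}_z$) gives $\cQ(z)(h'-zh)=L^{\langle*\rangle}{\mathbf R}_z(h'-zh)=L^{\langle*\rangle}h$. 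Dualizing, this reads $(\cQ(z)^{\langle*\rangle}u,h'-zh)_\sH=\overline{u^*L^{\langle*\rangle}h}=\langle Lu,h\rangle_{-,+}$, which is exactly what was needed.

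For (ii), all three summands lie in $A^{\langle*\rangle}$ by~\eqref{eq:bf_A_A0} and part (i), so it remains to show the sum is direct and exhausts $A^{\langle*\rangle}$. \emph{Directness.} If $\wh u+\wh\cP(z)^{\langle*\rangle}a+\wh\cQ(z)^{\langle*\rangle}b=0$ with $\wh u=\text{\rm col} \{h,h'\}\in\bA$, subtracting $\bar z$ times the first component from the second gives $h'-\bar z h=-Lb$; but $h'-\bar z h\in\ran(\bA-\bar z I_\sH)$ while $-Lb\in\sL$, and since $z\in\rho_s(A,\sL)$ forces $\bar z\in\rho(A,\sL)$, the decomposition~\eqref{eq:Lres2} at $\bar z$ yields $h'-\bar z h=0$ and $b=0$. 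The first component then reads $h=-\cP(z)^{\langle*\rangle}a\in\sN_{\bar z}$, so $\text{\rm col} \{h,\bar z h\}\in\bA\cap\sH^2=A$, hence $h\in\ker(A-\bar z I)=\{0\}$ (as $\bar z\in\wh\rho(A)$), and therefore $a=0$ (Lemma~\ref{lem:PQ_prop}(ii)) and $\wh u=0$. \emph{Exhaustion.} Given $\wh f=\text{\rm col} \{f,\ff'\}\in A^{\langle*\rangle}$, use~\eqref{eq:Lres2} at $\bar z$ to write $\ff'-\bar z f=g+\ell$ with $g\in\ran(\bA-\bar z I_\sH)$, $\ell\in\sL$; put $b=L^{-1}\ell$ and take $\wh u=\text{\rm col} \{u,\fu'\}\in\bA$ (unique by Lemma~\ref{Lem:3.3}(v)) with $\fu'-\bar z u=g$. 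Then $\wh w:=\wh f-\wh u-\wh\cQ(z)^{\langle*\rangle}b\in A^{\langle*\rangle}$ satisfies $\fw'-\bar z w=0$, so $\fw'=\bar z w\in\sH$, and for every $\text{\rm col} \{h,h'\}\in A$ we get $(\bar z w,h)_\sH=\langle\fw',h\rangle_{-,+}=(w,h')_\sH$, i.e. $\text{\rm col} \{w,\bar z w\}\in A^*$ and $w\in\sN_{\bar z}$; writing $w=\cP(z)^{\langle*\rangle}a$ gives $\wh w=\wh\cP(z)^{\langle*\rangle}a$, so $\wh f=\wh u+\wh\cP(z)^{\langle*\rangle}a+\wh\cQ(z)^{\langle*\rangle}b$. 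Apart from the identity of part (i), the delicate points are the consistent bookkeeping among the three pairings $(\cdot,\cdot)_\sH$, $\langle\cdot,\cdot\rangle_{-,+}$, $\langle\cdot,\cdot\rangle_{+,-}$ and the interplay of $z$ and $\bar z$, and the recognition that the hypothesis $z\in\rho_s(A,\sL)$ is invoked precisely to make the direct decomposition of $\sH_-$ available at $\bar z$.
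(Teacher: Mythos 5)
Your proposal is correct and follows essentially the same route as the paper: the inclusion $\wh\cQ({z})^{\langle*\rangle}u\in A^{\langle*\rangle}$ rests on the identity $\cQ({z})(h'-{z}h)={L}^{\langle*\rangle}h$ for $\text{\rm col}\{h,h'\}\in A$ (the paper computes it with $R^0_{z}$ via \eqref{eq:Q0}, you with ${\mathbf R}_{z}$ via \eqref{eq:Q_lambda} and Lemma~\ref{Lem:3.3}(iii) — the same fact), and the decomposition \eqref{eq:A+_decom} is obtained exactly as in the paper by splitting $\ff'-\ov{z}f$ along $\sH_-=\ran(\bA-\ov{z}I_\sH)\dotplus\sL$ and identifying the remainder with an element of $\sN_{\ov z}=\cP({z})^{\langle*\rangle}\dC^p$. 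Your directness argument (only the trivial representation of zero) is a cosmetic variant of the paper's uniqueness formulas \eqref{eq:11.12}.
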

\begin{proof}
(i)
The inclusion
$\wh\cP({z})^{\langle*\rangle}{\dC^p}\subseteq A^{\langle*\rangle}$ follows from Lemma~\ref{lem:PQ_prop} (ii).
Next, for $\text{\rm col} \{ h, h' \}\in A$ and $u\in{\dC^p}$ we obtain,
by~\eqref{eq:Q_lambda}, \eqref{eq:Q0} and Lemma~\ref{Lem:3.3}(iii),
\begin{multline*}
  \left\langle h,\ov{z} \cQ({z})^{\langle*\rangle}u+{L}u\right\rangle_{+,-}- (h',\cQ({z})^{\langle*\rangle}u)_\sH
   =u^*\left\{{L}^{\langle*\rangle}h-\cQ({z})(h'-{z} h)\right\} \\
      =u^*\left\{{L}^{\langle*\rangle}h-{L}^{\langle*\rangle}R_{z}^0(h'-{z} h)\right\} =0.
\end{multline*}
Hence, $\wh\cQ({z})^{\langle*\rangle}{\dC^p}\subseteq A^{\langle*\rangle}$.
\medskip

(ii)
The inclusion $A\dotplus\wh\cP({z})^{\langle*\rangle}{\dC^p}\dotplus \wh\cQ({z})^{\langle*\rangle}{\dC^p}\subseteq A^{\langle*\rangle}$ follows from (i) and~\eqref{eq:bf_A_A0}.
Let us prove the converse inclusion. Let
$\text{\rm col} \{ f, \ff' \}\in A^{\langle*\rangle}$.
Since $\ov{{z}}\in\rho(A,{\sL})$, there exist
$\text{\rm col} \{ g, \fg' \}\in \bA$ and $u\in{\dC^p}$ such that
\begin{equation}\label{eq:11.10}
  \ff'-\overline{{z}}f=\fg'-\overline{{z}}g+{L}u.
\end{equation}
It follows from \eqref{eq:11.10} and the relations 
\[
\begin{bmatrix}
f \\  \ff'-\overline{{z}}f
\end{bmatrix},\
\begin{bmatrix}
g \\  \fg'-\overline{{z}}g
\end{bmatrix},\
\begin{bmatrix}
\cQ({z})^{\langle*\rangle}u  \\  {L}u
\end{bmatrix}\in A^{\langle*\rangle}-\overline{{z}}I
\]
that
\begin{equation}\label{eq:11.11}
 \begin{bmatrix}
 f-g-\cQ({z})^{\langle*\rangle}u \\
        0
 \end{bmatrix}\in A^{\langle*\rangle}-\overline{{z}}I.
\end{equation}
Hence $ f-g-\cQ({z})^{\langle*\rangle}u\in\ker(A^{\langle*\rangle}-\overline{{z}}I)
=\sN_{\overline{{z}}}$.
By the assumption ${{z}}\in\rho(A,{\sL})$ and Lemma~\ref{lem:PQ_prop}(ii), the equality $\cP({z})^{\langle*\rangle}{\dC^p}=\sN_{\overline{{z}}}$ holds, and hence
there is a vector $v\in{\dC^p}$ such that
\begin{equation}\label{eq:11.10v}
f-g-\cQ({z})^{\langle*\rangle}u=\cP({z})^{\langle*\rangle}v.
\end{equation}
Therefore,
\[
\begin{bmatrix}
f \\
\ff'
\end{bmatrix}
=\begin{bmatrix}
g+\cP({z})^{\langle*\rangle}v+\cQ({z})^{\langle*\rangle}u\\
\fg'+\overline{{z}}\cP({z})^{\langle*\rangle}v+\overline{{z}}\cQ({z})^{\langle*\rangle}u+Lu
\end{bmatrix}
\in \bA\dotplus\wh\cP({z})^{\langle*\rangle}{\dC^p}\dotplus\wh\cQ({z})^{\langle*\rangle}{\dC^p}.
\]
Moreover, the decomposition \eqref{eq:A+_decom} is direct, since $u$, $v$, $g$ and $\fg'$ are uniquely determined
by~\eqref{eq:11.10} and~\eqref{eq:11.10v}:
\begin{equation}\label{eq:11.12}
\begin{split}
  u&=\cP(\overline{{z}})(\ff'-\ov{{z}} f),\quad
  g=R_{\overline{{z}}}^0(I-\Pi_{{\sL}}^{\overline{{z}}})(\ff'-\ov{{z}} f),\\
 v &={L}^{\langle*\rangle}(f-g),\qquad
 \fg'=\ff'-\overline{{z}}(f-g)-{L}u.
 \end{split}
\end{equation}
\end{proof}

\subsection{The ${\sL}$-preresolvent matrix}
Let $\widetilde A$ be a self-adjoint extension of $A$ in a possibly  larger Hilbert space $\wt\sH(\supseteq\sH)$. Consider $A$ as a linear relation $A'$ in $\wt\sH$. Then the adjoint linear relation to $A'$ in  $\wt\sH$ is equal to
\[
(A')^*=\left\{\begin{bmatrix}
                  f+h \\
                  f'+h'
                \end{bmatrix}:\,
                \begin{bmatrix}
                  f\\
                  f'
                \end{bmatrix}\in A^*,\, h,h'\in \wt\sH[-]\sH\right\}.
\]
Let us consider $\sH^\perp:=\wt\sH\ominus\sH$ as a subspace of the Hilbert space $\wt\sH$.
Consider $\wt\sH_+:=\mul A\oplus\dom (A')^*=\sH_+\oplus \sH^\perp$ as a  Hilbert space endowed with the  norm
\begin{equation}\label{eq:A+Norm2}
  \| \wt f\|^2_{\wt\sH_+}=\|f\|_{\sH_+}^2+\|h\|_{\sH^\perp}^2\quad
\mbox{for }\quad \wt f=f+h,\quad f\in\sH_+,\quad h\in \sH^\perp.
\end{equation}
Let $\sH_-$ be the dual space to $\sH_+$ that was  introduced in Lemma~\ref{lem:RiggedPontrSp}.
Then the Hilbert space $\wt\sH_-:=\sH_-\oplus \sH^\perp$ can be treated as a dual space to $\wt\sH_+$
with respect to the duality
\begin{equation}\label{eq:wtHpm}
\langle\ff+f^\perp, h+h^\perp\rangle^{(\wt\sH)}_{-,+}
:=\langle\ff, h\rangle_{-,+}+(f^\perp, h^\perp)_{\wt\sH}\quad
 \text{ $\ff\in \sH_-$, $h\in\sH_+$, $f^\perp,h^\perp\in\sH^\perp$}.
\end{equation}
Denote the resolvent of $\wt A$ by ${R}_{z}(\wt A):=(\wt A-{z} I_{\wt\sH})^{-1}$ (${z}\in\rho(\wt A)$) and let the extended  resolvent  $\wt{R}_{z}(\wt A)$ of $\wt A$ be defined by
\begin{equation}\label{eq:9.14BGc}
  (\widetilde{R}_{z}(\wt A)\ff,\wt h)_{\wt \sH}=
  \langle\ff, {R}_{\ov{{z}}}(\wt A)\wt h\rangle^{(\wt\sH)}_{-,+}
  \quad \textrm{for}\ \wt h\in\wt\sH,\ \wt\ff\in\wt\sH_-,\  {z}\in\rho(\widetilde A).
\end{equation}
  \begin{lemma}\label{lem:ExtRes_wtA}
  Let  $\widetilde{R}_{z}(\wt A)$ be the extended  resolvent and
  let $\wt{\mathbf R}_{{z}}$ be the extended generalized resolvent of $ A$. Then
 $\widetilde{R}_{z}(\wt A)\in\cB(\wt\sH_-,\wt\sH)$
 and $P_{\sH}\wt{R}_{z}(\wt A)\upharpoonright\sH_-=\wt{\mathbf R}_{{z}} $
 for all ${z}\in\rho(\widetilde A)$.
\end{lemma}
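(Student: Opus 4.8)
The plan is to unwind the definitions and check the two assertions separately, first boundedness of $\widetilde{R}_{z}(\wt A)$ and then the compatibility formula $P_{\sH}\wt{R}_{z}(\wt A)\upharpoonright\sH_-=\wt{\mathbf R}_{{z}}$. For the boundedness claim I would apply Lemma~\ref{lem:3.2B} to the pair $\langle A', \wt A\rangle$ in the enlarged space $\wt\sH$, viewing $\wt A$ as a minimal representing relation of its own resolvent: by Lemma~\ref{lem:3.2B}(ii)--(iii) one gets $R_{\ov{z}}(\wt A)\in\cB(\wt\sH,\wt\sH_+)$ and hence its adjoint with respect to the duality $\langle\cdot,\cdot\rangle^{(\wt\sH)}_{-,+}$, which by \eqref{eq:9.14BGc} is exactly $\widetilde{R}_{z}(\wt A)$, lies in $\cB(\wt\sH_-,\wt\sH)$. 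The only point to verify here is that the graph norm \eqref{eq:A+Norm2} on $\wt\sH_+$ and the decomposition $\wt\sH_\pm=\sH_\pm\oplus\sH^\perp$ genuinely put us in the situation of Lemma~\ref{lem:RiggedPontrSp} for the relation $A'$ in $\wt\sH$; this is essentially bookkeeping with the formula for $(A')^*$ recorded just above the statement, since $A'$ has the same operator part and multivalued part structure relative to $\wt\sH$.

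For the compatibility formula I would argue by duality directly from \eqref{eq:9.14BGc}. Fix $\ff\in\sH_-\subset\wt\sH_-$ and $h\in\sH$. Then
\[
(P_{\sH}\widetilde{R}_{z}(\wt A)\ff, h)_{\sH}=(\widetilde{R}_{z}(\wt A)\ff, h)_{\wt\sH}
=\langle\ff, R_{\ov{z}}(\wt A)h\rangle^{(\wt\sH)}_{-,+}
=\langle\ff, P_\sH R_{\ov{z}}(\wt A)h\rangle_{-,+},
\]
where the last equality uses that $\ff\in\sH_-$ annihilates the $\sH^\perp$-component in the duality \eqref{eq:wtHpm}, and that by Lemma~\ref{lem:3.2B}(i) applied to $\wt A$ the vector $R_{\ov z}(\wt A)h$ lies in $\dom(A')^*\subset\wt\sH_+$, so its projection onto $\sH$ lands in $\sH_+$ and the pairing makes sense. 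But $P_\sH R_{\ov{z}}(\wt A)h={\mathbf R}_{\ov{z}}h$ is precisely the generalized resolvent of $A$ with minimal representing relation $\wt A$ (Definition~\ref{def:genres}), and by Lemma~\ref{lem:3.2B}(iii) its extension $\wt{\mathbf R}_{{z}}={\mathbf R}_{\ov{z}}^{\langle*\rangle}$ satisfies $\langle\ff, {\mathbf R}_{\ov z}h\rangle_{-,+}=(\wt{\mathbf R}_{z}\ff, h)_\sH$. Comparing the two computations for all $h\in\sH$ gives $P_{\sH}\widetilde{R}_{z}(\wt A)\ff=\wt{\mathbf R}_{z}\ff$.

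The main obstacle I anticipate is not any single estimate but rather making the identification $\wt\sH_+\subset\wt\sH\subset\wt\sH_-$ legitimately a rigged-space triple for $A'$: one must check that the norm \eqref{eq:A+Norm2} is the graph norm of $(A')^*_{\rm op}$ and that the duality \eqref{eq:wtHpm} is the one coming from that rigging, so that Lemma~\ref{lem:3.2B} and the definition \eqref{eq:<*>} of the $\langle*\rangle$-adjoint can be invoked verbatim. Once that orthogonal-sum compatibility is in place (using that $\sH^\perp\subseteq\mul(A')^*\cap\ker((A')^*_{\rm op}$-type directions), the rest reduces to the duality chase above together with the relation $R_{\ov z}(\wt A)h\in\dom(A')^*$ that guarantees every pairing is well defined.
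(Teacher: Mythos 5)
Your argument is correct and follows essentially the same route as the paper: the paper's proof is exactly your duality chain $(\wt{R}_{z}(\wt A)\ff,h)_{\wt\sH}=\langle\ff, {R}_{\ov{{z}}}(\wt A)h\rangle^{(\wt\sH)}_{-,+}=\langle\ff, {\mathbf R}_{\overline{{z}}} h\rangle_{-,+}=(\wt{\mathbf R}_{{z}}\ff,h)_{\sH}$, using \eqref{eq:wtHpm} and Lemma~\ref{lem:3.2B}(iii). Your additional verification that $\wt\sH_+\subset\wt\sH\subset\wt\sH_-$ is the rigging of $A'$ so that Lemma~\ref{lem:3.2B} yields $\wt R_z(\wt A)\in\cB(\wt\sH_-,\wt\sH)$ is sound and merely makes explicit the boundedness claim that the paper leaves implicit.
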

\begin{proof}
  By Lemma~\ref{Lem:3.3}(iii) and \eqref{eq:wtHpm}, we get for $\ff\in\sH_-$, $h\in\sH$
\[
[\wt{R}_{z}(\wt A)\ff,h]_{\wt\sH}=\langle\ff, {R}_{\ov{{z}}}(\wt A)h\rangle^{(\wt\sH)}_{-,+}
=\langle\ff, {\mathbf R}_{\overline{{z}}} h\rangle_{-,+}
=(\wt{\mathbf R}_{{z}}\ff,h)_{\sH}.
\]
\end{proof}

\begin{definition}\label{def:11.Pl-res,matr}
Let 
$\wh{\mathbf R}_{{z}}$ be a regularized extended generalized resolvent of $A$, let  ${\sL}(\subset\sH_-)$ be a gauge for $A$ and let ${L}$ be an invertible mapping from $\cB({\dC^p},{\sL})$.  The
ovf
\begin{equation}\label{eq:G_res}
r({z}):={L}^{\langle*\rangle}\wh{\mathbf R}_{{z}}{L}={L}^{\langle*\rangle}(\wt{\mathbf R}_{{z}}-\bR^0){L}
\end{equation}
is called an ${\sL}${\it-resolvent} of $A$.
  \end{definition}
  Clearly, any ${\sL}$-resolvent $r({z})$ of $A$ belongs to $\cR^{\ptp}$.

\begin{lemma}\label{lem:PreresM}
Let $A$ be a closed symmetric linear relation with equal
defect  numbers $n_{\pm}(A)=p<\infty$ in a Hilbert space
$\sH$,
let $\sL(\subset\sH_-)$ be a gauge for $A$,
let $\Pi=(\dC^p,\Gamma_0,\Gamma_1)$ be a boundary triple for
$A^*$, let $M(\cdot)$ and $\gamma(\cdot)$  be the corresponding Weyl
function and  $\gamma$-field, respectively,
let $A_0 = 
\ker\Gamma_0$, ${z}\in\rho(A_0)$
and let $\wh R_{z}^0=\wt R_{z}^0-\bR^0$ be the  regularized extended  resolvent of $A_0$.
Then  the $2p\times 2p$--mvf ${\mathfrak  A}_{\Pi{\sL}}({\cdot})$  defined by
\begin{equation}\label{eq:Lres2A}
 {\mathfrak A}_{\Pi{\sL}}({z}) = ({\mathfrak a}_{ij}({z}))_{i,j=1}^2  :=
    \begin{bmatrix} M({z}) & \gamma(\ov{z})^{\langle*\rangle}{L}\\
    {L}^{\langle*\rangle}\gamma({z})& {L}^{\langle*\rangle}\wh R_{z}^0 {L}
    \end{bmatrix},
\end{equation}
has the following properties:
\begin{enumerate}
\item[\rm(i)]
 ${\mathfrak A}_{\Pi{\sL}}\in \cR^{2p\times 2p}$ and for all ${{z}},{\zeta}\in\rho(A_0)$ it holds$:$
\[
    {\mathsf{N}}^{{\sA}}_{\zeta}({{z}})
    := \frac{{\mathfrak A}_{\Pi{\sL}}({{z}})
    - {\mathfrak A}_{\Pi{\sL}} ({\zeta})^*}{{{z}} - \overline{{\zeta}}}
    = T({\zeta})^*T({{z}}),\quad 
T({{z}})=\begin{bmatrix}
             \gamma({{z}}) &
             \wt R_{z}^0 {L}
           \end{bmatrix}.
\]
\item[\rm(ii)]
${{z}}\in\rho( A, {\sL})\Longleftrightarrow
\text{\rm det }a_{21}({\ov{z}})\ne 0\Longleftrightarrow
\text{\rm det }a_{12}({{z}})\ne 0$.
  \medskip
\end{enumerate}
\end{lemma}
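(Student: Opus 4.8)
The plan is to prove (i) by evaluating the kernel $\mathsf N^{\sA}_\zeta(z)$ blockwise and matching it with
\[
T(\zeta)^*T(z)=\begin{bmatrix}\gamma(\zeta)^*\gamma(z) & \gamma(\zeta)^*\wt R_z^0 L\\ (\wt R_\zeta^0 L)^*\gamma(z) & (\wt R_\zeta^0 L)^*\wt R_z^0 L\end{bmatrix},
\]
then to read off that $\mathfrak A_{\Pi\sL}$ lies in $\cR^{2p\times 2p}$, and finally to deduce (ii) from the shape of the entry $\mathfrak a_{21}$ together with Lemma~\ref{lem:PQ_prop}(i).

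For (i), the $(1,1)$-entry of $\mathsf N^{\sA}_\zeta(z)$ is precisely the identity \eqref{Eq:11.8M} of Theorem~\ref{P:11.5}. For the off-diagonal entries I would start from the $\gamma$-field identity \eqref{eq:11_gamma2}, written as $R^0_w\gamma(w')=\bigl(\gamma(w)-\gamma(w')\bigr)/(w-w')$, and take its $\langle*\rangle$-adjoint; using $(R^0_{\bar z})^{\langle*\rangle}=\wt R^0_z$ this gives $\gamma(\bar z)^{\langle*\rangle}-\gamma(\zeta)^{\langle*\rangle}=(z-\bar\zeta)\,\gamma(\zeta)^*\wt R_z^0$ on $\sH_-$, and composing with $L$ on the right and with $L^{\langle*\rangle}$ on the left, together with $(L^{\langle*\rangle}\gamma(\zeta))^*=\gamma(\zeta)^{\langle*\rangle}L$ (immediate from \eqref{eq:<*>2} and \eqref{eq:gamma<*>}), produces the $(1,2)$- and $(2,1)$-entries. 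For the $(2,2)$-entry the decisive point is that $\bR^0$ is independent of $z$, so it drops out of $\mathfrak a_{22}(z)-\mathfrak a_{22}(\zeta)^*=L^{\langle*\rangle}\bigl(\wt R_z^0-\wt R_{\bar\zeta}^0\bigr)L$; the extended Hilbert identity \eqref{eq:HilbertId} then yields $\wt R_z^0-\wt R_{\bar\zeta}^0=(z-\bar\zeta)R^0_z\wt R^0_{\bar\zeta}=(z-\bar\zeta)R^0_{\bar\zeta}\wt R^0_z$, and, using $(\wt R_\zeta^0 L)^*=L^{\langle*\rangle}R^0_{\bar\zeta}$, this gives the entry $(\wt R_\zeta^0 L)^*\wt R_z^0 L$. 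This proves the identity for $z\neq\bar\zeta$; the diagonal case $z=\bar\zeta$ then follows by letting $\bar\zeta\to z$ once the symmetry (b) below is known. Non-negativity of the kernel is immediate from $\mathsf N^{\sA}_\zeta(z)=T(\zeta)^*T(z)$, since $\sum_{i,j}u_i^*\mathsf N^{\sA}_{z_j}(z_i)u_j=\bigl\|\sum_i T(z_i)u_i\bigr\|_\sH^2\ge 0$, so condition (a) of Definition~\ref{def:Nk-class} holds; condition (b), $\mathfrak A_{\Pi\sL}^\#(z)=\mathfrak A_{\Pi\sL}(z)$, I would check entrywise using $M^\#=M$ (Theorem~\ref{P:11.5}(iii)), $(L^{\langle*\rangle}\gamma(\bar z))^*=\gamma(\bar z)^{\langle*\rangle}L$ (which swaps the off-diagonal entries), and $(\wh R^0_{\bar z})^{\langle*\rangle}=\wh R^0_z$ (which holds because $(R^0_i)^*=R^0_{-i}$ makes $\bR^0$ $\langle*\rangle$-selfadjoint and $(\wt R^0_{\bar z})^{\langle*\rangle}=\wt R^0_z$). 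Holomorphy of $\mathfrak A_{\Pi\sL}$ on $\rho(A_0)$ is inherited from $M$, $\gamma$ and $\wt R^0_z$, so $\mathfrak A_{\Pi\sL}\in\cR^{2p\times 2p}$.

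For (ii), I would note that $\gamma(\bar z)\colon\dC^p\to\sN_{\bar z}$ is a bijection (because $\Gamma_0\uphar\wh\sN_{\bar z}$ is boundedly invertible and $\pi_1\uphar\wh\sN_{\bar z}$ is a bijection), whence $\mathfrak a_{21}(\bar z)=L^{\langle*\rangle}\gamma(\bar z)=(L^{\langle*\rangle}\uphar\sN_{\bar z})\,\gamma(\bar z)$ is invertible if and only if $L^{\langle*\rangle}\uphar\sN_{\bar z}$ is; by Lemma~\ref{lem:PQ_prop}(i) this happens exactly when $z\in\rho(A,\sL)$. The remaining equivalence is then a consequence of property (b): $\mathfrak a_{12}(z)=\mathfrak a_{21}(\bar z)^*$, so $\det\mathfrak a_{12}(z)=\overline{\det\mathfrak a_{21}(\bar z)}$.

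I expect the main obstacle to be the careful bookkeeping of the three kinds of adjoints involved — ordinary Hilbert-space adjoints and the two $\langle*\rangle$-adjoints attached to $\langle\cdot,\cdot\rangle_{-,+}$ and $\langle\cdot,\cdot\rangle_{+,-}$ — and, in particular, the verification of the compatibility relations $(R^0_{\bar z})^{\langle*\rangle}=\wt R^0_z$, $(\bR^0)^{\langle*\rangle}=\bR^0$, $(\wt R^0_\zeta L)^*=L^{\langle*\rangle}R^0_{\bar\zeta}$ and $R^0_z\wt R^0_{\bar\zeta}=R^0_{\bar\zeta}\wt R^0_z$ on which the block computation rests; once these are in hand, everything else is routine.
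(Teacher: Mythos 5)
Your proposal is correct and takes essentially the same route as the paper: part (i) is a blockwise evaluation of the kernel using \eqref{Eq:11.8M}, the $\gamma$-field identity \eqref{eq:11_gamma2} and the extended Hilbert identity \eqref{eq:HilbertId}, yielding the factorization $T(\zeta)^*T(z)$ and hence ${\mathfrak A}_{\Pi\sL}\in\cR^{2p\times 2p}$, while part (ii) rests on Lemma~\ref{lem:PQ_prop}(i), the bijectivity of $\gamma(\ov z):\dC^p\to\sN_{\ov z}$ and the relation ${\mathfrak a}_{12}(z)={\mathfrak a}_{21}(\ov z)^*$. Your additional verifications of the $\langle*\rangle$-adjoint identities (e.g. $(R^0_{\ov z})^{\langle*\rangle}=\wt R^0_z$, $(\wt R^0_\zeta L)^*=L^{\langle*\rangle}R^0_{\ov\zeta}$) merely make explicit what the paper's displayed computation uses tacitly.
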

\begin{proof}
(i) It follows from  \eqref{eq:Lres2A}, \eqref{eq:11_gamma2} and \eqref{Eq:11.8M} that for all ${{z}},{\zeta}\in\rho(A_0)$
\[
\begin{split}
{\mathsf{N}}^{\mathfrak A}_{\zeta}({{z}})  &=\frac{1}{{z} -\ov {{\zeta}}}
\begin{bmatrix}
M({{z}})-M(\ov{{\zeta}}) &
(\gamma (\ov{{z}})^*-\gamma ({\zeta})^*){L}\\
{L}^{\langle*\rangle}({\gamma}({{z}})-{\gamma}(\ov{{\zeta}})) &
\,\,{L}^{\langle*\rangle}(\wt R_{z}^0-\wt R_{\ov{\zeta}}^{0})
{L}
\end{bmatrix}\\
& =\begin{bmatrix}
\gamma ({\zeta})^*\gamma({{z}}) &
\gamma ({\zeta})^*\wt R_{z}^0{L}\\
{L}^{\langle*\rangle}R_{\ov{\zeta}}^{0}\gamma({{z}})&
\,\,{L}^{\langle*\rangle}R_{\ov{\zeta}}^{0}\wt R_{z}^0{L}
\end{bmatrix}
=T({\zeta})^*T({{z}}).
\end{split}
\]
This implies that ${\mathfrak A}_{\Pi{\sL}}\in \cR^{2p\times 2p}$.
\medskip

 (ii)
If  ${{z}}\in\rho(A,{\sL})$, then, by Lemma~\ref{lem:PQ_prop}(i), the operator ${L}^{\langle*\rangle}\upharpoonright\!{\sN_{\overline{{z}}}}$ is an isomorphism from $\sN_{\overline{{z}}}$ onto ${\dC^p}$.  Therefore, the operator $a_{21}({\overline{{z}}})={L}^{\langle*\rangle}\gamma(\overline{{z}})
=\left({L}^{\langle*\rangle}\!\upharpoonright\!{\sN_{\overline{{z}}}}\right) \gamma(\overline{{z}})$ is an isomorphism in $\dC^p$. This implies that $a_{12}({{z}})^{-1}\in\dC^{p\times p}$, since $a_{12}({{z}})=a_{21}({\overline{{z}}})^*$.

Conversely, if $a_{12}({{z}})^{-1}\in\dC^{p\times p}$, then $a_{21}({\overline{{z}}})=a_{12}({{z}})^*={L}^{\langle*\rangle}\gamma(\overline{{z}})$  is an isomorphism in $\dC^p$. Hence,
${L}^{\langle*\rangle}\!\upharpoonright\!\sN_{\overline{z}}$ is invertible
and, by Lemma~\ref{lem:PQ_prop}(i),
 ${{z}}\in\rho(A,{\sL})$.
\end{proof}
\begin{definition} \label{def:LpreresM}
The $\dC^{2p\times 2p}$--valued function ${\mathfrak
A}_{\Pi{\sL}}({{z}})$  defined by~\eqref{eq:Lres2A}
is called the ${\sL}$-preresolvent matrix of $A$ corresponding to the boundary triple $\Pi$, or, shortly,  the $\Pi{\sL}$-{\it preresolvent matrix} of $A$.

\end{definition}
\index{$\Pi{\sL}$-preresolvent matrix}
  The following lemma provides a description of ${\sL}$-resolvents of $A$.
\begin{lemma}\label{lem:Gres_F1}
Let the assumptions of Lemma~\ref{lem:PreresM}  hold and let $\rho_s( A, {\sL})\ne\emptyset$. Then the formula
\begin{equation}\label{eq:11.Lres1}
    {L}^{\langle*\rangle}\wh{\mathbf R}_{{z}}{L}={\mathfrak a}_{22}({{z}})-
    {\mathfrak a}_{21}({{z}})(\tau({{z}})+{\mathfrak a}_{11}({{z}}))^{-1}
    {\mathfrak a}_{12}({{z}}),\quad {z}\in\rho_s( A, {\sL})\setminus\dR,
\end{equation}
establishes a bijective correspondence between the set 
of all ${\sL}$-resolvents of $A$  and the set of
 $\tau\in\wt\cR^{\ptp}$.
\end{lemma}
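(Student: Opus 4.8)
The plan is to reduce the statement to an application of Theorem~\ref{krein} together with the definitions of the $\sL$-resolvent \eqref{eq:G_res} and the $\Pi\sL$-preresolvent matrix \eqref{eq:Lres2A}. First I would start from the description of generalized resolvents in Theorem~\ref{krein}(i): every generalized resolvent $\mathbf R_{z}$ of $A$ is given by
\[
\mathbf R_{z}=R_{z}^0-\gamma({z})(M({z})+\tau({z}))^{-1}\gamma(\ov z)^{\langle*\rangle}
\]
as $\tau$ runs over $\wt\cR^{\ptp}$, where I have already replaced $\gamma(\ov z)^*$ by its extension $\gamma(\ov z)^{\langle*\rangle}$ so that the formula makes sense on $\sH_-$ after passing to the extended (regularized) version $\wh{\mathbf R}_{z}=\wt{\mathbf R}_{z}-\bR^0$.

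Next I would conjugate this identity by $L$ on the right and $L^{\langle*\rangle}$ on the left. By Definition~\ref{def:11.Pl-res,matr}, $L^{\langle*\rangle}\wh{\mathbf R}_{z}L$ is precisely the $\sL$-resolvent $r({z})$, while $L^{\langle*\rangle}\wh R_{z}^0L=\mathfrak a_{22}({z})$, $L^{\langle*\rangle}\gamma({z})=\mathfrak a_{21}({z})$, $\gamma(\ov z)^{\langle*\rangle}L=\mathfrak a_{12}({z})$ and $M({z})=\mathfrak a_{11}({z})$ by \eqref{eq:Lres2A}. Hence
\[
L^{\langle*\rangle}\wh{\mathbf R}_{z}L
=\mathfrak a_{22}({z})-\mathfrak a_{21}({z})(\tau({z})+\mathfrak a_{11}({z}))^{-1}\mathfrak a_{12}({z}),
\]
which is exactly \eqref{eq:11.Lres1}. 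So the formula itself is immediate once one is careful that the regularizer $\bR^0$ produced in \eqref{eq:G_res} matches the one inside $\wh R_{z}^0$ in \eqref{eq:Lres2A} (both are $\bR^0$ from \eqref{eq:cR}); this bookkeeping, and the verification that the right-hand side is holomorphic at the relevant points, should be spelled out. One should also note that the restriction ${z}\in\rho_s(A,\sL)\setminus\dR$ guarantees, via Lemma~\ref{lem:PreresM}(ii), that $\det\mathfrak a_{12}({z})\ne0$ and $\det\mathfrak a_{21}({z})\ne0$, so no degeneracy occurs in the formula.

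The substantive point is the \emph{bijectivity} of the correspondence $r\leftrightarrow\tau$. Surjectivity onto $\wt\cR^{\ptp}$ is clear: any $\tau\in\wt\cR^{\ptp}$ yields a generalized resolvent by Theorem~\ref{krein}(i) and hence an $\sL$-resolvent by \eqref{eq:G_res}. For injectivity I would argue that the map $\mathbf R_{z}\mapsto r({z})=L^{\langle*\rangle}\wh{\mathbf R}_{z}L$ is itself injective on the set of generalized resolvents: since $\rho_s(A,\sL)\ne\emptyset$, fix ${z}$ with ${z},\ov z\in\rho(A,\sL)$, so that (by Lemma~\ref{lem:PQ_prop}) $L^{\langle*\rangle}$ is an isomorphism of $\sN_{\ov z}$ onto $\dC^p$ and $\mathfrak a_{12}({z})$, $\mathfrak a_{21}({z})$ are invertible; then from the displayed identity one recovers $(\tau({z})+M({z}))^{-1}=\mathfrak a_{21}({z})^{-1}(\mathfrak a_{22}({z})-r({z}))\mathfrak a_{12}({z})^{-1}$, hence $\tau({z})$ for each such ${z}$, and by holomorphy $\tau$ is determined on all of $\dC_+\cup\dC_-$; composing with the bijection of Theorem~\ref{krein}(i) gives injectivity of $r\mapsto\tau$. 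The main obstacle I anticipate is purely the duality-pairing bookkeeping: one must check that all operators involved ($\wh{\mathbf R}_{z}$, $\wh R_{z}^0$, $\gamma({z})$, $\gamma(\ov z)^{\langle*\rangle}$) really do map between the spaces claimed so that the compositions $L^{\langle*\rangle}(\cdots)L$ land in $\dC^{\ptp}$ and agree with the blocks of $\mathfrak A_{\Pi\sL}$; this is where Lemma~\ref{lem:3.2B}, Lemma~\ref{lem:PQ_prop} and \eqref{eq:gamma<*>} are used, and it is more delicate than in the operator case with proper gauge. I would also point out, for completeness, that $r\in\cR^{\ptp}$ (as remarked after Definition~\ref{def:11.Pl-res,matr}), so the correspondence indeed takes values in the asserted class.
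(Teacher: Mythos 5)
Your proposal is correct and follows essentially the same route as the paper: apply Theorem~\ref{krein} to write the extended generalized resolvent as $\wt{\mathbf R}_{z}=\wt R_{z}^0-\gamma({z})(M({z})+\tau({z}))^{-1}\gamma(\ov{z})^{\langle*\rangle}$, subtract the regularizer $\bR^0$, sandwich with ${L}^{\langle*\rangle}$ and ${L}$ to identify the blocks of ${\mathfrak A}_{\Pi\sL}$, and deduce uniqueness of $\tau$ from the invertibility of ${\mathfrak a}_{12}({z})$ and ${\mathfrak a}_{21}({z})$ on $\rho_s(A,\sL)$ via Lemma~\ref{lem:PreresM}(ii). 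The extra bookkeeping you flag (mapping properties between $\sH_-$, $\sH$, $\sH_+$ and the holomorphy/recovery of $\tau$) is consistent with, and slightly more explicit than, the paper's argument.
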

\begin{proof}
  If ${L}^{\langle*\rangle}\wh{\mathbf R}_{{z}}{L}$ is
  an ${\sL}$-resolvent of $A$,
  then, by Theorem~\ref{krein}, the extended generalized resolvent $ \wt{\mathbf R}_{z}$ admits the representation
\begin{equation}\label{eq:11.Lres2}
\wt{\mathbf R}_{{{z}}}=\wt R_{{z}}^0-\gamma({{z}})(M({{z}})
     +\tau({{z}}))^{-1}\gamma(\ov{{{z}}})^{\langle*\rangle},\quad
     {{z}}\in\dC\setminus\dR,
\end{equation}
for some $\tau\in\wt\cR^{\ptp}$.
Subtracting the regularizer $\bR^0$ and  applying from both sides the operators
$ {L}^{\langle*\rangle}$ and ${L}$ we obtain \eqref{eq:11.Lres1}.
Notice that $\tau\in\wt\cR^{\ptp}$ is uniquely defined by~\eqref{eq:11.Lres1} since,
by Lemma~\ref{lem:PreresM}(ii),
$a_{21}({z})$ and $a_{12}({z})$ are invertible for
${{z}}\in\rho_s( A, {\sL})$.
\end{proof}
\subsection{Right ${\sL}$-resolvent matrix}
\label{sec:11.5}

  In this section, for any  symmetric operator $A$ in a Hilbert space $
  \sH$, boundary triple  $\Pi= (\dC^p,\Gamma_0,\Gamma_1)  $ for   $A^{*}$
and a gauge  $\sL\subset\sH_-$ of $A$, we associate a  $\Pi\sL$-resolvent matrix $W({z})$  and present a formula which describes $\sL$-resolvents
of $A$ with the help of a linear-fractional transformation $T_W$.

\begin{definition}\label{def:11.cP}
Let  $J_p$ be given by~\eqref{kerK0} and let $\wt J_p:=iJ_p$.
An mvf $W(z)$ holomorphic on a domain $\Omega\subset\dC_+$ 
is said to belong
to the class $\cW(\wt J_p)$
if the kernel
\begin{equation}\label{kerK}
{\mathsf K}_{\zeta}^W({z}):=
\frac{\wt J_p-W({z})\wt J_p W({\zeta})^*}{-i({z}-\ov{\zeta})} \quad
{z},{\zeta}\in \Omega,\quad {z}\ne\overline{{\zeta}}
\end{equation}
is non-negative on $\Omega$.
\end{definition}
In particular, it follows from Definition~\ref{def:11.cP} that for
$W\in\cW(\wt J_p)$
\begin{equation}\label{eq:WJnonneg3}
\wt J_p-W(z)\wt J_pW(z)^*\ge 0\quad\text{for all}\quad z\in\Omega\subset\dC_+.
\end{equation}
For a  block mvf
$W({z})=\left[w_{ij}\right]_{i,j=1}^2\in \cW(\wt J_p)$
with blocks $w_{ij}({z})$ of size
${p\times p}$
we define a transformation $T_W$
in the set 
of linear relations $\tau$ in $\dC^p$ via
\begin{equation}
\label{eq:App2:LFT}
 T_W[\tau]
  =\Big\{\,\begin{bmatrix}w_{22}h+w_{21}h' \\w_{12}h+w_{11}h'\end{bmatrix}:\,
  \begin{bmatrix} h \\ h'\end{bmatrix}\in \tau\,\Big\},
\end{equation}
see Yu. Shmul'yan~\cite{Sh80}.
For an $\cR^{\ptp}$-family $\tau({z})
=(\text{\rm col} \{ \varphi({z}),\psi({z}) \})_{z\in\dC\setminus\dR}$
    (see Definition~\ref{def:Nk-family})
    let us set
\begin{equation}\label{eq:Lambda}
    \Lambda_{\varphi,\psi}=\{{z}\in \rho(A,{\sL})\cap\rho(A_0):\,
{\det}\left(w_{21}({z})\psi({z})+w_{22}({z})\varphi({z})\right)\ne0\}.
\end{equation}
If $\Lambda_{\varphi,\psi}\ne\emptyset$, then the linear fractional transform in~\eqref{eq:App2:LFT} for
${{z}}\in\Lambda_{\varphi,\psi}$ takes the form
\begin{equation}\label{eq:Lres_phipsi}
 T_{W}[\tau({{z}})]=
(w_{11}({{z}})\psi({{z}})+w_{12}({{z}})\varphi({{z}}))
(w_{21}({{z}})\psi({{z}})+w_{22}({{z}})\varphi({{z}}))^{-1}.
\end{equation}

\begin{definition}\label{def:11.Pl-res,matr2}
Let $\Pi=  (\dC^p,\Gamma_0,\Gamma_1) $ be a boundary triple for
$A^{*}$, let ${\sL}\subset\sH_-$ be a gauge for $A$ such that
$\rho(A,{\sL})\cap\rho({A}_0)\ne\emptyset$,  and let
${\mathfrak A}_{\Pi{\sL}}({{z}})=({\mathfrak a}_{ij}({{z}}))_{i,j=1}^2$
be  the $\Pi{\sL}$-preresolvent matrix of $A$, see~\eqref{eq:Lres2A}. The $2p\times 2p$--mvf $W_{\Pi{\sL}}({z})$ defined for $z\in\rho(A,{\sL})\cap\rho({A}_0)$ by
\begin{equation}\label{eq:LresM}
    W_{\Pi\sL}({z})=\begin{bmatrix}
    {\mathfrak a}_{22}({z}){\mathfrak a}_{12}({z})^{-1} &     {\mathfrak a}_{22}({z}){\mathfrak a}_{12}({z})^{-1}    {\mathfrak a}_{11}({z})-{\mathfrak a}_{21}({z})\\
        {\mathfrak a}_{12}({z})^{-1}                 &   {\mathfrak a}_{12}({z})^{-1}    {\mathfrak a}_{11}({z})
    \end{bmatrix}, 
\end{equation}
is called the {\it right ${\sL}$-resolvent matrix of $A$} corresponding to the boundary triple $\Pi$ or, briefly, the right $\Pi{\sL}$-resolvent matrix of $A$.
  \end{definition}

In the following theorem we show that the $\Pi{\sL}$-resolvent matrix
$W=W_{\Pi\sL}$ belongs to the class $\cW(\wt J_p)$ (see Definition~\ref{def:11.cP}).

\begin{theorem}\label{thm:Gres_F1}
Let the assumptions of Lemma~\ref{lem:PreresM}  hold and let the mvf's
$\sA({z}):={\mathfrak A}_{\Pi{\sL}}({{z}})
=({\mathfrak a}_{ij}({{z}}))_{i,j=1}^2$ and $W({z}):=W_{\Pi\sL}({z})$
be given  by~\eqref{eq:Lres2A} and~\eqref{eq:LresM}. Then
\begin{enumerate}\def\labelenumi{\rm (\roman{enumi})}
\item $W({z})$ and $\sA({z})$ are related by
\begin{equation}\label{eq:W_A}
  W({z})=\begin{bmatrix}
   0 & {\mathfrak a}_{12}({{z}})\\
  -I_p & {\mathfrak a}_{22}({{z}})
    \end{bmatrix}^{-1}
    \begin{bmatrix}
   I_p & {\mathfrak a}_{11}({{z}})\\
  0  & {\mathfrak a}_{21}({{z}})
    \end{bmatrix}, \quad {z}\in\rho(A,{\sL})\cap\rho({A}_0).
\end{equation}
 \item 
 The kernel ${\mathsf K}_{\zeta}^W({z})$
 is related to the kernel ${\mathsf{N}}^{\sA}_{\zeta}({{z}})$ by
\begin{equation}\label{eq:KW_NA}
{\mathsf K}_{\zeta}^W({z})=\begin{bmatrix}
   0 & {\mathfrak a}_{12}({{z}})\\
  -I_p & {\mathfrak a}_{22}({{z}})
    \end{bmatrix}^{-1}
    {\mathsf{N}}^{\sA}_{\zeta}({{z}})
    \begin{bmatrix}
   0 & {\mathfrak a}_{12}({\zeta})\\
  -I_p & {\mathfrak a}_{22}({\zeta})
    \end{bmatrix}^{-*}.
\end{equation}
\item
$W\in \cW(\wt J_p)$.

\item
For every $\tau\in\wt\cR^{p\times p}$ it holds $\wt\tau=T_{W}[\tau]\in\wt\cR^{p\times p}$.
\end{enumerate}
\end{theorem}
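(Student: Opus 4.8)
The plan is to dispose of (i)--(iii) by bookkeeping and to reduce (iv) to (iii) together with the fact, already available from Lemma~\ref{lem:PreresM}(i), that $\sA=\sA_{\Pi{\sL}}$ is Nevanlinna, in particular $\sA^\#=\sA$.

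For (i) I would invert the block matrix $B({z}):=\left[\begin{smallmatrix}0&{\mathfrak a}_{12}({z})\\-I_p&{\mathfrak a}_{22}({z})\end{smallmatrix}\right]$, which is invertible on $\rho(A,{\sL})\cap\rho(A_0)$ because ${\mathfrak a}_{12}({z})$ is invertible there by Lemma~\ref{lem:PreresM}(ii): one finds $B({z})^{-1}=\left[\begin{smallmatrix}{\mathfrak a}_{22}{\mathfrak a}_{12}^{-1}&-I_p\\{\mathfrak a}_{12}^{-1}&0\end{smallmatrix}\right]$, and multiplying on the right by $A({z}):=\left[\begin{smallmatrix}I_p&{\mathfrak a}_{11}({z})\\0&{\mathfrak a}_{21}({z})\end{smallmatrix}\right]$ gives exactly the mvf in~\eqref{eq:LresM}, proving~\eqref{eq:W_A}. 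For (ii) I would verify, using only the block form $\sA=({\mathfrak a}_{ij})$, the algebraic identity
\[
B({z})J_pB({\zeta})^*-A({z})J_pA({\zeta})^*=-i\bigl(\sA({z})-\sA({\zeta})^*\bigr),
\]
and then, since $W=B^{-1}A$, pre- and post-multiply by $B({z})^{-1}$ and $B({\zeta})^{-*}$ and divide by $-i({z}-\ov{\zeta})$ to get~\eqref{eq:KW_NA}. For (iii), \eqref{eq:KW_NA} exhibits ${\mathsf K}^W_{\zeta}({z})$ and ${\mathsf N}^{\sA}_{\zeta}({z})$ as congruent kernels (pointwise congruence by the invertible $B({\zeta})^{-*}$), so one is nonnegative iff the other is; since $\sA^\#=\sA$ is evident from~\eqref{eq:Lres2A} (as $M^\#=M$ and ${\mathfrak a}_{12}={\mathfrak a}_{21}^\#$), this is precisely the equivalence $W\in\cW(J_p)\Leftrightarrow\sA_{\Pi{\sL}}\in\cR^{2p\times2p}$, whose right side holds by Lemma~\ref{lem:PreresM}(i).

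For (iv) I would first rewrite the transform: putting $\sigma=\left[\begin{smallmatrix}0&I_p\\I_p&0\end{smallmatrix}\right]$, for $\tau=\ran\text{\rm col}\{\varphi,\psi\}$ one has $T_W[\tau]=\ran\text{\rm col}\{\wt\varphi,\wt\psi\}$ with $\text{\rm col}\{\wt\varphi,\wt\psi\}=\sigma W\sigma\,\text{\rm col}\{\varphi,\psi\}$. The extra ingredient to be extracted is that $W$ is \emph{$\#$-unitary}, i.e. $W(\ov{z})^*J_pW({z})=J_p$: setting ${\zeta}=\ov{z}$ in the identity of (ii) makes its right side vanish (by $\sA^\#=\sA$), so $B({z})J_pB(\ov{z})^*=A({z})J_pA(\ov{z})^*$, and cancelling the invertible $B$-factors — legitimate on $\rho_s(A,{\sL})\cap\rho(A_0)$, where $W$ is invertible — gives $W({z})J_pW(\ov{z})^*=J_p$, hence the claim. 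Working on $\rho_s(A,{\sL})\cap\rho(A_0)$ I would then check the three requirements of Definition~\ref{def:Nk-family} for $\text{\rm col}\{\wt\varphi,\wt\psi\}$: property (iii) is immediate from invertibility of $\sigma W({z})\sigma$, which forces $\ker\wt\varphi({z})\cap\ker\wt\psi({z})=\ker\varphi({z})\cap\ker\psi({z})=\{0\}$; property (ii) drops out of computing the numerator $\text{\rm col}\{\wt\varphi(\ov{z}),\wt\psi(\ov{z})\}^*\,j_p\,\text{\rm col}\{\wt\varphi({z}),\wt\psi({z})\}$ with $j_p=\left[\begin{smallmatrix}0&I_p\\-I_p&0\end{smallmatrix}\right]=iJ_p$, using $\sigma j_p\sigma=-j_p$ and $\#$-unitarity, since it collapses to $\varphi(\ov{z})^*\psi({z})-\psi(\ov{z})^*\varphi({z})=0$; and property (i) follows from the kernel identity
\[
{\mathsf N}^{\wt\varphi\wt\psi}_{\zeta}({z})={\mathsf N}^{\varphi\psi}_{\zeta}({z})+\text{\rm col}\{\psi({\zeta}),\varphi({\zeta})\}^*\,\wt{\mathsf K}^W_{\zeta}({z})\,\text{\rm col}\{\psi({z}),\varphi({z})\},\qquad \wt{\mathsf K}^W_{\zeta}({z}):=\frac{J_p-W({\zeta})^*J_pW({z})}{-i({z}-\ov{\zeta})},
\]
obtained by the same computation for general ${z},{\zeta}$: here the first summand is a nonnegative kernel by property (i) for $\tau$, and the ``left'' kernel $\wt{\mathsf K}^W$ is nonnegative because, $W$ being $\#$-unitary, $\wt{\mathsf K}^W_{\zeta}({z})=J_pW(\ov{\zeta})^{-1}\,{\mathsf K}^W_{\ov{z}}(\ov{\zeta})\,J_pW({z})$, so the substitution $u\mapsto J_pW({z})u$ transfers nonnegativity of ${\mathsf K}^W$ on $\dC_-$ (which follows from part (iii) and $\#$-unitarity) to $\wt{\mathsf K}^W$ on $\dC_+$ — this step being Shmul'yan's transformation lemma~\cite{Sh80}. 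Finally, since $\det{\mathfrak a}_{12}\not\equiv0$ on $\dC\setminus\dR$, the set $\rho_s(A,{\sL})\cap\rho(A_0)$ differs from $\dC\setminus\dR$ by a discrete set, and $\text{\rm col}\{\wt\varphi,\wt\psi\}$ extends across those removable singularities, so $\wt\tau\in\wt\cR^{p\times p}$.

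The calculations in (i)--(iii) are routine. In (iv) the part I expect to need genuine care is the nonnegativity of the ``left'' kernel $\wt{\mathsf K}^W$ — i.e. that a $\#$-unitary member of $\cW(J_p)$ is $J_p$-contractive from both sides and on both half-planes — together with the removable-singularity argument extending $\wt\tau$ to all of $\dC\setminus\dR$; the $\#$-symmetry $\sA^\#=\sA$, which underlies both the $\#$-unitarity of $W$ and property (ii) for $\wt\tau$, is the conceptual key throughout.
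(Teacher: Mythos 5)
Parts (i)--(iii) of your proposal are correct and essentially coincide with what the paper leaves as ``straightforward calculations'': the block inversion of $B({z})$, the identity $B({z})J_pB({\zeta})^*-A({z})J_pA({\zeta})^*=-i\bigl(\sA({z})-\sA({\zeta})^*\bigr)$, and the congruence \eqref{eq:KW_NA}; your derivation of the $\#$-unitarity $W(\ov{z})^*J_pW({z})=J_p$ directly from this identity (take ${\zeta}=\ov{z}$ and use $\sA^\#=\sA$) is a nice self-contained substitute for the paper's forward reference to \eqref{eq:11.W*W}, and your verification of conditions (ii)--(iii) of Definition~\ref{def:Nk-family} for $\wt\tau$ is sound.

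The gap is in your positivity argument for the left kernel $\wt{\mathsf K}^W_{\zeta}({z})=(J_p-W({\zeta})^*J_pW({z}))/(-i({z}-\ov{\zeta}))$. The identity $\wt{\mathsf K}^W_{\zeta}({z})=(J_pW({\zeta}))^*\,{\mathsf K}^W_{\ov{z}}(\ov{\zeta})\,(J_pW({z}))$ is true, but it is not a congruence that transfers kernel nonnegativity: the arguments of the inner kernel are swapped and conjugated, so the quadratic form becomes $\sum_{i,j}\bigl(J_pW({z}_j)u_i\bigr)^*{\mathsf K}^W_{\ov{z}_i}(\ov{z}_j)\bigl(J_pW({z}_i)u_j\bigr)$, where the left and right factors carry mismatched indices. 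This is the block transpose of a nonnegative block matrix, which need not be nonnegative (e.g. $\left[\begin{smallmatrix} e_1e_1^* & e_1e_2^*\\ e_2e_1^* & e_2e_2^*\end{smallmatrix}\right]$ is PSD while its block transpose is not), so ``the substitution $u\mapsto J_pW({z})u$'' does not do the job, and attributing the step to \cite{Sh80} does not repair your own justification. The claim itself is true, and an honest proof is a left-handed analogue of your step (ii): since $W=B^{-1}A$,
\begin{equation}
J_p-W({\zeta})^*J_pW({z})=A({\zeta})^*\bigl[A({\zeta})^{-*}J_pA({z})^{-1}-B({\zeta})^{-*}J_pB({z})^{-1}\bigr]A({z}),
\end{equation}
and a short computation using ${\mathsf N}^{\sA}_{\zeta}({z})=T({\zeta})^*T({z})$ from Lemma~\ref{lem:PreresM}(i) shows that the bracketed expression divided by $-i({z}-\ov{\zeta})$ equals $S({\zeta})^*S({z})$ with $S({z})=\begin{bmatrix}\wt R^0_{z}L\,{\mathfrak a}_{12}({z})^{-1}& -\gamma({z}){\mathfrak a}_{21}({z})^{-1}\end{bmatrix}$ --- a genuine same-argument congruence, whence $\wt{\mathsf K}^W\ge0$ and your kernel identity for ${\mathsf N}^{\wt\varphi\wt\psi}$ completes (iv). For what it is worth, the paper is silent at exactly this point (the ``${\mathsf K}_W({z},{\zeta})$'' appearing in \eqref{Nkern3} is in fact the left kernel, used without comment), so you correctly isolated the delicate spot, but the repair you propose would fail as written; the concluding removable-singularity remark about extending $\wt\tau$ to all of $\dC\setminus\dR$ also deserves a word on why $\det{\mathfrak a}_{12}\not\equiv0$ on each half-plane, which follows once $\rho_s(A,\sL)\ne\emptyset$.
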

\begin{proof}
  The items (i) and (ii) are checked by straightforward calculations.

  (iii) The formula \eqref{eq:KW_NA} ensures that the kernels
  ${\mathsf K}_{\zeta}^W({z})$ and ${\mathsf{N}}^{\sA}_{\zeta}({{z}})$
  are  non-negative simultaneously on $\rho(A,\sL)\cap \rho(A_0)$.

\medskip

  (iv) See the proof for instance in~\cite[Proposition B10]{DM25}.
\end{proof}

 In the following theorem we provide an explicit formula for the    $\Pi{\sL}$-resolvent matrix $W_{\Pi{\sL}}({{z}})$ of $A$
in the sense of Definition \ref{def:11.Pl-res,matr2} that expresses it in terms of the boundary mappings $\Gamma_j$ and the family $\cG({\cdot})$  given by  
\begin{equation}\label{eq:11.cG}
    {\cG}({{z}})=\begin{bmatrix}
   -\cQ({{z}})\\
      \cP({{z}})
    \end{bmatrix},\quad
    \wh{\cG}({{z}})=\begin{bmatrix}
   -\wh\cQ({{z}})\\
      \wh\cP({{z}})
    \end{bmatrix}.
\end{equation}
\begin{theorem}\label{thm:ResM}
Let $(\dC^p,\wh\Gamma_0,\wh\Gamma_1)$ be an extended boundary triple for $A^{\langle*\rangle}$, let ${\sL}$ be a subspace of $\sH_-$
such that $\rho(A,{\sL})\ne\emptyset$ and let the mvf  $\wh W$ be defined by
\begin{equation}\label{eq:Formula_W}
    \wh W({{z}}):=\left(\wh\Gamma(\wh{\cG}({z})^{\langle*\rangle})\right)^*
 =\begin{bmatrix}
   -\wh\Gamma_0\wh\cQ({z})^{\langle*\rangle}  &  \wh \Gamma_0\wh\cP({z})^{\langle*\rangle}\\
   -\wh\Gamma_1\wh\cQ({z})^{\langle*\rangle} &   \wh\Gamma_1\wh\cP({z})^{\langle*\rangle}
       \end{bmatrix}^*,\quad
 {{z}}\in\rho(A,{\sL}).
\end{equation}
Then:
\begin{enumerate}\def\labelenumi{\rm (\roman{enumi})}
  \item 
 The kernel
\begin{equation}\label{kerK1}
{\mathsf K}_{\zeta}^{\wh W}({z}):=
\frac{\wt J_p-\wh W({z})\wt J_p \wh W({\zeta})^*}{-i({z}-\ov{\zeta})}, \quad{z},{\zeta}\in \rho(A,{\sL}),\quad {z}\ne\overline{{\zeta}},
\end{equation}
admits the factorization
\begin{equation}\label{eq:Christ_Id}
    {\mathsf K}_{\zeta}^{\wh W}({z})={\cG}({{z}}){\cG}({\zeta})^{\langle*\rangle},\qquad
 {{z}},{\zeta}\in\rho(A,{\sL}).
\end{equation}
\medskip
  \item 
  If, in addition, $\rho_s(A,{\sL}):=\rho(A,{\sL})\cap\overline{\rho(A,{\sL})}\ne
\emptyset$, then $\wh W({{z}})$ is invertible for ${z}\in \rho_s(A,{\sL})$ and
coincides  on $\rho_s(A,{\sL})\cap\rho(A_0)$ with the $\Pi{\sL}$-resolvent matrix
 $W_{\Pi{\sL}}({{z}})$ of $A$.
 \end{enumerate}
\end{theorem}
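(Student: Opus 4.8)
The plan is to verify the two assertions separately, using the extended Green formula of Lemma~\ref{lem:ExtGreenForm} as the main engine for part~(i) and the abstract comparison of $\sL$-resolvent matrices built from different boundary triples for part~(ii). First I would record the ingredients: by the last lemma of Section~\ref{sec:4.1} the vectors $\wh\cP(z)^{\langle*\rangle}u$ and $\wh\cQ(z)^{\langle*\rangle}u$ lie in $A^{\langle*\rangle}$, so $\wh\cG(z)^{\langle*\rangle}$ maps $\dC^{2p}$ into $A^{\langle*\rangle}$ and $\wh\Gamma$ can be applied to its columns; this makes the right-hand side of~\eqref{eq:Formula_W} well defined. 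I would also note the ``second components'' of these vectors: the multivalued-type part of $\wh\cP(z)^{\langle*\rangle}u$ is $\ov z\,\cP(z)^{\langle*\rangle}u\in\sH$ (so here the correction term $\wt f$ in Lemma~\ref{lem:ExtGreenForm} is zero), whereas for $\wh\cQ(z)^{\langle*\rangle}u$ the second component is $\ov z\,\cQ(z)^{\langle*\rangle}u+Lu$, which is genuinely in $\sH_-$ and needs the regularizer $\bR^0$.

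For part~(i), I would compute the kernel ${\mathsf K}_{\zeta}^{\wh W}(z)$ entrywise by applying the extended Green identity~\eqref{eq:ExtGreen2} to the four pairs of vectors $\wh f,\wh g$ drawn from $\{\wh\cP(z)^{\langle*\rangle}u,\wh\cQ(z)^{\langle*\rangle}u\}$ and $\{\wh\cP(\zeta)^{\langle*\rangle}v,\wh\cQ(\zeta)^{\langle*\rangle}v\}$. The bilinear expression on the right of~\eqref{eq:Formula_W} squared against $J_p$ is exactly $(\wh\Gamma_0\wh g)^*(\wh\Gamma_1\wh f)-(\wh\Gamma_1\wh g)^*(\wh\Gamma_0\wh f)$; Lemma~\ref{lem:ExtGreenForm} rewrites this as an ``inner product'' difference plus the two duality correction terms involving $\bR^0$. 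The key point is that, upon forming the combination that defines ${\mathsf K}^{\wh W}$, the ambient $\sH$-inner-product terms assemble into $\frac{1}{-i(z-\ov\zeta)}$ times something that telescopes, and using the relations~\eqref{eq:PQ_pr1} and~\eqref{eq:PQ_pr2} (the identities $\cP(z)L=I_p$, $\wt\cQ(z)L=O_p$, $L^{\langle*\rangle}(\cQ(z)^{\langle*\rangle}-\bR^0L)=O_p$) the correction terms collapse. What survives is precisely $\cG(z)\cG(\zeta)^{\langle*\rangle}$, i.e.\ the product $\begin{bmatrix}-\cQ(z)\\ \cP(z)\end{bmatrix}\begin{bmatrix}-\cQ(\zeta)^{\langle*\rangle}& \cP(\zeta)^{\langle*\rangle}\end{bmatrix}$. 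This bookkeeping — keeping straight which correction terms are present (only for the $\cQ$-columns) and checking they annihilate against $L$ and $L^{\langle*\rangle}$ — is the main obstacle; the algebra is routine once one is careful about which space each object lives in.

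For part~(ii), I would first show $\wh W(z)$ is invertible for $z\in\rho_s(A,\sL)$: by~\eqref{eq:A+_decom} the map $\wh\cG(z)^{\langle*\rangle}:\dC^{2p}\to A^{\langle*\rangle}/\bA$ is a linear isomorphism onto a complement of $\bA$, and since $\bA=\ker\wh\Gamma$ by~\eqref{ker_wh_Gamma} while $\wh\Gamma:A^{\langle*\rangle}\to\dC^{2p}$ is onto, the composition $\wh\Gamma\wh\cG(z)^{\langle*\rangle}$ is a bijection of $\dC^{2p}$; hence its transpose $\wh W(z)$ is invertible. To identify $\wh W(z)$ with $W_{\Pi\sL}(z)$ on $\rho_s(A,\sL)\cap\rho(A_0)$, I would express the columns of $\wh\cG(z)^{\langle*\rangle}$ in the decomposition $A^{\langle*\rangle}=\bA_0\dotplus\wh\sN_z$ and apply $\wh\Gamma$ using Lemma~\ref{lem:bf_A0Pi}: on $\wh\sN_z$ one gets $\wh\Gamma\wh\gamma(z)=\Gamma\wh\gamma(z)=\begin{bmatrix}I_p\\ M(z)\end{bmatrix}$, while on the $\bA_0$-part one gets $\begin{bmatrix}0\\ \gamma(\ov z)^{\langle*\rangle}(\cdot)\end{bmatrix}$. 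Writing $\cP(z)^{\langle*\rangle}=(L^{\langle*\rangle}\!\upharpoonright\!\sN_{\ov z})^{-1}$ and decomposing $\cQ(z)^{\langle*\rangle}$ via Lemma~\ref{lem:PQ_prop}(v), the entries of $\wh W(z)$ turn out to be built from $M(z)$, $\gamma(\ov z)^{\langle*\rangle}L$, $L^{\langle*\rangle}\gamma(z)$, $L^{\langle*\rangle}\wh R_z^0 L$ — exactly the blocks ${\mathfrak a}_{ij}(z)$ of the preresolvent matrix~\eqref{eq:Lres2A}. Matching these against formula~\eqref{eq:LresM} (equivalently~\eqref{eq:W_A}) gives $\wh W(z)=W_{\Pi\sL}(z)$; the invertibility of ${\mathfrak a}_{12}(z)$ needed for~\eqref{eq:LresM} is supplied by Lemma~\ref{lem:PreresM}(ii) since $z\in\rho(A,\sL)$.
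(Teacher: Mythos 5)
Your proposal is correct, and for part (i) it follows essentially the paper's own route: the four blocks of ${\mathsf K}_{\zeta}^{\wh W}({z})$ are obtained by feeding the pairs built from $\wh\cP(\cdot)^{\langle*\rangle}u$, $\wh\cQ(\cdot)^{\langle*\rangle}u$ into the Green identities \eqref{eq:ExtGreen}--\eqref{eq:ExtGreen2}, with the regularizer corrections present only for the $\cQ$-columns and killed by \eqref{eq:PQ_pr1}--\eqref{eq:PQ_pr2}, exactly as in the paper. In part (ii) you deviate from the paper in a useful way. The paper gets invertibility of $\wh W(z)$ from the $J_p$-unitarity relations \eqref{eq:11.W*W}, which follow from \eqref{eq:Christ_Id} at ${\zeta}=\ov z$, and then identifies $\wh W$ with $W_{\Pi\sL}$ by expressing the blocks ${\mathfrak a}_{ij}$ of \eqref{eq:Lres2A} through the $\wh w_{ij}$ (including solving a boundary value problem for ${\mathfrak a}_{22}$, see \eqref{eq:11.a22}) and then using \eqref{LR_39}--\eqref{LR_38} to invert these relations and match \eqref{eq:LresM}. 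You instead obtain invertibility directly from the decomposition \eqref{eq:A+_decom} together with $\ker\wh\Gamma=\bA$ (see \eqref{ker_wh_Gamma}) and the surjectivity of $\wh\Gamma$ inherited from $\Gamma$, and you compute the $\wh w_{ij}$ in terms of the ${\mathfrak a}_{ij}$ by decomposing the columns of $\wh\cG(z)^{\langle*\rangle}$ along $A^{\langle*\rangle}=\bA_0\dotplus\wh\sN_{\ov z}$ and reading off the $\wh\Gamma$-images via Lemma~\ref{lem:bf_A0Pi} and \eqref{eq:bf_A0R}; this runs the paper's boundary-value computation in the opposite direction and avoids the explicit use of \eqref{LR_39}--\eqref{LR_38}, at the price of not exhibiting the $J_p$-unitarity as such (which the paper gets for free and reuses elsewhere). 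One bookkeeping caution: since $\cP(z)^{\langle*\rangle}u\in\sN_{\ov z}$, the defect component in your decomposition must be taken in $\wh\sN_{\ov z}$ (not $\wh\sN_{z}$), so the raw $\wh\Gamma$-images involve $M(\ov z)$, $\gamma(z)^{\langle*\rangle}L$, etc.; the blocks ${\mathfrak a}_{ij}(z)$ you list appear only after taking the outer adjoint in \eqref{eq:Formula_W}, and with this correction the matching against \eqref{eq:LresM}, equivalently \eqref{eq:W_A}, goes through as you describe.
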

\begin{proof}
(i) Let us  decompose the kernel ${\mathsf K}_{\zeta}^{\wh W}({z})$ into four $p\times p$-blocks: ${\mathsf K}_{\zeta}^W({z})=\left[{\mathsf K}^{ij}_{\zeta}({z})\right]_{i,j=1}^2$.
Then the identity~\eqref{eq:Christ_Id} is splitted into four identities
\begin{equation}\label{eq:KrSaakId1}
  {\mathsf K}^{11}_{\zeta}({z})=\cQ({{z}})\cQ({\zeta})^{\langle*\rangle},\quad
  {\mathsf K}^{12}_{\zeta}({z})=-\cQ({{z}})\cP({\zeta})^{\langle*\rangle},
\end{equation}
\begin{equation}\label{eq:KrSaakId2}
  {\mathsf K}^{21}_{\zeta}({z})=-\cP({{z}})\cQ({\zeta})^{\langle*\rangle},\quad
  {\mathsf K}^{22}_{\zeta}({z})=\cP({{z}})\cP({\zeta})^{\langle*\rangle}.
\end{equation}

Setting ${z},{\zeta}\in{\rho(A,{\sL})}$,
\[
\wh f=\begin{bmatrix}
                          f\\ f'
                        \end{bmatrix}=\wh\cP({{\zeta}})^{\langle*\rangle}u_2,\,\,
                        \wh g=\begin{bmatrix}
                          g\\ g'
                        \end{bmatrix}=\wh\cP({z})^{\langle*\rangle}v_2,\,\quad
u=\begin{bmatrix}
                          0\\ u_2
                        \end{bmatrix},\,\,
                        v=\begin{bmatrix}
                          0 \\ v_2
                        \end{bmatrix}\in\dC^{2p},
\]
 we can rewrite the left hand part of~\eqref{eq:ExtGreen} as
\begin{equation}\label{eq:11.24}
\begin{split}
  (f',g)_\sH-(f,g')_\sH=(\ov{\zeta}-{z})(f,g)_\sH
   =(\ov{\zeta}-{z})(\cP({\zeta})^{\langle*\rangle}u_2,\cP({z})^{\langle*\rangle}v_2)_\sH.
   \end{split}
\end{equation}
In view of~\eqref{eq:Formula_W} the right hand part of~\eqref{eq:ExtGreen} takes the form
\begin{multline}\label{eq:11.25}
(\Gamma_0\wh g)^*(\Gamma_1\wh f)-(\Gamma_1\wh g)^*(\Gamma_0\wh f)
=-\left(\wh\Gamma\wh\cP({z})^{\langle*\rangle}v_2\right)^* J_p\left(\wh\Gamma\wh\cP({\zeta})^{\langle*\rangle}u_2\right)\\
  =-v^*\wh W({{z}})J_p\wh W({\zeta})^*u
  =(\ov{\zeta}-{z})v_2^*{\mathsf K}^{22}_{\zeta}({z})u_2.
   \end{multline}
Comparing of \eqref{eq:11.24} and \eqref{eq:11.25} proves the second identity in~\eqref{eq:KrSaakId2}.

Similarly, setting
$\wh f=\wh\cP({{\zeta}})^{\langle*\rangle}u_2$,
$\wh g=\wh\cQ({{z}})^{\langle*\rangle}v_1$,
for
$u_2,v_1\in{\dC^p}$,
${z},{\zeta}\in{\rho(A,{\sL})}$, we can rewrite the left hand part of~\eqref{eq:ExtGreen} as
\begin{equation}\label{eq:W21A}
\begin{split}
  (f',g)_\sH-\langle f,g'\rangle_{+,-}&=(\ov{\zeta}-{z})( f,g)_\sH-\langle f,{L}v_1\rangle_{+,-}\\
   &=(\ov{\zeta}-{z})v_1^*
   \cQ({z})\cP({\zeta})^*u_2-v_1^*u_2,
   \end{split}
\end{equation}
while the right hand part of~\eqref{eq:ExtGreen} takes the form
\begin{equation}\label{eq:W21B}
\begin{split}
&(\wh\Gamma_0\wh g)^*(\Gamma_1\wh f)-(\wh\Gamma_1\wh g)^*(\Gamma_0\wh f)
=-v_1^*\left( \Gamma\wh\cQ({{z}})^{\langle*\rangle}\right)^* J_p\left(\Gamma\wh\cP({\zeta})^{\langle*\rangle}\right)u_2\\
  &=\begin{bmatrix}
                          v_1^* & 0
                        \end{bmatrix}
  \wh W({{z}})J_p\wh W({\zeta})^*\begin{bmatrix}
                         0 \\ u_2
                        \end{bmatrix}
  =-(\ov{\zeta}-{z})v_1^*{\mathsf K}^{12}_{\zeta}({z})u_2
  -v_1^*u_2.
   \end{split}
\end{equation}
Comparing of \eqref{eq:W21A} and \eqref{eq:W21B} proves
the second identity  in~\eqref{eq:KrSaakId1}
as well as the first identity in~\eqref{eq:KrSaakId2} since
${\mathsf K}^{21}_{\zeta}({z})=\left({\mathsf K}^{12}_{z}({\zeta})\right)^*$.

To prove the first identity  in~\eqref{eq:KrSaakId1} let us set
$  {z},{\zeta}\in{\rho(A,{\sL})}$ and
\[
\wh f=\begin{bmatrix}
                          f\\ f'
                        \end{bmatrix}=-\wh\cQ({{\zeta}})^{\langle*\rangle}u_1,\,\,
                        \wh g=\begin{bmatrix}
                          g\\ g'
                        \end{bmatrix}=-\wh\cQ({{z}})^{\langle*\rangle}v_1,\,\,
u=\begin{bmatrix}
                          u_1 \\ 0
                        \end{bmatrix},
                        \,\, v=\begin{bmatrix}
                          v_1 \\ 0
                        \end{bmatrix}\in\dC^{2p}.
\]
By Lemma~\ref{lem:ExtGreenForm}, the left hand part (LHP) of~\eqref{eq:ExtGreen2} equals to
 \[
 \begin{split}
 LHP=
    (f'+{L}u_1,g)_\sH-(f, g'+{L}v_1)_\sH
  =(\ov{\zeta}-{z})(\cQ({{\zeta}})^{\langle*\rangle}u_1,\cQ({{z}})^{\langle*\rangle}v_1)_\sH.
  \end{split}
\]
Notice that, by~\eqref{eq:PQ_pr2},
\[
\left\langle {L}u_1,\cQ({{z}})^{\langle*\rangle}v_1-\bR^0 {L}v_1\right\rangle_{-,+}
   =\left\langle \cQ({{\zeta}})^{\langle*\rangle}u_1-\bR^0 {L}u_1, {L}v_1\right\rangle_{+,-}=0
\]
and hence the right hand part (RHP) of~\eqref{eq:ExtGreen2} equals to
 \begin{multline*}
 RHP=-v^*\wh W({{z}})J_p\wh W({\zeta})^*u
   +\left\langle {L}u_1,-\cQ({{z}})^{\langle*\rangle}v_1+\bR^0 {L}v_1\right\rangle_{-,+}\\
   -\left\langle -\cQ({{\zeta}})^{\langle*\rangle}u_1+\bR^0 {L}u_1, {L}v_1\right\rangle_{+,-}
   =-v^*\wh W({{z}})J_p\wh W({\zeta})^*u
   =(\ov{\zeta}-{z})v_1^*{\mathsf K}^{11}_{\zeta}({z})u_1.
   \end{multline*}
This implies the first identity  in~\eqref{eq:KrSaakId1}.
\medskip

(ii) Assume that ${z}\in\rho_s(A,{\sL})$.
Then~\eqref{eq:Christ_Id}  implies the identities
\begin{equation}\label{eq:11.W*W}
\wh W({{z}})J_p\wh W(\ov{{z}})^*=J_p,\quad  \wh W(\ov{{z}})^*J_p\wh W({{z}})=J_p, \quad {z}\in\rho_s(A,{\sL}).
\end{equation}
Let us  decompose the matrices $W_{\Pi{\sL}}({{z}})$ and $\wh W({{z}})$ into four $p\times p$-blocks:
  \[
  W_{\Pi{\sL}}({{z}})=[w_{ij}({{z}})]_{i,j=1}^2,\qquad
  \wh W({{z}})=[\wh w_{ij}({{z}})]_{i,j=1}^2.
  \]
The equalities in~\eqref{eq:11.W*W} are equivalent to the following conditions on the components of the matrix $ \wh W $:
\begin{equation}\label{LR_39}
\wh w_{21}\wh w_{22}^\#=\wh w_{22}\wh w_{21}^\#,\ \
\wh w_{11}\wh w_{12}^\#=\wh w_{12}\wh w_{11}^\#,\ \
\wh w_{11}\wh w_{22}^\#-\wh w_{12}\wh w_{21}^\#=I_{p}.
\end{equation}
\begin{equation}\label{LR_38}
 \wh w_{12}^\#\wh w_{22}=\wh w_{22}^\#\wh w_{12},\ \
 \wh w_{11}^\#\wh w_{21}=\wh w_{21}^\#\wh w_{11},\ \
\wh w_{11}^\#\wh w_{22}-\wh w_{21}^\#\wh w_{12}=I_p,
\end{equation}

By~\eqref{eq:Formula_W}, we have
\begin{equation}\label{eq:11.Wh_w}
  \begin{split}
     \wh w_{11}({{z}})^* & =-\wh \Gamma_0\wh\cQ({{z}})^{\langle*\rangle},\qquad
     \wh w_{21}({{z}})^*  =\wh \Gamma_0\wh\cP({z})^{\langle*\rangle},\\
     \wh w_{12}({{z}})^* & =-\Gamma_1\wh\cQ({{z}})^{\langle*\rangle},\qquad
     \wh w_{22}({{z}})^*  =\Gamma_1\wh\cP({z})^{\langle*\rangle}.
  \end{split}
\end{equation}
Hence, we obtain explicit formulas for the elements of the matrix $\sA_{\Pi{\sL}}({{z}})=({\mathfrak a}_{ij}({{z}}))_{i,j=1}^2$ by means of $ \wh w_{ij}({{z}})$. By~\eqref{eq:Lres2A} and \eqref{eq:Formula_W},
\begin{equation} \label{eq:11.a11}
  {\mathfrak a}_{11}({{z}})=M({{z}})=\wh w_{22}(\ov{{z}})^*\wh w_{21}(\ov{{z}})^{-*}
  =\wh w_{22}^\#({{z}})\wh w_{21}^\#({{z}})^{-1}.
\end{equation}
Next, it follows from \eqref{eq:Lres2A}, \eqref{eq:11.Wh_w} and~\eqref{eq:PQ_pr1} that for all $u\in\dC^p$
\[
  {\mathfrak a}_{21}({{z}})\wh w_{21}(\ov{{z}})^*u= {L}^{\langle*\rangle}\gamma({{z}})\Gamma_0\wh\cP({z})^{\langle*\rangle}u
  = {L}^{\langle*\rangle}\cP({z})^{\langle*\rangle}u=u,
\]
whence
\begin{equation} \label{eq:11.a21}
  {\mathfrak a}_{21}({{z}})=\wh w_{21}^\#({{z}})^{-1},\quad
  {\mathfrak a}_{12}({{z}})={\mathfrak a}_{21}^\#({{z}})=\wh w_{21}( {z})^{-1}.
\end{equation}
To find the expression of ${\mathfrak a}_{22}({{z}})= {L}^{\langle*\rangle}\wh R^0_{z} {L}$ we consider the problem
\[
\wh f=\begin{bmatrix}
  f \\
  {L}u
\end{bmatrix}\in A^{\langle*\rangle}-{z} I,\quad
\Gamma_0\wh f=0,\quad
\wh f=\begin{bmatrix}
  f \\
  {L}u+{z} f
\end{bmatrix}\in A^{\langle*\rangle},\quad
\begin{array}{l}
f\in\sH,\\
u\in\dC^p.
\end{array}
\]
It follows from \eqref{eq:PQ*} that this problem has a solution of the form
\[
\wh f=\wh\cQ(\ov{{z}})^{\langle*\rangle}u-\wh\cP(\ov{{z}})^{\langle*\rangle}v
\quad\text{with some}\quad
v\in\dC^p.
\]
In view of \eqref{eq:11.Wh_w},  the equality $\wh\Gamma_0\wh f=0$ implies that
\[
\wh\Gamma_0\wh f=\wh \Gamma_0\wh\cQ(\ov{{z}})^{\langle*\rangle}u-\Gamma_0\wh\cP(\ov{{z}})^{\langle*\rangle}v=
-\wh w_{11}^\#({{z}})u-\wh w_{21}^\#({{z}})v=0
\]
and hence $v=-\wh w_{21}^\#({{z}})^{-1}\wh w_{11}^\#({{z}})u$. Therefore,
\[
f=\cQ(\ov{{z}})^{\langle*\rangle}u
+\cP(\ov{{z}})^{\langle*\rangle}\wh w_{21}^\#({z})^{-1}\wh w_{11}(\ov{{z}})^*u
=\wt R^0_{z} {L}u,
\]
and, by~\eqref{eq:11.Wh_w}, \eqref{eq:PQ_pr1}, \eqref{eq:PQ_pr2},
\begin{equation}\label{eq:11.a22}
\begin{split}
  {\mathfrak a}_{22}({{z}})u&={L}^{\langle*\rangle}(\wt R^0_{z}-\bR^0){L}\\
  &={L}^{\langle*\rangle}(\cQ(\ov{{z}})^{\langle*\rangle}u-\bR^0 {L})u
  +{L}^{\langle*\rangle}\cP(\ov{{z}})^{\langle*\rangle}
  \wh w_{21}^\#({{z}})^{-1}\wh w_{11}^\#({{z}})u\\
  &=\wh w_{21}^\#({{z}})^{-1}\wh w_{11}^\#({{z}})u.
\end{split}
\end{equation}
It follows from  \eqref{eq:11.a11}--\eqref{eq:11.a22} that the $\Pi{\sL}$-preresolvent matrix takes the form
\begin{equation}\label{eq:11.Preres_A}
  \sA_{\Pi{\sL}}({{z}})=\begin{bmatrix}
  \wh w_{22}^\#({{z}})\wh w_{21}^\#({{z}})^{-1} & \wh w_{21}({z})^{-1} \\
  \wh w_{21}^\#({{z}})^{-1}
  & \wh w_{21}^\#({{z}})^{-1}\wh w_{11}^\#({{z}})
         \end{bmatrix},\quad {z}\in\rho_s(A,{\sL}).
\end{equation}
Finally, \eqref{eq:LresM}, 
\eqref{eq:11.Preres_A}, \eqref{LR_39} and \eqref{LR_38} imply that for all ${z}\in\rho_s(A,{\sL})\cap\rho(A_0)$
\[
\begin{split}
   w_{11}({{z}}) & ={\mathfrak a}_{22}({{z}}){\mathfrak a}_{12}({{z}})^{-1}=
   \wh w_{21}^\#({{z}})^{-1}\wh w_{11}^\#({{z}})\wh w_{21}( {z})=\wh w_{11}({{z}}), \\
   w_{12}({{z}}) & ={\mathfrak a}_{22}({{z}}){\mathfrak a}_{12}({{z}})^{-1}{\mathfrak a}_{11}({{z}}) -{\mathfrak a}_{21}({{z}})\\
      &=\wh w_{11}({{z}})\wh w_{22}^\#({{z}})\wh w_{21}^\#({{z}})^{-1}-\wh w_{21}^\#({{z}})^{-1}
     =\wh w_{12}({{z}}), \\
   w_{21}({{z}}) & ={\mathfrak a}_{12}({{z}})^{-1}=\wh w_{21}({{z}}), \\
   w_{22}({{z}}) & ={\mathfrak a}_{12}({{z}})^{-1}{\mathfrak a}_{11}({{z}})=
   \wh w_{21}({{z}})\wh w_{22}^\#({{z}})\wh w_{21}^\#({{z}})^{-1}
   =\wh w_{22}({{z}}), \\
\end{split}
\]
and hence, $W_{\Pi{\sL}}({{z}})=\wh W({{z}})$ for all ${z}\in\rho_s(A,{\sL})\cap\rho(A_0)$.
\end{proof}

\begin{theorem}\label{prop:Gresolv}
Let $\Pi=  (\dC^p,\Gamma_0,\Gamma_1) $ be a boundary triple for
$A^{*}$ and let ${\sL}\subset\sH_-$ be a gauge for $A$,
 let  $W_{\Pi{\sL}}({z})$ be the $\Pi{\sL}$-resolvent matrix of $A$ defined on $\rho(A,{\sL})\cap\rho({A}_0)$ by
\eqref{eq:LresM}.
 Then
\begin{enumerate}
\item[\rm(i)] The formula
\begin{equation}\label{eq:Lres}
(r({z})=){L}^{\langle*\rangle}\wh{\mathbf R}_{{z}}{L}
 =T_{W_{\Pi{\sL}}({z})}[\tau({{z}})], \quad
 {z}\in\rho(A,{\sL})\cap\rho(A_0)\cap\rho(\wt A),
\end{equation}
establishes a one--to--one correspondence
$r(\cdot)\longleftrightarrow  \tau(\cdot)$   between the set  of ${\sL}$-resolvents $r$ of $A$ and the set of
 $\tau\in\wt\cR^{\ptp}$.
 \item[\rm(ii)]
If, in addition, $A_0=A\dotplus \text{\rm col} \{0,\mul A^*\}$,
then the $\Pi$-admissibility of $\tau({z})$ (see Definition~\ref{def:Adm}) is equivalent
to the single condition \eqref{tlimit00}.
\end{enumerate}
  \end{theorem}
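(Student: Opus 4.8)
The plan is to obtain part~(i) from Kre\u{\i}n's resolvent formula by a change of parameter, reading off the blocks of the preresolvent matrix, and then performing one algebraic manipulation that turns the resulting Schur‑type expression into the linear fractional transform $T_{W_{\Pi\sL}(z)}$; part~(ii) will then follow immediately from Proposition~\ref{prop:Adm_Nev}(ii).

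For (i) I would first take $\langle*\rangle$-adjoints in the Kre\u{\i}n formula \eqref{gres0}: by Theorem~\ref{krein}(i) every generalized resolvent $\mathbf R_z$ of $A$ corresponds to a unique $\tau\in\wt\cR^{p\times p}$ with $\wt{\mathbf R}_z=\wt R_z^0-\gamma(z)(M(z)+\tau(z))^{-1}\gamma(\bar z)^{\langle*\rangle}$ on $\dC_+\cup\dC_-$, and conversely. Inserting this into $r(z)=L^{\langle*\rangle}(\wt{\mathbf R}_z-\bR^0)L$ and using that the blocks of the $\Pi\sL$-preresolvent matrix \eqref{eq:Lres2A} are $\mathfrak a_{11}(z)=M(z)$, $\mathfrak a_{21}(z)=L^{\langle*\rangle}\gamma(z)$, $\mathfrak a_{12}(z)=\gamma(\bar z)^{\langle*\rangle}L$ and $\mathfrak a_{22}(z)=L^{\langle*\rangle}\wh R_z^0 L$ yields exactly \eqref{eq:11.Lres1}, namely $r(z)=\mathfrak a_{22}(z)-\mathfrak a_{21}(z)(\mathfrak a_{11}(z)+\tau(z))^{-1}\mathfrak a_{12}(z)$, valid on $\rho(A,{\sL})\cap\rho(A_0)\cap\rho(\wt A)$. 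Since $\mathfrak a_{12}(z)$ and $\mathfrak a_{21}(z)$ are invertible on $\rho(A,{\sL})\cap\rho(A_0)$ by Lemma~\ref{lem:PreresM}(ii), this determines $(\mathfrak a_{11}+\tau)^{-1}$, hence the relation $\mathfrak a_{11}(z)+\tau(z)$ and therefore $\tau(z)$, uniquely there, and by holomorphy the correspondence $r\longleftrightarrow\tau$ is a bijection (this recovers Lemma~\ref{lem:Gres_F1} when $\rho_s(A,{\sL})\ne\emptyset$, but the argument just sketched needs no such hypothesis).

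It then remains to identify the right‑hand side of \eqref{eq:11.Lres1} with $T_{W_{\Pi\sL}(z)}[\tau(z)]$. Writing $\tau(z)=\ran\begin{bmatrix}\varphi(z)\\\psi(z)\end{bmatrix}$ and reading the blocks $w_{ij}$ of $W_{\Pi\sL}$ off \eqref{eq:LresM}, a direct computation gives $w_{11}\psi+w_{12}\varphi=\mathfrak a_{22}\mathfrak a_{12}^{-1}(\psi+\mathfrak a_{11}\varphi)-\mathfrak a_{21}\varphi$ and $w_{21}\psi+w_{22}\varphi=\mathfrak a_{12}^{-1}(\psi+\mathfrak a_{11}\varphi)$, so that \eqref{eq:Lres_phipsi} produces $T_{W_{\Pi\sL}(z)}[\tau(z)]=\mathfrak a_{22}(z)-\mathfrak a_{21}(z)\,\varphi(z)(\psi(z)+M(z)\varphi(z))^{-1}\,\mathfrak a_{12}(z)$; and $\varphi(\psi+M\varphi)^{-1}$ is precisely the relation inverse $(\mathfrak a_{11}+\tau)^{-1}$, since $\mathfrak a_{11}+\tau=\ran\begin{bmatrix}\varphi\\\psi+M\varphi\end{bmatrix}$. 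Combined with Theorem~\ref{thm:Gres_F1}(iv), which gives $T_{W_{\Pi\sL}}[\tau]\in\wt\cR^{p\times p}$ for every $\tau\in\wt\cR^{p\times p}$, this establishes \eqref{eq:Lres} and finishes (i).

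For (ii): by (i) the $\sL$-resolvent $r$ attached to $\tau$ is realized by the extended generalized resolvent $\wt{\mathbf R}_z$ whose minimal representing relation is $A_\tau$, so the minimal representing relation of $r$ equals $A_\tau$ and $\Pi$-admissibility of $\tau$ is the requirement $\mul A_\tau=\mul A$ (Definition~\ref{def:Adm}). Under the additional hypothesis $A_0=A\dotplus\text{\rm col}\{0;\mul A^*\}$, Proposition~\ref{prop:Adm_Nev}(ii) identifies this requirement with the single condition \eqref{tlimit00}, the passage from $\tau\in\cR^{p\times p}$ there to $\tau\in\wt\cR^{p\times p}$ here being routine once \eqref{tlimit00} is read through the representing pair $\begin{bmatrix}\varphi&\psi\end{bmatrix}$. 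The step I expect to demand the most care is keeping everything within the calculus of linear relations in $\dC^p$ (Lemma~\ref{lem:Nkpairs_fam}): $\tau$ need not be the graph of a function and $\det(\psi+M\varphi)$ may vanish at some points of $\rho(A,{\sL})\cap\rho(A_0)$, so the identities above are first secured on the open subset $\Lambda_{\varphi,\psi}$ where $M+\tau$ is boundedly invertible and then propagated to all of $\rho(A,{\sL})\cap\rho(A_0)\cap\rho(\wt A)$ by analyticity.
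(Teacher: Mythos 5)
Your proof of (i) follows the paper's own route: Kre\u{\i}n's formula \eqref{gres0}, regularized and compressed by $L^{\langle*\rangle}\cdot L$, gives \eqref{eq:11.Lres1} (this is precisely Lemma~\ref{lem:Gres_F1}), the rewriting of its right-hand side as $T_{W_{\Pi{\sL}}(z)}[\tau(z)]$ is the paper's computation \eqref{eq:Lres.2} (your $(\varphi,\psi)$-bookkeeping just makes the relation-valued case explicit), and (ii) is reduced, as intended, to Proposition~\ref{prop:Adm_Nev}(ii). One caveat: Lemma~\ref{lem:PreresM}(ii) yields invertibility of ${\mathfrak a}_{12}(z)$ for $z\in\rho(A,{\sL})$ but of ${\mathfrak a}_{21}(z)$ only when $\bar z\in\rho(A,{\sL})$, so the uniqueness of $\tau$ still requires points of $\rho_s(A,{\sL})$; your parenthetical claim that the hypothesis $\rho_s(A,{\sL})\ne\emptyset$ can be dispensed with is not supported by the argument as sketched.
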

\begin{proof} (i) By Lemma~\ref{lem:PreresM}(iii), ${\mathfrak a}_{12}({z})$ is invertible for ${z}\in\rho(A,{\sL})\cap\rho(A_0)\cap\rho(\wt A)$.
  It follows from~\eqref{eq:11.Lres1} and~\eqref{eq:LresM} that
\begin{equation}\label{eq:Lres.2}
  \begin{split}
{L}^{\langle*\rangle}\wh{\mathbf R}_{{z}}{L}
&={\mathfrak a}_{22}({{z}})-{\mathfrak a}_{21}({{z}})(\tau({{z}})+{\mathfrak a}_{11}({{z}}))^{-1}
{\mathfrak a}_{12}({{z}})\\
&={\mathfrak a}_{22}({{z}})-{\mathfrak a}_{21}({{z}})({\mathfrak a}_{12}({{z}})^{-1}\tau({{z}})+
{\mathfrak a}_{12}({{z}})^{-1}{\mathfrak a}_{11}({{z}}))^{-1}\\
&=(w_{11}({{z}})\tau({{z}})+w_{12}({{z}}))
(w_{21}({{z}})\tau({{z}})+w_{22}({{z}}))^{-1}.
\end{split}
\end{equation}
Now (i) follows from~Lemma~\ref{lem:Gres_F1}(i) and
(ii) follows from Theorem~\ref{krein}(iii).
\end{proof}

\subsection{Left ${\sL}$-resolvent matrix}
For a right $\Pi{\sL}$-resolvent matrix  $W({z}):=W_{\Pi{\sL}}({{z}})=\left[w_{i,j}({z})\right]_{i,j=1}^2$ of $A$
and  $\tau\in \wt\cR(\cH)$ we define the left $\Pi{\sL}$-resolvent matrix of $A$
\begin{equation}\label{eq:LeftResMat}
  W^\ell_{\Pi{\sL}}({z})=[w^\ell_{ij}({z})]_{i,j=1}^2:=W^\#({z}) 
\end{equation}
and the left linear-fractional transformation
\begin{equation}\label{eq:2.2}
  T_W^{\ell}[\tau]:=(\tau({z})w^{\ell}_{ 12}({z})+w_{22}^{\ell}({z}))^{-1}
  (\tau({z})w_{11}^{\ell}({z})+w^{\ell}_{ 21}({z})).
\end{equation}
It follows from~\eqref{eq:LresM} and the formulas
\[
{\mathfrak a}_{11}^\#({{z}})={\mathfrak a}_{11}({{z}}),\quad
{\mathfrak a}_{12}^\#({{z}})={\mathfrak a}_{21}({{z}}),\quad
{\mathfrak a}_{22}^\#({{z}})={\mathfrak a}_{22}({{z}}),\quad
{z}\in\rho_s(A,{\sL}),
\]
that the left $\Pi{\sL}$-resolvent matrix of $A$ takes the form
\begin{equation}\label{eq:LresM_Left}
    W_{\Pi{\sL}}^\ell({{z}})=\begin{bmatrix}
        {\mathfrak a}_{21}({{z}})^{-1}{\mathfrak a}_{22}({{z}}) & {\mathfrak a}_{21}({{z}})^{-1}\\
    {\mathfrak a}_{11}({z}){\mathfrak a}_{21}({{z}})^{-1}{\mathfrak a}_{22}({{z}})-{\mathfrak a}_{12}({{z}})
      &
    {\mathfrak a}_{11}({{z}}){\mathfrak a}_{21}({{z}})^{-1} \\
    \end{bmatrix},\quad {z}\in\rho_s(A,{\sL}).
\end{equation}
Moreover, \eqref{eq:Formula_W} and \eqref{eq:LeftResMat} yield another formula for  the left $\Pi{\sL}$-resolvent matrix
\begin{equation}\label{eq:Left_WPQ}
   W_{\Pi{\sL}}^\ell({{z}}):=\wh\Gamma(\wh{\sL}(\ov{z}))^{\langle*\rangle})
 =\begin{bmatrix}
   -\wh\Gamma_0\wh\cQ(\ov{z})^{\langle*\rangle}  &  \Gamma_0\wh\cP(\ov{z})^{\langle*\rangle}\\
   -\wh\Gamma_1\wh\cQ(\ov{z})^{\langle*\rangle} &   \Gamma_1\wh\cP(\ov{z})^{\langle*\rangle}
       \end{bmatrix},\quad
 {{z}}\in\rho_s(A,{\sL}).
\end{equation}

By Theorem~\ref{thm:Gres_F1}, for every $\tau\in \wt\cR^{\ptp}$
 the family $\wt\tau=T_W[\tau]$
belongs to $\wt\cR^{\ptp}$.
Since  $\tau^\#({z})=\tau({z})$ and $\wt\tau^\#({z})=\wt\tau({z})$,
the left and right linear-fractional transformations coincide:
\begin{equation}\label{eq:LeftLFT2}
  (T_W^{\ell}[\tau])({z})=(T_W[\tau])^\#({z})=(T_W[\tau])({z}),\quad {z}\in\rho_s(A,{\sL})\setminus \dR.
\end{equation}

 If $\tau\in \wt\cR^{\ptp}\setminus \cR^{\ptp}$ it is convenient to consider the following  kernel representation of $\tau$,
see~\eqref{LR_50},
\[
\tau({z})=\ker{\begin{bmatrix} {C(z)} & -{D}(z)\end{bmatrix}},
\]
where $\begin{bmatrix}
    C({z})& D({z})
    \end{bmatrix}$ is an $\cR^{\ptp}$-pair connected with
    $\cR^{\ptp}$-family $\tau$
by \eqref{eq:Nkpairs_fam}.

\begin{theorem}\label{thm:Gres_CD}
Let assumptions of Theorem~\ref{prop:Gresolv} hold and let $W_{\Pi{\sL}}^\ell({{z}})$
be the left $\Pi{\sL}$-resolvent matrix of $A$ defined by~\eqref{eq:LresM_Left}. Then the formula
\begin{equation}\label{eq:2.2L}
  {L}^{\langle*\rangle}\wh {\mathbf R}_{{z}}{L}=(C({z})w_{12}^\ell({z})+D({z})w_{22}^\ell({z}))^{-1}
  (C({z})w_{11}^\ell({z})+D({z})w_{21}^\ell({z}))
\end{equation}
establishes a one--to--one correspondence  between the set
of all ${\sL}$-resol\-vents of $A$  and the set of
$\cR^{\ptp}$-pairs $\begin{bmatrix}
    C({z})& D({z})
    \end{bmatrix}$.
\end{theorem}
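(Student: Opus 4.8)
The plan is to deduce the statement from the already-established description of $\sL$-resolvents via the right $\Pi\sL$-resolvent matrix $W_{\Pi\sL}$ in Theorem~\ref{prop:Gresolv}, combined with Lemma~\ref{lem:Nkpairs_fam} which converts the $\cR^{\ptp}$-family $\tau$ into an $\cR^{\ptp}$-pair $\begin{bmatrix} C & D\end{bmatrix}$. First I would recall that by Theorem~\ref{prop:Gresolv}(i), every $\sL$-resolvent of $A$ has the form ${L}^{\langle*\rangle}\wh{\mathbf R}_{{z}}{L}=T_{W_{\Pi{\sL}}({z})}[\tau({z})]$ with $\tau\in\wt\cR^{\ptp}$, and this is a bijection. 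Then I would fix the $\cR^{\ptp}$-pair $\begin{bmatrix} C({z})& D({z})\end{bmatrix}$ corresponding to $\tau$ via~\eqref{eq:Nkpairs_fam}, i.e.\ with $\tau({z})=\ran\,\text{\rm col}\{\varphi({z});\psi({z})\}$ and $C=\psi^\#$, $D=\varphi^\#$, so that by~\eqref{LR_50} one has $\tau({z})=\ker\begin{bmatrix} C({z}) & -D({z})\end{bmatrix}$.

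The core computation is to re-express $T_{W_{\Pi{\sL}}({z})}[\tau({z})]$ in terms of $C$, $D$ and the blocks $w^\ell_{ij}$ of the left resolvent matrix $W^\ell_{\Pi\sL}=W_{\Pi\sL}^\#$. Starting from the representation~\eqref{eq:Lres_phipsi} of the linear fractional transform on a family,
\[
T_{W_{\Pi\sL}({z})}[\tau({z})]=\bigl(w_{11}({z})\psi({z})+w_{12}({z})\varphi({z})\bigr)
\bigl(w_{21}({z})\psi({z})+w_{22}({z})\varphi({z})\bigr)^{-1},
\]
I would substitute $\psi=C^\#$, $\varphi=D^\#$ and use the relations $w^\ell_{ij}({z})=w_{ij}^\#({z})$ from~\eqref{eq:LeftResMat}. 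Since on $\rho_s(A,\sL)\setminus\dR$ the $\sL$-resolvent ${L}^{\langle*\rangle}\wh{\mathbf R}_{{z}}{L}$ is Hermitian-symmetric (it lies in $\cR^{\ptp}$, cf.\ the remark after Definition~\ref{def:11.Pl-res,matr}), I would take the $\#$-transform of the numerator and denominator and reorganize using~\eqref{eq:LeftLFT2}, the identity $(T_W[\tau])^\#=T_W[\tau]$, to land on the left linear-fractional transformation form~\eqref{eq:2.2}, namely
\[
{L}^{\langle*\rangle}\wh{\mathbf R}_{{z}}{L}=\bigl(\tau({z})w^\ell_{12}({z})+w^\ell_{22}({z})\bigr)^{-1}
\bigl(\tau({z})w^\ell_{11}({z})+w^\ell_{21}({z})\bigr),
\]
and then replace the family $\tau$ by its kernel description to pass to the pair: the condition $C u - D u'=0$ defining $\tau$ turns the quotient into the stated formula
$(C w^\ell_{12}+D w^\ell_{22})^{-1}(C w^\ell_{11}+D w^\ell_{21})$.
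This is essentially a repetition, in the ``left'' and ``pair'' language, of the chain of identities~\eqref{eq:Lres.2} in the proof of Theorem~\ref{prop:Gresolv}, now using~\eqref{eq:LresM_Left} instead of~\eqref{eq:LresM}.

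Finally, for the bijectivity statement I would argue as follows. The map $\tau\mapsto\begin{bmatrix} C& D\end{bmatrix}$ of Lemma~\ref{lem:Nkpairs_fam} is a bijection between $\wt\cR^{\ptp}$-families and $\cR^{\ptp}$-pairs (modulo the equivalence by holomorphic invertible right multipliers, which is absorbed in the very definition of a pair, just as two coinciding families give the same kernel). Composing this with the bijection $\tau\leftrightarrow r$ of Theorem~\ref{prop:Gresolv}(i) yields a bijection between $\cR^{\ptp}$-pairs and $\sL$-resolvents; the phrase ``$\sL$-regular $\sL$-resolvents'' in the statement just restricts the domain of definition of $r$ to $\rho_s(A,\sL)$, where all the relevant matrix blocks (${\mathfrak a}_{12}$, ${\mathfrak a}_{21}$, $\wh w_{21}$) are invertible by Lemma~\ref{lem:PreresM}(ii) and~\eqref{eq:11.a21}, so that formula~\eqref{eq:2.2L} is well posed. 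The main obstacle I anticipate is purely bookkeeping: carefully tracking the $\#$-transforms and the left-versus-right ordering of the $p\times p$ blocks so that the numerator and denominator of~\eqref{eq:2.2L} come out in exactly the order written, and checking invertibility of $C({z})w^\ell_{12}({z})+D({z})w^\ell_{22}({z})$ on $\rho_s(A,\sL)\cap\rho(A_0)\cap\rho(\wt A)$ — this follows because $w^\ell_{ij}=w_{ij}^\#$, $W_{\Pi\sL}\in\cW(J_p)$ gives the $J_p$-unitarity relations~\eqref{eq:11.W*W} on $\rho_s(A,\sL)$, and $\begin{bmatrix} C& D\end{bmatrix}$ has full rank $p$, so the nonvanishing of the denominator is equivalent to ${z}\in\rho(\wt A)$ exactly as in Corollary~\ref{cor:krein}.
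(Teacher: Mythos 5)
Your strategy is sound and it does differ from the paper's route: the paper does not pass through the right linear--fractional description of Theorem~\ref{prop:Gresolv} at all, but goes back to the preresolvent formula~\eqref{eq:11.Lres1}, inserts the identity $(\tau({z})+{\mathfrak a}_{11}({z}))^{-1}=(C({z})+D({z}){\mathfrak a}_{11}({z}))^{-1}D({z})$, and rearranges the resulting expression until the blocks of~\eqref{eq:LresM_Left} appear; the bijectivity is then inherited from Lemma~\ref{lem:Gres_F1} and Lemma~\ref{lem:Nkpairs_fam}, exactly as in your closing paragraph. Your route instead exploits the already proved identity $r=T_{W_{\Pi\sL}}[\tau]$ together with $r^{\#}=r$ and the correspondence $C=\psi^{\#}$, $D=\varphi^{\#}$; this is a legitimate and arguably shorter derivation, since the symmetry $r^{\#}=r$ replaces the pair-resolvent identity that the paper uses.

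There is, however, one concrete slip that as written would derail the computation: you claim $w^{\ell}_{ij}({z})=w_{ij}^{\#}({z})$, but \eqref{eq:LeftResMat} defines $W^{\ell}_{\Pi\sL}=W^{\#}$ as the $\#$ of the whole $2p\times 2p$ matrix, which transposes the block indices, so in fact $w^{\ell}_{ij}({z})=w_{ji}^{\#}({z})$ (check against \eqref{eq:LresM} and \eqref{eq:LresM_Left}: e.g.\ $w_{21}^{\#}={\mathfrak a}_{21}^{-1}=w^{\ell}_{12}$, $w_{12}^{\#}={\mathfrak a}_{11}{\mathfrak a}_{21}^{-1}{\mathfrak a}_{22}-{\mathfrak a}_{12}=w^{\ell}_{21}$). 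With your stated relation the $\#$-transform of \eqref{eq:Lres_phipsi} would come out as $(Cw^{\ell}_{21}+Dw^{\ell}_{22})^{-1}(Cw^{\ell}_{11}+Dw^{\ell}_{12})$, which is not \eqref{eq:2.2L}; with the correct block identification the same chain gives
\begin{equation}
 r({z})=r^{\#}({z})=(C({z})w_{21}^{\#}({z})+D({z})w_{22}^{\#}({z}))^{-1}
 (C({z})w_{11}^{\#}({z})+D({z})w_{12}^{\#}({z}))
 =(Cw^{\ell}_{12}+Dw^{\ell}_{22})^{-1}(Cw^{\ell}_{11}+Dw^{\ell}_{21}),
\end{equation}
exactly the claimed formula. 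So the plan is correct once this bookkeeping point (which you yourself flagged as the main obstacle) is carried out with the block transposition taken into account; the remaining points — invertibility of the denominator on $\rho_s(A,\sL)\cap\rho(A_0)\cap\rho(\wt A)$ and the bijectivity via composition with Lemma~\ref{lem:Nkpairs_fam} — are handled adequately and agree in substance with the paper.
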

\begin{proof} Using the
connection $\tau({z})=\ker{\begin{bmatrix} {C({z})} & -{D}({z})\end{bmatrix}}$ between all $\cR^{\ptp}$-families
$\tau({z})$
and $\cR^{\ptp}$-pairs $\begin{bmatrix} {C({z})} & {D}({z})\end{bmatrix}$,
see Lemma~\ref{lem:Nkpairs_fam},
and the formula
\[
(\tau({{z}})+{\mathfrak a}_{11}({{z}}))^{-1}=(C({z})+D({z}){\mathfrak a}_{11}({z}))^{-1}D({z})
\]
we obtain from
\eqref{eq:11.Lres1} that for $ {z}\in\rho(A,{\sL})\cap\rho(A_0)\cap\rho(\wt A)$
\begin{multline}\label{eq:Lres.2A}
{L}^{\langle*\rangle}\wh{\mathbf R}_{{z}} {L}
={\mathfrak a}_{22}({z})-{\mathfrak a}_{21}({z})(C({z})+D({z}){\mathfrak a}_{11}({z}))^{-1}
D({z}){\mathfrak a}_{12}({z})\\
={\mathfrak a}_{22}({z})-
(C({z}){\mathfrak a}_{21}({z})^{-1}+D({z}){\mathfrak a}_{11}({z}){\mathfrak a}_{21}({z})^{-1})^{-1}D({z}){\mathfrak a}_{12}({z})\\
=\left(C{\mathfrak a}_{21}^{-1}
+D{\mathfrak a}_{11}{\mathfrak a}_{21}^{-1}\right)^{-1}
\left(C{\mathfrak a}_{21}^{-1}{\mathfrak a}_{22}
+ D\left\{{\mathfrak a}_{11}{\mathfrak a}_{21}^{-1}{\mathfrak a}_{22}-{\mathfrak a}_{12}\right\}\right)
\end{multline}
which, by~\eqref{eq:LresM_Left}, yields~\eqref{eq:2.2}.
\end{proof}

\section{Spectral and pseudo-spectral functions}\label{sec:5}
In this paper we use the following classification from \cite{LaTe78}, \cite{LaTe84}, \cite{Sak92}, \cite{Kac03}, \cite{ArDy12} and \cite{Mog15MFAT}.

\begin{definition}\label{def:PseudoSF}
Let $\sL(\subset\sH_-)$ be a gauge for $A$ such that $\dR\subset\rho(A,\sL)$,
 and let the mapping $\cP({z}):\sH_-\to\dC^p$ be defined by \eqref{eq:P_lambda}.
A non-decreasing left-continuous mvf $\dR\ni{\lambda}\mapsto \sigma({\lambda})$ with values in $\dC^{p\times p}$
is called
	\begin{enumerate}
		\item[(1)]
$LT$-{\it spectral function}  of the pair $\langle A;\sL\rangle$ if
\begin{equation}\label{eq:BesselIneq}
  \int_{\dR}(\cP({\lambda}) f)^*d\sigma({\lambda})\cP({\lambda}) f
  \le\langle  f, f\rangle_{\sH}, \quad \text{for all}\quad f\in\sH
\end{equation}
with equality sign for all $f\in\cdom{A}$.
The set of $LT$-spectral functions of $\langle A;\sL\rangle$ is denoted by $\Sigma_{LT}(A,\sL)$.
		\item[(2)]
$q$-{\it pseudo-spectral function}  of the pair $\langle A;\sL\rangle$   if
the mapping 
\begin{equation}\label{eq:GenFourier}
\cF:\sH\ni f\mapsto \cP({\cdot})f\in L^2(d\sigma)
\end{equation}
is  a partial isometry from $\sH$ to $L^2(d\sigma)$.
The set of $q$-pseudo-spectral functions of $\langle A;\sL\rangle$ is denoted by $\Sigma_{qpsf}(A,\sL)$.

		\item[(3)]
{\it pseudo-spectral function } of the pair $\langle A;\sL\rangle$   if
the mapping
$\cF:\sH\to L^2(d\sigma)$ in \eqref{eq:GenFourier}
is  a partial isometry  from $\sH$ to $L^2(d\sigma)$
such that $\ker \cF=\mul A$. 
The set of pseudo-spectral functions of $\langle A;\sL\rangle$ is denoted by $\Sigma_{psf}(A,\sL)$.
\item[(4)]
{\it spectral function}  for  $\langle A;\sL\rangle$  if 
the mapping $\cF$ is an  isometry from $\sH$ into $L^2(d\sigma)$.
The set of spectral functions of $\langle A;\sL\rangle$ is denoted by $\Sigma_{sf}(A,\sL)$.
\item[(5)]
a pseudo-spectral function
(resp., spectral function)  $\sigma$ for  $\langle A;\sL\rangle$
is called {\it orthogonal} if $
\ran \cF=L^2(d\sigma)$.
\end{enumerate}
\end{definition}
This classification is not common. It is consistent with those in~\cite{Sak92} and \cite{Mog15MFAT}, while differs from that in  \cite{ArDy12},
where the term pseudo-spectral is used for $q$-pseudo-spectral,
and from \cite{Kac03}, where the term quasi-spectral is used for
pseudo-spectral functions in the sense of Definition~\ref{def:PseudoSF}.

\begin{remark}\label{rem:Directing}
If a gauge $\sL(\subset\sH_-)$  for $A$ is such that $\dR\subset\rho(A,\sL)$,
then the mapping
\begin{equation}\label{eq:GenFourierTr}
\cF:\sH\ni f\mapsto \cP(\cdot)f
\end{equation}
is a {\it directing mapping} for $A$ 
in the sense of \cite{Kr48}, \cite{La70,LaTe78, LaTe84}, i.e.
\begin{enumerate}
  \item [(1)] for each ${z}\in \dR$ the mapping $h\mapsto \cP({z}) h$ is linear;
  \item [(2)] for each $h\in \sH$ the mapping ${z}\mapsto \cP({z})h$ is holomorphic on $\dR$;
  \item [(3)] for ${z}\in \dR$ and $h\in \sH$ we have $\cP({z}) h=0$ if and only if
 $h\in \ran(A-{z} I)$;
  \item [(4)] for at least one ${z}\in \dR$ it holds $\dim\cP(z)\sH=p$.
\end{enumerate}

Indeed, (1)--(2) are evident, (3) follows  from the definition~\eqref{eq:P_lambda} of $\cP({z})$
and the equality $\ran({\mathbf A}-{z} I)\cap\sH=\ran(A-{z} I)$.

Assume that $\text{dim }\cP({z})\sH=k<p$ for some ${z}\in\dR$. Since $\sH=\ran(A-{z} I)\dotplus \sN_{\overline{z}}$ and
$\text{dim }\sN_{\overline{z}}=p$,
we get
$\text{dim }\cP({z})\sN_{\overline{z}}=k<p$ and so
there exists a vector $h\in\sN_{\overline{z}}\setminus\{0\}$ such that
$\cP({z})h=0$. Hence $h\in\sN_{\overline{z}}\cap \ran(A-{z} I)=\{0\}$ what contradicts the fact that $h\ne 0$.
\end{remark}

\begin{lemma}\label{lem:P-dir_func}
Let $\sL(\subset\sH_-)$ be a gauge for $A$ such that $\dR\subset\rho(A,\sL)$,
 let  $L:\dC^p\to\sL$ be a one-to-one linear mapping from $\dC^p$ onto $\sL$,
  let the mapping $\cP({z}):\sH_-\to\dC^p$ be defined by \eqref{eq:P_lambda},
 let  $E$ (resp., $\wh E$) be the generalized (resp., extended generalized) spectral function of a minimal selfadjoint extensions $\wt A$ of $A$, see Subsection~\ref{sec:3.3}, and let $\wt R_z=(\wt A-zI_{\wt\sH})^{-1}$.
Then
\begin{enumerate}
\def\labelenumi{\rm (\roman{enumi})}
\item
For every finite interval $\Delta\subset\dR$ and $f\in\sH$
\begin{equation}\label{eq:Expansion_f}
  \wh E(\Delta)f=\int_{\Delta}\,d\wh E_{\lambda}
  L\cP(\lambda)f.
\end{equation}
\item
For $\sigma(\Delta)$ given for a finite interval $\Delta\subset\dR$ by
\begin{equation}\label{eq:sigma}
  \sigma(\Delta)=L^{\langle*\rangle} \wh E(\Delta)L
\end{equation}
(see Lemma~\ref{lem:ExtSpectralF}(vi)) the following equality holds for all $f\in\sH$
\begin{equation}\label{eq:Parseval}
  (f, E(\dR)f)_\sH =\int_{\dR}\,(\cP({\lambda})f)^*d\sigma({\lambda})\cP({\lambda})f.
\end{equation}

\item
For all $f\in\sH$ it holds
\begin{equation}\label{eq:IntRepR3}
(\wt R_z f,g)_\sH= \int_{\dR}\frac{1}{{\lambda}-z}
(\cP({\lambda})g)^*d\sigma({\lambda})\cP({\lambda})f.
\end{equation}
\end{enumerate}
\end{lemma}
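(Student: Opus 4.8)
The plan is to establish the three formulas in order, deducing each from the previous one together with the rigging/extension machinery already set up.

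For (i), I would start from the factorization identity \eqref{eq:Christ_Id} in Theorem~\ref{thm:ResM}, or rather its ingredients: by Lemma~\ref{lem:PQ_prop}(ii) the operator $\cP(\lambda)^{\langle*\rangle}$ maps $\dC^p$ onto $\sN_{\overline\lambda}$, and $L^{\langle*\rangle}\cP(\lambda)^{\langle*\rangle}=I_p$ by \eqref{eq:PQ_pr1}. The key point is that $\cP(\lambda)$, as a map $\sH_-\to\dC^p$, is the adjoint (with respect to $\langle\cdot,\cdot\rangle_{-,+}$) of a bounded map into $\sH_+$, so $L\cP(\lambda)$ acts on $\sH$ as a bounded finite-rank operator. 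The identity \eqref{eq:Expansion_f} then amounts to checking that $\wh E(\Delta)f$ and $\int_\Delta d\wh E_\lambda\,L\cP(\lambda)f$ pair identically against every $h\in\sH_+$; I would test against eigen-type vectors from $\wh\sN_\lambda$ and use Lemma~\ref{lem:ExtSpectralF}(iii) (the identity $\wt A_{op}\wt E(\Delta)f=\wt E(\Delta)^{\langle*\rangle}f'$ for $\{f;f'\}\in\bA$) together with the spectral theorem for $\wt A_{op}$. Concretely, for $\{f;\ff'\}\in\bA$ one has $\ff'-\lambda f\in\ran(\bA-\lambda I_\sH)$, so $\Pi_\sL^\lambda(\ff'-\lambda f)=0$ and the $\cP$-component "sees only" the $\sN_{\overline\lambda}$-part; this is exactly what the Stieltjes measure $d\wh E_\lambda$ picks out. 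The boundedness of $\wh E(\Delta)$ on $\sH_-$ from Lemma~\ref{lem:ExtSpectralF}(ii) lets me extend from a dense set.

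For (ii), I would simply apply $L^{\langle*\rangle}(\cdot)L$ to the spectral resolution. Writing $\wt E(\dR)$ as the strong limit of $\wt E(\Delta_n)$ over an exhausting sequence of finite intervals, and using (i),
\[
(f,\wt E(\Delta)f)_\sH=\langle \wh E(\Delta)f,f\rangle_{+,-}
=\int_\Delta (\cP(\lambda)f)^*\,d\bigl(L^{\langle*\rangle}\wh E_\lambda L\bigr)\,\cP(\lambda)f,
\]
where the middle equality uses Lemma~\ref{lem:ExtSpectralF}(i) ($\wh E(\Delta)f\in\sH$) and the fact that for $f\in\sH$ the duality $\langle\cdot,\cdot\rangle_{+,-}$ reduces to the inner product, and the last equality is \eqref{eq:Expansion_f} together with $L^{\langle*\rangle}L=$ the Gram operator absorbed into $d\sigma$ via \eqref{eq:sigma}. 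Passing to the limit $\Delta\uparrow\dR$ gives \eqref{eq:Parseval}; the finiteness of the right-hand side is guaranteed by Lemma~\ref{lem:ExtSpectralF}(v), i.e. $(\wh E(\Delta)\mathfrak f,\mathfrak g)\in\sS^{(2)}$.

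For (iii), I would combine the integral representation \eqref{eq:gen_res_int} of the generalized resolvent, $\mathbf R_z=\int_\dR (\lambda-z)^{-1}\,dE_\lambda$, with its extension: taking adjoints with respect to the rigging and using Lemma~\ref{lem:3.2B}(iii) gives $\wt{\mathbf R}_z=\int_\dR(\lambda-z)^{-1}\,d\wh E_\lambda$ as an operator in $\cB(\sH_-,\sH)$, and by Lemma~\ref{lem:ExtRes_wtA} this agrees with $P_\sH\wt R_z(\wt A)\!\upharpoonright\!\sH$. Then, for $f,g\in\sH$,
\[
(\wt R_z f,g)_\sH=\int_\dR\frac{1}{\lambda-z}\,d(\wh E_\lambda f,g)_\sH
=\int_\dR\frac{1}{\lambda-z}(\cP(\lambda)g)^*\,d\sigma(\lambda)\,\cP(\lambda)f,
\]
where the second equality substitutes \eqref{eq:Expansion_f} for both $f$ and $g$ (so that $(d\wh E_\lambda f,g)_\sH = (\cP(\lambda)g)^* d\sigma(\lambda)\cP(\lambda)f$ after using $\langle L\cP(\lambda)f, L\cP(\lambda)g\rangle$-type bilinearity and \eqref{eq:sigma}). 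The convergence of the integral near $\infty$ follows again from $\sS^{(2)}\subset\sS^{(1)}$-type estimates (the factor $(\lambda-z)^{-1}$ improves integrability), i.e. from Lemma~\ref{lem:ExtSpectralF}(iv)--(v).

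The main obstacle I anticipate is part (i): making rigorous the manipulation of the operator-valued Stieltjes integral $\int_\Delta d\wh E_\lambda\, L\cP(\lambda)f$ in which the integrand $L\cP(\lambda)$ itself varies with $\lambda$ and takes values in $\sH_+$ while $d\wh E_\lambda$ is a measure with values in operators from $\sH_-$ to $\sH$. The clean way around this is to never integrate the operator-valued object directly but instead to verify \eqref{eq:Expansion_f} weakly — pairing both sides against an arbitrary $h\in\sH_+$, reducing to a scalar identity between two Stieltjes measures, and checking equality of the measures via the resolvent (Borel transform) using \eqref{eq:IntRepR3}-type identities and the Stieltjes inversion formula \eqref{eq:IntRep3}. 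This is why I would prove (iii) essentially in parallel with (i) rather than strictly after it, using whichever direction of the equivalence is more convenient at each step.
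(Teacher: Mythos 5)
Your parts (ii) and (iii) are essentially the paper's own argument: the paper obtains \eqref{eq:Parseval} by substituting \eqref{eq:Expansion_f} twice into $(f,\wt E(\Delta)g)$ (using the projection property of $\wt E_\lambda$) and then letting $\Delta\uparrow\dR$ via Fatou, and it derives \eqref{eq:IntRepR3} from \eqref{eq:Parseval} with $f$ replaced by $\wt R_zf$; your route through the integral representation \eqref{eq:gen_res_int} plus \eqref{eq:Expansion_f} is the same computation in a slightly different order, and is fine once (i) is in hand.

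The genuine gap is in part (i), which is where the real work lies. The paper proves \eqref{eq:Expansion_f} by the Akhiezer--Glazman differentiation argument: it sets $w_\mu=\int_\alpha^\mu d\wh E_\lambda f-\int_\alpha^\mu d\wh E_\lambda L\cP(\lambda)f$, checks left- and right-continuity, and then shows $\lim_{\varepsilon\downarrow0}\varepsilon^{-2}\|w_{\mu+\varepsilon}-w_\mu\|^2=0$, whence $w_\mu\equiv0$. The quantitative heart is this: since the real point $\mu$ lies in $\rho(A,\sL)$, one can write $f-L\cP(\mu)f=(I-\Pi^{\mu}_{\sL})f=\fg'-\mu g$ with $\text{\rm col}\{g;\fg'\}\in\bA$, so by Lemma~\ref{lem:ExtSpectralF}(iii) $\wh E(\Delta)(f-L\cP(\mu)f)=P_\sH(\wt A_{op}-\mu)\wt E(\Delta)g$, which is $O(\varepsilon)$ in norm when $\Delta=[\mu,\mu+\varepsilon)$; the remaining term is $O(\varepsilon)$ by continuity of $\lambda\mapsto\cP(\lambda)f$. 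Your sketch names the right ingredients (the decomposition $f=(I-\Pi^{\lambda}_{\sL})f+L\cP(\lambda)f$ and Lemma~\ref{lem:ExtSpectralF}(iii)) but never performs this local first-order estimate; the assertion that $d\wh E_\lambda$ ``picks out exactly'' the $\sL$-component is precisely the statement to be proved, not an argument. Also, ``testing against eigen-type vectors from $\wh\sN_\lambda$'' is not available: for real $\lambda$ the defect elements are not eigenvectors of $\wt A$, and the spectral measure generically has no atoms. Your fallback via Borel transforms and the Stieltjes inversion formula \eqref{eq:IntRep3} does not circumvent the difficulty either: to identify the Cauchy transform of the scalar measure $\Delta\mapsto\int_\Delta\,(d\wh E_\lambda L\cP(\lambda)f,h)_\sH$ with $(\wt{\mathbf R}_zf,h)_\sH$ one needs exactly the local vanishing of $\wh E(d\lambda)$ on $\ran(\bA-\lambda I_\sH)$ (or an equivalent integration by parts), i.e.\ the content of (i); and proving (iii) ``in parallel with (i)'' risks circularity, because before (i) the only resolvent formula at your disposal is $(\wt R_zf,g)=\int_{\dR}(\lambda-z)^{-1}d(E_\lambda f,g)$, not the $\sigma$-version \eqref{eq:IntRepR3}.
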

\begin{proof}
(i) The proof of the statement (i) is standard and follows the ideas of \cite{Kr48} (see also \cite[Section 129]{AG66})
except that here we use  the geometry of rigged Hilbert spaces.
For $\alpha<\mu<+\infty$ let us consider
\[
w_\mu=\int_\alpha^\mu d\wh E_{\lambda} f-\int_{\alpha}^\mu\,d\wh E_{{\lambda}} L\cP({\lambda})f
\]
as a vector-function of $\mu$. Clearly, $w_\mu$ is left-continuous.
To prove that $w_\mu$ is right-continuous represent the vector
$f-L\cP(\mu)f$ in the form
\[
f-L\cP(\mu)f=g'-\mu g,\quad\text{where}\quad
\text{\rm col} \{g, g'\}\in {\mathbf A}.
\]
Let $\Delta_\mu^\varepsilon=[\mu,\mu+\varepsilon)$.
By Lemma~\ref{lem:ExtSpectralF}(iv) we get
\begin{equation}\label{eq:wtEdelta1}
\wh E(\Delta)(f-L\cP(\mu)f)=\wh E(\Delta)(g'-\mu g)
=P_\sH(\wt A_{\rm op}-\mu)\wt E(\Delta)g.
\end{equation}
Then for $\Delta=\Delta_\mu^\varepsilon=[\mu,\mu+\varepsilon)$ we obtain
\[
\lim_{\varepsilon\downarrow 0}\wh E(\Delta_\mu^\varepsilon)(f-L\cP(\mu)f)
=\lim_{\varepsilon\downarrow 0}(\wt A_{\rm op}-\mu)\wt E(\Delta_\mu^\varepsilon)g=0
\]
and hence $w_\mu$ is right-continuous.

Now let us prove that $\frac{d}{d\mu}w_\mu\equiv 0$ in the sense that
\begin{equation}\label{eq:Estim0}
\lim_{\varepsilon\downarrow 0}\frac{1}{\varepsilon^2}\|w_{\mu+\varepsilon}-w_{\mu}\|^2=0.
\end{equation}
Assume that $\varepsilon>0$. Using the fact that $w_{\mu+}=w_{\mu}$ we obtain
\[
    \|w_{\mu+\varepsilon}-w_{\mu}\|  =
    \| \wt E(\Delta_{\mu+}^\varepsilon)f-\int_{\mu+}^{\mu+\varepsilon}\,d\wh E_{{\lambda}} L\cP({\lambda})f \|\le X_1+X_2,
 \]
where
\[
X_1=\| \wt E(\Delta_{\mu+}^\varepsilon)(f-L\cP(\mu)f)\|,\quad
X_2=\| \wt E(\Delta_{\mu+}^\varepsilon)L\cP(\mu)f-\int_{\mu+}^{\mu+\varepsilon}\!d\wh E_{{\lambda}} L\cP({\lambda})f \|.
\]
By~\eqref{eq:wtEdelta1},
\begin{equation}\label{eq:Estim1}
 \begin{split}
  X_1^2&=\|P_\sH(\wt A_{\rm op}-\mu)\wt E(\Delta_{\mu+}^\varepsilon)g\|^2\le\int_{\mu+}^{\mu+\varepsilon}|{\lambda}-\mu|^2(d\wt E_{\lambda} g,g)
  \le \varepsilon^2\| \wt E(\Delta_{\mu+}^\varepsilon)g \|^2,
  \end{split}
\end{equation}
and
\begin{equation}\label{eq:Estim2}
 \begin{split}
 X_2^2&=\| \int_{\mu+}^{\mu+\varepsilon}\,d\wh E_{{\lambda}}L(\cP(\mu)f- \cP({\lambda})f) \|^2\le C^2 \varepsilon^2\|\wt E(\Delta_{\mu+}^\varepsilon)L\cP(\mu)f\|^2,
 \end{split}
 \end{equation}
 where $C$ is some positive constant. The equality \eqref{eq:Estim0} follows from \eqref{eq:Estim1} and \eqref{eq:Estim2}. This proves the identity $w_\mu\equiv 0$ and hence also (i).

(ii)  For $f,g\in\sH$ and a finite interval $\Delta\subset\dR$ we get from \eqref{eq:Expansion_f} and \eqref{eq:sigma}
 \begin{multline}\label{eq:Isom1}
    (f,\wt E(\Delta)g) = \int_{\Delta}\,d(f,\wh E_{\lambda} L\cP(\lambda)g)_\sH
      = \int_{\Delta}\,d(\wt E_{\lambda}f, L\cP(\lambda)g)_\sH \\
      = \int_{\Delta}\,d{\langle\wh E_{\lambda} L\cP(\lambda)f, L\cP(\lambda)g)_\sH\rangle} = \int_{\Delta}\,(\cP(\lambda)g)^*d\sigma(\lambda)\cP(\lambda)f.
 \end{multline}
The equality~\eqref{eq:Parseval} is obtained by setting in \eqref{eq:Isom1} $g=f$ and applying the Fatou's lemma.

(iii) follows from \eqref{eq:Parseval} in a similar way when replacing $f$ with $\wt R_zf$.
\end{proof}

\begin{theorem}
  \label{thm:Descr_Pseudo-spect}
Let the assumptions of Lemma~\ref{lem:P-dir_func}  hold and let $\sigma$
be given by~\eqref{eq:sigma}. Then
\begin{enumerate}
\def\labelenumi{\rm (\roman{enumi})}
\item 
$\sigma$ is an LT-spectral function of $\langle A;\sL\rangle$.
    \item
    $\sigma\in \Sigma_{qpsf}(A;\sL)$ if and only if
\begin{equation}\label{eq:Adm_q}
(\mul A\subseteq)\mul \wt A\subset\sH.
\end{equation}
\item $\sigma\in \Sigma_{psf}(A;\sL)$ if and only if
\begin{equation}\label{eq:Adm1}
\mul \wt A=\mul A.
\end{equation}
\item The set $\Sigma_{sf}(A;\sL)$ of spectral functions of $\langle A;\sL\rangle$ is nonempty if and only if $\mul A=\{0\}$ and in this case  $\Sigma_{sf}(A;\sL)=\Sigma_{psf}(A;\sL)$.
    \item
    If \eqref{eq:Adm1} holds then $\sigma$ is an orthogonal pseudo-spectral function  of $\langle A;\sL\rangle$ if and only if
        $\wt A$ is a canonical selfadjoint extension of $A$.
\end{enumerate}
\end{theorem}
\begin{proof}
(i) Let $\wt E(\cdot)$ be the resolution of the identity of the operator part
of the linear relation $\wt A$.
Notice that \eqref{eq:Parseval} yields
 the inequality 
\begin{equation}\label{eq:BesselIneq2}
    \int_{\dR}(\cP({\lambda}) f)^*d\sigma({\lambda})\cP({\lambda}) f
    =(  \wt E(\dR)f,  f)_{\wt\sH}
  \le ( f,  f)_{\sH},
\end{equation}
which is valid for all $f\in\sH$. This inequality can also be rewritten as
\begin{equation}\label{eq:BesselIneq3}
  \|\cF f\|_{L^2(d\sigma)}= \| \wt E(\dR) f\|_{\wt\sH}^2\le
   \|  f\|_{\sH}^2 ,\quad f\in\sH,
\end{equation}
which implies (i).

(ii) If $\mul \wt A\subset\sH$ then for all $f\in\mul \wt A$ we get
$\wt E(\dR) f=0$ and hence, by~\eqref{eq:BesselIneq3}, $\mul \wt A\subset\ker\cF$.
For $f\in\sH\ominus\mul \wt A$ we get
$\wt E(\dR) f=f$,
$\|\cF f\|_{L^2(d\sigma)}=\|f\|_{\sH}$ and hence
$\cF:L^2(d\sigma)\to \sH$ is a partial isometry with the kernel
$\ker\cF=\mul \wt A$.

Conversely, if  $\sigma\in \Sigma_{qpsf}(A;\sL)$, then $\cF:\sH\to L^2(d\sigma)$
is a partial isometry with the kernel $\cK_0=\mul\wt A\cap\sH$.
 Then for $h\in\cK_0^\perp:=\sH\ominus \cK_0$ we have the equality $\|\cF h\|_{L^2(d\sigma)}=\|h\|_\sH$ and hence $\cK_0^\perp\perp\mul\wt A$.
 Decomposing $\mul\wt A=\cK_0\oplus\cK_1$ we obtain
 \[
 \cK_1\perp\cK_0\quad\text{and}\quad \cK_1\perp\cK_0^\perp\quad
 \Longrightarrow\quad \cK_1\subset\wt\sH\ominus\sH.
 \]
Since $\wt A$ is a minimal extension of $A$, we get $\cK_1=\{0\}$
and hence  $\mul\wt A\subset\sH$.

(iii) Since $\ker\cF=\mul \wt A$, we have the equivalence
\[
\ker\cF=\mul A\quad\Longleftrightarrow \quad\mul \wt A=\mul  A.
\]
Therefore, $\sigma$ is a pseudo-spectral function of $\langle A;\sL\rangle$ if and only if
\eqref{eq:Adm1} holds.

(iv) Since $\mul A\subseteq \mul \wt A$, the condition $\mul A=\{0\}$ is necessary for
$\sigma$ to be a spectral function of $\langle A;\sL\rangle$. The condition $\mul A=\{0\}$ is also sufficient for $\Sigma_{sp}(A;\sL)$ to be non-empty since a single-valued operator $A$ with equal defect numbers admits a single-valued selfadjoint extension $\wt A$.

(v) If $\sigma\in\Sigma_{psf}(A;\sL)$ and $\ran \cF=L^2(d\sigma)$, then
$\cF_0=\cF\upharpoonright \sH_{\rm op}$ is a unitary operator
 from $\sH_{\rm op}=\sH\ominus\mul A$ to $L^2(d\sigma)$ and
 $A_{\rm op}=\cF_0^* \Lambda\cF_0\upharpoonright \dom A_{\rm op}$,
 where $\Lambda$ is the multiplication operator in $L^2(d\sigma)$.
 Therefore, $\cF_0^* \Lambda\cF_0$ is a selfadjoint extension of
 $A_{\rm op}$ in $\sH_{\rm op}$ and the linear relation $\wt A$
with the operator part $\wt A_{\rm op}=\cF_0^* \Lambda\cF_0$
is a canonical  selfadjoint extension of $A$.

 Conversely, let $\wt A$ be a canonical  selfadjoint extension of $A$
 such that \eqref{eq:Adm1} holds, denote
 $\sG=\ran \cF\subset L^2(d\sigma)$ and consider
 $\cF_0=\cF\upharpoonright \sH_{\rm op}$ as a unitary operator
 from $\sH_{\rm op}=\sH\ominus\mul A$ to $\sG\subseteq L^2(d\sigma)$.
 Then the operator $G:=\cF_0\wt A_{\rm op}\cF_0^*$ is
  selfadjoint in $\sG$ and for its resolvent
  $R_z^G=(G-z I_{\sG})^{-1}$
  and functions $\varphi=\cF_0 f$ and $h=\cF_0g$, $f,g\in\sH_{\rm op}$
  we obtain
\begin{equation}\label{eq:IntRepRG}
(R_z^G \varphi,h)_{\sG}=
(\wt R_z f,g)_\sH= \int_{\dR}\frac{1}{{\lambda}-z}
\psi({\lambda})^*d\sigma({\lambda})\varphi({\lambda}).
\end{equation}

Assuming that $h\in L^2(d\sigma)\ominus\sG$ we obtain
$(R_z^G \varphi,h)_{\sG}=0$ for all $\varphi\in\sG$.
By~\eqref{eq:IntRepRG}, we get
\[
\int_{\dR}\frac{1}{{\lambda}-z}
h({\lambda})^*d\sigma({\lambda})\varphi({\lambda})=0\quad
\text{for all $\varphi\in\sG$}
\]
and hence, by the Stieltjes inversion formula, for every interval $\Delta\subset\dR$
\[
\int_{\Delta}\,
h({\lambda})^*d\sigma({\lambda})\varphi({\lambda})=0\quad
\text{for all $\varphi\in\sG$}.
\]
Since the set $\{\cP(\cdot)f{\mathbf 1}_\Delta:f\in\sH, \,\Delta\subset\dR\}$
is dense in $L^2(d\sigma)$, we obtain $h=0$ in $L^2(d\sigma)$,
and thus $\sG=\ran \cF= L^2(d\sigma)$.
\end{proof}

\begin{remark}
\begin{itemize}
  \item   For a single-valued operator $A$ formulas \eqref{eq:Expansion_f} and \eqref{eq:Isom1} were proved in \cite{Sh71}
      under a  weaker assumption that $\cP({\lambda})f$ admits an analytic continuation into a neighborhood of $\Delta$.
      \item For a  single-valued operator $A$ with defect numbers $n_\pm(A)=1$
      the statement (v) in Theorem~\ref{thm:Descr_Pseudo-spect} was proved in~\cite{Kr48} as a criterion of completness of the set $\{\cP(\lambda)f\}$
      in $L^2(d\sigma)$,
      where $f$ ranges over the set of finite functions.
  \item If $\sigma$ is an orthogonal pseudo-spectral function for
  $\langle A,\sL \rangle$, then Lemma~\ref{lem:P-dir_func} and formula \eqref{eq:BesselIneq3} show that the mappings
\begin{equation}\label{eq:Expansion_f2}
  f=\int_{\Delta}\,d\wh E_{{\lambda}} L(F({\lambda})),\quad F\in L^2(d\sigma),
\end{equation}
and
\begin{equation}\label{eq:Expansion_f3}
  F({\lambda})=\cP({\lambda})f,\quad f\in \sH\ominus\mul A,
\end{equation}
define an isometry from $L^2(d\sigma)$ onto $\sH\ominus\mul A$.
\end{itemize}
\end{remark}

\begin{theorem}
  \label{thm:Descr_Pseudo_IS}
  Let $\sL(\subset\sH_-)$ be a gauge for $A$ such that $\dR\subset\rho(A,\sL)$,
 let $L:\dC^p\to\sL$ be a one-to-one linear mapping from $\dC^p$ onto $\sL$.
Then
\begin{enumerate}
\def\labelenumi{\rm (\roman{enumi})}
\item
The formula
\begin{equation}\label{eq:IntRepR2}
 \int_{\dR}\left(\frac{1}{{\lambda}-z}-\frac{{\lambda}}{1+{\lambda}^2}\right)d\sigma({\lambda})=
 L^{\langle*\rangle}\wh{\bm R}_{z} L-\wh K
\end{equation}
establishes a bijective correspondence between
all LT-spectral functions $\sigma$  of $\langle A;\sL\rangle$
and all $\sL$-resolvents
$L^{\langle*\rangle}\wh{\bm R}_{z} L$ of $A$. Here $\wh K$ is a Hermitian ${p\times p}$ matrix depending on $\sigma.$
\item
$\sigma\in \Sigma_{qpsf}(A;\sL)$
if and only if the corresponding
minimal representing relations $\wt A$ satisfy the condition~\eqref{eq:Adm_q}.

\item
$\sigma\in \Sigma_{psf}(A;\sL)$
if and only if the corresponding
minimal representing relations $\wt A$ satisfy the condition~\eqref{eq:Adm1}.

\item
If $\mul A=\{0\}$, then $\sigma\in \Sigma_{sf}(A;\sL)$
if and only if the corresponding
minimal representing relations $\wt A$ satisfy the condition~\eqref{eq:Adm1}.
\end{enumerate}
\end{theorem}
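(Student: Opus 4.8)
The plan is to reduce the whole statement to one identification: for an arbitrary $\sL$-resolvent $r({z})=L^{\langle*\rangle}\wh{\mathbf R}_{z}L$, the representing measure in the canonical $\cR^{p\times p}$-representation \eqref{eq:IntRep} of $r$ coincides with the mvf $\sigma(\cdot)=L^{\langle*\rangle}\wh E(\cdot)L$ attached by \eqref{eq:sigma} to the minimal representing relation $\wt A$ of the generalized resolvent underlying $r$. Granting this, part (i) splits into three claims: (a) every $\sL$-resolvent produces, in this way, an LT-spectral function $\sigma$ of $\langle A;\sL\rangle$ and satisfies \eqref{eq:IntRepR2} with a Hermitian constant $\wh K$; (b) the assignment $r\mapsto\sigma$ is injective; (c) it is onto the set of all LT-spectral functions. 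Items (ii) and (iii) will then follow directly from Lemma~\ref{thm:Descr_Pseudo-spect}.

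For (a) I would start from the fact that every $\sL$-resolvent belongs to $\cR^{p\times p}$, so $r({z})=\alpha+\beta {z}+\int_{\dR}(\tfrac{1}{{\lambda}-{z}}-\tfrac{{\lambda}}{1+{\lambda}^{2}})\,d\widetilde\sigma({\lambda})$ with $\alpha=\alpha^{*}$, $\beta\ge 0$, $\widetilde\sigma\in\sS^{(2)}$, and compute $\Im r$. Since $r({z})^{*}=r(\bar z)$ (property (b) of $\cR^{p\times p}$), and the regularizer $\bR^0$ is ${z}$-independent, $\Im r({z})=\tfrac{1}{2i}L^{\langle*\rangle}(\wh{\mathbf R}_{z}-\wh{\mathbf R}_{\bar z})L=\tfrac{1}{2i}L^{\langle*\rangle}(\wt{\mathbf R}_{z}-\wt{\mathbf R}_{\bar z})L$; from \eqref{eq:gen_res_int} and its extension to $\sH_{-}$ (Lemma~\ref{lem:ExtSpectralF}) one obtains $\wt{\mathbf R}_{z}-\wt{\mathbf R}_{\bar z}=({z}-\bar z)\int_{\dR}|{\lambda}-{z}|^{-2}\,d\wh E_{\lambda}$ as an operator in $\cB(\sH_{-},\sH)$ for ${z}\notin\dR$, whence, pulling the bounded map $L^{\langle*\rangle}(\cdot)L$ under the integral, $\Im r({z})=\Im {z}\int_{\dR}|{\lambda}-{z}|^{-2}\,d\sigma({\lambda})$ with $\sigma=L^{\langle*\rangle}\wh E(\cdot)L$. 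Comparing with the integral representation of $r$ and letting ${z}=iy$, $y\uparrow\infty$, dominated convergence (using $\widetilde\sigma,\sigma\in\sS^{(2)}$) forces $\beta=0$; the two Poisson-type integrals then agree on $\dC\setminus\dR$, so the Stieltjes inversion formula \eqref{eq:IntRep3} gives $\widetilde\sigma=\sigma$. This yields \eqref{eq:IntRepR2} with $\wh K:=\alpha=\alpha^{*}$, and by Lemma~\ref{thm:Descr_Pseudo-spect}(i) the mvf $\sigma$ is an LT-spectral function of $\langle A;\sL\rangle$.

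For (b), if two $\sL$-resolvents produce the same $\sigma$, then \eqref{eq:Isom1}---whose right-hand side depends only on $A$, $\sL$ and $\sigma$---forces their generalized spectral functions to coincide on every finite interval, hence their generalized resolvents coincide by \eqref{eq:gen_res_int}, and so do their regularized extensions and the $\sL$-resolvents themselves (equivalently, $\sigma$ is recovered from $r$ by \eqref{eq:IntRep3}). For (c), given an arbitrary LT-spectral function $\sigma$ of $\langle A;\sL\rangle$, I would invoke the method of directing functionals of M.~Krein and of Langer--Textorius: the generalized Fourier transform $\cF\colon f\mapsto\cP(\cdot)f$ is a directing mapping (Remark~\ref{rem:Directing}) that is contractive on $\sH$ and isometric on $\cdom A$ relative to $d\sigma$, and the classical realization construction, carried out here in the rigged triple $\sH_{+}\subset\sH\subset\sH_{-}$, produces a minimal selfadjoint extension $\wt A$ of $A$ in a possibly larger space with $\sigma=L^{\langle*\rangle}\wh E(\cdot)L$; step (a) then gives \eqref{eq:IntRepR2} for the corresponding $\sL$-resolvent. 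Together (a)--(c) establish the bijection in (i).

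For (ii) and (iii): under the bijection of (i) each $\sigma$ comes from a (unique) minimal representing relation $\wt A$ with $\sigma=L^{\langle*\rangle}\wh E(\cdot)L$, so (ii) is precisely Lemma~\ref{thm:Descr_Pseudo-spect}(ii), while (iii) follows by combining Lemma~\ref{thm:Descr_Pseudo-spect}(iii) (so that $\Sigma_{sf}(A;\sL)=\Sigma_{psf}(A;\sL)$ when $\mul A=\{0\}$) with (ii), membership then being equivalent to $\mul\wt A=\mul A=\{0\}$, i.e. \eqref{eq:Adm1}. I expect the main obstacle to be the rigged-space bookkeeping in step (a)---in particular, justifying that $\wt{\mathbf R}_{z}-\wt{\mathbf R}_{\bar z}$ genuinely factors through $\int_{\dR}|{\lambda}-{z}|^{-2}\,d\wh E_{\lambda}\in\cB(\sH_{-},\sH)$ and that $L^{\langle*\rangle}(\cdot)L$ may be carried under this integral---together with the rigged adaptation of the directing-functional realization theorem needed for surjectivity in step (c).
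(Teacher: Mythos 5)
Your proposal is correct in substance and shares the two load-bearing ingredients with the paper's proof: the Langer--Textorius directing-mapping realization theorem (\cite{LaTe84}, via Remark~\ref{rem:Directing}) to realize an arbitrary LT-spectral function by a minimal selfadjoint extension $\wt A$, and Lemma~\ref{lem:P-dir_func} together with Lemma~\ref{thm:Descr_Pseudo-spect} for the converse direction and for items (ii)--(iii). Where you genuinely diverge is in how the identity \eqref{eq:IntRepR2} is obtained. The paper proves it only once, starting from an LT-spectral function $\sigma$: it takes the realization formula \eqref{eq:Descr_LTResolv3}, regularizes, and passes from $f\in\sH$ to the gauge vectors $L\xi\in\sH_-$ by approximation ($f_n\to L\xi$ in $\sH_-$, Fatou plus dominated convergence), obtaining \eqref{eq:IntRepR2} with $\wh K=L^{\langle*\rangle}\wh{\bm R}L$; the converse direction then just cites Lemma~\ref{lem:P-dir_func} and reuses this computation. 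You instead attack the $\sL$-resolvent side first, computing $\Im r$ through the extended spectral function, killing the linear term by $z=iy$ asymptotics, and invoking Stieltjes inversion \eqref{eq:IntRep3} to identify the representing measure with $\sigma=L^{\langle*\rangle}\wh E(\cdot)L$ from \eqref{eq:sigma}; you also make the injectivity of $r\mapsto\sigma$ explicit via \eqref{eq:Isom1} and \eqref{eq:gen_res_int}, which the paper leaves implicit. Your route has the advantage of making the link $\sigma=L^{\langle*\rangle}\wh E(\cdot)L$ (and hence the relevance of Lemma~\ref{thm:Descr_Pseudo-spect}(ii)--(iii) to the $\sigma$ appearing in \eqref{eq:IntRepR2}) completely transparent; the paper's route avoids any appeal to Stieltjes inversion by working with the full Nevanlinna integral from the start.

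Two points need more care than your sketch provides, both of which you flagged. First, the operator identity $\wt{\bm R}_z-\wt{\bm R}_{\bar z}=(z-\bar z)\int_{\dR}|\lambda-z|^{-2}\,d\wh E_\lambda$ in $\cB(\sH_-,\sH)$ is not available from Lemma~\ref{lem:ExtSpectralF} as stated; what you actually need (and what suffices for the measure identification) is only the quadratic form at $\ff=L\xi$, and the clean way to get it is exactly the paper's bookkeeping: write the form for $f_n\in\sH$ with $f_n\to L\xi$ in $\sH_-$, use $\wh{\bm R}_z-\wh{\bm R}_{\bar z}\in\cB(\sH_-,\sH_+)$ for convergence of the left-hand side, Fatou to bound $\int(1+\lambda^2)^{-1}\xi^*d\sigma(\lambda)\xi$, and dominated convergence to pass to the limit on the right. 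Second, in your surjectivity step (c) the claim that the Langer--Textorius realization yields $\sigma=L^{\langle*\rangle}\wh E(\cdot)L$ is not automatic: the realization gives \eqref{eq:Descr_LTResolv3}, i.e.\ equality of the scalar measures $(\cP(\cdot)f)^*d\sigma\,\cP(\cdot)f$ and $(\cP(\cdot)f)^*d(L^{\langle*\rangle}\wh E L)\,\cP(\cdot)f$ for all $f\in\sH$, and upgrading this to equality of the $p\times p$ measures requires a short argument using $\cP(\lambda)\sH=\dC^p$ for all real $\lambda$ (Remark~\ref{rem:Directing}) and polarization --- or can be bypassed altogether by deriving \eqref{eq:IntRepR2} directly from \eqref{eq:Descr_LTResolv3} as the paper does. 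With these details supplied, your argument is complete.
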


\begin{proof}
(i) Let $\sigma\in \Sigma_{LT}(A;\sL)$.
  Since, by Remark~\ref{rem:Directing}, the mapping
  $\sH\ni f\mapsto \cP({z})f\in\dC^p$ with $\cP({z})$ defined by \eqref{eq:P_lambda} is a directing mapping,
  it follows from \cite[Theorem~1]{LaTe84} that there exists
  a minimal selfadjoint extensions $\wt A$ of $A$ such that
\eqref{eq:Adm1} holds and
\begin{equation}\label{eq:Descr_LTResolv3}
(\wt{ R}_z f,f)_{\wt\sH}= \int_{\dR}\frac{1}{{\lambda}-z}
(\cP({\lambda})f)^*d\sigma({\lambda})\cP({\lambda})f \quad \text{for all}\quad f\in\sH,
\end{equation}
where $\wt R_z=(\wt A-z I_{\wt\sH})^{-1}$.
Hence we obtain for the regularized resolvent
$\wh R_z=\wt R_z-\wt R$ with $\wt R:=\frac12(\wt R_i+\wt R_{-i})$
\begin{equation}\label{eq:Descr_LTResolv4}
  \langle \wh R_z  f,  f\rangle_{\sH}=
  \int_{\dR}\left(\frac{1}{{\lambda}-z}-\frac{{\lambda}}{{\lambda}^2+1}\right)
  (\cP({\lambda})f)^*d\sigma({\lambda})\cP({\lambda})f
  , \quad \text{for all}\quad f\in\sH.
\end{equation}
Let $\widehat{\bm R}_z \in \cB(\sH_-, \sH_+)$ be the regularized extended generalized resolvent of $A$, see Lemma~\ref{lem:3.2B}(iii).
The operator $\widehat{\bm R}_z $ is related with  $\wh R_z$ by the formula
\begin{equation}\label{eq:Regul_0-wt}
  \widehat{\bm R}_z\upharpoonright\sH=P_{\sH}\wh R_z\upharpoonright\sH
  +\wh R,\quad\text{where}\quad
  \widehat{ R}=P_{\sH}\wt R\upharpoonright\sH- R^0\in \cB(\sH, \sH_+).
\end{equation}
By Lemma~\ref{lem:3.2B}(iv),  $\wh{\bm R}=\wh R^{\langle*\rangle}\in \cB(\sH_-, \sH_+)$.
 Since
{ $\sH$ is dense in $\sH_-$}, for every ${\xi}\in\dC^p$ there exists a sequence $f_n\in\sH$ such that $f_n\to L{\xi}$ in $\sH_-$. Therefore,
there exists the limit
\begin{equation}\label{eq:LimRegRes}
 N_z({\xi}):= \lim_{n\to\infty} \langle \wh R_z  f_n,  f_n\rangle_{+,-}
 =\langle \wh {\bm R}_z L{\xi},L{\xi}\rangle_{\sH}
 -\langle  \wh{\bm R} L{\xi},L{\xi}\rangle_{\sH}.
\end{equation}

On the other hand, since $\cP({\lambda})\in \cB(\sH_-,\dC^p)$, we have
\[
\cP({\lambda})( f_n)\to  \cP({\lambda})L{\xi}={\xi}.
\]
By \eqref{eq:Descr_LTResolv4}, we get
\[
\int_{\dR}\frac{(\cP({\lambda}) f_n)^*d\sigma({\lambda})\cP({\lambda}) f_n}{{\lambda}^2+1}
= \Im \langle \wh R_i f_n,f_n\rangle_{\sH}.
\]
Next by the Fatou's lemma  and \eqref{eq:LimRegRes}, we get for every $c>0$
\[
\int_{-c}^c\frac{{\xi}^* d\sigma({\lambda}){\xi}}{{\lambda}^2+1}
=\lim_{n\to\infty}\int_{-c}^c
\frac{(\cP({\lambda}) f_n)^*d\sigma({\lambda})\cP({\lambda}) f_n}{{\lambda}^2+1}
\le \Im \langle \wh {\bm R}_i L{\xi},L{\xi}\rangle_{\sH}.
\]
Now applying  the Lebesgue's dominated convergence theorem to
the equality
\begin{equation}\label{eq:Descr_LTResolv5}
  \langle \wh R_z f_n, f_n\rangle_{\sH}=
  \int_{\dR}\left(\frac{1}{{\lambda}-z}-\frac{{\lambda}}{{\lambda}^2+1}\right)
  (\cP({\lambda}) f_n)^*d\sigma({\lambda})\cP({\lambda}) f_n
\end{equation}
and taking into account \eqref{eq:LimRegRes} we obtain
\begin{equation}\label{eq:Descr_LTResolv6}
  \langle \wh{\bm R}_z L{\xi}, L{\xi}\rangle_{\sH}
  -\langle \wh{\bm R} L{\xi},L{\xi}\rangle_{\sH}=
  \int_{\dR}\left(\frac{1}{{\lambda}-z}-\frac{{\lambda}}{{\lambda}^2+1}\right)
  {\xi}^* d\sigma({\lambda}){\xi}.
\end{equation}
Setting $\wh K=L^{\langle*\rangle}\wh{\bm R} L$ we obtain~\eqref{eq:IntRepR2}.

Conversely,  let $\wt A$  be a minimal selfadjoint extensions of $A$.
By Lemma~\ref{lem:P-dir_func}, the mvf $\sigma$ given by \eqref{eq:sigma} is an LT-spectral function of $\langle A;\sL\rangle$.
Therefore, by the 1-st part of the proof, $\sigma$ satisfies~\eqref{eq:IntRepR2}.

 (ii)--(iv) 
 follow from statements (ii)--(iv) of
Theorem~\ref{thm:Descr_Pseudo-spect}.
\end{proof}

\section{Canonical systems of differential equations}\label{sec:6}
Consider the canonical differential equation  
   \begin{equation}\label{eq:can,eq-n}
J_p f'(t)+\cF(t)f(t) = {z}\mathcal H(t)f(t), \quad  t\in(a,b),\quad
  \end{equation}
where $ {z} \in {\dC}$,  $f(\cdot)$  is a ${\dC}^{n}$-vector function, $n=2p$,
$J_p$ is defined by~\eqref{kerK0}, and ${n\times n}$ matrix-functions $\cF(t)$ and $\cH(t)$ satisfy the assumptions
\begin{enumerate}
  \item [(A1)] $\cF(t)$ and $\cH(t)$ are real Hermitian matrix-functions
with entries from $L^1_{\rm loc}(a,b)$, $\text{tr }\cH(t)\equiv 1$ and  $\cH(t)\ge 0$ for a.e. $t\in I:=(a,b)$.
  \item [(A2)]  For each absolutely continuous $f$ such that $J_p f'(t)+\cF(t)f(t)=0$ the following implication holds
  \[
  \cH(t)f(t)=0\quad \text{a.e. }\Longrightarrow f\equiv 0 \quad \text{on}\quad I=(a,b).
  \]
\end{enumerate}
 An endpoint of the interval $(a,b)$ is said to be a {\it quasiregular} endpoint
of the canonical system~\eqref{eq:can,eq-n} if $\cF$ and $G$
 are integrable up to that endpoint. A finite quasiregular endpoint is called
{\it regular}.

Let $\cL^2_\cH(a,b)$ be the semi-Hilbert space of measurable $\dC^{n}$-valued functions $f$, such that $\langle f,f\rangle_\cH :=\int_a^b f(t)^*\cH(t)f(t)dt<\infty$.
The semi-definite inner product in $\cL^2_\cH(a,b)$  corresponding to the semi-norm $\|f\|_\cH :=\langle f,f\rangle_\cH^{1/2}$ is defined by
\begin{equation}\label{eq:InnerPr}
  \langle f,g\rangle_\cH:=\int_a^b g(t)^*\cH(t)f(t)dt.
\end{equation}
Let $L^2_\cH(a,b)$ be the factor-space
\[
L^2_\cH(a,b)=\cL^2_\cH(a,b)/\{f\in \cL^2_\cH(a,b):\langle f,f\rangle_\cH=0\}.
\]
 For a function $f\in \cL^2_\cH(a,b)$ we denote by $\pi_{\cH} f$ the corresponding class in $L^2_\cH(a,b)$.
  Clearly,  $L^2_\cH(a,b)$ is a Hilbert space with respect to the inner product
  \[
  ( \pi_{\cH}f, \pi_{\cH}g)_{L^2_{\cH}} = \langle f,g\rangle_\cH,\quad f,g\in \cL^2_\cH(a,b).
  \]
   Define  the maximal relation $S_{\rm max}$ in  $L^2_\cH(a,b)$ by
  \[
 S_{\rm max}=\left\{
\text{\rm col}\{\pi_{\cH}f,\pi_{\cH}g\}\in L^2_\cH(a,b)\times L^2_\cH(a,b):\,
                    J_p f'+\cF f = \cH g\right\}.
  \]
As is known,  
for
  $\text{\rm col}\{\pi_{\cH}f, \pi_{\cH}g\}\in S_{\rm max}$ the  class $\pi_{\cH}f$
  has a unique absolutely
 continuous representative $f\in AC[a,b]$, \cite{Orc67}, see also~\cite{LaTe82}.
For not necessarily definite $2\times 2$  canonical systems such regularity result was
proven in~\cite{Kac83} and for $n\times n$ systems by another method in \cite{LesMal03}.

Let  the pre-minimal relation $S'$ be defined as the
 restriction of the maximal relation $S_{\rm max}$ to the elements
 $\text{\rm col} \{\pi_{\cH}f,\pi_{\cH}g\}$ such that $f$ has compact support on $(a,b)$
and let the
 minimal relation $S_{\rm min}$ be defined as  $S_{\rm min}=\overline{S'}$.

\subsection{Spectral and pseudo-spectral functions of $S_{\min}$ in the regular case}
 Assume that  $a$ and $b$  are regular or
quasiregular endpoints of the canonical system~\eqref{eq:can,eq-n}.
Then for $g\in\cL^2_{\cH,{\rm loc}}(a,b) $  each solution $f$ of the equation
$J_p f'+\cF f = \mathcal H g$
 is square-integrable with respect to
$\cF$ on $I$ and the limits
\[
f(a)=\lim_{t\downarrow a}f(t)\quad\text{and}\quad
f(b)=\lim_{t\uparrow b}f(t)
\]
exist. As is known, see~\cite{{LaTe82}},
 $A:=S_{\rm min}$ is a symmetric relation with defect numbers $n_\pm(A)=n=2p$,
 $S_{\rm max}=S_{\rm min}^*$ and
  \[
 S_{\rm min}=\big\{\text{\rm col}\{\pi_{\cH}f,\pi_{\cH}g\}\in S_{\rm max}:\,
                     f(a)= f(b) = 0\big\}.
  \]

Denote by $U(\cdot,{z})$ the fundamental ${n\times n}$ matrix solution of the initial problem
\begin{equation}\label{Intro_canon_system_second}
   J_p\frac{dU(t,{z})}{dt}+\cF(t)U(t)= {{z}}\cH U(t,{z}),\quad \text{a.e. on}\quad(a,b),
   \quad  U(a,{z})= I_n.
\end{equation}
The matrix function $U({z}):=U(b,{z})$ is called the monodromy matrix of the system~\eqref{eq:can,eq-n}.
We will drop $\pi_\cH$ sometimes if it does not lead to a confusion.

\begin{proposition}
 \label{prop:14.51}
 Let the assumptions (A1)-(A2) hold, let $A=S_{\rm min}$ be the minimal relation associated with the canonical system~\eqref{eq:can,eq-n} and let
 $U({z})$ be the monodromy matrix of the system~\eqref{eq:can,eq-n}. Then
\begin{enumerate}
  \item [\rm(i)]
As a boundary triple
$\Pi = ({\dC}^n,\Gamma_0,\Gamma_1)$ for $A^*=S_{\rm max}$ one can take
\begin{equation}\label{eq:BT_Can}
\Gamma_0 \begin{bmatrix}
              f \\
               g
             \end{bmatrix}=\frac{1}{\sqrt{2}}\bigl(f(a) + f(b)\bigr),\quad
\Gamma_1 \begin{bmatrix}
              f \\
               g
             \end{bmatrix}= -\frac{1}{\sqrt{2}}J_p\bigl(f(a) - f(b)\bigr), \quad
\begin{bmatrix}
              f \\
               g
             \end{bmatrix}\in S_{\rm max}.
\end{equation}
  \item [\rm(ii)] The corresponding Weyl function is given by
    \begin{equation}\label{eq:M_lambda}
M({z}) = -J_p(I_n - U({z}))(I_n+U({z}))^{-1}.
 \end{equation}
 \item [\rm(iii)] The $\gamma$-field is
     \begin{equation}\label{eq:gamma_lambda}
\gamma({z}) = \sqrt{2}U(\cdot,{z})(I_n+U({z}))^{-1}.
 \end{equation}
  \item [\rm(iv)] The adjoint to the $\gamma$-field is
     \begin{equation}\label{eq:gamma_*}
\gamma(\ov{z})^*f = \sqrt{2}\int_a^b(I_n+U^\#({z}))^{-1}U^\#(s,{z})\cH(s)f(s)ds.
 \end{equation}
   \item [\rm(v)] The resolvent $R^0_{z}=(S_0-{z} I_\sH)^{-1}$ of the linear relation $S_0:=\ker \Gamma_0$ takes the form
     \begin{equation}\label{eq:R0lambda}
(R^0_{z} f)(t) = \frac12 U(t,{z})\int_a^b\left\{\sgn(s-t) J_p-J_p M({z})J_p\right\}U^\#(s,{z})\cH(s)f(s)ds.
 \end{equation}
\end{enumerate}
\end{proposition}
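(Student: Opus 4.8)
The plan is to verify the five assertions directly from the defining properties of a boundary triple, a Weyl function and a $\gamma$-field (Definitions~\ref{def:btriple}, \ref{def:11W00}), using only elementary facts about the fundamental matrix $U(\cdot,{z})$ of \eqref{Intro_canon_system_second}: the Lagrange identity, a Wronskian-type identity, and the reality of $\cF,\cH$ (whence $\overline{U(t,\bar{z})}=U(t,{z})$). Throughout I put $U({z}):=U(b,{z})$, $U^\#(t,{z}):=U(t,\bar{z})^{*}$, and use $\cJ^{*}=-\cJ$, $\cJ^{-1}=-\cJ$, $\cJ^{2}=-I_n$.

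\textbf{(i).} I would first establish the Lagrange identity: for $\text{\rm col}\{f;\psi_f\},\text{\rm col}\{g;\psi_g\}\in A_{\rm max}$ the Hermiticity of $\cF,\cH$ and $\cJ^{*}=-\cJ$ give $g^{*}\cH\psi_f-\psi_g^{*}\cH f=\tfrac{d}{dt}\bigl(g^{*}\cJ f\bigr)$ a.e., so that the left-hand side of \eqref{eq:1.9} equals $g(b)^{*}\cJ f(b)-g(a)^{*}\cJ f(a)$ after integrating over $(a,b)$. A one-line expansion shows this equals the right-hand side of \eqref{eq:1.9} evaluated on \eqref{eq:BT_Can}. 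For surjectivity of $\Gamma$, note that on $A_{\rm max}$ one has $\text{\rm col}\{\Gamma_0\wh f;\Gamma_1\wh f\}=\tfrac1{\sqrt2}\begin{bmatrix}I_n&I_n\\-\cJ&\cJ\end{bmatrix}\text{\rm col}\{f(a);f(b)\}$ with the $2n\times2n$ block matrix invertible; the map $\text{\rm col}\{f;\psi_f\}\mapsto(f(a),f(b))$ has kernel $A_{\rm min}$ and, since $\dim(A_{\rm max}/A_{\rm min})=n_+(A)+n_-(A)=4p=2n$ by \cite{LaTe82}, it is onto $\dC^{n}\times\dC^{n}$; hence so is $\Gamma$.

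\textbf{(ii)--(iv).} In the regular case all solutions of $\cJ f'+\cF f={z}\cH f$ lie in $L^2_\cH(a,b)$, so (using $n_\pm(A)=n$) $\wh\sN_{{z}}=\{\text{\rm col}\{U(\cdot,{z})c;{z}U(\cdot,{z})c\}:c\in\dC^{n}\}$, and for such $\wh f_{z}$ one has $f_{z}(a)=c$, $f_{z}(b)=U({z})c$, hence by \eqref{eq:BT_Can} $\Gamma_0\wh f_{z}=\tfrac1{\sqrt2}(I_n+U({z}))c$ and $\Gamma_1\wh f_{z}=-\tfrac1{\sqrt2}\cJ(I_n-U({z}))c$. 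By Lemma~\ref{lem:11_A*} the map $\Gamma_0\upharpoonright\wh\sN_{{z}}$ is invertible for ${z}\in\rho(A_0)$, so $I_n+U({z})$ is invertible there; then \eqref{eq:11.M} becomes $M({z})(I_n+U({z}))=-\cJ(I_n-U({z}))$, i.e.\ \eqref{eq:M_lambda}, and \eqref{eq:11_gamma1} gives $\gamma({z})=\pi_1(\Gamma_0\upharpoonright\wh\sN_{{z}})^{-1}=\sqrt2\,U(\cdot,{z})(I_n+U({z}))^{-1}$, i.e.\ \eqref{eq:gamma_lambda}. For (iv), a bare adjoint computation gives $(\gamma(\bar{z})u,f)_{L^2_\cH}=\sqrt2\int_a^b f(s)^{*}\cH(s)U(s,\bar{z})(I_n+U(\bar{z}))^{-1}u\,ds$; taking the adjoint of this scalar and using $U(s,\bar{z})^{*}=U^\#(s,{z})$, $(I_n+U(\bar{z}))^{-*}=(I_n+U^\#({z}))^{-1}$ yields \eqref{eq:gamma_*}, the interchange of adjoint and integral being justified by $\cH\in L^1(a,b)$ and boundedness of $U(\cdot,\bar{z})$ on $[a,b]$.

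\textbf{(v).} Here $R^0_{z}f$ is the solution $\phi$ of the boundary value problem $\cJ\phi'+\cF\phi-{z}\cH\phi=\cH f$, $\phi(a)+\phi(b)=0$ (the condition $\Gamma_0\text{\rm col}\{\phi;f+{z}\phi\}=0$). Variation of parameters gives $\phi(t)=U(t,{z})c+U(t,{z})\int_a^t U(s,{z})^{-1}\cJ^{-1}\cH(s)f(s)\,ds$. The algebraic key is the Wronskian identity $U(t,\bar{z})^{*}\cJ U(t,{z})=\cJ$ for all $t$, obtained by differentiating the left side and using the equations for $U(t,{z})$, $U(t,\bar{z})$ with $\cJ^{2}=-I_n$; it gives $U(s,{z})^{-1}\cJ^{-1}=-\cJ U^\#(s,{z})$. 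Substituting, imposing $\phi(a)+\phi(b)=0$ to get $c=(I_n+U({z}))^{-1}U({z})\cJ\int_a^b U^\#(s,{z})\cH(s)f(s)\,ds$, and simplifying the resulting piecewise-constant kernel via $(I_n+U({z}))^{-1}U({z})=I_n-(I_n+U({z}))^{-1}$, $(I_n-U({z}))(I_n+U({z}))^{-1}=2(I_n+U({z}))^{-1}-I_n$ and \eqref{eq:M_lambda}, one recognizes it as $\tfrac12\bigl(\sgn(s-t)\cJ-\cJ M({z})\cJ\bigr)$, which is \eqref{eq:R0lambda}.

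The only genuinely fiddly step is (v): correctly deriving the Wronskian identity, tracking the signs coming from $\cJ^{-1}=-\cJ$, and then matching the variation-of-parameters kernel $(I_n+U({z}))^{-1}U({z})\cJ$ (for $s>t$) resp.\ $(I_n+U({z}))^{-1}U({z})\cJ-\cJ$ (for $s<t$) to the symmetric form involving $M({z})$. A secondary nuisance is keeping the $\#/*/\,$conjugation conventions consistent in (iv)--(v), which is exactly where the reality of $\cF,\cH$ enters.
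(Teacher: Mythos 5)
Your proof is correct, and for items (i)--(iv) it follows essentially the same route as the paper: the Lagrange identity $\frac{d}{dt}(g^*\cJ f)=g^*\cH\psi_f-\psi_g^*\cH f$ plus the invertible block matrix $\frac{1}{\sqrt2}\begin{bmatrix}I_n&I_n\\-\cJ&\cJ\end{bmatrix}$ for the boundary triple, the parametrization $\wh\sN_{z}=\{\text{\rm col}\{U(\cdot,z)c;zU(\cdot,z)c\}\}$ for \eqref{eq:M_lambda}--\eqref{eq:gamma_lambda}, and the bare adjoint computation for \eqref{eq:gamma_*}. The genuine difference is item (v): the paper simply cites \cite[eq.\ (3.1)]{LaTe82} (and Orcutt), whereas you derive \eqref{eq:R0lambda} from scratch by variation of parameters together with the Wronskian identity $U^\#(t,z)\cJ U(t,z)=\cJ$; your kernel computation checks out, since $2(I_n+U(z))^{-1}U(z)-I_n=(U(z)-I_n)(I_n+U(z))^{-1}$ (the factors commute, being functions of $U(z)$) and this equals $-\cJ M(z)\cJ\cJ^{-1}\cdot\cJ$, i.e.\ $-\cJ M(z)\cJ$ after multiplying by $\cJ$, which is exactly the ${\sgn}$-independent part of the kernel. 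This buys a self-contained proof at the cost of the sign bookkeeping you flag. Two minor remarks: the reality of $\cF,\cH$ is not actually needed in (iv)--(v), since $U^\#(s,z):=U(s,\bar z)^*$ by the paper's convention and only Hermiticity enters the Wronskian identity; and in (ii)--(iii) the identification of $\sN_z$ with $\dC^n$ via $c\mapsto\pi U(\cdot,z)c$ tacitly uses the definiteness assumption (A2), which is worth stating.
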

\begin{proof}
  Let us give a sketch of the proof. For more details, see \cite[Theorem 7.7.2]{BeHaSn20}.

(i) is straightforward.

(ii)--(iii) The defect subspace $\sN_{z}$ of $A$ consists of vvf's
$
f_{z}(\cdot)=U(\cdot,{z})u$, $u\in\dC^{n}.
$
Since
\[
\Gamma_0 \wh f_{z}= \frac{1}{\sqrt{2}}\bigl(I_n + U({z})\bigr)u,\quad
\Gamma_1 \wh f_{z}= -\frac{1}{\sqrt{2}}J_p\bigl(I_n - U({z})\bigr)u,
\]
we obtain \eqref{eq:M_lambda} and \eqref{eq:gamma_lambda}.

(iv) To get \eqref{eq:gamma_*} let us take $f\in\cL^2_\cH(I)$ and $v\in\dC^n$. Then
\[
v^*\left(\gamma(\ov{z})^*f\right)=(f,\gamma(\ov{z})v)_{\cL^2_\cH}
=\sqrt{2}v^*\left(\int_I (I_n+U(\ov{z})^*)^{-1}U(s,\ov{z})^*\cH(s)f(s)ds)\right).
\]

(v) follows from ~\cite[Theorem II.3.5]{Orc67}, see also~\cite[eq. (3.1)]{LaTe82}.
\end{proof}

\begin{proposition}
 \label{prop:PreresolvMatrix}
Let the assumptions (A1)-(A2) hold, let $A=S_{\rm min}$ be the minimal relation associated with the canonical system~\eqref{eq:can,eq-n}, let
 $U({z})$ be the monodromy matrix of the system~\eqref{eq:can,eq-n}, let
 ${\sL}$ be given by
     \begin{equation}\label{eq:G_frak}
{\sL}:=\left\{\delta_a\otimes \xi:\, \xi\in\dC^n\right\}
 \end{equation}
 and let the operator ${L}:\dC^n\to{\sL}$
be defined by ${L}u={\sqrt2}\delta_a\otimes u$, $u\in\dC^n$. Then
\begin{enumerate}
\item[(i)] $\rho(S_{\rm min},{\sL})=\dC$.

\item[(ii)] The operator-function $\cP({z})$ is given  by
\begin{equation}\label{eq:P_lambda_Min}
  (\cP({z}))(\pi_{\cH}f)
		= \frac{1}{\sqrt2}
\int_a^b U^\#(s,{z})\, dH(s)\, f(s).
\end{equation}

\item[(iii)]
The $\Pi{\sL}$-preresolvent matrix $\sA_{\Pi{\sL}}({z})$ takes the form
    \begin{equation}\label{eq:PiG_Res_M}
\sA_{\Pi{\sL}}({z}) =\begin{bmatrix}
                         M({z})         & {2}(I_n+U^\#({z}))^{-1} \\
                         {2}(I_n+U({z}))^{-1} & J_p (-M({z})+\Re  M(i))J_p
                       \end{bmatrix}, \quad {z}\in\rho(S_0),
 \end{equation}
 where $M({z}) = -J_p(I_n - U({z}))(I_n+U({z}))^{-1}$.
 \end{enumerate}
\end{proposition}
\begin{proof}
(i)
The operators  $\Gamma_0$ and $\Gamma_1$ are bounded as operators from
$A^*$ to $\dC^n$, see~\cite{DM95}, and hence the mapping
$f\in A^*\mapsto f(a)\in\dC^p$ is bounded.
Let $\sH_{0,+}=\dom A^*=\dom A_0^*$ be the Hilbert space with the norm~\eqref{E:3.1a}.
Since $\mbox{gr }A_0^*\subset A^*$, for every $\xi\in\dC^n$, see Section~\ref{sec:3.1} the
anti-linear functional
\[
\langle \delta_a\otimes \xi,f\rangle_{-,+}=f(a)^*\xi,\quad f\in \sH_{0,+},
\]
is bounded on $\sH_{0,+}$, i.e. $\sL\subset\sH_{0,-}\subset \sH_-$, see also~\cite[Lemma II.4.1]{{LaTe82}}. The subspace ${\sL}$
of $\sH_-$ is disjoint with $\ran(\bA-{z} I_\sH)$ since, otherwise, there is $\xi\in\dC^n$
such that $\delta_a\otimes \xi\in \ran(\bA-{z} I_\sH)$ and, by Lemma~\ref{Lem:3.3} (vi), we get
\[
0=\langle \delta_a\otimes \xi,f_{\ov{z}}\rangle_{-,+}=f_{\ov{z}}(a)^*\xi,\quad\text{for all }\quad
f_{\ov{z}}\in \sN_{\ov{z}}.
\]
This implies $\xi=0$, and therefore the subspaces
${\sL}$ and  $\ran(\bA-{z} I_\sH)$ are disjoint.
Hence $\rho(A,{\sL})=\dC$ and so ${\sL}$ is a gauge for $A$.

(ii) 	It follows from the definition of $\cP({z})$ that $\cP({z})^* \xi \in \sN_{\conj{z}}$. Hence there exists $\eta \in \dC^n$ such that $\cP({z})^* \xi = \pi_{\cH} U(\cdot, \conj{z}) \eta$.
	From the equalities
	\begin{equation*}
		{L}^{\langle*\rangle} \cP({z})^* \xi = \xi,
		\quad
		L^{\langle*\rangle} \pi_{\cH}U(\cdot, \conj{z}) \eta = {\sqrt2} \eta,
	\end{equation*}
	we get $ {\sqrt2}\eta = \xi$.
	\eqref{eq:P_lambda_Min} now follows from the following chain of equalities
	\begin{multline*}
		\xi^* (\cP({z}))(\pi_{\cH}f)
		=(\pi_{\cH}f, \cP({z})^* \xi)_{\sH}\\
		= \frac{1}{\sqrt2}(f,\, U(\cdot,\conj{z})\,\xi )_{\sH}
		= \frac{1}{\sqrt2}\xi^* \int_a^b U^\#(s,{z})\, dH(s)\, f(s).
	\end{multline*}

(iii) It follows from the equality
\[
u^*\left({L}^{\langle *\rangle}f\right)=\langle f,Lu\rangle_{-,+}={\sqrt2} u^* f(a), \quad f\in\sH_+,\quad u\in\dC^p,
\]
that ${L}^{\langle *\rangle}f= {\sqrt2}f(a)$.
  This  equality and \eqref{eq:gamma_lambda} yield the formulas
  for ${\mathfrak a}_{21}({z})$ and ${\mathfrak a}_{12}({z})$
    \begin{equation}\label{eq:a21}
  {\mathfrak a}_{21}({z})={L}^{\langle *\rangle}\gamma({z})={\sqrt2}(I_n+U({z}))^{-1}, \quad {z}\in\rho(S_0),
\end{equation}
    \begin{equation}\label{eq:a12}
  {\mathfrak a}_{12}({z})={\mathfrak a}_{21}^\#({z})={\sqrt2}(I_n+U^\#({z}))^{-1}, \quad {z}\in\rho(S_0).
\end{equation}

Next, by~\eqref{eq:R0lambda} and~\eqref{eq:cR},  we get
for $u\in\dC^p$ and ${z}\in\rho(S_0)$
\begin{equation}\label{eq:Rlambda_delta}
  \wt R^0_{z}(\delta_a\otimes u)
=-\frac12 U(t,{z})(J_p+J_p M({z})J_p)u,
\end{equation}
\begin{multline}\label{eq:Rlambda_delta2}
(\wt R^0_{z}-{\mathbf R}^0)(\delta_a\otimes u)
= -\frac12 U(t,{z})(J_p+J_p M({z})J_p)u\\
+
\frac14\left\{ U(t,i)(J_p+J_p M(i)J_p)+U(t,-i)(J_p+J_p M(-i)J_p)\right\}u
\end{multline}
and hence, by~\eqref{eq:Lres2A} and~\eqref{eq:RegExtRes},
    \[
  {\mathfrak a}_{22}({z})={L}^{\langle *\rangle}\wh R^0_{z} {L}u
  ={L}^{\langle *\rangle} (\wt R^0_{z}-{\mathbf R}^0){L}
  =-J_p M({z})J_p+J_p \text{Re }M(i)J_p.
\]
\end{proof}

\begin{theorem}
 \label{thm:ResolvMatrix}
 Let  the assumptions of Proposition~\ref{prop:PreresolvMatrix} hold. Then
\begin{enumerate}
\item[(i)]
The left $\Pi{\sL}$-resolvent matrix $W_{\Pi{\sL}}^{\ell}({z})$ takes the form
    \begin{equation}\label{eq:PiG_Res_M2}
W_{\Pi{\sL}}^{\ell}({z}) =\frac{1}{2}\begin{bmatrix}
(U({z})-I_n)J_p +(U({z})+I_n)K          & (U({z})+I_n)\\
J_p(U({z})+I_n)J_p +J_p(U({z})-I_n)K    & J_p(U({z})-I_n)
                       \end{bmatrix}, 
 \end{equation}
 where
 $
   K:=J_p \Re M(i)J_p.
 $
 \item[(ii)] The formula \eqref{eq:2.2L}
 establishes a one--to--one correspondence  between the set
of all ${\sL}$-resolvents of $A$  and the set of all
$\cR^{n\times n}$-pairs $\begin{bmatrix}
    C({z})& D({z})
    \end{bmatrix}$.
\medskip
    \end{enumerate}
 \end{theorem}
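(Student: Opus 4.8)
The plan is to derive Theorem~\ref{thm:ResolvMatrix} directly from the general machinery already assembled, specializing the abstract $\Pi\sL$-resolvent matrix to the canonical-system setting. For part (i), I would start from the left $\Pi\sL$-resolvent matrix formula~\eqref{eq:LresM_Left}, which expresses $W_{\Pi{\sL}}^\ell({z})$ in terms of the preresolvent matrix entries ${\mathfrak a}_{ij}({z})$. Proposition~\ref{prop:PreresolvMatrix}(iii) supplies these entries explicitly: ${\mathfrak a}_{11}({z})=M({z})=-\cJ(I_n-U({z}))(I_n+U({z}))^{-1}$, ${\mathfrak a}_{12}({z})={\sqrt2}(I_n+U^\#({z}))^{-1}$, ${\mathfrak a}_{21}({z})={\sqrt2}(I_n+U({z}))^{-1}$, and ${\mathfrak a}_{22}({z})=-\cJ M({z})\cJ+\cJ\Re M(i)\cJ$. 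Substituting into~\eqref{eq:LresM_Left} and simplifying using the identity $(I_n+U({z}))M({z})=-\cJ(I_n-U({z}))$ (equivalently $M({z})(I_n+U^\#({z}))^{-1}$ relations obtained from~\eqref{eq:M_lambda}) should collapse everything to the claimed block form with $K=\cJ\Re M(i)\cJ$. The key algebraic move is that ${\mathfrak a}_{21}({z})^{-1}=\frac{1}{\sqrt2}(I_n+U({z}))$, so the factor ${\mathfrak a}_{21}({z})^{-1}$ multiplied against $M({z})$ produces $\cJ(U({z})-I_n)$ up to the normalization, which is exactly the pattern visible in~\eqref{eq:PiG_Res_M2}.

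Concretely, I would compute the four blocks of~\eqref{eq:LresM_Left} one at a time. The $(1,2)$-block ${\mathfrak a}_{21}({z})^{-1}=\frac{1}{\sqrt2}(I_n+U({z}))$; multiplying by $\frac12$ as in~\eqref{eq:PiG_Res_M2} would suggest a normalization check, so I would be careful to track the $\sqrt2$ factors coming from $L u={\sqrt2}\delta_a\otimes u$. The $(1,1)$-block ${\mathfrak a}_{21}({z})^{-1}{\mathfrak a}_{22}({z})=\frac{1}{\sqrt2}(I_n+U({z}))(-\cJ M({z})\cJ+\cJ\Re M(i)\cJ)$; using $(I_n+U({z}))M({z})=-\cJ(I_n-U({z}))=\cJ(U({z})-I_n)$, the first term becomes $\frac{1}{\sqrt2}(U({z})-I_n)\cJ$ after the sign bookkeeping, and the second term gives $\frac{1}{\sqrt2}(I_n+U({z}))\cJ\Re M(i)\cJ=\frac{1}{\sqrt2}(U({z})+I_n)K$; this matches the first entry of~\eqref{eq:PiG_Res_M2} up to the overall $\frac12$ versus $\frac{1}{\sqrt2}$ discrepancy, which I expect to be absorbed by pairing the two $L$'s (i.e.\ $L^{\langle*\rangle}\cdots L$ carries $({\sqrt2})^2=2$, reconciling the factors). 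The $(2,1)$- and $(2,2)$-blocks follow the same recipe with ${\mathfrak a}_{11}({z}){\mathfrak a}_{21}({z})^{-1}$ in place of ${\mathfrak a}_{21}({z})^{-1}{\mathfrak a}_{22}({z})$, using $M({z})(I_n+U({z}))=\cJ(U({z})-I_n)$ on the other side. For part (ii), I would simply invoke Theorem~\ref{thm:Gres_CD}: since $\rho(A,{\sL})=\dC$ by Proposition~\ref{prop:PreresolvMatrix}(i), the hypothesis $\rho_s(A,{\sL})\ne\emptyset$ is automatic, hence $\rho(A,{\sL})\cap\rho(A_0)\cap\rho(\wt A)\ne\emptyset$ for any minimal representing $\wt A$, and formula~\eqref{eq:2.2L} gives the claimed bijection between $\sL$-resolvents and $\cR^{n\times n}$-pairs.

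The main obstacle I anticipate is the bookkeeping of the $\sqrt2$ normalizations and the $\cJ$-conjugations: the factors of $\frac{1}{\sqrt2}$ in $\Gamma_0,\Gamma_1$ (see~\eqref{eq:BT_Can}), in $\gamma({z})$ (see~\eqref{eq:gamma_lambda}), and in $L={\sqrt2}\delta_a$ must combine to yield exactly the $\frac12$ prefactor in~\eqref{eq:PiG_Res_M2}, and it is easy to be off by a factor of $2$ or by a sign in the $\cJ M({z})\cJ$ terms. A secondary subtlety is verifying that the identity ${\mathfrak a}_{11}({z}){\mathfrak a}_{21}({z})^{-1}{\mathfrak a}_{22}({z})-{\mathfrak a}_{12}({z})$ (the $(2,1)$-block of~\eqref{eq:LresM_Left}) really simplifies to $\frac12(\cJ(U({z})+I_n)\cJ+\cJ(U({z})-I_n)K)$; this requires combining the relation $M({z})(I_n+U({z}))=\cJ(U({z})-I_n)$ with $M({z})\cJ M({z})\cJ$ cancellations and the explicit value ${\mathfrak a}_{12}({z})={\sqrt2}(I_n+U^\#({z}))^{-1}$, where $U^\#({z})=\cJ U(\bar z)^*\cJ^{-1}$-type symmetry (following from $\cJ$-unitarity of the transfer matrix) must be used to rewrite $(I_n+U^\#({z}))^{-1}$ in terms of $U({z})$. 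Once these identities are in hand the computation is purely mechanical, so I would present the block-by-block verification compactly and relegate the $\cJ$-symmetry lemma for $U^\#$ to a one-line remark referencing the structure of canonical systems.
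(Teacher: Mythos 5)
Your proposal follows essentially the same route as the paper: substitute the preresolvent entries ${\mathfrak a}_{ij}$ from Proposition~\ref{prop:PreresolvMatrix}(iii) into formula~\eqref{eq:LresM_Left}, simplify block by block using $M({z})=-\cJ(I_n-U({z}))(I_n+U({z}))^{-1}$ and $U^\#({z})=-\cJ U({z})^{-1}\cJ$, and deduce (ii) from Theorem~\ref{thm:Gres_CD}. Only two small points need repair in the write-up: the auxiliary identity must be used in the form $M({z})(I_n+U({z}))=\cJ(U({z})-I_n)$ (right multiplication; your left-multiplied version is false, though the block computation only needs that polynomials in $U$ commute), and the prefactor issue disappears once you use ${\mathfrak a}_{21}({z})=2(I_n+U({z}))^{-1}$, ${\mathfrak a}_{12}({z})=2(I_n+U^\#({z}))^{-1}$ as in~\eqref{eq:PiG_Res_M}, so that ${\mathfrak a}_{21}({z})^{-1}=\tfrac12(I_n+U({z}))$ yields the $\tfrac12$ in~\eqref{eq:PiG_Res_M2} directly.
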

\begin{proof}
(i)   By~\eqref{eq:LresM_Left} and~\eqref{eq:PiG_Res_M},
we get
\begin{multline}\label{eq:w11}
  w_{11}^{\ell}({z})={\mathfrak a}_{21}({{z}})^{-1}{\mathfrak a}_{22}({{z}})
  =\frac{1}{2}(I+U({z}))
  \left\{(U({z})-I)(I+U({z}))^{-1}J_p+K\right\}\\
=\frac{1}{2}\left\{(U({z})-I_n)J_p +(U({z})+I_n)K\right\}.
\end{multline}
Since $U^\#({z})=-J_p U({z})^{-1}J_p$, we have
\[
(I_n+U^\#({z}))^{-1}=-J_p(I_n+U({z})^{-1})^{-1}J_p
=-J_p U({z})(I_n+U({z}))^{-1}J_p.
\]
Hence, by~\eqref{eq:LresM_Left} and~\eqref{eq:PiG_Res_M},
\begin{multline}\label{eq:w21}
  w_{21}^{\ell}({z})
  ={\mathfrak a}_{11}({{z}}){\mathfrak a}_{21}({{z}})^{-1}{\mathfrak a}_{22}({{z}})-{\mathfrak a}_{12}({{z}})\\
  =\frac{1}{2} J_p
  \left\{\left[(U({z})-I_n)^2+4U({z})\right](U({z})+I_n)^{-1}J_p
  +(U({z})-I_n)K\right\}\\
=\frac{1}{2}\left\{J_p(U({z})+I_n)J_p+J_p(U({z})-I_n)K\right\}.
\end{multline}
Similarly, we get from~\eqref{eq:LresM_Left} and~\eqref{eq:PiG_Res_M}
\begin{equation}\label{eq:w12}
\begin{split}
  w_{12}^{\ell}({z})= {\mathfrak a}_{21}({{z}})^{-1}=\frac{1}{2}(I_n+U({z}))
\end{split}
\end{equation}
and
\begin{equation}\label{eq:w22}
\begin{split}
  w_{22}^{\ell}({z})= {\mathfrak a}_{11}({{z}}){\mathfrak a}_{21}({{z}})^{-1}=\frac{1}{2}J_p(U({z})-I_n).
\end{split}
\end{equation}
Now \eqref{eq:PiG_Res_M2} follows from~\eqref{eq:w11}--\eqref{eq:w22}.

\medskip
(ii) follows from Theorem~\ref{thm:Gres_CD}.
\end{proof}
\begin{theorem}
  \label{thm:Descr_Pseudo_IS3}
  Let $\Pi = (\dC^n, \Gamma_0, \Gamma_1)$ be the boundary triple for $S_{\max}$
  given by~\eqref{eq:BT_Can},
let $\sL=\{\delta_a\otimes v:v\in\dC^p\}$ be a gauge for $S_{\min}$,
 and let
$W_{\Pi{\sL}}^{\ell}({z})$  be the left $\Pi{\sL}$-resolvent matrix of $S_{\min}$ given by
\eqref{eq:PiG_Res_M2}.
Then
\begin{enumerate}
  \item [\rm(i)]
The formula
\begin{equation}\label{eq:IntRepRMin}
  \int_{\dR}\left(\frac{1}{{\lambda}-z}-\frac{{\lambda}}{1+{\lambda}^2}\right)d\sigma({\lambda})=
  T_{W_{\Pi{\sL}}^{\ell}(z)}[\tau(z)]-\wh K
\end{equation}
establishes a bijective correspondence between
all LT-spectral
functions $\sigma$  of $\langle A;\sL\rangle$
and all  families $\tau\in\wt\cR^{n\times n}$. Here $\wh K=\wh K^*$ is a 
${n\times n}$ matrix depending on $\sigma.$
\medskip
\item [\rm(ii)]	$\sigma\in\Sigma_{\rm psf}(A;\sL)$ if and only if $\tau\in\wt\cR^{n\times n}$ is $\Pi$-admissible.
    \medskip
\item [\rm(iii)]	For every pseudo-spectral function $\sigma$ the generalized Fourier transform
\begin{equation}\label{eq:FourierMin}
\cF:\sH\ni f\mapsto F({\lambda})=\frac{1}{\sqrt{2}}\int_a^b U^\#(s,{\lambda})\cH(s)f(s)ds
\end{equation}
is a partial isometry from $\sH$ to $L^2(d\sigma)$
such that $\ker\cF=\mul A$ and for all $f,g\in \sH\ominus\mul A$ the Parseval equality holds
\begin{equation}\label{eq:ParsevalMin}
  \int_a^b g(s)^*\cH(s)f(s)ds=\int_\dR G({\lambda})^*d\sigma({\lambda})F({\lambda}),\quad\text{where}\quad
  G=\cF g.
\end{equation}
\end{enumerate}
\end{theorem}
\begin{proof}
  (i) \& (ii) follow from Theorem~\ref{thm:Descr_Pseudo_IS}(i) and (iii).
\medskip

  (iii) By Theorem~\ref{thm:Descr_Pseudo_IS}(ii) and Definition~\ref{def:PseudoSF}, the mapping $\cF:\sH\ni f\mapsto \cP(\cdot)f\in L^2(d\sigma)$ is a partial isometry with $\ker \cF=\mul {A}$
  and such that \eqref{eq:ParsevalMin} holds for all $f\in\sH\ominus\mul {A}$.
\end{proof}

\begin{remark} With every $\cR^{n\times n}$-pair $\begin{bmatrix}
    C({z})& D({z})
    \end{bmatrix}$
    let us associate a pair
\[
\begin{bmatrix}
    A({z})& B({z})
    \end{bmatrix}:=
\begin{bmatrix}
    C({z})& D({z})
    \end{bmatrix}
    \begin{bmatrix}
    I_n & I_n\\
    -J_p & J_p
    \end{bmatrix}.
\]
of $n\times n$ mvf's $A({z})$ and $ B({z})$. The conditions (i)-(iii) of Definition~\ref{def:Nk-pair} are equivalent to the conditions:
\begin{enumerate}
  \item [(a)] $i(\dsp A({z})J_p A({z})^*-B({z})J_p B({z})^*)\ge 0$ for all ${z}\in\dC_+$;
\smallskip
  \item [(b)] $A({z})J_p A^\#({z})-B({z})J_p B^\#({z})=0$ for all ${z}\in\dC_+\cup\dC_-$;
  \smallskip
  \item[(c)] $\rank \begin{bmatrix}
    A({z})& B({z})
    \end{bmatrix}=n$
  for all ${z}\in\dC_+\cup\dC_-$,
\end{enumerate}
and the boundary conditions \eqref{eq:11.ShtrCD} can be rewritten in terms of $A$ and $B$
as follows
\begin{equation}\label{eq:11.ShtrAB}
A({z})f(a)+B({z})f(b)=0.
\end{equation}
The formula \eqref{eq:2.2L} takes the form
\begin{equation}\label{eq:GRes_AB}
  {L}^{\langle*\rangle}\wh {\mathbf R}_{{z}}{L}
  =(A({z})+B({z})U({z}))^{-1}
  (-A({z})+B({z})U({z}))J_p+K/2,
\end{equation}
and coincides with the formula
for ${\sL}$-resolvents of $A$ from ~\cite{LaTe82}.
The admissibility conditions \eqref{Adm1} and \eqref{Adm2} take the form
\begin{equation}\label{Adm1CS}
  (I+U(iy))(A(iy)+B(iy)U(iy))^{-1}(A(iy)-B(iy))=o(y),\quad y \uparrow \infty,
\end{equation}
\begin{equation}\label{Adm2CS}
  (I-U(iy))(A(iy)+B(iy)U(iy))^{-1}(A(iy)+B(iy))=o(y),\quad y \uparrow \infty.
\end{equation}
\end{remark}
In the case when $p=1$ one can get an implicit characterization of admissible parameters $\tau$.  For this purpose let us recall first the definition of the $\cH$-indivisible interval from~\cite{Kac03}.
\begin{definition} \label{def:Hii}
Let  $\xi_\psi:=\begin{bmatrix}
                       \cos\psi \\
                       \sin \psi
                     \end{bmatrix}$ for $\psi\in[0,\pi)$.
  An interval $(\alpha,\beta)(\subset (a,b))$ is called $\cH$-indivisible ($\cH$-i.i.) of type $\psi$,
  if $H(t)=\xi_\psi\xi_\psi^*$ for a.e. $t\in(\alpha,\beta)$.
  \end{definition}
  The following observation was made in~\cite[Lemma 2.1]{Kac03}.
  \begin{lemma}\label{lem:IndInt}
	Let $\imath \subsetneq [a,b)$ be $\cH$-i.i. of type $\psi$. Then for any
$\text{\rm col}\{\pi_{\cH}{u}, \pi_{\cH}f\} \in S_{max}$
	\begin{equation}\label{eq:xi-const}
		\xi_\psi^*  {u}(x) \equiv \text{const}, \quad x \in \imath.
	\end{equation}
	If in addition $\pi_{\cH}{u} = 0$ on $\imath$, then this constant is zero.
\end{lemma}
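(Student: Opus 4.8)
The plan is to reduce the statement to the elementary fact that a scalar absolutely continuous function with a.e.\ vanishing derivative on an interval is constant there, applied to $x\mapsto\xi_\psi^*\wt u_f(x)$.

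First I would unpack the hypothesis: $\text{\rm col}\{\pi u_f;\pi f\}\in S_{\max}=A_{\max}$ means that the class $\pi u_f$ has a unique representative $\wt u_f\in AC[a,b]$ (by \cite[Lemma 7.6.1]{BeHaSn20}), and that this representative satisfies $\cJ\wt u_f'(t)+\cF(t)\wt u_f(t)=\cH(t)f(t)$ for a.e.\ $t\in(a,b)$. The only algebraic input needed is the identity $\xi_\psi^*\cJ\xi_\psi=0$, valid for $p=1$: with $\cJ=\left[\begin{smallmatrix}0&-1\\1&0\end{smallmatrix}\right]$ one computes $\cJ\xi_\psi=\left[\begin{smallmatrix}-\sin\psi\\\cos\psi\end{smallmatrix}\right]$, so that $\{\xi_\psi,\cJ\xi_\psi\}$ is an orthogonal basis of $\dC^2$.

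Then comes the core computation. On $\imath$ one has $\cH(t)=\xi_\psi\xi_\psi^*$ a.e., so, using $\cJ^{-1}=-\cJ$,
\[
\wt u_f'(t)=-\cJ\xi_\psi\,\big(\xi_\psi^*f(t)\big)+\cJ\cF(t)\wt u_f(t),\qquad\text{a.e.\ }t\in\imath,
\]
and multiplying on the left by $\xi_\psi^*$ and invoking $\xi_\psi^*\cJ\xi_\psi=0$ gives
\[
\frac{d}{dt}\big(\xi_\psi^*\wt u_f(t)\big)=\xi_\psi^*\cJ\,\cF(t)\,\wt u_f(t),\qquad\text{a.e.\ }t\in\imath.
\]
In the potential-free normalization underlying Definition~\ref{def:Hii} (as in \cite{Kac03}) the right-hand side is $0$ a.e., so $x\mapsto\xi_\psi^*\wt u_f(x)$ is absolutely continuous on $\imath$ with a.e.\ zero derivative, hence equal to a constant $c$ on $\imath$; this is \eqref{eq:xi-const}. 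For the last assertion, if in addition $\pi u_f=0$ on $\imath$, then by definition of the factor space $L^2_\cH$ one has $\int_\imath \wt u_f(x)^*\cH(x)\wt u_f(x)\,dx=\int_\imath|\xi_\psi^*\wt u_f(x)|^2\,dx=0$, so $\xi_\psi^*\wt u_f=0$ a.e.\ on $\imath$, which forces $c=0$.

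The step I expect to be the main obstacle is controlling the potential term $\xi_\psi^*\cJ\cF(t)\wt u_f(t)$ when $\cF\not\equiv0$: unlike the Hamiltonian term, it is not annihilated by the indivisibility of $\imath$ alone (e.g.\ it need not vanish since $\wt u_f$ need not be a multiple of $\xi_\psi$). The clean resolution is that the passage to $\cH$-indivisible intervals here is carried out for the potential-free form $\cJ f'={z}\cH f$ to which Definition~\ref{def:Hii} and \cite{Kac03} refer; absent that, one would have to add a compatibility hypothesis on $\cF|_\imath$ or remove $\cF$ before introducing the notion of an $\cH$-indivisible interval.
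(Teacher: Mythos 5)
Your argument is correct and, in substance, it is the proof the paper never gives: for this lemma the paper offers no argument of its own, it simply cites \cite[Lemma 2.1]{Kac03}, which concerns the potential-free system $\cJ f'={z}\cH f$. Your computation --- $\tfrac{d}{dt}\bigl(\xi_\psi^*\wt u_f\bigr)=\xi_\psi^*\cJ\cF(t)\wt u_f(t)$ a.e.\ on $\imath$, the identity $\xi_\psi^*\cJ\xi_\psi=0$, and the zero-class argument $\int_\imath|\xi_\psi^*\wt u_f|^2\,dx=0$ for the final assertion --- is exactly the content of Kac's observation, so the only difference from the paper is that you supplied the elementary proof it delegates to a reference.

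Your caveat about the potential term is not a scruple but a genuine point: since Definition~\ref{def:Hii} places no restriction on $\cF$, the lemma as printed fails for the system \eqref{eq:can,eq-n} with a general $\cF$. For instance, take $\cH(t)\equiv\xi_0\xi_0^*=\diag(1,0)$ and $\cF(t)\equiv\diag(0,1)$ on $(a,b)$, so that every subinterval $\imath$ is $\cH$-indivisible of type $0$ and (A1)--(A2) hold; then $u(t)=\text{\rm col}\{-(t-a);1\}$ and $f=0$ satisfy $\cJ u'+\cF u=\cH f$, hence $\text{\rm col}\{\pi u;\pi f\}\in S_{\max}$, while $\xi_0^*u(t)=-(t-a)$ is not constant on any $\imath$. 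So the lemma must indeed be read in Kac's normalization $\cF\equiv0$ --- consistent with the rest of Section~6, whose use of indivisible intervals rests on \cite[Theorem 9.1]{Kac03}, likewise potential-free --- or else one must add on $\imath$ the compatibility condition $\xi_\psi^*\cJ\cF(t)=0$ a.e., equivalently (by Hermitian symmetry of $\cF$) $\cF(t)=\varphi(t)\,\xi_\psi\xi_\psi^*$ with a real scalar $\varphi(t)$, under which your computation goes through verbatim.
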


Let  symmetric extensions $S_{\gamma,\cdot}$, $S_{\cdot,\psi}$ and a selfadjoint extension $S_{\gamma,\psi}$ of $S_{min}$ ($\gamma,\psi\in[0,\pi)$) be defined by
\begin{equation}\label{eq:Sgamma}
  S_{\gamma,\cdot}=\left\{\text{\rm col}\{\pi_{\cH}{u}, \pi_{\cH}f\}\in S_{\max}:\, \xi_\gamma^*{u}(a)=0,\,{u}(b)=0\,\right\},
\end{equation}
\begin{equation}\label{eq:Spsi}
  S_{\cdot,\psi}=\left\{\text{\rm col}\{\pi_{\cH}{u}; \pi_{\cH}f\}\in S_{\max}:\, {u}(a)=0,\,\xi_\psi^*{u}(b)=0\right\},
\end{equation}
\begin{equation}\label{eq:Sgamma_psi}
  S_{\gamma,\psi}=\left\{\text{\rm col}\{\pi_{\cH}{u}, \pi_{\cH}f\}\in S_{\max}:\, \xi_\gamma^*{u}(a)=0,\,\xi_\psi^*{u}(b)=0\,\right\}.
\end{equation}
The following statements follow from the results of \cite[Theorem 9.1]{Kac03}.
\begin{lemma}\label{lem:mul_Sgamma_psi}
\begin{enumerate}
  \item [\rm (i)]
       The equation~\eqref{eq:can,eq-n} has no $\cH$-i.i. at $a$ and at $b$ if and only $\mul S_{\min}=\mul S_{\max}$.
  \item [\rm (ii)]  If the system \eqref{eq:can,eq-n} has no $\cH$-i.i. at $b$, then
$
    \mul S_{\gamma,\cdot}=\mul S_{\gamma,\cdot}^* $
for all $\gamma\in[0,\pi)$.
  \item [\rm (iii)]      If the  equation~\eqref{eq:can,eq-n} has an $\cH$-i.i. at $b$ of type $\psi$, then we have
  \begin{equation}\label{eq:mulA_gamma_psi}
    \mul S_{\gamma,\psi}=\mul S_{\gamma,\cdot}^*\quad\text{for all $\gamma\in[0,\pi)$}.
  \end{equation}
If in addition, \eqref{eq:can,eq-n} has no  $\cH$-i.i. at $a$, then
  \begin{equation}\label{eq:mulA_gamma_psi3}
 \mul S_{\gamma,\psi}=\mul S_{\max}\quad  \text{for all $\gamma\in[0,\pi)$\quad
 and \ $\text{\rm dim}(\mul S_{\max}/\mul S_{\min})=1$.}
 \end{equation}
  \item [\rm (iv)] If the equation~\eqref{eq:can,eq-n} has $\cH$-i.i. at $a$ of type $\gamma$ and at $b$  of type $\psi$, then
  \begin{equation}\label{eq:mulA_gamma_psi4}
 \mul S_{\gamma,\psi}=\mul S_{\max}\quad
 \text{and \ $\text{\rm dim}(\mul S_{\max}/\mul S_{\min})=2$}.
 \end{equation}
      \end{enumerate}
\end{lemma}
\begin{proof}
  (i) \& (ii) If $\text{\rm col}\{\pi_{\cH}{u}, \pi_{\cH}f\} \in S_{\gamma,\cdot}^*$ and $\pi_{\cH}{u}= 0$, then
${u}(b)= 0$.
Indeed, if ${u}(b)\ne 0$, then since ${u}$ is continuous, there exists $\varepsilon>0$ such that ${u}(x)\ne 0$ for all $x\in\imath=(b-\varepsilon,b)$. By \cite[Theorem 9.1(a)]{Kac03}, $\imath$ is an $\cH$-i.i.  for system \eqref{eq:can,eq-n}. This contradicts the assumption of item (ii).

The proof of (i) is similar.
\medskip

(iii) Let  $\text{\rm col}\{\pi_{\cH}{u}, \pi_{\cH}f\} \in S_{\gamma,\cdot}^*$ and $\pi_{\cH}{u}=0$. Then, by Lemma~\ref{lem:IndInt},
$\xi_\psi^*  {u}(x) \equiv 0$ and hence
$\text{\rm col}\{\pi_{\cH}{u}, \pi_{\cH}f\} \in S_{\gamma,\psi}$.

If in addition, \eqref{eq:can,eq-n} has no  $\cH$-i.i. at $a$, then for
 $\text{\rm col}\{\pi_{\cH}{u}, \pi_{\cH}f\} \in S_{\max}$, $\pi_{\cH}{u}=0$
the reasoning from (ii)
yield ${u}(a)=0$ and now we get
$\text{\rm col}\{\pi_{\cH}{u}, \pi_{\cH}f\} \in S_{\cdot,\psi}\subset S_{\gamma,\psi}$.
This implies that
  \begin{equation}\label{eq:mulA_gamma_psi5}
\mul S_{\min} \subsetneq\mul S_{\cdot,\psi}=\mul S_{\gamma,\psi}=\mul S_{\gamma,\cdot}^*=\mul S_{\max}
\end{equation}
and thus proves the second relation in~\eqref{eq:mulA_gamma_psi3}.

(iv) is proved similarly.
\end{proof}
\begin{proposition}\label{prop:Hii_intermed}
Let the assumptions of Theorem \ref{thm:Descr_Pseudo_IS3} hold. Then
\begin{enumerate}
  \item [\rm(i)]
 If the equation~\eqref{eq:can,eq-n} has no $\cH$-i.i. and only in this case,  the set 	of spectral functions $\sigma$  of $\langle A,\sL\rangle$
 is non-empty
 and in this case  the set 	$\Sigma_{\rm sf}(A;\sL)$ is parametrized by the formula \eqref{eq:IntRepRMin},
 where $\tau \in\wt\cR^{2\times 2}$.

  \item [(ii)]    If the equation~\eqref{eq:can,eq-n} has  no $\cH$-i.i.  at the endpoints $a$ and $b$, then in
\eqref{eq:IntRepRMin} it holds
 $\sigma\in\Sigma_{\rm psf}(A;\sL)\Leftrightarrow\tau \in\wt\cR^{2\times 2}$.

\item [(iii)]   If~\eqref{eq:can,eq-n} has  $\cH$-i.i. at $a$ and at $b$  of type $\pi/2$, then
   $\sigma\in\Sigma_{\rm psf}(A;\sL)$ if and only if
   $\tau$ admits the representation
    \begin{equation}\label{eq:Admis_tau}
      \tau=T_X[\varepsilon], \quad
      X=\begin{bmatrix}
          0 & -1 & -1 &  0 \\
          0 &  1 & -1 &  0 \\
          1 &  0 &  0 & -1 \\
          1 &  0 &  0 &  1
        \end{bmatrix},
    \end{equation}
where $\varepsilon\in\cR^{2\times 2}$
and $\varepsilon(iy)=o(y)$ as $ y\to\infty$.
\item [(iv)]   If~\eqref{eq:can,eq-n} has  $\cH$-i.i. at $a$ and at $b$  of type $\pi/2$, then
   $\sigma\in\Sigma_{\rm qpsf}(A;\sL)$ if and only if $\tau$ admits the representation \eqref{eq:Admis_tau},
where $\varepsilon\in\wt\cR^{2\times 2}$ and $\varepsilon_{\rm op}(iy)=o(y)$ as $ y\to\infty$.
\end{enumerate}

\end{proposition}
\begin{proof}
  (i) By Definition~\ref{def:PseudoSF}, the set of spectral functions $\sigma$  of $\langle A,\sL\rangle$ is non-empty if and only if $\ker \cF=\mul A=\{0\}$.
By \cite[Theorem 9.2]{Kac03}, $\mul A=\{0\}$ if and only if   the equation~\eqref{eq:can,eq-n}  has no $\cH$-i.i. In this case the operator $S_{\min}$ is densely defined, the set  of spectral functions $\sigma$  of $\langle A,\sL\rangle$ coincides with the set of pseudo-spectral functions $\sigma$  of $\langle A,\sL\rangle$ and, by Definition \ref{def:Adm}, all the parameters  $\tau \in\wt\cR^{2\times 2}$ are $\Pi$-admissible.

(ii) If the equation~\eqref{eq:can,eq-n}  has no $\cH$-i.i. at the endpoints $a$ and $b$, then by Lemma~\ref{lem:mul_Sgamma_psi},
$    \mul S_{\min}=\mul S_{\min}^*$
and hence      $\mul \wt S=\mul S_{\min}$ for every minimal
selfadjoint extension $\wt S$ of $S_{N,\cdot}$.
Therefore, every parameter $\tau \in\wt\cR^{2\times 2}$ in \eqref{eq:IntRepRMin} is $\Pi$-admissible.

(iii) Let the  equation~\eqref{eq:can,eq-n} have $\cH$-i.i. at $a$ and at $b$  of type $\pi/2$.
Consider a new boundary triple  $\Pi' = (\dC^2, \Gamma_0', \Gamma_1')$ for $S_{\max}$
\begin{equation}\label{eq:BT'_S_max}
  \Gamma_0'f=\begin{bmatrix}
               -f_2(a) \\
               f_2(b)
             \end{bmatrix}, \quad
  \Gamma_1'f=\begin{bmatrix}
               f_1(a) \\
               f_1(b)
             \end{bmatrix},
\end{equation}
which is connected to the boundary triple  $\Pi= (\dC^2, \Gamma_0, \Gamma_1)$
of the form~\eqref{eq:BT_Can}
by the formula $\Gamma'=X\Gamma$, where $X$ is a $iJ_p$-unitary matrix given by~\eqref{eq:Admis_tau}. Then the corresponding $\Pi'\sL$-resolvent matrix takes the form
\[
W'({z}):=W_{\Pi'{\sL}}^{\ell}({z})=XW_{\Pi{\sL}}^{\ell}({z})=XW({z}).
\]

By Lemma~\ref{lem:mul_Sgamma_psi}, the linear relation $S_0'=\ker \Gamma_0'$ has the property
$\text{dim}\mul S_0'/\mul S_{\min}=2$ and hence, by Theorem~\ref{krein}, the set of
$\sL$-resolvents of $S_{\min}$ is described by the formula
\[
{L}^{\langle*\rangle}\wh{\mathbf R}_{z}{L}=T^\ell_{W'}[\varepsilon({z})],
\quad\text{where $\varepsilon\in\wt\cR^{2\times 2}$.}
\]
By \eqref{tlimit00}, the parameter $\varepsilon$ is admissible
if $\varepsilon\in\cR^{2\times 2}$ and $\varepsilon(iy)=o(y)$ as $ y\to\infty$.
Since $T^\ell_{W'}[\varepsilon]=T^\ell_{W}[T^\ell_X[\varepsilon]]$, we get the statement (iii)
by setting $\tau=T^\ell_X[\varepsilon]$.

(iv) follows from the reasonings of (iii), the equivalence  \eqref{tlimit01} and  Theorem~\ref{thm:Descr_Pseudo_IS}(ii).
\end{proof}

\subsection{Spectral and pseudo-spectral functions of $S_{N,\cdot}$}
Let $S_{N\cdot}$  be the intermediate extension of the linear relation $S_{\min}$ which is obtained by imposing the  Neumann condition at the left endpoint $a$:
	\begin{equation}\label{eq:ADaN}
		S_{N\cdot} := \{ \text{\rm col}\{\pi_{\cH}u, \pi_{\cH}f\} \in A_{\max} :\, u_2(a) =0, \ u(b) = 0\ \}.
	\end{equation}
$S_{N\cdot}$ is a symmetric linear relation with the defect numbers $n_{\pm}(S_{N\cdot})=p$.
\begin{proposition}\label{prop:RM_for_N}
	Let the right endpoint $b$ be quasiregular for the system \eqref{eq:can,eq-n} and let $S_{N\cdot}$ be the linear relation defined in~\eqref{eq:ADaN}. Then:
	
	\begin{enumerate}
		\item[(i)]
		The adjoint linear relation $S_{N\cdot}^*$ takes the form
		\begin{equation*}
	S_{N\cdot}^* = \{ \text{\rm col}\{\pi_{\cH}u, \pi_{\cH}f\} \in A_{\max}:\, u_2(a) = 0 \}.
		\end{equation*}

		\item[(ii)]
		The triple $\Pi^{(N)} := (\dC^p, \Gamma_0^{(N)}, \Gamma_1^{(N)})$ where
for $\text{\rm col}\{\pi_{\cH}u, \pi_{\cH}f\} \in S_{N\cdot}^*$
			\begin{equation}\label{eq:btriple-NII}
				\Gamma_0^{(N)} \text{\rm col}\{\pi_{\cH}u, \pi_{\cH}f\} := u_2(b),
				\quad
				\Gamma_1^{(N)} \text{\rm col}\{\pi_{\cH}u, \pi_{\cH}f\} := u_1(b),
			\end{equation}
			is a boundary triple for $S_{N\cdot}^*$.

		\item[(iii)]
		The corresponding Weyl function and the $\gamma$-field are
		\begin{equation}\label{eq:WeylF_NII}
			M({z}) =c_1(b,{z})c_2(b,{z})^{-1}, \qquad
			\gamma({z}) =  c(\cdot,{z})c_2(b,{z})^{-1}.
		\end{equation}

		\item[(iv)]
The resolvent  $R^0_{{z}}$ of
	the extension $S_0 = \ker \Gamma_0^{(N)}$ is given by
		\begin{equation}\label{eq:Resolv_NII}
		(R^0_{{z}} f) (x)
		= \frac12{U(x,{z})} \int_a^b
			\left\{ \sgn_+(s-x)I_{2p} -D(z) \right\} J U^\#(s,{z}) \cH(s) f(s)\,ds,
	\end{equation}
	where $f \in \cL^2_\cH(a,b)$, ${z} \in \rho(S_0)$ and
		\begin{equation}\label{eq:Resolv_D}
D(z)=\begin{bmatrix}
			                      -I_p & -2c_2(b,{z})^{-1}s_2(b,{z})\\
			                       0 & I_p
			                         \end{bmatrix}.
\end{equation}
	\item[(v)]
		With the gauge $\sL(\subset \sH_-)$ and mapping $L :\, \dC^p \to \sL $ defined as
		\begin{equation}\label{eq:gauge-AN}
\sL=\left\{L\xi:\xi\in\dC^p\right\},\quad			
L\xi:= \text{\rm col}\{\delta_a\otimes\xi,0\},
		\end{equation}
we have $\rho(A,\sL)=\dC$ and
\begin{equation}\label{eq:P_lambda_SN}
  \cP({z})f= \begin{bmatrix}I_p & 0\end{bmatrix}\int_a^b U^\#(s,{z})\cH(s)f(s)ds.
\end{equation}
\item[(vi)]
        The corresponding  right $\Pi^{(N)}\sL$-resolvent matrix takes the form
		\begin{equation}\label{eq:intermed-WrN}
			W^r_{\Pi^{(N)}\sL}({z}) =
								\begin{bmatrix}
					s_2(b,{z}) & s_1(b,{z}) \\
					c_2(b,{z}) & c_1(b,{z})
				\end{bmatrix}
				-
				\begin{bmatrix}
					Kc_2(b,{z}) & Kc_1(b,{z}) \\
                             0      & 0
				\end{bmatrix} .
		\end{equation}
where $K=K^*\in\dC^{\ptp}$.
	\end{enumerate}
\end{proposition}
\begin{proof}
  (i) \& (ii) are straightforward.

  (iii) The formulas \eqref{eq:WeylF_NII} follow from \eqref{eq:11.M} and \eqref{eq:11_gamma1} since the defect subspace $\sN_z$ of $S_{N\cdot}$ consists of vector-functions
  \[
  f_z(\cdot)=c(\cdot,z)\xi,\quad \xi\in\dC^p.
  \]

  (iv) By \cite[eq. (3.6)]{LaTe82} the resolvent of an extension $\wt S$ of $S_{\min}$ characterized by the boundary condition \eqref{eq:11.ShtrAB} takes the form~\eqref{eq:Resolv_NII}, where
  \[
  D(z)=(A+BU(z))^{-1}(A-BU(z)).
  \]
  For the extension $S_0=\ker\Gamma_0^{N}$ one can take
  $A=\begin{bmatrix}
       0 & 0 \\
       0 & 1
     \end{bmatrix}$,   $B=\begin{bmatrix}
       0 & 1 \\
       0 & 0
     \end{bmatrix}$ and hence the expression for
     \begin{equation*}
  D(z)=\begin{bmatrix}
       c_2(b,z) &s_2(b,z)  \\
       0        & 1
     \end{bmatrix}^{-1}
     \begin{bmatrix}
       -c_2(b,z)& -s_2(b,z)  \\
       0        & 1
     \end{bmatrix}
  \end{equation*}
  is reduced to~\eqref{eq:Resolv_D}.
\end{proof}

\begin{theorem}
  \label{thm:Descr_Pseudo_N}
  Let $\Pi = (\dC^p, \Gamma_0, \Gamma_1)$ be the boundary triple for $S_{N\cdot}^*$
  given by~\eqref{eq:BT_Can},
let the gauge $\sL$ and the mapping
$L\in\cB(\dC^p,\sL)$ be defined by \eqref{eq:gauge-AN}
 and let
$W_{\Pi{\sL}}^{r}({z})$  be the right $\Pi{\sL}$-resolvent matrix of $S_{N\cdot}$ given by
\eqref{eq:intermed-WrN}.
Then
\begin{enumerate}
  \item [\rm(i)]
The formula
\begin{equation}\label{eq:IntRepRN}
  \int_{\dR}\left(\frac{1}{{\lambda}-z}
  -\frac{{\lambda}}{1+{\lambda}^2}\right)d\sigma({\lambda})=
  T_{W_{\Pi{\sL}}^{r}(z)}[\tau(z)]+\wh K
\end{equation}
establishes a bijective correspondence between
all pseudo-spectral functions $\sigma$  of $\langle S_{N\cdot},\sL\rangle$
and all admissible families $\tau\in\wt\cR^{p\times p}$. Here $\wh K$ is a Hermitian ${p\times p}$ matrix depending on $\sigma.$

\item [\rm(ii)]	
For every pseudo-spectral function $\sigma$ the generalized Fourier transform
\begin{equation}\label{eq:FourierN}
\cF:\sH\ni f\mapsto F({\lambda})= \begin{bmatrix}I_p & 0\end{bmatrix}\int_a^b U^\#(s,{\lambda})\cH(s)f(s)ds
\end{equation}
is a partial isometry from $\sH$ to $L^2(d\sigma)$
such that $\ker\cF=\mul S_{N\cdot}$ and for all $f,g\in \sH\ominus\mul S_{N\cdot}$ the Parseval equality holds
\begin{equation}\label{eq:ParsevalN}
  \int_a^b g(s)^*\cH(s)f(s)ds=\int_\dR G({\lambda})^*d\sigma({\lambda})F({\lambda}),\quad\text{where}\quad
  G=\cF g.
\end{equation}
\item [\rm(iii)]
The adjoint to the  generalized Fourier transform is given by
\begin{equation}\label{eq:Fourier_InvN}
(\cF^{*}(F))(x)
=\int_\dR c(x,{\lambda})d\sigma({\lambda})F({\lambda}),
\quad F\in L^2(d\sigma),
\end{equation}
where the integral in \eqref{eq:Fourier_InvN} converges in the norm of $L^2_{\cH}(I)$.

\end{enumerate}
\end{theorem}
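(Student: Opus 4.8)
The plan is to obtain the three parts by specializing the abstract machinery of Sections~\ref{sec:4.1} and~\ref{sec:5} to the symmetric relation $A=S_{N\cdot}$, the boundary triple $\Pi^{(N)}$, and the gauge $\sL$ of Proposition~\ref{prop:RM_for_N}. Recall from that proposition that $S_{N\cdot}$ is closed symmetric with $n_\pm(S_{N\cdot})=p<\infty$, that $\rho(S_{N\cdot},\sL)=\dC$ (so in particular $\dR\subset\rho(S_{N\cdot},\sL)$ and $\sL$ is a genuine gauge), that the directing mapping $\cP(\cdot)$ is given by~\eqref{eq:P_lambda_SN}, and that the right $\Pi^{(N)}\sL$-resolvent matrix is~\eqref{eq:intermed-WrN}.

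For (i) I would compose two bijections and then restrict to pseudo-spectral functions. Theorem~\ref{thm:Descr_Pseudo_IS}(i), applied with $A=S_{N\cdot}$, gives a one-to-one correspondence between the LT-spectral functions $\sigma$ of $\langle S_{N\cdot};\sL\rangle$ and the $\sL$-resolvents $L^{\langle*\rangle}\wh{\mathbf R}_z L$ of $S_{N\cdot}$, realized by an integral representation with a Hermitian additive constant. Theorem~\ref{prop:Gresolv}(i), applied to $\Pi^{(N)}$, then identifies each such $\sL$-resolvent with a unique $\tau\in\wt\cR^{p\times p}$ through $L^{\langle*\rangle}\wh{\mathbf R}_z L=T_{W^r_{\Pi^{(N)}\sL}(z)}[\tau(z)]$, with $W^r_{\Pi^{(N)}\sL}$ the explicit matrix~\eqref{eq:intermed-WrN}. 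Composing these correspondences yields~\eqref{eq:IntRepRN}, the additive constant (denoted $\wh K$ in the statement, entering with sign $+K$) being carried along by the normalization of $W^r_{\Pi^{(N)}\sL}$ fixed in~\eqref{eq:intermed-WrN}. Finally, Theorem~\ref{thm:Descr_Pseudo_IS}(ii) together with Definition~\ref{def:Adm} shows that $\sigma\in\Sigma_{psf}(S_{N\cdot};\sL)$ holds exactly when the minimal representing relation $\wt S$ of the corresponding $\sL$-resolvent satisfies $\mul\wt S=\mul S_{N\cdot}$, i.e.\ exactly when $\tau$ is $\Pi^{(N)}$-admissible; this completes (i).

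For (ii), the generalized Fourier transform $\cF\colon f\mapsto\cP(\cdot)f$ coincides, by~\eqref{eq:P_lambda_SN}, with the mapping~\eqref{eq:FourierN}. Since $\sigma$ is pseudo-spectral, Definition~\ref{def:PseudoSF}(2) (together with Lemma~\ref{thm:Descr_Pseudo-spect}(ii)) states precisely that $\cF$ is a partial isometry from $\sH=L^2_\cH(a,b)$ onto $L^2(d\sigma)$ with $\ker\cF=\mul S_{N\cdot}$. The Parseval identity~\eqref{eq:ParsevalN} then follows from Lemma~\ref{lem:P-dir_func}(ii), that is from~\eqref{eq:Parseval} and~\eqref{eq:sigma}, specialized to the subspace $\sH\ominus\mul S_{N\cdot}$ on which $\cF$ is isometric, after polarization and after rewriting $\cP(\lambda)$ via~\eqref{eq:P_lambda_SN}; the inner product of $L^2_\cH$ here is the one in~\eqref{eq:InnerPr}. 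For (iii) I would follow the proof of Theorem~\ref{thm:Descr_Pseudo_IS3}(iii): fix $F\in L^2(d\sigma)$ with compact support in a finite interval $\Delta$, let $f\in L^2_\cH\ominus\mul S_{N\cdot}$ be the vector~\eqref{eq:Expansion_f2} attached to $F$, and for $t\in[a,b)$ put $g_t=\mathbf{1}_{[a,t)}$ and $G_t=\cF g_t$. Parseval~\eqref{eq:ParsevalN} gives $(F,G_t)_{L^2(d\sigma)}=(f,g_t)_{L^2_\cH}=\int_a^t\cH(s)f(s)\,ds$, whereas computing $G_t$ from~\eqref{eq:FourierN}, interchanging the order of integration by Fubini, and using $U(\cdot,\lambda)\begin{bmatrix}I_p\\0\end{bmatrix}=c(\cdot,\lambda)$ (the solution spanning $\sN_z$ for $S_{N\cdot}^*$, normalized by $c(a,\lambda)=\begin{bmatrix}I_p\\0\end{bmatrix}$, cf.\ Proposition~\ref{prop:RM_for_N}(iii)) gives $(F,G_t)_{L^2(d\sigma)}=\int_a^t\cH(s)\bigl(\int_\dR c(s,\lambda)\,d\sigma(\lambda)F(\lambda)\bigr)\,ds$. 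Equating the two for all $t$ and using that the span of $\{g_t:t\in[a,b)\}$ is dense in $L^2_\cH$ yields~\eqref{eq:Fourier_InvN}, the $L^2_\cH$-convergence of the integral being handled as in the Remark after Lemma~\ref{lem:P-dir_func} and in \cite[Section~83]{AG66}.

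The step I expect to be the main obstacle is (i): pinning down the Hermitian constant in~\eqref{eq:IntRepRN} with the precise sign and normalization built into~\eqref{eq:intermed-WrN}, and carefully transporting admissibility along the chain: LT-spectral function $\leftrightarrow$ $\sL$-resolvent $\leftrightarrow$ parameter $\tau$, with pseudo-spectral $\leftrightarrow$ $\mul\wt S=\mul S_{N\cdot}$ $\leftrightarrow$ $\Pi^{(N)}$-admissible. This in turn forces one to keep track of when $S_{N\cdot}$ is itself multivalued, which by Lemma~\ref{lem:mul_Sgamma_psi} occurs precisely when the system has an $\cH$-indivisible interval at the right endpoint $b$.
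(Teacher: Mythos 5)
Your proposal is correct and follows essentially the same route as the paper: part (i) is obtained by composing Theorem~\ref{thm:Descr_Pseudo_IS}(i)--(ii) with the linear-fractional description of $\sL$-resolvents via the $\Pi^{(N)}\sL$-resolvent matrix (Theorem~\ref{prop:Gresolv}), part (ii) is the specialization of Theorem~\ref{thm:Descr_Pseudo_IS}(ii) and Definition~\ref{def:PseudoSF} using $\cP(\cdot)$ from~\eqref{eq:P_lambda_SN}, and part (iii) is exactly the paper's inversion argument with compactly supported $F$, the indicator functions $g_t=\mathbf{1}_{[a,t)}$, Parseval, Fubini, and density of $\{g_t\}$ in $L^2_\cH$. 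The only difference is that you spell out the composition in (i) more explicitly than the paper's one-line citation, which is harmless.
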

\begin{proof}
  (i) follows from Theorem~\ref{thm:Descr_Pseudo_IS}(i).

  (ii) By Theorem~\ref{thm:Descr_Pseudo_IS}(ii) and Definition~\ref{def:PseudoSF}, the mapping $\cF:\sH\ni f\mapsto \cP(\cdot)f\in L^2(d\sigma)$ is a partial isometry with
  $\ker \cF=\mul S_{N\cdot}$
  and such that \eqref{eq:Parseval} holds for all $f\in\sH\ominus \mul S_{N\cdot}$.

  (iii)
  The proof of (iii) for arbitrary $\sigma\in\Sigma_{psf}(S_{\min},\sL)$
   is given in \cite[Proposition 3.4]{Mog15Ufa}. We present here a short proof for an orthogonal pseudo-spectral function  $\sigma$.

    Let $F({z}) $ be a function from $L^2(d\sigma)$ with a compact support
  ($\subset \Delta$).
   Since for orthogonal pseudo-spectral function $\sigma\in\Sigma_{psf}(S_{\min},\sL)$
   it holds $\ran\cF=L^2(d\sigma)$, there exists
    $f\in L^2_{\cH}\ominus\mul S_{N\cdot}$
    such that $\cF f=F$, see~ \eqref{eq:Expansion_f2}. Let $g_t(s)=\mathbf{1}_{[a,t)}$
  and let
  \[
  G({\lambda}):=(\cF g_t)({\lambda})
  =\begin{bmatrix}I_p & 0\end{bmatrix}\int_a^t U^\#(s,{\lambda})\cH(s)\,ds.
  \]
  Then, by~\eqref{eq:Parseval}, the equality~\eqref{eq:ParsevalN} holds
  and hence
  \[
 (F,\cF g_t)_{L^2(d\sigma)}= (F,G)_{L^2(d\sigma)}
  =\int_{\Delta}\left(\int_a^t\begin{bmatrix}I_p & 0\end{bmatrix} U^\#(s,{\lambda})\cH(s)\,ds\right)^*d\sigma({\lambda})F({\lambda}).
  \]
 By the Fubini theorem, we get
  \[
 (F,\cF g_t)_{L^2(d\sigma)}
  =\int_a^t \cH(s) \left(\int_{\Delta} U(s,{\lambda})\begin{bmatrix}I_p \\ 0\end{bmatrix}\,d\sigma({\lambda})F({\lambda})\right)\,ds.
  \]
  Since the set $\{g_t(s):\,t\in[a,b)\}$ is dense in $L^2_\cH(I)$, this proves \eqref{eq:Fourier_InvN}.
\end{proof}

As follows from Definition~\ref{def:PseudoSF},
\begin{equation}\label{eq:FF*}
\cF^*\cF=P_{\sH\ominus\ker \cF}\quad\text{and}
\quad \cF\cF^*=P_{\ran \cF}.
\end{equation}
In particular, if $\sigma$ is an orthogonal spectral function for
$\langle S_{\min},\sL\rangle$,
then $\cF^*$ is the inverse to the generalized Fourier transform $\cF$.

\begin{proposition}\label{prop:Indivis_Int}
Let in the assumptions of Theorem \ref{thm:Descr_Pseudo_N} $p=1$. Then
\begin{enumerate}
  \item [\rm(i)]
 If the system \eqref{eq:can,eq-n} has no $\cH$-i.i. and only in this case,  the set of spectral functions $\sigma$  of $\langle S_{N\cdot},\sL\rangle$ is non-empty and
 $\Sigma_{sf}(S_{N\cdot},\sL)=\Sigma_{psf}(S_{N\cdot},\sL)$.

  \item [(ii)]    If the system \eqref{eq:can,eq-n} has  no $\cH$-i.i.  at the endpoint $b$, then the formula
\eqref{eq:IntRepRMin}
establishes a bijective correspondence between
all pseudo-spectral functions $\sigma$  of $(S_{N\cdot},\sL)$
and all  $\tau\in\wt\cR$.

\item [(iii)]   If the system \eqref{eq:can,eq-n} has an $\cH$-i.i.  at $b$  of type $\pi/2$, then we have in
\eqref{eq:IntRepRMin}
 \[
 \text{$\sigma\in\Sigma_{\rm psf}(A;\sL)\Leftrightarrow\tau \in\cR$ and $\tau(iy)=o(y)$ as $ y\to\infty$.}
 \]

     \item [(iv)]
  If the system \eqref{eq:can,eq-n} has an $\cH$-i.i. at $b$  
  of type $\psi$,
then  $\sigma\in\Sigma_{\rm psf}(A;\sL)$
  if and only if  $\tau\in\cR$
\begin{equation}\label{eq:NevCond_psi}
  (\sin\psi\tau(iy)+\cos\psi)(-\cos\psi\tau(iy)+\sin\psi)^{-1}=o(y)\quad\text{ as }\quad y\to\infty.
\end{equation}
\end{enumerate}
\end{proposition}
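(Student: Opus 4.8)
The plan is to follow the pattern of the proof of Proposition~\ref{prop:Hii_intermed}, using two elementary identifications read off from \eqref{eq:ADaN}, \eqref{eq:btriple-NII} and \eqref{eq:Sgamma}--\eqref{eq:Sgamma_psi} with $\xi_{\pi/2}=\text{\rm col}\{0,1\}$: namely $S_{N\cdot}=S_{\pi/2,\cdot}$ and $S_0:=\ker\Gamma_0^{(N)}=S_{\pi/2,\pi/2}$. Since $\rho(S_{N\cdot},\sL)=\dC$ by Proposition~\ref{prop:RM_for_N}(v), Theorem~\ref{thm:Descr_Pseudo_N} applies, and by Theorem~\ref{thm:Descr_Pseudo-spect}(ii)--(iii) (with $A=S_{N\cdot}$) a pseudo-spectral function of $\langle S_{N\cdot},\sL\rangle$ corresponds exactly to a $\Pi$-admissible family $\tau$, while a spectral function exists iff $\mul S_{N\cdot}=\{0\}$. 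Hence the whole statement reduces to translating the conditions $\mul S_{N\cdot}=\{0\}$ and $\mul\wt S=\mul S_{N\cdot}$ into conditions on $\cH$-indivisible intervals and on $\tau$; the tools are Lemma~\ref{lem:mul_Sgamma_psi}, Proposition~\ref{prop:Adm_Nev}, and, for the sign bookkeeping on the terminal indivisible interval, Lemma~\ref{lem:IndInt}.

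For (i): $\Sigma_{sf}(S_{N\cdot},\sL)\ne\emptyset$ iff $\mul S_{N\cdot}=\{0\}$; since $\mul S_{\min}\subseteq\mul S_{N\cdot}\subseteq\mul S_{\max}$ and, by Lemma~\ref{lem:mul_Sgamma_psi} (as exploited in the proof of Proposition~\ref{prop:Hii_intermed}(i)), for $p=1$ both $\mul S_{\min}$ and $\mul S_{\max}$ vanish precisely when \eqref{eq:can,eq-n} has no $\cH$-i.i., we obtain $\mul S_{N\cdot}=\{0\}$ iff \eqref{eq:can,eq-n} has no $\cH$-i.i.; in that case $S_{N\cdot}$ is densely defined, so $\mul\wt S\subseteq\mul S_{N\cdot}^{*}=\{0\}$ for every minimal representing $\wt S$, every $\tau\in\wt\cR$ is admissible, and $\Sigma_{sf}=\Sigma_{psf}$ by Theorem~\ref{thm:Descr_Pseudo-spect}(iii). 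For (ii): if \eqref{eq:can,eq-n} has no $\cH$-i.i.\ at $b$, Lemma~\ref{lem:mul_Sgamma_psi}(ii) with $\gamma=\pi/2$ gives $\mul S_{N\cdot}=\mul S_{N\cdot}^{*}$, whence $\mul S_{N\cdot}\subseteq\mul\wt S\subseteq\mul S_{N\cdot}^{*}=\mul S_{N\cdot}$ for every minimal representing $\wt S$; thus all $\tau\in\wt\cR$ are $\Pi$-admissible and \eqref{eq:IntRepRN} becomes a bijection onto $\wt\cR$.

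For (iv) (which contains (iii) as the special case $\psi=\pi/2$), let $(\beta,b)$ be an $\cH$-i.i.\ of type $\psi$. I pass to the boundary triple $\Pi''=(\dC,\Gamma_0'',\Gamma_1'')$ for $S_{N\cdot}^{*}$ with $\Gamma''=X_\psi\Gamma^{(N)}$, where $X_\psi=\begin{bmatrix}\sin\psi&\cos\psi\\-\cos\psi&\sin\psi\end{bmatrix}$ is $\cJ$-unitary and $\Gamma_0''\,\text{\rm col}\{\pi u;\pi f\}=\xi_\psi^{*}u(b)$, so that $S_0'':=\ker\Gamma_0''=S_{\pi/2,\psi}$. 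By Lemma~\ref{lem:mul_Sgamma_psi}(iii), $\mul S_0''=\mul S_{\pi/2,\psi}=\mul S_{\pi/2,\cdot}^{*}=\mul S_{N\cdot}^{*}$; since $n_{\pm}(S_{N\cdot})=1$ forces $\dim(\mul S_{N\cdot}^{*}/\mul S_{N\cdot})\le1$, the inclusions $S_{N\cdot}\subsetneq S_0''$ and $\text{\rm col}\{0;\mul S_{N\cdot}^{*}\}\subseteq S_0''$ imply $S_0''=S_{N\cdot}\dotplus\text{\rm col}\{0;\mul S_{N\cdot}^{*}\}$. Proposition~\ref{prop:Adm_Nev}(ii), applied to $\Pi''$, then says the $\Pi''$-parameter $\tau''$ is admissible iff $\tau''(iy)=o(y)$. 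Admissibility ($\mul\wt S=\mul S_{N\cdot}$) does not depend on the choice of boundary triple, and the change $\Gamma''=X_\psi\Gamma^{(N)}$ transforms the Kre\u{\i}n parameter by the linear-fractional substitution $\tau''=T_{X_\psi}[\tau]=(\sin\psi\,\tau+\cos\psi)(-\cos\psi\,\tau+\sin\psi)^{-1}$, which lies in $\wt\cR$ by Theorem~\ref{thm:Gres_F1}(iv); this substitution is non-degenerate exactly when $-\cos\psi\,\tau(z)+\sin\psi\not\equiv0$ — the excluded case $\tau\equiv\tan\psi$ corresponds to $\tau''=\infty$, i.e.\ to $S_0''=S_{\pi/2,\psi}$ itself, whose multivalued part strictly exceeds $\mul S_{N\cdot}$ — and then $\tau''(iy)=o(y)$ is exactly \eqref{eq:NevCond_psi}. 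For $\psi=\pi/2$ one has $X_{\pi/2}=I_2$, $S_0''=S_0$ and the side condition is vacuous, which recovers (iii).

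The main obstacle is the identity $S_0''=S_{N\cdot}\dotplus\text{\rm col}\{0;\mul S_{N\cdot}^{*}\}$, i.e.\ that the single extra ``direction'' adjoined to $S_{N\cdot}$ when the boundary condition at $b$ is relaxed from $u(b)=0$ to $\xi_\psi^{*}u(b)=0$ is purely multivalued; I would prove it from the deficiency-index bound $\dim(\mul S_{N\cdot}^{*}/\mul S_{N\cdot})\le n_{\pm}(S_{N\cdot})=1$ together with Lemma~\ref{lem:mul_Sgamma_psi}(iii) (whose own proof rests on Lemma~\ref{lem:IndInt} applied to the terminal $\cH$-i.i.). A subsidiary care point is to pin down the orientation/sign conventions in $X_\psi$ and in the linear-fractional map so that the criterion emerges precisely in the form \eqref{eq:NevCond_psi} rather than a sign-twisted variant.
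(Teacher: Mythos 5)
Your proposal is correct and follows essentially the same route as the paper's proof: (i)--(ii) via Lemma~\ref{lem:mul_Sgamma_psi} and the identification $S_{N\cdot}=S_{\pi/2,\cdot}$, and (iii)--(iv) via the rotated boundary triple $\Gamma''=X_\psi\Gamma^{(N)}$ with $\ker\Gamma_0''=S_{\pi/2,\psi}$, Proposition~\ref{prop:Adm_Nev}(ii), and the linear-fractional change of parameter $\tau''=T_{X_\psi}[\tau]$ (the paper reaches the same transformation through the resolvent-matrix identity $W'=W X^*$ rather than transforming $\tau$ directly). The extra details you supply --- the dimension count giving $S_0''=S_{N\cdot}\dotplus\text{\rm col}\{0;\mul S_{N\cdot}^*\}$ before invoking Proposition~\ref{prop:Adm_Nev}(ii), and the explicit treatment of the degenerate parameter $\tau\equiv\tan\psi$ --- are consistent with, and slightly more explicit than, the paper's argument.
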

\begin{proof}
  (i) By Definition~\ref{def:PseudoSF}, the set of spectral functions $\sigma$  of $\langle A,\sL\rangle$ is non-empty if and only if $\ker \cF=\mul A=\{0\}$.
  By Lemma~\ref{lem:mul_Sgamma_psi}(i),
$\mul A=\{0\}$ if and only if   the system \eqref{eq:can,eq-n} has no $\cH$-i.i. In this case the operator $S_{N,\cdot}$ is densely defined, the set  of spectral functions $\sigma$  of $\langle A,\sL\rangle$ coincides with the set of pseudo-spectral functions $\sigma$  of $\langle A,\sL\rangle$ and, by Definition \ref{def:Adm}, all the parameters $\tau\in\wt\cR$ are $\Pi$-admissible.

(ii) If the system \eqref{eq:can,eq-n}  has no $\cH$-i.i. at point $b$, then by Lemma~\ref{lem:mul_Sgamma_psi}(ii),
$    \mul S_{N,\cdot}=\mul S_{N,\cdot}^*$
and hence      $\mul \wt S=\mul S_{N,\cdot}^*$ for every minimal
selfadjoint extension $\wt S$ of $S_{N,\cdot}$.
Therefore, every parameter $\tau$ in \eqref{eq:IntRepRN} is $\Pi$-admissible.

(iii) If the system \eqref{eq:can,eq-n} has an $\cH$-i.i. $(\beta,b)$ of type $\pi/2$, then, 
by Lemma~\ref{lem:mul_Sgamma_psi}(iii), the extension $S_{N,\pi/2}$ has maximal multi-valued part, i.e.
$
    \mul S_{N,\pi/2}=\mul S_{N}^*.
$
By Proposition~\ref{prop:Adm_Nev}, the $\Pi$-admissibility of  $\tau$ is equivalent to the single condition~\eqref{tlimit00}.

(iv) By 
~\eqref{eq:mulA_gamma_psi5}, we have in this case
  $
    \mul S_{N,\psi}=\mul S_{N}^*.
  $
Consider the triple $\Pi^{(\psi)} := (\dC, \Gamma_0^{(\psi)}, \Gamma_1^{(\psi)})$, where for $\wh u=\text{\rm col}\{\pi_{\cH}u, \pi_{\cH}f\} \in S_{N,\cdot}^*$
			\begin{equation*}\label{eq:btriple-Da}
			\Gamma^{(\psi)}\wh u=\begin{bmatrix}\Gamma_0^{(\psi)} \wh u\\ \Gamma_1^{(\psi)} \wh u\end{bmatrix}
=\begin{bmatrix}\xi_\psi^*u(b) \\ \xi_\psi^*Ju(b)\end{bmatrix}
=X\Gamma\wh u
				\quad
				\text{and}\quad
X=\begin{bmatrix}\sin\psi & \cos\psi\\
                    -\cos\psi & \sin\psi\end{bmatrix}.
\end{equation*}
Then $S_0:=\ker\Gamma_0^{(\psi)}= S_{N,\psi}$.
With the gauge mapping $L : \dC \to \sL \subset \sH_-$ defined by~\eqref{eq:gauge-AN}
		the corresponding   right $\Pi\sL$-resolvent matrix takes the form
		$
			W^r_{\Pi^\psi \sL}({z}) =W^r_{\Pi^{(2)}\sL}({z})X^*.
$
Since $
    \mul S_{\gamma,\psi}=\mul S_{\gamma,\cdot}^*
  $, by Theorem~\ref{thm:Descr_Pseudo_N},
  every pseudo-spectral function $\sigma$  of $\langle A,\sL\rangle$ admits the representation
\begin{equation}\label{eq:PSF_descrN}
  \int_{\dR}\left(\frac{1}{{\lambda}-z}-\frac{{\lambda}}{1+{\lambda}^2}\right)d\sigma({\lambda})=
  T_{W^r_{\Pi^\psi \sL}}[\wt\tau]+k,
  \end{equation}
  where $k\in\dR$ and
$\wt\tau\in\cR$  satisfies the single condition 
\begin{equation}\label{eq:PSF_admN}
  \wt\tau(iy)=o(y)\quad\text{as $ y\to\infty$}.
\end{equation}
The right hand side of~\eqref{eq:PSF_descrN} can be rewritten as
\[
T_{W^r_{\Pi^\psi \sL}}[\wt\tau]=
T_{W_{\Pi{\sL}}}^{r}[\tau],
\quad\text{
where }\quad
\tau=T_{X^*}[\wt\tau]
\]
and since
\[
 \wt\tau=T_{X^{-*}}[\tau],\quad\text{
and }\quad X^{-*}=-JXJ=X,
\]
we obtain that  \eqref{eq:PSF_admN} ensures the admissibility of $\tau$ in \eqref{eq:IntRepRN}.
\end{proof}
\subsection{Limit point case}
Let $p=1$ and let  the differential expression
   \eqref{eq:can,eq-n} be quasi-regular at $a$ and limit point at $b$.
   Let us consider the
   minimal  $S_{\min}$ and the maximal $S_{\max}$ relations  generated by the expression
   \eqref{eq:can,eq-n} in $L^2_\cH(I)$, $I=[a,b]$.
Then $A=S_{\min}$ is a symmetric linear relation with the defect numbers $n_{\pm}(A)=1$
and $A^*=S_{\max}$.
\begin{proposition}\label{prop:RM_for_LP}
Let the canonical system
   \eqref{eq:can,eq-n} be quasi-regular at $a$ and limit point at $b$, let $m({z})$
   be the Weyl-Titchmarsh coefficient defined by the condition
\[
y(\cdot, {z}):= s(\cdot, {z}) - m({z})\, c(\cdot, {z})\in \cL^2_\cH(I)
\]
and let the gauge $\sL$ and the mapping $L:\dC\to\sH_-$ be defined by
		\begin{equation}\label{eq:gauge-LP}
\sL= \left\{\begin{bmatrix}0\\
\xi\delta_a \end{bmatrix}:\,\xi\in\dC\right\},\quad
L\xi=\begin{bmatrix}0\\
\xi\delta_a \end{bmatrix}
			\quad \xi \in \dC.
		\end{equation}

     Then:
	
	\begin{enumerate}
		\item[(i)]
		The triple $\Pi := (\dC, \Gamma_0, \Gamma_1)$ where
			\begin{equation*}\label{eq:btriple-LP}
				\Gamma_0 \{\pi_{\cH}u; \pi_{\cH}f\} := u_2(a),
				\quad
				\Gamma_1 \{\pi_{\cH}u; \pi_{\cH}f\} := -u_1(a),
				\quad
				\{\pi_{\cH}u; \pi_{\cH}f\} \in A^*,
			\end{equation*}
			is a boundary triple for $A^*$.

		\item[(ii)]
		The corresponding Weyl function and the $\gamma$-field are
		\begin{equation}\label{eq:WeylF_LP}
			M({z}) =m({z}), \qquad
			\gamma({z}) = y(\cdot, {z}),\qquad
		\end{equation}

	\item[(iii)]
	$\rho(A,\sL)=\dC$ and	
the right $\Pi\sL$-resolvent matrix takes the form
		\begin{equation}\label{eq:intermed-WrLP}
			W^r_{\Pi\sL}({z}) =
								\begin{bmatrix}
					0      & -1 \\
					1   & m({z})
				\end{bmatrix}.
		\end{equation}
	\end{enumerate}
\end{proposition}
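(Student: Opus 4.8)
The plan is to reduce the three items to the general results of Sections \ref{sec:2}--\ref{sec:4.1}, specialised to the limit point situation, following the pattern of the proofs of Propositions \ref{prop:14.51} and \ref{prop:RM_for_N}.

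For (i), the Green identity \eqref{eq:1.9} for $\Pi$ is just the Lagrange identity of the canonical system: for $\wh u=\text{\rm col}\{\pi u,\pi f\}$, $\wh v=\text{\rm col}\{\pi v,\pi g\}\in A_{\max}$, written with their absolutely continuous representatives $u,v$, one has $\langle f,v\rangle_{\cH}-\langle u,g\rangle_{\cH}=v(b)^*\cJ u(b)-v(a)^*\cJ u(a)$. Since the system is limit point at $b$, the boundary term at $b$ vanishes for every pair of $\cL^2_\cH$-functions, so what remains is $-v(a)^*\cJ u(a)$, and a direct $2\times2$ computation with $\cJ=\begin{bmatrix}0&-1\\1&0\end{bmatrix}$, $\Gamma_0\wh u=u_2(a)$, $\Gamma_1\wh u=-u_1(a)$ rewrites this as $(\Gamma_0\wh v)^*(\Gamma_1\wh u)-(\Gamma_1\wh v)^*(\Gamma_0\wh u)$. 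Surjectivity of $\Gamma=\text{\rm col}\{\Gamma_0,\Gamma_1\}$ follows because, $b$ being limit point, $\ker\Gamma=\{\wh u\in A_{\max}:u(a)=0\}=A_{\min}$, whence $\dim\ran\Gamma=\dim(A_{\max}/A_{\min})=2n_\pm(A)=2$ (cf.\ \cite{LaTe82,BeHaSn20}).

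For (ii), $\sN_z=\ker(A^*-zI)$ is one dimensional, spanned by the limit point solution $y(\cdot,z)=s(\cdot,z)-m(z)c(\cdot,z)$. With $c(a,z)=\begin{bmatrix}1\\0\end{bmatrix}$, $s(a,z)=\begin{bmatrix}0\\1\end{bmatrix}$ we get $y(a,z)=\begin{bmatrix}-m(z)\\1\end{bmatrix}$, hence $\wh y_z=\text{\rm col}\{y(\cdot,z),z\,y(\cdot,z)\}\in\wh\sN_z$ with $\Gamma_0\wh y_z=1$ and $\Gamma_1\wh y_z=m(z)$; substituting into \eqref{eq:11.M} and \eqref{eq:11_gamma1} yields $M(z)=m(z)$ and $\gamma(z)=y(\cdot,z)$. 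For (iii), one checks $L^{\langle*\rangle}h=h_2(a)$, and since $\sN_{\bar z}=\text{span}\{y(\cdot,\bar z)\}$ with $L^{\langle*\rangle}y(\cdot,\bar z)=y_2(a,\bar z)=1$, the operator $L^{\langle*\rangle}\uphar\sN_{\bar z}$ is invertible; by Lemma \ref{lem:PQ_prop}(i) every $z\in\wh\rho(A)$ lies in $\rho(A,\sL)$, and $\wh\rho(A)=\dC$ for the minimal relation of a canonical system, so $\rho(A,\sL)=\dC$; moreover $\cP(z)^{\langle*\rangle}=y(\cdot,\bar z)$. I would then compute the $\Pi\sL$-preresolvent matrix \eqref{eq:Lres2A}: ${\mathfrak a}_{11}=M(z)=m(z)$ by (ii); ${\mathfrak a}_{21}=L^{\langle*\rangle}\gamma(z)=y_2(a,z)=1$, hence ${\mathfrak a}_{12}={\mathfrak a}_{21}^{\#}=1$; and ${\mathfrak a}_{22}=L^{\langle*\rangle}\wh R^0_z L$. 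For the last entry I would use the explicit resolvent of $A_0=\ker\Gamma_0$ — the Green function built from the $a$-regular solution $c(\cdot,z)$ and the limit point solution $y(\cdot,z)$, cf.\ \cite[eq.~(3.6)]{LaTe82} and Proposition \ref{prop:RM_for_N}(iv) — to see that $\wt R^0_z L\xi$ is a multiple of $y(\cdot,z)$ with coefficient $\big(y(a,\bar z)^*\cJ c(a,\bar z)\big)^{-1}c(a,\bar z)^*\cH(a)\begin{bmatrix}0\\\xi\end{bmatrix}$; since $y(a,\bar z)^*\cJ c(a,\bar z)=1$ and $c(a,\cdot)\equiv\begin{bmatrix}1\\0\end{bmatrix}$, this coefficient is independent of $z$, so $L^{\langle*\rangle}\wt R^0_z L$ is constant in $z$ and ${\mathfrak a}_{22}=L^{\langle*\rangle}(\wt R^0_z-\bR^0)L=0$. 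Thus $\sA_{\Pi{\sL}}(z)=\begin{bmatrix}m(z)&1\\1&0\end{bmatrix}$, and inserting this into \eqref{eq:LresM} gives $W^r_{\Pi\sL}(z)=\begin{bmatrix}0&-1\\1&m(z)\end{bmatrix}$, i.e.\ \eqref{eq:intermed-WrLP}.

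The hard part will be the identity ${\mathfrak a}_{22}=0$: this is the one place where the rigged-space/distributional apparatus of Section \ref{sec:3} has to be used in an essential way, since it amounts to controlling how the extended resolvent $\wt R^0_z$ and the regularizer $\bR^0$ act on the distribution $L\xi$ concentrated at the endpoint $a$; the remaining computations are routine bookkeeping with \eqref{eq:11.M}, \eqref{eq:11_gamma1}, \eqref{eq:Lres2A}, \eqref{eq:LresM} and the standard solution data of the canonical system.
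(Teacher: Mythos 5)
Your route is essentially the paper's: items (i) and (ii) are treated as elementary (the paper simply cites \cite[Theorems 7.8.2, 7.8.5]{BeHaSn20}), and in (iii) you compute the entries of the $\Pi\sL$-preresolvent matrix \eqref{eq:Lres2A} exactly as the paper does — ${\mathfrak a}_{11}=m(z)$, ${\mathfrak a}_{21}={\mathfrak a}_{12}=1$ from $L^{\langle*\rangle}h=h_2(a)$ and $y(a,z)=\operatorname{col}\{-m(z),1\}$ — and then substitute into \eqref{eq:LresM}. That part is correct and matches the paper's proof.

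The one place where your argument is not sound as written is precisely the step you single out as the hard one, ${\mathfrak a}_{22}=0$. Your claimed formula for $\wt R^0_z L\xi$, with the factor $c(a,\bar z)^*\cH(a)\operatorname{col}\{0,\xi\}$, is both ill-defined (the entries of $\cH$ are only locally integrable, so $\cH(a)$ has no pointwise meaning) and inconsistent with the duality that defines the gauge: by construction $\langle \delta_a\otimes\xi, f\rangle_{-,+}=f(a)^*\xi$ is plain point evaluation, with no $\cH$-weight, so the action of the extended resolvent on $L\xi$ must be computed through $(\wt R^0_z L\xi,h)_\sH=\langle L\xi, R^0_{\bar z}h\rangle_{-,+}$, which equals the second component of $(R^0_{\bar z}h)(a)$. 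From the explicit resolvent (the paper quotes it from \cite[Theorem 7.8.5]{BeHaSn20}) one has $(R^0_{\bar z}h)(a)\in\operatorname{span}\{c(a,\bar z)\}=\operatorname{span}\{\operatorname{col}\{1,0\}\}$, whose second entry vanishes; hence $\wt R^0_z L\equiv 0$, not a nonzero multiple of $y(\cdot,z)$. This is the paper's argument, and it gives ${\mathfrak a}_{22}=L^{\langle*\rangle}(\wt R^0_z-\bR^0)L=0$ immediately, since $\bR^0 L=\tfrac12(\wt R^0_i+\wt R^0_{-i})L=0$ as well. Your conclusion survives only because you use nothing beyond the $z$-independence of $L^{\langle*\rangle}\wt R^0_z L$ (and the true value, zero, is of course $z$-independent), so the gap is repairable by replacing your kernel manipulation with the duality computation above; but as submitted, the key identity rests on an intermediate formula that is false in this rigged-space framework.
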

\begin{proof}
(i) \& (ii) are straightforward, cf. \cite[Theorem 7.8.2]{BeHaSn20}.

  (iii) The proof of the equality $\rho(A,\sL)=\dC$ is similar to that in Proposition~\ref{prop:PreresolvMatrix}. By~\cite[Theorem 7.8.5]{BeHaSn20},
  the resolvent of the extension $S_0=\ker\Gamma_0$ is given by
\[
\begin{split}
(R_z^0f)(x)&=(-s(x,z)+c(x,z)m(z))\int_a^x c^\#(t,z)\cH(t)f(t)\,dt\\
&+c(x,z)\int_x^b(-s^\#(t,z) +m(z)c^\#(t,z))\cH(t)f(t)\,dt.
\end{split}
\]
Hence we get $\wt R_z^0L\xi=0$, since for all $f\in\sH$
\[
(f,\wt R_z^0L\xi)_{\sH}=\langle R_{\bar z}^0f, \text{\rm col}\{0, \xi\delta_a\} \rangle_{+,-}=0.
\]
Therefore, by Lemma~\ref{lem:PreresM} and \eqref{eq:WeylF_LP}, we get
$\sA(z)=\begin{bmatrix}
          m(z) & 1 \\
          1 & 0
        \end{bmatrix}$.
        Now the formula~\eqref{eq:intermed-WrLP} follows from~\eqref{eq:LresM}.
\end{proof}
The next theorem follows from Theorem~\ref{thm:Descr_Pseudo_IS} and
Proposition~\ref{prop:RM_for_LP}.
\begin{theorem}
  \label{thm:Descr_Pseudo_LP}
  Let the expression   \eqref{eq:can,eq-n} be quasi-regular at $a$ and limit point at $b$ and let $A=S_{\min}$.
  Let $\Pi = (\dC, \Gamma_0, \Gamma_1)$ be the boundary triple for $S_{\max}$
  given by~\eqref{eq:BT_Can},
let 
a gauge $\sL$ for $S_{\min}$ and the mapping
$L\in\cB(\dC,\sL)$ be defined by \eqref{eq:gauge-LP}.
Then
\begin{enumerate}
  \item [\rm(i)]
The formula
\begin{equation}\label{eq:IntRepLP}
  \int_{\dR}\left(\frac{1}{{\lambda}-z}-\frac{{\lambda}}{1+{\lambda}^2}\right)d\sigma({\lambda})=
  -(\tau(z)+m(z))^{-1}+k
\end{equation}
establishes a bijective correspondence between
all pseudo-spectral functions $\sigma$  of $\langle A,\sL\rangle$
and all $\Pi$-admissible families $\tau\in\wt\cR$, where $k\in\dR$ depends on $\sigma$.
\item [\rm(ii)]	
For every pseudo-spectral function $\sigma$ the generalized Fourier transform
\begin{equation}\label{eq:FourierLP}
\cF:\sH\ni f\mapsto F(\lambda)= \int_a^b y^\#(s,\lambda)\cH(s)f(s)ds
\end{equation}
is a partial isometry from $\sH$ to $L^2(d\sigma)$
such that $\ker\cF=\mul A$ and for all $f,g\in \sH\ominus\mul A$ the Parseval equality holds
\begin{equation}\label{eq:ParsevalLP}
  \int_a^b g(s)^*\cH(s)f(s)ds=\int_\dR G({\lambda})^*d\sigma({\lambda})F({\lambda}),\quad\text{where}\quad
  G=\cF g.
\end{equation}
\end{enumerate}
\end{theorem}
\begin{proposition}
Let in the assumptions of Theorem \ref{thm:Descr_Pseudo_LP} $p=1$. Then
\begin{enumerate}
  \item [\rm(i)]
 If the equation~\eqref{eq:can,eq-n} has no $\cH$-i.i. and only in this case,  the set of spectral functions $\sigma$  of $\langle S_{\min},\sL\rangle$ is non-empty and
 $\Sigma_{sf}(S_{\min},\sL)=\Sigma_{psf}(S_{\min},\sL)$.

  \item [(ii)]    If the equation~\eqref{eq:can,eq-n} has  no $\cH$-i.i.  at the endpoint $a$, then all  $\tau\in\wt\cR$ are $\Pi$-admissible.

\item [(iii)]   If the system~\eqref{eq:can,eq-n} has  $\cH$-i.i.  at $a$  of type $\pi/2$, then  the parameter $\tau$ in~\eqref{eq:IntRepLP} is $\Pi$-admissible
    if and only if $\tau\in\cR$ and
 $
      \tau(iy)=o(y)$ as $ y\to\infty$.
     \item [(iv)]
  If the system \eqref{eq:can,eq-n} has an $\cH$-i.i.  at $a$ of type $\gamma$, then  the parameter $\tau\in\wt\cR$ in~\eqref{eq:IntRepLP} is $\Pi$-admissible if and only if  
\[
  (\sin\gamma\tau(iy)+\cos\gamma)(-\cos\gamma\tau(iy)+\sin\gamma)^{-1}=o(y)\quad\text{ as }\quad y\to\infty.
\]
\end{enumerate}

\end{proposition}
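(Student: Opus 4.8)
The plan is to follow, \emph{mutatis mutandis}, the proof of Proposition~\ref{prop:Indivis_Int}, exchanging the roles of the two endpoints: since the system is limit point at $b$, no boundary condition is imposed there, so the multi-valued parts of all minimal selfadjoint extensions of $A=S_{\min}$ are governed exclusively by the $\cH$-indivisible structure of the system near $a$. Each item is reduced, via Theorem~\ref{thm:Descr_Pseudo_LP}, to a statement about $\Pi$-admissibility of the parameter $\tau$ in the representation~\eqref{eq:IntRepLP}, and then to the multi-valued-part analysis of the relevant selfadjoint extension.

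For (i): by Definition~\ref{def:PseudoSF} a spectral function of $\langle S_{\min},\sL\rangle$ exists exactly when the Fourier transform $\cF$ of Theorem~\ref{thm:Descr_Pseudo_LP}(ii) has trivial kernel, i.e.\ $\mul S_{\min}=\ker\cF=\{0\}$; by Lemma~\ref{lem:mul_Sgamma_psi}(i) this is equivalent to the absence of any $\cH$-indivisible interval. In that case $S_{\min}$ is densely defined, every minimal selfadjoint extension is an operator, so $\mul\wt A=\mul S_{\min}$ for all of them, every $\tau\in\wt\cR$ is $\Pi$-admissible by Definition~\ref{def:Adm}, and $\Sigma_{sf}(S_{\min},\sL)=\Sigma_{psf}(S_{\min},\sL)$ by Lemma~\ref{thm:Descr_Pseudo-spect}(iii). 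For (ii): if there is no $\cH$-i.i.\ at $a$ I would establish $\mul S_{\min}=\mul S_{\min}^{*}$, arguing as in Lemma~\ref{lem:mul_Sgamma_psi}(ii) with Lemma~\ref{lem:IndInt} and the left-continuity of the first component of the elements of $S_{\max}$: any $\text{\rm col}\{\pi u;\pi f\}\in S_{\max}$ with $\pi u=0$ must then have $u(a)=0$, hence lies in $S_{\min}$. Consequently $\mul\wt A=\mul S_{\min}$ for every minimal selfadjoint extension $\wt A$, so by Theorem~\ref{thm:Descr_Pseudo_LP}(i) every $\tau\in\wt\cR$ is admissible.

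For (iii)--(iv) I would reuse the device from the proofs of Proposition~\ref{prop:Indivis_Int}(iii)--(iv). If the system has an $\cH$-i.i.\ at $a$ of type $\gamma$, then (the $a$-version of) Lemma~\ref{lem:mul_Sgamma_psi}(iv) gives that the extension $S_{\gamma,\cdot}$, defined by the condition $\xi_\gamma^{*}u(a)=0$ and nothing at $b$, has maximal multi-valued part, $\mul S_{\gamma,\cdot}=\mul S_{\min}^{*}$, and the same holds for every minimal selfadjoint $\wt S\supset S_{\gamma,\cdot}$. Passing to the boundary triple $\Pi^{(\gamma)}=(\dC,\Gamma_0^{(\gamma)},\Gamma_1^{(\gamma)})$ with $\Gamma^{(\gamma)}=X\Gamma$, $X=\begin{bmatrix}\sin\gamma & \cos\gamma\\ -\cos\gamma & \sin\gamma\end{bmatrix}$, so that $\ker\Gamma_0^{(\gamma)}=S_{\gamma,\cdot}$, Proposition~\ref{prop:Adm_Nev}(ii) says that a parameter $\wt\tau$ for $\Pi^{(\gamma)}$ is admissible iff $\wt\tau(iy)=o(y)$; transporting this back through $\tau=T_{X^{*}}[\wt\tau]$, equivalently $\wt\tau=T_{X^{-*}}[\tau]$ with $X^{-*}=-\cJ X\cJ=X$, yields exactly the Nevanlinna-growth condition~\eqref{eq:NevCond_psi} with $\psi$ replaced by $\gamma$, which is (iv); specializing to $\gamma=\pi/2$ gives $\tau(iy)=o(y)$, i.e.\ (iii).

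The one genuinely new ingredient, and the step I expect to be the main obstacle, is the limit point analog of Lemma~\ref{lem:mul_Sgamma_psi}: one must verify that near $b$ the limit point hypothesis already fixes the behaviour, so that an $\cH$-indivisible interval abutting $b$ (necessarily of infinite $t$-length, since a finite one would make $b$ a regular, hence limit circle, endpoint) contributes nothing to $\mul S_{\max}$ beyond $\mul S_{\min}$ — using Lemma~\ref{lem:IndInt} together with the definiteness assumption (A2) to see that on such an interval only the solutions annihilated by $\xi_\psi^{*}$ survive in $L^2_\cH$, so no extra multi-valued elements appear. Thus the whole multi-valued-part dichotomy is decided at $a$. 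Once this is in place, items (i)--(iv) follow from Theorem~\ref{thm:Descr_Pseudo_LP}, Proposition~\ref{prop:Adm_Nev}, and the composition rule for the transformations $T_W$, exactly as in Proposition~\ref{prop:Indivis_Int}.
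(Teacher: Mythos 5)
The paper gives no proof of this proposition, and your plan is exactly the intended one: repeat the proof of Proposition~\ref{prop:Indivis_Int} with the roles of the endpoints exchanged. Items (i)--(iii) are sound and at the same level of detail as the paper's own arguments for Propositions~\ref{prop:Hii_intermed} and~\ref{prop:Indivis_Int}. Moreover, the step you single out as the main obstacle (an analysis of $\cH$-indivisible intervals abutting $b$) is not actually needed for (ii)--(iv): in the limit point case the triple \eqref{eq:btriple-LP} has $\ker\Gamma_0\cap\ker\Gamma_1=S_{\min}$, i.e.\ $S_{\min}=\{\{\pi u,\pi f\}\in S_{\max}:u(a)=0\}$, so the quotient $\mul S_{\max}/\mul S_{\min}$ and the question which canonical extension absorbs the extra multivalued part are decided by the value $u(a)$ alone; the endpoint $b$ enters only in (i), through Kac's criterion for $\mul S_{\min}\ne\{0\}$, which you quote in any case.

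The genuine gap is in (iv). You take over the matrix $X=\begin{bmatrix}\sin\gamma&\cos\gamma\\-\cos\gamma&\sin\gamma\end{bmatrix}$ from the proof of Proposition~\ref{prop:Indivis_Int}(iv), but that choice was tailored to the triple \eqref{eq:btriple-NII}, where $\Gamma_1\wh u=u_1(b)$; the limit point triple \eqref{eq:btriple-LP} has $\Gamma_1\wh u=-u_1(a)$. Consequently $\sin\gamma\,\Gamma_0+\cos\gamma\,\Gamma_1=\sin\gamma\,u_2(a)-\cos\gamma\,u_1(a)$, whose kernel is \emph{not} $S_{\gamma,\cdot}=\{\xi_\gamma^*u(a)=0\}=\{\cos\gamma\,u_1(a)+\sin\gamma\,u_2(a)=0\}$ unless $\gamma\in\{0,\pi/2\}$, so the claim ``$\ker\Gamma_0^{(\gamma)}=S_{\gamma,\cdot}$'' fails and the subsequent transport of the condition $\wt\tau(iy)=o(y)$ is carried out with the wrong triple. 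With the correct rotation $X=\begin{bmatrix}\sin\gamma&-\cos\gamma\\ \cos\gamma&\sin\gamma\end{bmatrix}$ one gets instead $(\sin\gamma\,\tau(iy)-\cos\gamma)(\cos\gamma\,\tau(iy)+\sin\gamma)^{-1}=o(y)$, i.e.\ the printed formula with the sign of the $\cos\gamma$-terms reversed. A quick consistency check makes the point: by Lemma~\ref{lem:IndInt} the extension that absorbs the extra multivalued part is $S_{\gamma,\cdot}=S_{\min}\dotplus\{0\}\times\mul S_{\max}$, and under \eqref{eq:btriple-LP} and Theorem~\ref{krein}(ii) this extension corresponds to the constant parameter $\tau_0=-\tan\gamma$; hence the M\"obius transform in the admissibility criterion must have its pole at $-\tan\gamma$, whereas the transform you (and the statement as printed, which was evidently written by analogy with \eqref{eq:NevCond_psi}) use has its pole at $+\tan\gamma$. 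So you need to redo this sign bookkeeping rather than reproduce the $b$-endpoint formula verbatim; this does not affect (iii), where $\gamma=\pi/2$, nor (i)--(ii).
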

\begin{remark}
Basic results  in the theory of spectral functions for
 canonical systems with $p=1$ are discussed in the monograph
 by F. V. Atkinson~\cite{Atk64}. Direct  and inverse spectral problems for canonical systems were studied by L. de Branges~\cite{dB68}, for strings by
 I.S. Kac and M.G. Krein~\cite{KacK68}.
Spectral and pseudo-spectral functions of a symmetric relation  $S_{\gamma\cdot}$  for canonical differential systems  (with  $p>1$) in the regular case were studied by A.~Sakhnovich \cite{Sak92},
in the general case  by L.~Sakhnovich~\cite{SakL99}, and
by D. Arov and H. Dym~\cite{ArDy12}. Criteria for the existence of spectral and pseudo-spectral functions
of the symmetric relation $S_{\gamma\cdot}$  for canonical  systems in the regular and singular case
in terms of the $\cH$-indivisible intervals were found by I.S.~Kac~\cite{Kac03}. Description of LT-spectral functions
of symmetric relations associated with  canonical  systems were presented
by H. Langer and B. Textorius in~\cite{LaTe85}.
Eigenfunction expansions of first-order
 symmetric systems with non-equal defect numbers were studied by
 V. Mogilevskii~\cite{Mog15}.
\end{remark}

\section*{Acknowledgements}
The author expresses deep gratitude to Yu.M. Arlinskii and D.Z. Arov for their valuable comments and fruitful discussions.

%
%
%

\end{document}